\numberwithin{equation}{section}
\newtheorem{theorem}{Theorem}[section]
\newtheorem{cor}[theorem]{Corollary}
\newtheorem{proposition}[theorem]{Proposition}
\newtheorem{lemme}[theorem]{Lemma}
\theoremstyle{definition}
\newtheorem{defi}[theorem]{Definition}
\newtheorem{remq}[theorem]{Remark}
\newtheorem{rmk}[theorem]{Remark}
\newcommand{\Mscr}{\mathscr{M}}
\newcommand{\modm}{\mathrm{ mod \ }}
\newcommand{\sumstar}{\sideset{}{^*}\sum}
\newcommand{\Tscr}{\mathscr{T}}
\newcommand{\Mcal}{\mathcal{M}}
\newcommand{\Cbf}{\mathbf{C}}
\newcommand{\ifm}{\mathrm{if}}
\newcommand{\Scal}{\mathcal{S}}
\newcommand{\Rbf}{\mathbf{R}}
\newcommand{\Fbf}{\mathbf{F}}
\newcommand{\Kl}{\mathrm{Kl}}
\newcommand{\Qlbar}{\overline{\mathbf{Q}}_\ell}
\newcommand{\Abfq}{\mathbf{A}^1_{\Fbf_q}}
\newcommand{\cond}{\mathrm{cond}}
\newcommand{\PGL}{\mathrm{PGL}}
\newcommand{\GL}{\mathrm{GL}}
\newcommand{\Fbfbar}{\overline{\mathbf{F}}}
\newcommand{\Pbfq}{\mathbf{P}^1_{\mathbf{F}_q}}
\newcommand{\Ccal}{\mathcal{C}}
\newcommand{\chibar}{\overline{\chi}}
\newcommand{\Brm}{\mathrm{B}}
\newcommand{\Trm}{\mathrm{T}}
\newcommand{\Nrm}{\mathrm{N}}
\newcommand{\Gbf}{\mathbf{G}}
\newcommand{\Fcal}{\mathcal{F}}
\newcommand{\Frm}{\mathrm{F}}
\newcommand{\etabar}{\overline{\eta}}
\newcommand{\Tr}{\mathrm{Tr}}
\newcommand{\rank}{\mathrm{rank}}
\newcommand{\Swan}{\mathrm{Swan}}
\newcommand{\Lcal}{\mathcal{L}}
\newcommand{\Klcal}{\mathcal{K}\ell}
\newcommand{\xbar}{\overline{x}}
\newcommand{\Kbf}{\mathbf{K}}
\newcommand{\FT}{\mathrm{FT}}
\newcommand{\Gcal}{\mathcal{G}}
\newcommand{\Fscr}{\mathscr{F}}
\newcommand{\Hrm}{\mathrm{H}}
\newcommand{\Urm}{\mathrm{U}}
\newcommand{\amf}{\textfrak{a}}
\newcommand{\Hbf}{\mathbf{H}}
\newcommand{\Bcal}{\mathcal{B}}
\newcommand{\Zbf}{\mathbf{Z}}
\newcommand{\Ecal}{\mathcal{E}}
\newcommand{\Qbf}{\mathbf{Q}}
\newcommand{\Ncal}{\mathcal{N}}
\newcommand{\Abf}{\mathbf{A}}
\newcommand{\omegabar}{\overline{\omega}}
\newcommand{\Acal}{\mathcal{A}}
\title{Simultaneous non-vanishing for Dirichlet $L$-functions}
\author{Raphaël Zacharias}
\date{\today}
\begin{document}
\maketitle

\begin{abstract}
We extend the work of Fouvry, Kowalski and Michel on correlation between Hecke eigenvalues of modular forms and algebraic trace functions in order to establish an asymptotic formula for a generalized cubic moment of modular $L$-functions at the central point $s=\frac{1}{2}$ and for prime moduli $q$. As an application, we exploit our recent result on the mollification of the fourth moment of Dirichlet $L$-functions to derive that for any pair $(\omega_1,\omega_2)$ of multiplicative characters modulo $q$, there is a positive proportion of $\chi \ (\modm q)$ such that $L(\chi,\frac{1}{2}),L(\chi\omega_1,\frac{1}{2})$ and $L(\chi\omega_2,\frac{1}{2})$ are simultaneously not too small.
\end{abstract}

\tableofcontents

\section{Introduction and statement of results}\label{Section1}
The zeros of automorphic $L$-functions on the critical line have received considerable attention these last years \cite{bump,critical,luo,analytic99,iwaniec2000}. In particular, at the central point $s=\frac{1}{2}$, an $L$-function is expected to vanish only for either a good reason or a trivial reason. For example, if $E$ is an elliptic curve defined over $\Qbf$ and $L(E,s)$ is its associated $L$-function, then according to the Birch and Swinnerton-Dyer conjecture, $L(E,\frac{1}{2})=0$ if and only if the group of $\Qbf$-points $E(\Qbf)$ has positive rank. A trivial reason is for instance when the sign of the functional equation is $-1$, which is the case if the $L$-function is attached to an odd Hecke-Maass form.

A typical approach in the study of non-vanishing problems is to consider a family of $L$-functions $\{L(\pi,\frac{1}{2})\}$ for $\pi$ varying in some finite set $\Acal$ and try to give a lower bound for the proportion of $\pi\in\Acal$ such that $L(\pi,\frac{1}{2})\neq 0$ as $|\Acal|\rightarrow\infty$. In \cite{Iw-S}, H. Iwaniec and P. Sarnak examined $L(\chi,s)$ at $s=\frac{1}{2}$ as $\chi$ ranges over all primitive Dirichlet characters modulo $q$. They proved that at least $\frac{1}{3}$ of the central values $L(\chi,\frac{1}{2})$ are not zero. This proportion has been slightly improved to $3.3411$ by H.M. Bui \cite{bui} and finally to $\frac{3}{8}$ by R. Khan and H.T. Ngo \cite{khan} with the restriction to prime moduli $q$.

In \cite{simultaneous}, P. Michel and J. Vanderkam considered simultaneous non-vanishing problems : given three distinct Dirichlet characters $\omega_1,\omega_2,\omega_3$ of fixed modulus $D_1,D_2,D_3$ (satisfying some technical conditions), they proved that a positive proportion of Holomorphic primitive Hecke cusp form $f$ of weight $2$, prime level $q$ and trivial nebentypus are such that the product $L(f\otimes\omega_1,\frac{1}{2})L(f\otimes\omega_2,\frac{1}{2})L(f\otimes\omega_3,\frac{1}{2})$ is not zero for sufficiently large $q$ (in terms of $D_1,D_2,D_3$). They derived various arithmetic applications, especially the existence of quotients of $J_0(q)$ of large dimension satisfying the Birch and Swinnerton-Dyer conjecture over cyclic number fields of degree less than 5.

\vspace{0.2cm}

In this paper, we let $q>2$ be a prime number, $\omega_1,\omega_2$ be arbitrary Dirichlet characters of modulus $q$, $f$ a Hecke eigenform for $\mathrm{SL}_2(\Zbf)$ (holomorphic or Maass) and we are interested in the distribution of the values of the two families indexed by Dirichlet characters modulo $q$
$$\left\{ L\left(\chi,\frac{1}{2}\right)L\left(\chi\omega_1,\frac{1}{2}\right)L\left(\chi\omega_2,\frac{1}{2}\right) \ : \ \chi \ (\modm q)\right\},$$
$$\left\{ L\left(f\otimes\chi,\frac{1}{2}\right)L\left(\chi,\frac{1}{2}\right) \ : \ \chi \ (\modm q) \right\},$$
as $q\rightarrow \infty$. Using mollification method, a technique that has made the success of many of the papers cited in the previous paragraphs, we show that for both families, a positive proportion of these central values of character twists is not zero. We give in fact a more precise statement, saying that they are simultaneously not too small. We point out that there is no additional difficulty by considering fixed $\omega_1,\omega_2$ of conductors $D_1,D_2<q$. These simultaneous non-vanish results require the evaluation of the two cubic moments
\begin{equation}\label{DefinitionMomentDirichlet}
\Tscr^3(\omega_1,\omega_2,\ell;q):= \frac{1}{q-1}\sum_{\substack{\chi\ (\modm q) \\ \chi\neq 1,\omegabar_1,\omegabar_2}}L\left(\chi,\frac{1}{2}\right)L\left(\chi\omega_1,\frac{1}{2}\right)L\left(\chi\omega_2,\frac{1}{2}\right)\chi(\ell),
\end{equation}
\begin{equation}\label{DefinitionMomentCusp}
\Tscr^3(f,\ell;q):= \frac{1}{q-1}\sum_{\substack{\chi \ (\modm q) \\ \chi\neq 1}}L\left(f\otimes \chi,\frac{1}{2}\right)L\left(\chi,\frac{1}{2}\right)\chi(\ell),
\end{equation}
for $\ell\geqslant 1$ an integer coprime with $q$.

\vspace{0.1cm}

We now state the main results of this paper :

\begin{theorem}\label{TheoremPositive} Let $q>2$ be a prime number, $\omega_1,\omega_2$ two Dirichlet characters of modulus $q$ and $\varepsilon>0$. Then there exists an explicit absolute constant $c_1>0$ and $Q=Q(\varepsilon)>2$ such that for any prime $q\geqslant Q$, 
$$\left|\left\{ \chi \ (\modm q) \ : \ \left|L\left(\chi,\frac{1}{2}\right)\right|,\left|L\left(\chi\omega_1,\frac{1}{2}\right)\right|,\left|L\left(\chi\omega_2,\frac{1}{2}\right)\right|\geqslant \frac{1}{\log q}\right\}\right|\geqslant (c_1-\varepsilon)(q-1).$$
\end{theorem}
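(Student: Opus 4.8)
The plan is to run the Iwaniec--Sarnak mollification method on the triple product, feeding in the two arithmetic inputs announced in the introduction: the asymptotic evaluation of the cubic moment $\Tscr^3(\omega_1,\omega_2,\ell;q)$ (to be proven later in the paper via the Fouvry--Kowalski--Michel machinery for correlations of Hecke eigenvalues with trace functions) and the author's recent mollified fourth moment of Dirichlet $L$-functions, which I shall use in the twisted form $\sum_{\chi}|L(\chi\psi_1,\tfrac12)L(\chi\psi_2,\tfrac12)N(\chi)|^2\ll q$ for $\psi_1,\psi_2\in\{1,\omega_1,\omega_2\}$ and $N$ any mollifier of length $q^{O(\theta)}$. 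Write $L_0(\chi)=L(\chi,\tfrac12)$, $L_1(\chi)=L(\chi\omega_1,\tfrac12)$, $L_2(\chi)=L(\chi\omega_2,\tfrac12)$; for a small parameter $\theta>0$ and polynomials $P_0,P_1,P_2$ with $P_j(0)=0$, attach to $L_j$ the mollifier $M_j(\chi)=\sum_{m\le q^{\theta}}\mu(m)\,\omega_j(m)\,P_j\big(\tfrac{\log(q^{\theta}/m)}{\theta\log q}\big)\chi(m)m^{-1/2}$, so that $\lambda_j(\chi):=L_j(\chi)M_j(\chi)$ is close to $1$ on average, and put $M=M_0M_1M_2$. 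Since $M$ is a Dirichlet polynomial $\sum_{\ell\le q^{3\theta}}y_\ell\,\chi(\ell)$ supported on $\ell$ coprime to $q$, the mollified first moment satisfies
\[
\mathcal{M}_1:=\frac{1}{q-1}\sum_{\substack{\chi\,(\modm q)\\ \chi\neq1,\omegabar_1,\omegabar_2}}L_0(\chi)L_1(\chi)L_2(\chi)\,M(\chi)=\sum_{\ell\le q^{3\theta}}y_\ell\,\Tscr^3(\omega_1,\omega_2,\ell;q),
\]
and inserting the asymptotic formula for $\Tscr^3$ gives $\mathcal{M}_1=c_1'+o(1)$, where $c_1'$ is the value of an explicit bilinear form in $P_0,P_1,P_2$; one chooses these polynomials so that $c_1'>0$. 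This step requires that the error term in $\Tscr^3$, which must be a fixed power saving in $q$ uniform in $\ell$, survive the sum over $\ell\le q^{3\theta}$, which pins $\theta$ below some admissible threshold $\theta_0$.

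Next I would convert this lower bound into a count. Let $\Acal=\{\chi\,(\modm q):|L_0(\chi)|,|L_1(\chi)|,|L_2(\chi)|\ge1/\log q\}$, which is exactly the set in the statement, and split $\mathcal{M}_1$ according to whether $\chi\in\Acal$. If $\chi\notin\Acal$ then some $|L_j(\chi)|<1/\log q$, say $j=2$; then $|\lambda_2(\chi)|<|M_2(\chi)|/\log q$, so by Cauchy--Schwarz
\[
\sum_{|L_2(\chi)|<1/\log q}|\lambda_0\lambda_1\lambda_2|\le\frac{1}{\log q}\Big(\sum_\chi|\lambda_0\lambda_1|^2\Big)^{1/2}\Big(\sum_\chi|M_2(\chi)|^2\Big)^{1/2}\ll\frac{q}{\sqrt{\log q}},
\]
using the twisted fourth moment for $\sum_\chi|\lambda_0\lambda_1|^2=\sum_\chi|L_0L_1M_0M_1(\chi)|^2$ and the elementary bound $\sum_\chi|M_2(\chi)|^2\ll q\log q$; the cases $j=0,1$ are symmetric. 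Hence $\sum_{\chi\in\Acal}\lambda_0\lambda_1\lambda_2=(c_1'+o(1))(q-1)$. Now two applications of Cauchy--Schwarz: first, $\big|\sum_{\chi\in\Acal}\lambda_0\cdot(\lambda_1\lambda_2)\big|^2\le\big(\sum_{\chi\in\Acal}|\lambda_0|^2\big)\big(\sum_\chi|\lambda_1\lambda_2|^2\big)$ together with $\sum_\chi|\lambda_1\lambda_2|^2\ll q$ yields $\sum_{\chi\in\Acal}|\lambda_0|^2\gg q$; second, $\big(\sum_{\chi\in\Acal}|\lambda_0|^2\big)^2\le\#\Acal\cdot\sum_\chi|\lambda_0|^4$ with $\sum_\chi|\lambda_0|^4=\sum_\chi|L_0(\chi)|^4|M_0(\chi)^2|^2\ll q$ (the twisted fourth moment with $\psi_1=\psi_2=1$ and $N=M_0^2$, of length $q^{2\theta}$) yields $\#\Acal\ge(c_1-\varepsilon)(q-1)$ for an explicit constant $c_1$ of the shape $(c_1')^4/(C_1^2C_2)$, $C_1,C_2$ being the constants in the two fourth-moment bounds. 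One then optimizes over $P_0,P_1,P_2$, over the parity of $\chi$ (even and odd characters handled separately, the functional equations differing), and over which of $L_0,L_1,L_2$ is peeled off first.

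The hard part is entirely contained in the two imported inputs, and for the deduction the one genuinely delicate point is securing a positive admissible exponent $\theta_0$: the asymptotic for $\Tscr^3(\omega_1,\omega_2,\ell;q)$ has to hold with a fixed power-of-$q$ saving uniformly for $\ell\le q^{3\theta_0}$ --- this is where the nontrivial estimates for sums of Hecke eigenvalues against trace functions are needed --- and this range must be compatible with the range in which the twisted mollified fourth moment is available; the value of $c_1$ then depends on $\theta_0$ and on the optimal polynomials. The remaining points --- reduction to primitive characters, the parity splitting, the bound $\sum_\chi|M_j(\chi)|^2\ll q\log q$, and checking that the bilinear form can be made positive --- are routine.
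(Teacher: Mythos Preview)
Your proposal is correct and follows essentially the same route as the paper: mollify the cubic moment, use Theorem~\ref{Theorem1} (Corollary~\ref{Corollary1}) to evaluate it, strip off the contribution of $\chi\notin\Acal$ via Cauchy--Schwarz and the bound $\tfrac{1}{q-1}\sum_\chi|\Mcal|^2\ll\log q$, and then two Cauchy--Schwarz applications against the mollified fourth moment \eqref{FourthMoment}. The only cosmetic difference is that the paper uses the \emph{same} mollifier $\Mcal(\chi\omega_i;L)$ for each factor, so that after a shift $\chi\mapsto\chi\omega_i^{-1}$ every fourth-moment input collapses to the single quantity $\Mscr^4(q)$; your mixed moment $\sum_\chi|\lambda_1\lambda_2|^2$ is then bounded by one further Cauchy--Schwarz, $\le(\sum|\lambda_1|^4)^{1/2}(\sum|\lambda_2|^4)^{1/2}$, rather than invoked as a separate input.
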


\begin{theorem}\label{TheoremPositive2} Let $q>2$ be a prime number, $f$ a Hecke cusp of level $1$ and spectral parameter $t_f$ satisfying the Ramanujan-Petersson conjecture and $\varepsilon>0$. Then there exists an explicit absolute constant $c_2>0$ and $Q=Q(\varepsilon,t_f)>2$ such that for any prime $q\geqslant Q$,
$$\left|\left\{ \chi \ (\modm q) \ : \  \left|L\left(f\otimes\chi,\frac{1}{2}\right)\right|\geqslant \frac{1}{\log(q)^2},\left|L\left(\chi,\frac{1}{2}\right)\right|\geqslant \frac{1}{\log q}\right\}\right|\geqslant (c_2-\varepsilon)(q-1).$$
\end{theorem}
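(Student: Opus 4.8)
\textbf{Proof plan for Theorems~\ref{TheoremPositive} and~\ref{TheoremPositive2}.}
The strategy is the standard mollification argument, applied here with a cubic (rather than the more common quadratic) moment, so the plan rests on two asymptotic evaluations: an upper bound for a mollified second moment and an asymptotic formula for a mollified cubic moment. Concretely, for Theorem~\ref{TheoremPositive} I would introduce a Dirichlet polynomial mollifier $M(\chi)=\sum_{\ell\le L}\frac{x_\ell}{\sqrt\ell}\chi(\ell)$ of length $L=q^{\Delta}$ for some fixed $\Delta<\tfrac12$, with coefficients $x_\ell$ to be optimized, and consider the two quantities
\begin{equation}\label{eq:plan-moments}
S_1:=\frac{1}{q-1}\sum_{\chi\neq 1,\omegabar_1,\omegabar_2}\Big|L\!\left(\chi,\tfrac12\right)L\!\left(\chi\omega_1,\tfrac12\right)L\!\left(\chi\omega_2,\tfrac12\right)M(\chi)M(\chi\omega_1)M(\chi\omega_2)\Big|,\qquad S_2:=\frac{1}{q-1}\sum_{\chi}\Big|L\Big|^2|M|^2
\end{equation}
summed appropriately over the three twists; by Hölder's inequality the number of $\chi$ for which all three $L$-values exceed $1/\log q$ is bounded below by $S_1^{?}/(S_2^{?}\,(\log q)^{?})$ once the product $LM$ is under control on the relevant set. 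The first input, $S_2$, is the mollified fourth moment of Dirichlet $L$-functions, which the abstract advertises as available from the author's recent work; the second input, $S_1$, is exactly $\Tscr^3(\omega_1,\omega_2,\ell;q)$ weighted by the mollifier coefficients, whose main term the paper will compute.

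The heart of the matter is therefore the asymptotic evaluation of the cubic moments $\Tscr^3(\omega_1,\omega_2,\ell;q)$ and $\Tscr^3(f,\ell;q)$ of~\eqref{DefinitionMomentDirichlet}--\eqref{DefinitionMomentCusp}. I would begin by expressing each $L$-value at $s=\tfrac12$ through its approximate functional equation, turning the product of three $L$-functions into a sum over $(m_1,m_2,m_3)$ (respectively a sum over $(m,n)$ weighted by Hecke eigenvalues $\lambda_f(m)$ for the $f$-twisted case) of length roughly $q^{3/2}$ (respectively $q$), with suitable smooth weights and root-number factors. Executing the character sum $\frac{1}{q-1}\sum_\chi \chi(m_1 m_2 m_3 \ell)\cdot(\text{root numbers})$ via orthogonality produces a diagonal contribution $m_1 m_2 m_3\ell \equiv \pm 1 \ (\modm q)$, or more precisely, after opening the Gauss sums hidden in the root numbers, a shifted-convolution / correlation problem. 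The diagonal terms $m_1m_2m_3\ell=1$ give the expected main term (a polynomial in $\log q$ times an arithmetic factor depending on $\ell,\omega_1,\omega_2$); everything else is off-diagonal and must be shown to be a genuine error term.

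The main obstacle, and the reason the paper invokes the Fouvry--Kowalski--Michel machinery, is bounding these off-diagonal contributions: after Poisson summation in one variable the off-diagonal becomes a sum of the shape $\sum_n \lambda_f(n)\,K(n)$ (or $\sum_n d_2(n) K(n)$ in the Dirichlet case) where $K$ is an \emph{algebraic trace function} modulo $q$ — a normalized hyper-Kloosterman-type sum arising from the Gauss sums and the additive twists. The required cancellation is precisely a bound for the correlation between Hecke eigenvalues (or the divisor function) and trace functions, which the authors of \cite{simultaneous} and of the Fouvry--Kowalski--Michel papers establish via spectral theory of automorphic forms combined with the Riemann Hypothesis over finite fields (Deligne), controlling the conductor of $K$ and ruling out the ``degenerate'' case where $K$ correlates with a character. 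I would first isolate the ranges of $(m_i)$ where the trivial bound already suffices, then in the critical range apply the amplified/shifted-convolution estimate for $\sum \lambda_f(n) K(n)$, keeping careful track of the dependence on $\ell$ and on the conductors $D_1,D_2$ of $\omega_1,\omega_2$ so that the final error term is $o\big((\log q)^A\big)$ uniformly for $\ell$ in the range needed by the mollifier. Once both cubic moments are in hand with power-saving (or at least logarithmic-saving) error terms, the optimization of the mollifier coefficients $x_\ell$ — a now-classical variational problem whose solution involves the relevant arithmetic Euler products — yields the constants $c_1,c_2$, and Theorems~\ref{TheoremPositive} and~\ref{TheoremPositive2} follow by the Hölder/Cauchy--Schwarz lower-bound argument sketched above, with the extra factor $(\log q)^{-2}$ in Theorem~\ref{TheoremPositive2} reflecting the larger analytic conductor of $L(f\otimes\chi,s)$.
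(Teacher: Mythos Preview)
Your high-level architecture is correct and matches the paper: mollify, evaluate the mollified cubic moment via Theorem~\ref{Theorem1}, and combine with higher moments through Cauchy--Schwarz. However, several concrete details deviate from what actually happens.

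First, the main term of $\Tscr^3(f,\ell;q)$ (and of $\Tscr^3(\omega_1,\omega_2,\ell;q)$) is not a polynomial in $\log q$: it is exactly $\delta_{\ell=1}$, with a power saving error. Consequently there is no variational problem for the mollifier coefficients; the paper simply takes the standard choices $\mu(\ell)(\log(L/\ell)/\log L)^2$ and $\mu_f(\ell)(\log(L'/\ell)/\log L')$, and the mollified cubic moment is $1+o(1)$ directly. The constants $c_1,c_2$ come entirely from the higher-moment side.

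Second, for Theorem~\ref{TheoremPositive2} specifically you are missing an input. The Cauchy--Schwarz step in the paper is asymmetric: one needs both the mollified fourth moment $\Mscr^4(q)$ of Dirichlet $L$-functions \emph{and} the mollified second moment $\Mscr^4(f;q)=\frac{1}{q-1}\sum_{\chi\neq 1}|L(f\otimes\chi,\tfrac12)\Mcal(f\otimes\chi;L')|^2$, the latter being supplied by an unpublished result of Blomer--Fouvry--Kowalski--Michel--Mili\'cevi\'c--Sawin. Your $S_2$ as written (a single second moment ``over the three twists'') does not capture this, and without it the lower bound for the count cannot be closed.

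Third, on the error term for the cuspidal cubic moment: the FKM-type bound for $\sum_n\lambda_f(n)K(n)$ you describe handles only the range where the free variable $m$ is very short. In the genuinely critical range $M\sim N\sim q^{1/2}$ (so that neither Poly\'a--Vinogradov nor the long-variable twist bound applies), the paper invokes the Kowalski--Michel--Sawin bilinear estimate for $\sum_m\sum_n\alpha_m\Kl_3(amn;q)$. This is a separate and essential ingredient; without it the range $\tfrac12-\delta<\mu<\tfrac12+\delta$ in the final optimization is not covered, and the exponent $1/52$ in \eqref{AsymptoticForumalaTheorem1_2} cannot be obtained.
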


The new main ingredient in the proof of Theorems \ref{TheoremPositive},\ref{TheoremPositive2} is the following result which establishes an asymptotic formula for the moments \eqref{DefinitionMomentDirichlet},\eqref{DefinitionMomentCusp}.
\begin{theorem}\label{Theorem1}Let $q>2$ be a prime number, $\omega_1,\omega_2$ be Dirichlet characters of modulus $q$, $f$ a primitive Hecke cusp form of level $1$ or $q$ and trivial nebentypus. Assume that $f$ satisfies the Ramanujan-Petersson conjecture, then for any $\varepsilon>0$, we have
\begin{equation}\label{AsymptoticFormulaTheorem1}
\Tscr^3(\omega_1,\omega_2,\ell;q) = \delta_{\ell=1}+O\left(q^{-\frac{1}{64}+\varepsilon}\right),
\end{equation}
\begin{equation}\label{AsymptoticForumalaTheorem1_2}
\Tscr^3(f,\ell;q) = \delta_{\ell=1} + O\left(q^{-\frac{1}{52}+\varepsilon}\right),
\end{equation}
where the implied constant only depends on $\varepsilon>0$ and polynomially on the Archimedean parameters of $f$ (the weight or the Laplace eigenvalue) in \eqref{AsymptoticForumalaTheorem1_2}.
\end{theorem}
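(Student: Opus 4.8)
The plan is to establish both asymptotic formulas by the same mechanism, the two cases differing only in the lengths of the approximate functional equations involved and in the nature of the arithmetic coefficients that appear.

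\textbf{Step 1 (approximate functional equations).} Since $q$ is prime, for $\chi\neq 1,\overline{\omega}_1,\overline{\omega}_2$ each of $\chi$, $\chi\omega_1$, $\chi\omega_2$ is primitive modulo $q$, so the three central values satisfy functional equations of conductor $q$ whose root numbers are the normalized Gauss sums $\tau(\chi)/\sqrt{q}$, $\tau(\chi\omega_1)/\sqrt{q}$, $\tau(\chi\omega_2)/\sqrt{q}$ up to explicit parity factors. Expanding each value into its approximate functional equation and multiplying, one obtains eight terms indexed by the choice of a principal or a dual piece in each factor; each term is a triple Dirichlet polynomial in $\chi$ or $\overline{\chi}$, with variables of length $q^{1/2+\varepsilon}$, carrying smooth cutoffs and, in the dual pieces, the above Gauss sums. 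For $\Tscr^3(f,\ell;q)$ one instead opens $L(f\otimes\chi,\tfrac12)$, of conductor $q^2$ and hence of length $\asymp q$, together with $L(\chi,\tfrac12)$ of length $\asymp q^{1/2}$, the former introducing the Hecke eigenvalues $\lambda_f(n)$.

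\textbf{Step 2 (orthogonality and the diagonal).} Insert the twist $\chi(\ell)$, average over $\chi$ using $\tfrac{1}{q-1}\sum_{\chi}\chi(a)\overline{\chi}(b)=\mathbf{1}_{a\equiv b\,(q)}$, and unfold each Gauss sum by $\tau(\chi)=\sum_{x}\chi(x)e_q(x)$. In the purely principal term the congruence forces $mnk\ell\equiv 1\,(q)$ with $mnk\ll q^{3/2+\varepsilon}$; the genuine diagonal $mnk\ell=1$, that is $m=n=k=1$ and $\ell=1$, contributes $\delta_{\ell=1}$ since the cutoffs are $\approx 1$ at the bottom of the ranges. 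One then checks that the remaining seven terms yield no new main term — either by parity of $\chi$, or because after unfolding the Gauss sums they become genuinely oscillatory. For $\Tscr^3(f,\ell;q)$ the diagonal again gives $\delta_{\ell=1}$, with $\lambda_f(1)=1$, and there is no secondary main term because $f$ is cuspidal.

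\textbf{Step 3 (off-diagonal, Poisson summation, trace functions).} The off-diagonal contribution splits, according to the frequency $h$ defined by $mnk\ell=1+qh$ (resp.\ $mn\ell=1+qh$) with $1\le|h|\ll q^{1/2+\varepsilon}\ell$, into incomplete sums in $m,n,k$ (resp.\ $m,n$) constrained modulo $q$. Poisson summation in each variable modulo $q$ turns these into complete exponential sums modulo $q$ which, after suitable changes of variable, are products of Kloosterman sums, hyper-Kloosterman sums $\Kl_3$, or the correlation sums introduced by Fouvry--Kowalski--Michel, all of bounded conductor; Deligne's bound then gives square-root cancellation for each $h$, and summing trivially over $h$ already produces a power saving. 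To reach the exponents stated one must in addition handle the short sums $\sum_{n}\alpha(n)K(n)$ that survive — with $\alpha$ equal to $\lambda_f$, to a divisor function $d_2$ or $d_3$, or to a short multiplicative convolution of these twisted by $\omega_1,\omega_2$, and with $K$ the relevant trace function — by the Fouvry--Kowalski--Michel estimates for the correlation of automorphic, respectively divisor, coefficients with trace functions; the divisor case being the weaker of the two, which accounts for the discrepancy between $1/64$ and $1/52$. The geometric input — irreducibility of the underlying $\ell$-adic sheaves and the fact that none of them is geometrically a twist of an Artin--Schreier or Kummer sheaf — is supplied by the work of Katz, after computing the relevant conductors. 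Collecting $\delta_{\ell=1}$ and the error terms from the tails of the approximate functional equations, from the trivial bound over large $h$, and from the trace-function estimate over small $h$, and balancing the available parameters (the split point in $h$ and the length of any amplifier used above), yields $O(q^{-1/64+\varepsilon})$ and $O(q^{-1/52+\varepsilon})$.

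The main obstacle is Step 3 in the regime of \emph{small} $h$, where square-root cancellation for a single modulus is not enough and one must exploit the complete sums modulo $q$ on average over the surviving variables. This demands identifying those exponential sums precisely as trace functions of bounded conductor and, above all, extending the Fouvry--Kowalski--Michel correlation bounds to exactly the combination of coefficients ($\lambda_f$, $d_2$, $d_3$, together with the $\omega_i$-twists) and trace functions occurring here, as well as verifying by $\ell$-adic sheaf theory that no Kummer- or Artin--Schreier-type degeneracy destroys the cancellation.
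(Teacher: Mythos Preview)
Your outline follows the same arc as the paper --- approximate functional equation, orthogonality, diagonal main term, off-diagonal via trace-function machinery --- but Step~3 has a concrete gap in the cuspidal case and some structural misdescriptions.

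\textbf{The missing ingredient.} For $\Tscr^3(f,\ell;q)$ the variables localize as $M\sim q^\mu$, $N\sim q^\nu$ with $\mu+\nu\approx 1$. When $\mu\leqslant\delta_1$ is small the FKM-type correlation bound (the paper's Theorem~\ref{Theorem2}) applies to $\sum_n\lambda_f(n)K(n)$; when $\mu>\tfrac12+\delta_3$ one can sum trivially in $n$ and use Poly\'a--Vinogradov in $m$. But in the critical range $\mu\approx\nu\approx\tfrac12$ neither works: the bilinear Poly\'a--Vinogradov estimate (Theorem~\ref{TheoremPrime}) gives no saving when both lengths are $\sim q^{1/2}$, and the $\lambda_f$-variable is too short for Theorem~\ref{Theorem2}. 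The paper closes this gap with the Kowalski--Michel--Sawin bound for type-I bilinear forms $\sum_m\alpha_m\sum_n\Kl_k(amn;q)$ (Theorem~\ref{TheoremSawin}), which is a genuinely different and much deeper input than anything in FKM. Your proposal does not invoke it, and without it the exponent $1/52$ cannot be reached --- indeed, no power saving survives in that range from the tools you list.

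\textbf{Structural issues.} There is no amplifier in the proof of Theorem~\ref{Theorem1}; the amplifier lives inside the proof of Theorem~\ref{Theorem2}, which is used as a black box here. Likewise the paper does not split over a shift parameter $h$; after averaging over $\chi$ the error term is directly a sum of type $\sum_{n_0,n_1,n_2}\omegabar_1(n_1)\omegabar_2(n_2)\Kl_3(n_0n_1n_2\bar\ell,\omega_1,\omega_2,1;q)$ (resp.\ $\sum_{n,m}\lambda_f(n)\Kl_3(nm\bar\ell;q)$), and the analysis proceeds by dyadic localization of the $N_i$ (resp.\ $M,N$) followed by Poisson (resp.\ Voronoi then Poisson) to handle the long-variable regime, then the bilinear and correlation bounds for the remaining ranges. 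Finally, the FKM correlation bound must be extended to forms of level~$q$ with nebentypus (this is the content of Theorem~\ref{Theorem2} and occupies Sections~\ref{SectionAlgebraicTwsits}--\ref{ProofTwist}); the original FKM results are for forms of level coprime to $q$, and this extension is not automatic.
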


\begin{remq} The case where $f$ is of level one and $\ell=1$ has been announced by S. Das and S. Ganguly and it seems that their method is similar to our.

When $f$ is of level $q$ with non trivial central character, the proof of Theorem \ref{Theorem1} is similar but requires a mild extension of \cite[Theorem 1.3]{sawin} for Kloosterman sums twisted by characters. We will return to this question in a coming paper.
\end{remq}

\begin{remq} The asymptotic formula \eqref{AsymptoticForumalaTheorem1_2} is similar to the mixted cubic moment evaluated by S. Das and R. Khan in \cite{das}
$$\frac{1}{q-1}\sum_{\substack{\chi\ (\modm q) \\ \chi\neq 1}}L\left(f\otimes\chi,\frac{1}{2}\right)\overline{L\left(\chi,\frac{1}{2}\right)}.$$
As the authors explained, the complex conjugation above $L(\chi,1/2)$ was introduced to avoid difficulties connected to the oscillations of Gauss sums. What we show here is that this difficulties are resolved using variants of the methods of \cite{twists} \cite{sawin}.
\end{remq}

\subsection{The mollification method} We show here how to derive Theorems \ref{TheoremPositive} and \ref{TheoremPositive2} from Theorem \ref{Theorem1}. Let $1<L<q$ be a real number such that $\log L\asymp \log q$. For any multiplicative character $\chi \ (\modm q)$, we define the short linear form
\begin{equation}\label{DefinitionMollifier1}
\Mcal(\chi;L):= \sum_{\ell\leqslant L}\frac{\chi(\ell)\mu(\ell)}{\ell^{1/2}}\left(\frac{\log L/\ell}{\log L}\right)^2,
\end{equation}
where $\mu$ is the Möbius function. Let $\{\lambda_f(n)\}_{n\geqslant 1}$ denotes the sequence of Hecke eigenvalues of a Hecke cusp form of level one and $\mu_f(n)$ be the convolution inverse of $\lambda_f(n)$ given by 
$$L(f,s)^{-1}=\prod_p\left(1-\frac{\lambda_f(p)}{p^s}+\frac{1}{p^{2s}}\right)=\sum_{n=1}^\infty\frac{\mu_f(n)}{n^s} \ , \ \Re e (s)>1.$$
For $1<L'<q$ with $\log L'\asymp \log q$, we also define
\begin{equation}\label{DefinitionMollifier2}
\Mcal(f\otimes\chi;L'):=\sum_{\ell\leqslant L'}\frac{\chi(\ell)\mu_f(\ell)}{\ell^{1/2}}\left(\frac{\log L'/\ell}{\log L'}\right).
\end{equation}
We finally consider the two mollified cubic moments
\begin{equation}\label{DefinitionMollifiedMoment2}
\Mscr^3(\omega_1,\omega_2;q):= \frac{1}{q-1}\sum_{\substack{\chi \ (\modm q) \\ \chi\neq 1,\omegabar_1,\omegabar_2}}\prod_{i=0}^2L\left(\chi\omega_i,\frac{1}{2}\right)\Mcal(\chi\omega_i;L),
\end{equation}
where $\omega_0$ is the trivial character, and 
\begin{equation}\label{DefinitionMollifiedMomentf}
\Mscr^3(f;q) := \frac{1}{q-1}\sum_{\substack{\chi \ (\modm q) \\ \chi\neq 1}}L\left(f\otimes\chi,\frac{1}{2}\right)\Mcal(f\otimes\chi;L')L\left(\chi,\frac{1}{2}\right)\Mcal(\chi;L).
\end{equation}
Note that \eqref{DefinitionMollifiedMoment2} and \eqref{DefinitionMollifiedMomentf} can be written in the form
$$\Mscr^3(\omega_1,\omega_2;q)=\sum_{\ell_1,\ell_2,\ell_3\leqslant L}\frac{\bm{x}(\ell_1)\bm{x}(\ell_2)\bm{x}(\ell_3)}{(\ell_1\ell_2\ell_3)^{1/2}}\Tscr^3(\omega_1,\omega_2,\ell_1\ell_2\ell_3;q),$$
$$\Mscr^3(f;q)=\sum_{\ell\leqslant L,\ell'\leqslant L'}\frac{\bm{x}_f(\ell)\bm{x}(\ell')}{(\ell\ell')^{1/2}}\Tscr^3(f,\ell\ell';q),$$
with
$$\bm{x}(\ell):= \mu(\ell)\left(\frac{\log L/\ell}{\log L}\right)^2 \ \ \mathrm{and} \ \ \bm{x}_f(\ell'):= \mu_f(\ell')\left(\frac{\log L/\ell'}{\log L}\right).$$
Since $f$ satisfies the Ramanujan-Petersson conjecture, we have for $1\leqslant \ell\leqslant L$ and $1\leqslant \ell'\leqslant L'$,
$$|\bm{x}(\ell)|\leqslant 1 \ \ \ \mathrm{and} \ \ \ |\bm{x}_f(\ell')|\leqslant \tau(\ell'),$$
where $\tau(n)=\sum_{d|n}1$ is the divisor function. Hence an immediate consequence of Theorem \ref{Theorem1} is the following corollary :
\begin{cor}\label{Corollary1} For any $\varepsilon >0$, the mollified cubic moments \eqref{DefinitionMollifiedMoment2} and \eqref{DefinitionMollifiedMomentf} satisfies
$$
\Mscr^3(\omega_1,\omega_2;q)=1+O\left(L^{3/2}q^{-\frac{1}{64}+\varepsilon}\right) \ , \ \Mscr^3(f;q)=1+O\left((L')^{1/2}L^{1/2}q^{-\frac{1}{52}+\varepsilon}\right),
$$
where the implied constant only depends on $\varepsilon>0$ and polynomially on $t_f$ in the second expression.
\end{cor}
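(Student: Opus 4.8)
The plan is to substitute the asymptotic formula of Theorem \ref{Theorem1} into the multilinear identities for $\Mscr^3(\omega_1,\omega_2;q)$ and $\Mscr^3(f;q)$ displayed just above the statement, and then to split the outcome into a diagonal main term coming from the Kronecker delta and an off-diagonal error term which is bounded trivially.

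For the first moment, one inserts \eqref{AsymptoticFormulaTheorem1} into
$$\Mscr^3(\omega_1,\omega_2;q)=\sum_{\ell_1,\ell_2,\ell_3\leqslant L}\frac{\bm{x}(\ell_1)\bm{x}(\ell_2)\bm{x}(\ell_3)}{(\ell_1\ell_2\ell_3)^{1/2}}\Tscr^3(\omega_1,\omega_2,\ell_1\ell_2\ell_3;q);$$
this is legitimate because each $\ell_i\leqslant L<q$ is automatically coprime to the prime $q$, so $\ell_1\ell_2\ell_3$ lies in the range where Theorem \ref{Theorem1} applies. The factor $\delta_{\ell_1\ell_2\ell_3=1}$ forces $\ell_1=\ell_2=\ell_3=1$, and since $\bm{x}(1)=\mu(1)(\log L/\log L)^2=1$, this diagonal term contributes exactly $1$. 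As the error $O(q^{-1/64+\varepsilon})$ in \eqref{AsymptoticFormulaTheorem1} is uniform in $\ell$, its total contribution is, after passing to absolute values and using $|\bm{x}(\ell)|\leqslant 1$,
$$\ll q^{-1/64+\varepsilon}\Big(\sum_{\ell\leqslant L}\ell^{-1/2}\Big)^{3}\ll L^{3/2}q^{-1/64+\varepsilon},$$
which is the asserted bound.

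For the second moment one argues in the same way with
$$\Mscr^3(f;q)=\sum_{\ell\leqslant L,\ell'\leqslant L'}\frac{\bm{x}_f(\ell)\bm{x}(\ell')}{(\ell\ell')^{1/2}}\Tscr^3(f,\ell\ell';q)$$
and \eqref{AsymptoticForumalaTheorem1_2}. Here the Kronecker delta selects $\ell=\ell'=1$ with coefficient $\bm{x}_f(1)\bm{x}(1)=1$ (recall $\mu_f(1)=1$), producing the main term $1$; the error term is controlled using $|\bm{x}_f(\ell)|\leqslant\tau(\ell)$, $|\bm{x}(\ell')|\leqslant 1$ and the standard estimate $\sum_{\ell\leqslant L}\tau(\ell)\ell^{-1/2}\ll L^{1/2}\log L$, giving
$$\ll q^{-1/52+\varepsilon}\Big(\sum_{\ell\leqslant L}\tau(\ell)\ell^{-1/2}\Big)\Big(\sum_{\ell'\leqslant L'}(\ell')^{-1/2}\Big)\ll L^{1/2}(L')^{1/2}q^{-1/52+\varepsilon},$$
the harmless factor $\log L\ll\log q$ being absorbed into $q^{\varepsilon}$ since $\log L\asymp\log q$. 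The polynomial dependence on $t_f$ is inherited directly from the implied constant in Theorem \ref{Theorem1}.

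There is no genuine obstacle here — this is precisely why the statement is only a corollary. The only points that deserve a word of care are the uniformity in $\ell$ of the error term in Theorem \ref{Theorem1} (so that it survives summation over ranges of size $\asymp L^{3}$, respectively $\asymp LL'$), the automatic coprimality of the mollifier variables with the prime modulus $q$, and the absorption of the divisor-function logarithm into the factor $q^{\varepsilon}$.
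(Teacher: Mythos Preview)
Your argument is correct and is exactly the ``immediate consequence of Theorem~\ref{Theorem1}'' that the paper invokes without spelling out: you substitute the asymptotic formula into the displayed multilinear expansions, read off the main term $1$ from $\delta_{\ell=1}$ using $\bm{x}(1)=\bm{x}_f(1)=1$, and bound the remainder with $|\bm{x}(\ell)|\leqslant 1$, $|\bm{x}_f(\ell)|\leqslant\tau(\ell)$ together with $\sum_{\ell\leqslant L}\ell^{-1/2}\ll L^{1/2}$ and $\sum_{\ell\leqslant L'}\tau(\ell)\ell^{-1/2}\ll (L')^{1/2}\log L'$. There is nothing to add.
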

In \cite[Theorem 1.2]{zacharias}, we established an asymptotic formula for a mollified fourth moment of Dirichlet $L$-functions : for $L=q^{\lambda}$ with $0<\lambda<\frac{11}{8064}$, we obtained
\begin{equation}\label{FourthMoment}
\Mscr^4(q):= \frac{1}{q-1}\sum_{\substack{\chi \ (\modm q) \\ \chi\neq 1}}\left|L\left(\chi,\frac{1}{2}\right)\Mcal(\chi;L)\right|^4= P(\lambda^{-1})+o_\lambda(1),
\end{equation}
where $P(X)\in\Rbf[X]$ is a degree four polynomial with calculable coefficients. 

Similarly, in a paper of preparation \og \textit{Non-Vanishing of twisted L-function} \fg \ by Blomer, Fouvry, Kowalski, Michel, Mili\'cevi\'c and Sawin, they obtained
\begin{equation}\label{FourthMomentf}
\Mscr^4(f;q):= \frac{1}{q-1}\sum_{\substack{\chi \ (\modm q) \\ \chi\neq 1}}\left|L\left(f\otimes\chi,\frac{1}{2}\right)\Mcal(f\otimes\chi;L')\right|^2 = \frac{1}{1+\frac{1}{\lambda'}}+o_{\lambda',t_f}(1),
\end{equation}
for $L'=q^{\lambda'}$ with $0<\lambda'<\frac{1}{360}.$ Hence, combining Corollary \ref{Corollary1} with \eqref{FourthMoment} and \eqref{FourthMomentf} yields


\begin{proof}[Proof of Theorems \ref{TheoremPositive} and \ref{TheoremPositive2}]
$\mathbf{The \ Dirichlet \ characters \ case \ : \ }$We first present the proof of Theorem \ref{TheoremPositive}. For any $\chi \ (\modm q)$, we define the characteristic function
$$\mathbf{1}(\chi) :=\delta_{|L(\chi,1/2)|\geqslant \frac{1}{\log q}}\delta_{|L(\chi\omega_1,1/2)|\geqslant \frac{1}{\log q}}\delta_{|L(\chi\omega_2,1/2)|\geqslant \frac{1}{\log q}}.$$
By Cauchy-Schwarz inequality, we infer
\begin{alignat*}{1}
&\left|\frac{1}{q-1}\sum_{\substack{\chi \ (\modm q) \\ \chi\neq 1,\omegabar_1,\omegabar_2}}\mathbf{1}(\chi)\prod_{i=0}^2 L\left(\chi\omega_i,\frac{1}{2}\right)\Mcal(\chi\omega_i;L)\right| \leqslant
\left(\frac{1}{q-1}\sum_{\chi\neq 1}\mathbf{1}(\chi)\left|L\left(\chi,\frac{1}{2}\right)\Mcal(\chi;L)\right|^2\right)^{1/2} \\ & \ \times  \left(\frac{1}{q-1}\sum_{\chi\neq \omegabar_1,\omegabar_2}\left|L\left(\chi\omega_1,\frac{1}{2}\right)\Mcal(\chi\omega_1;L)\right|^2\left|L\left(\chi\omega_2,\frac{1}{2}\right)\Mcal(\chi\omega_2;L)\right|^2\right)^{1/2} \\ 
& \leqslant \left(\frac{1}{q-1}\sum_{\chi \ (\modm q)}\mathbf{1}(\chi)\right)^{1/4}\left(\frac{1}{q-1}\sumstar_{\chi \ (\modm q)}\left|L\left(\chi,\frac{1}{2}\right)\mathcal{M}(\chi;L)\right|^4\right)^{3/4},
\end{alignat*}
where the symbol $^*$ means that we sum over non trivial characters modulo $q$. On the other hand, we have the lower bound for the left handside in the first line
$$\left|\frac{1}{q-1}\sum_{\substack{\chi \ (\modm q) \\ \chi\neq 1,\omegabar_1,\omegabar_2}}\mathbf{1}(\chi)\prod_{i=0}^2 L\left(\chi\omega_i,\frac{1}{2}\right)\Mcal(\chi\omega_i;L)\right|\geqslant \left|\Mscr^3(\omega_1,\omega_2;q)\right|-\mathscr{D},$$
where $\Mscr^3(\omega_1,\omega_2;q)$ is defined in \eqref{DefinitionMollifiedMoment2} and
$$\mathscr{D} :=\frac{1}{q-1}\sum_{\substack{\chi\neq 1,\omegabar_1,\omegabar_2 \\ \mathbf{1}(\chi)=0}}\left|\prod_{i=0}^2 L\left(\chi\omega_i,\frac{1}{2}\right)\Mcal(\chi\omega_i;L)\right|.$$
To estimate $\mathscr{D}$, note that the condition $\mathbf{1}(\chi)=0$ means that one of the central values is less than $\log(q)^{-1}$. Therefore, if for $i=0,1,2$, $\mathscr{D}_i$ is the subsum of $\mathscr{D}$ restricted to $\chi$ such that $|L(\chi\omega_i,\frac{1}{2})|\leqslant \log(q)^{-1}$, we obtain, by positivity, $\mathscr{D}\leqslant \mathscr{D}_0+\mathscr{D}_1+\mathscr{D}_2$ with for each $i=0,1,2$,
\begin{alignat*}{1}
\mathscr{D}_i & \ \leqslant \frac{1}{\log(q)}\left(\frac{1}{q-1}\sum_{\chi \ (\modm q)}\left|\Mcal(\chi;L)\right|^2\right)^{1/2}\left(\frac{1}{q-1}\sumstar_{\chi \ (\modm q)}\left|L\left(\chi,\frac{1}{2}\right)\mathcal{M}(\chi;L)\right|^4\right)^{1/2} \\ & \ \ll_\lambda  \ \frac{1}{\log (q)}\left(\frac{1}{q-1}\sum_{\chi \ (\modm q)}\left|\Mcal(\chi;L)\right|^2\right)^{1/2},
\end{alignat*}
using again twice Cauchy-Schwarz inequality and \eqref{FourthMoment} (recall that $L=q^\lambda$). Moreover, opening the square in $|\Mcal(\chi;L)|^2$ and applying the orthogonality relation yields
$$\frac{1}{q-1}\sum_{\chi \ (\modm q)}|\Mcal(\chi;L)|^2 \leqslant \sum_{\substack{\ell\equiv \ell' \ (\modm q) \\ \ell,\ell'\leqslant L}}\frac{|\bm{x}(\ell)\bm{x}(\ell')|}{(\ell\ell')^{1/2}}\leqslant \sum_{\ell\leqslant L}\frac{1}{\ell}\ll\log L,$$
since $L<q$.
Hence, assuming $L=q^\lambda$ with $0<\lambda<\frac{11}{8064}$, we get
$$\frac{1}{q-1}\sum_{\chi \ (\modm q)}\mathbf{1}(\chi)\geqslant \frac{\left|\Mscr^3(\omega_1,\omega_2;q)\right|^4}{\Mscr^4(q)^{3}}+O_\lambda\left(\frac{1}{\log(q)^{1/2}}\right)= \frac{1}{P(\lambda^{-1})^3}+o_\lambda(1).$$
If $$c_1:=\max_{0<\lambda\leqslant \frac{11}{8064}}P(\lambda^{-1})^{-3},$$ then for any $\varepsilon>0$, there exists $0<\tilde{\lambda}<\frac{11}{8064}$ depending on $\varepsilon$ satisfying $|P(\tilde{\lambda}^{-1})^{-3}-c_1|\leqslant\varepsilon/2$. Finally, choosing $Q=Q(\varepsilon)$ large enough such that $|o_{\tilde{\lambda}}(1)|\leqslant\varepsilon/2$ for $q\geqslant Q$ and the result follows.

\vspace{0.2cm}

\noindent $\mathbf{The \ cuspidal \ case \ : \ }$ We proceed in a similar way. Setting
$$\mathbf{1}(\chi,f):= \delta_{|L(\chi,1/2)|\geqslant\frac{1}{\log q}}\delta_{|L(f\otimes\chi,1/2)|\geqslant \frac{1}{\log^2 q}},$$
we obtain
\begin{alignat*}{1}
\left|\frac{1}{q-1}\sum_{\chi\neq 1}\mathbf{1}(\chi,f)L\left(f\otimes\chi,\frac{1}{2}\right)\Mcal(f\otimes\chi;L')L\left(\chi,\frac{1}{2}\right)\Mcal(\chi;L)\right| & \ \leqslant \left(\frac{1}{q-1}\sum_{\chi \ (\modm q)}\mathbf{1}(\chi,f)\right)^{1/4} \\  & \times \left(\Mscr^4(q)\right)^{1/4}\left(\Mscr^4(f;q)\right)^{1/2}
\end{alignat*}
where $\Mscr^4(q)$ (resp. $\Mscr^2(f;q)$) are defined by \eqref{FourthMoment} (resp. by \eqref{FourthMomentf}). As in the previous part, the left handside admits the lower bound
$$\geqslant \left|\Mscr^3(f;q)\right|-\mathscr{C},$$
where $\mathscr{C}$ is the same as $\Mscr^3(f;q)$, but with the absolute values inside and with the restriction in the summation to $\chi$ such that $\mathbf{1}(\chi,f)=0$. Writing $\mathscr{C}_1$ (resp. $\mathscr{C}_2$) for the contribution of $|L(\chi,1/2)|\leqslant\frac{1}{\log q}$ (resp. $|L(f\otimes\chi,1/2)|\leqslant \frac{1}{\log^2 q}$), we get $\mathscr{C}\leqslant \mathscr{C}_1+\mathscr{C}_2$ with 
$$\mathscr{C}_1 = O_\lambda\left(\frac{1}{\log (q)^{1/2}}\right).$$
Finally, we have
\begin{alignat*}{1}
\mathscr{C}_2 & \leqslant \frac{1}{\log^2(q)}\left(\frac{1}{q-1}\sum_{\chi \ (\modm q)}|\Mcal(f\otimes\chi;L')|^2\right)^{1/2}\left(\frac{1}{q-1}\sumstar_{\chi \ (\modm q)}\left|L\left(\chi,\frac{1}{2}\right)\Mcal(\chi;L)\right|^2\right)^{1/2} \\ 
  & \ll_\lambda \frac{1}{\log^2(q)}\left(\frac{1}{q-1}\sum_{\chi \ (\modm q)}|\Mcal(f\otimes\chi;L')|^2\right)^{1/2},
\end{alignat*}
with
$$\frac{1}{q-1}\sum_{\chi \ (\modm q)}|\Mcal(f\otimes\chi;L')|^2\leqslant \sum_{\substack{\ell\equiv \ell' \ (\modm q) \\ \ell,\ell'\leqslant L'}}\frac{\tau(\ell)\tau(\ell')}{(\ell\ell')^{1/2}} = \sum_{\ell\leqslant L'}\frac{\tau(\ell)^2}{\ell}\ll \log^3 L'.$$
Hence, 
$$\mathscr{C}_2=O_{\lambda'}\left(\frac{1}{\log(q)^{1/2}}\right),$$
and the rest of the proof is exactly the same as in the previous case.
\end{proof}


\begin{remq} Let $f$ be a primitive Hecke cusp form of prime level $q$ satisfying the Ramanujan-Petersson conjecture. The formula \eqref{AsymptoticForumalaTheorem1_2} could be used to prove simultaneous non-vanishing for $L(f\otimes\chi,\frac{1}{2})L(\chi,\frac{1}{2})$ as $\chi$ runs over non trivial Dirichlet characters modulo $q$ provided that it is possible to evaluate a second twisted moment of the form
$$\frac{1}{q-1}\sum_{\substack{\chi \ (\modm q) \\ \chi\neq 1}}\left|L\left(f\otimes\chi,\frac{1}{2}\right)\right|^2\chi(\ell_1)\chibar(\ell_2),$$
where $(\ell_1,\ell_2)=1$ and are coprime with $q$. An asymptotic formula for this moment is given in \cite{moments} in the special case where the level is $1$ and $\ell_1=\ell_2=1$ and for general $(\ell_1,\ell_2)=1$ in the paper of preparation mentioned above (also for level $1$). The principal difficulty here is that since the level is $q$, we have the solve a shifted convolution problem of the shape
$$\mathop{\sum\sum}_{\ell_1n-\ell_2m=hq}\lambda_f(n)\lambda_f(m)W_1\left(\frac{n}{N}\right)W_2\left(\frac{m}{M}\right),$$
for Hecke eigenvalues $\lambda_f(n)$ of level $q$.

\end{remq}

\subsection{Sketch of the proof of Theorem \ref{Theorem1}}\label{SectionSketch} After an application of the approximate functional equation to \eqref{DefinitionMomentCusp} and \eqref{DefinitionMomentDirichlet}, which expresses the central value of automorphic $L$-functions as a convergent series, and an average over the characters, we isolate a main term which appears only if $\ell=1$ (c.f. § \ref{SectionApplicationApproximateFunctionalEquation}-\ref{SectionMainTerm}).

\vspace{0.1cm}

The treatment of the error term passes by the analysis of sums of the shape
\begin{equation}\label{Shape1}
\Scal(\omega_1,\omega_2;q) = \frac{1}{(qN_0N_1N_2)^{1/2}}\mathop{\sum\sum\sum}_{\substack{n_0\sim N_0 , n_1\sim N_1 \\ n_2\sim N_2}}\omegabar_1(n_1)\omegabar_2(n_2)\Kl_3(n_0n_1n_2,\omega_1,\omega_2,1;q),
\end{equation}
\begin{equation}\label{Shape2}
\Ccal(f;q)=\frac{1}{(qMN)^{1/2}}\mathop{\sum\sum}_{n\sim N, m\sim M}\lambda_f(n)\Kl_3(nm;q),
\end{equation}
where $\Kl_3$ is the classical rank $3$ Kloosterman sum, $\Kl_3(\omega_1,\omega_2,1;q)$ is the twisted version as defined in \eqref{DefinitionKloosterman}, $\{\lambda_f(n)\}_{n\geqslant 1}$ are the Hecke eigenvalues of $f$ which satisfy $|\lambda_f(n)|\leqslant \tau(n)$ and $N_0,N_1,N_2,N,M$ are parameters such that 
$$1\leqslant N_i,N,M, \ \ N_0N_1N_2\leqslant q^{3/2+\varepsilon} \ \mathrm{and} \ MN\leqslant q^{3/2+\varepsilon}.$$
The ultimate goal is to obtain a bound of the form
$$\Scal(\omega_1,\omega_2;q),\Ccal(f;q) = O\left(q^{-\delta}\right),$$
for some absolute constant $\delta>0$. Using Poisson summation in the three variables in \eqref{Shape1}, or Voronoi formula in the $n$-variable in \eqref{Shape2} (followed by Poisson on $m$) allows us to get rid of the cases where the product of the variables is larger than $q$; namely in § \ref{Section3Poisson} and \ref{SectionVoronoi}, we prove
$$\Scal(\omega_1,\omega_2;q)\ll q^\varepsilon\left(\frac{q}{N_0N_1N_2}\right)^{1/2} \ \ \mathrm{and} \ \ \Ccal(f;q) \ll q^\varepsilon\left(\frac{q}{NM}\right)^{1/2}.$$
Combining these two estimates with the trivial bounds
$$\Scal(\omega_1,\omega_2;q)\ll \left(\frac{N_0N_1N_2}{q}\right)^{1/2} \ \ \mathrm{and} \ \ \Ccal(f;q) \ll q^\varepsilon\left(\frac{NM}{q}\right)^{1/2},$$
we can assume for the rest of this outline that 
$$N_0N_1N_2=NM=q.$$
We treat these sums differently according to the relative size of the various parameters. If $N_1\sim 1$ (say) and $M\sim 1$, we exploit the $n_0,n_2$-sum (resp. the $n$-sum) in \eqref{Shape1} (resp. in \eqref{Shape2}) and average trivially over the others. Grouping $n_0n_2$ into a long variable $n$ and we need to analyze roughly 
$$\sum_{n\sim q}\lambda_{\omegabar_2}(n,0)\Kl_3(nn_1,\omega_1,\omega_2;q) \ \ \mathrm{and} \ \ \sum_{n\sim q}\lambda_f(n)\Kl_3(nm;q),$$
where for any $t\in\Rbf$,
$$\lambda_{\omegabar_2}(n,it)=\sum_{n_0n_2=n}\omegabar_2(n_2)\left(\frac{n_2}{n_0}\right)^{it}.$$
In \cite{prime} and \cite{twists}, Fouvry, Kowalski and Michel studied these sums when $f$ is a fixed cusp form of level $1$ and $\lambda_{\omegabar_2}(n,it)$ is replaced by the generalized divisor function $d_{it}(n)=\sum_{ab=n}a^{it}b^{-it}.$ They treated the problem in a more general case than $\Kl_3$, namely when the $q$-periodic function arises as Frobenius trace function of a certain constructible $\Qlbar$-sheaves on the affine line $\Abfq$ for $\ell$ a prime number different from $q$.

\vspace{0.1cm}

Our second main result is a generalization of their results (see \cite[Theorem 1.2]{twists} and \cite[Theorem 1.15]{prime}) for modular forms with nebentypus $\omega$ (possibly trivial) whose level is the same as the \og level \fg{} of the trace function and also for the twisted divisor function $\lambda_{\omega}(n,it)$. The only restriction we made in comparison with their Theorems is that we avoid the case of exceptional trace functions (see Definition \ref{DefinitionExceptional}). More precisely, for $V$ a smooth and compactly supported function on $\Rbf_+^*$, we define
\begin{equation}\label{Coorelation1}
\Scal_V(f,K;q) :=\sum_{n\geqslant 1}\lambda_f(n)K(n)V\left(\frac{n}{q}\right),
\end{equation}
\begin{equation}\label{Correlation2}
\Scal_V(\omega,it,K;q) :=\sum_{n\geqslant 1}\lambda_\omega(n,it)K(n)V\left(\frac{n}{q}\right).
\end{equation}
We prove in Section \ref{ProofTwist} :
\begin{theorem}\label{Theorem2} Let $q>2$ be a prime number, $\omega$ a Dirichlet character of modulus $q$, $f$ a primitive Hecke cusp form of level $q$, nebentypus $\omega$ and spectral parameter $t_f$. Let $K$ be a non Fourier-exceptional isotypic trace function of conductor $\cond(K)$, as defined in \eqref{DefinitionTracesheaf},\eqref{DefinitionConductor},\eqref{DefinitionExceptional}, and $V$ a function satisfying $V(C,P,Q)$ $($see Definition \eqref{DefinitionVCQ}$)$. Then there exists absolute constants $s\geqslant 1$ and $A\geqslant 1$ such that 
\begin{alignat*}{1}
\Scal_V(f,K;q) & \ll_{C,\delta}(1+|t_f|)^A\cond(K)^s q^{1-\delta}(PQ)^{1/2}\left(P+Q\right)^{1/2},\\
\Scal_V(\omega,it,K;q) & \ll_{C,\delta} (1+|t|)^A\cond(K)^sq^{1-\delta}(PQ)^{1/2}\left(P+Q\right)^{1/2},
\end{alignat*}
for any $\delta<1/16$.
\end{theorem}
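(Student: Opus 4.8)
The plan is to follow the strategy of Fouvry--Kowalski--Michel in \cite{twists,prime}, adapting it to the situation where the Hecke cusp form has level $q$ (the same as the conductor of the trace function) and to the twisted divisor function $\lambda_\omega(n,it)$. I would first reduce both sums to a single archetypal bilinear situation. Applying the Voronoi summation formula on $\mathrm{SL}_2$ (for $\Scal_V(f,K;q)$) or Poisson summation / a hyperbola-type decomposition (for $\Scal_V(\omega,it,K;q)$) expresses the Hecke eigenvalues or the twisted divisor function as a sum over a dual variable of length roughly $q^{1+\varepsilon}/$(original length), introducing Kloosterman-type phases $e(\cdot/q)$. After this, both quantities become, up to admissible error terms and dyadic decompositions into ranges $m\sim M$, $n\sim N$ with $MN\approx q$, an \emph{$(\alpha,\beta)$-bilinear form in trace functions}
\begin{equation}\label{BilinearFormPlan}
\mathcal{B}=\sum_{m\sim M}\sum_{n\sim N}\alpha(m)\beta(n)\widetilde{K}(mn),
\end{equation}
where $\widetilde{K}$ is again (essentially) a trace function of bounded conductor attached to a sheaf obtained from $K$ by pullback along multiplication, $\alpha$ is a general bounded (divisor-bounded) sequence and $\beta$ is smooth. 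This is exactly the setup of the ``type I/II'' estimates in \cite{twists,prime}.

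The heart of the matter is then a bound for \eqref{BilinearFormPlan} with a power saving. Here I would run the standard amplification/completion argument: apply Cauchy--Schwarz in the $m$ variable to eliminate $\alpha$, open the square, apply Poisson summation (completion) in the $n$ variable modulo $q$, and arrive at a complete sum over $\Fbf_q$ of the form $\sum_{x\in\Fbf_q}\widetilde{K}(mx)\overline{\widetilde{K}(m'x)}\,e(hx/q)$, i.e. a \emph{correlation sum} $\mathsf{C}(\widetilde{K};\gamma)$ in the terminology of \cite{prime} for the additive-shift-and-scaling transformations $\gamma=\begin{pmatrix} 0 & h \\ m & -m' \end{pmatrix}$-type matrices. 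Deligne's Riemann Hypothesis over finite fields, in the quantitative sheaf-theoretic form of \cite{prime} (the key ``not $\mathrm{PGL}_2$-exceptional'' dichotomy), gives square-root cancellation $\ll \cond(\widetilde{K})^{O(1)}\sqrt{q}$ for these complete sums \emph{unless} the matrix $\gamma$ lies in the (finite, conductor-bounded) exceptional set $\mathcal{G}_{\widetilde{K}}\subset\PGL_2(\Fbf_q)$; the diagonal contribution $m'=m$, $h=0$ is the expected main term. The hypothesis that $K$ is non-Fourier-exceptional (Definition \ref{DefinitionExceptional}) is precisely what guarantees that the sheaf is geometrically irreducible of the right shape so that this dichotomy applies and the exceptional locus is negligible; this is where I expect the main technical obstacle to lie, since one must check that the sheaf-theoretic input survives the pullback by multiplication and the presence of the extra character twist $\omega$, and track the conductor through all these operations (this is the content of the ``mild extension of \cite[Theorem 1.3]{sawin}'' alluded to in the remark, although for trivial or tame $\omega$ and non-exceptional $K$ it should follow from \cite{prime} directly).

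Assembling the pieces: the Cauchy--Schwarz step costs a factor $M^{1/2}$ (or $N^{1/2}$, whichever variable one smooths over), the completion step turns the $n$-sum of length $N$ into $N/q$ complete sums plus the full one, and the square-root cancellation on the generic complete sums yields, after re-summing over $m,m',h$, a total bound of shape $q^\varepsilon \cond(K)^{O(1)}\bigl(\text{trivial bound}\bigr)\cdot q^{-\delta}$ for any $\delta<1/16$ — the exponent $1/16$ being dictated by the interplay between the length $MN\approx q$, the $M^{1/2}$ loss from Cauchy--Schwarz, and the $q^{1/2}$ gain from Deligne, optimized over the dyadic ranges. Finally one restores the polynomial dependence on the archimedean parameter: the Voronoi kernel and the weight functions $V$ satisfying $V(C,P,Q)$ contribute only factors $(1+|t_f|)^{A}$ (resp. $(1+|t|)^A$) and powers of $P,Q$, giving the stated $(PQ)^{1/2}(P+Q)^{1/2}$ shape; these are routine integration-by-parts and stationary-phase estimates on the archimedean transforms, with no cancellation needed. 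The only genuinely delicate point, to reiterate, is the sheaf-theoretic classification of the exceptional matrices and the uniform control of $\cond(\widetilde{K})$, which I would isolate into a self-contained algebraic-geometry lemma invoking \cite{prime} and \cite{sawin}.
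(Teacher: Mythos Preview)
Your proposal has a genuine gap in the cuspidal case. The sum $\Scal_V(f,K;q)=\sum_n\lambda_f(n)K(n)V(n/q)$ has \emph{no} intrinsic bilinear structure: $\lambda_f(n)$ is not a convolution, and Voronoi summation does not create one. If you expand $K$ in additive characters and apply Voronoi in $n$, you obtain $\sum_m\lambda_f(m)K'(m)W(m)$ for another trace function $K'$ of comparable conductor --- the same problem back again. Your step ``both quantities become an $(\alpha,\beta)$-bilinear form $\sum_{m,n}\alpha(m)\beta(n)\widetilde K(mn)$'' is simply not justified for the cuspidal sum. Even in the Eisenstein case, where opening $\lambda_\omega(n,it)=\sum_{ab=n}\omega(a)(a/b)^{it}$ \emph{does} give a genuine bilinear form, the Cauchy--Schwarz\,+\,completion argument you describe yields no saving at the balanced range $M\sim N\sim q^{1/2}$ for a general trace function (this is exactly the well-known limitation of Theorem~\ref{TheoremPrime}); and the deep input from \cite{sawin} you invoke is specific to hyper-Kloosterman sums and is used in this paper only for Theorem~\ref{Theorem1}, not here.

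The paper's actual route is the amplification method of \cite{twists}, adapted to level $q$ and nebentypus $\omega$. One embeds $f$ (resp.\ the Eisenstein series $E_1(\cdot,\tfrac12+it)$) into an orthonormal basis of forms of level $2q$ and nebentypus $\omega$, and estimates an \emph{amplified second moment} $\sum_g |B(g)|^2|\tilde\Scal_V(g,K;q)|^2$ over the full spectrum, with amplifier $B(g)=\sum_{\ell}b_\ell\lambda_g(\ell)$ built from Iwaniec's coefficients $b_\ell$ supported on primes and prime squares up to $L$. Petersson/Kuznetsov turns this into sums of twisted Kloosterman sums $S_\omega(m,n;2cq)$; Poisson in the two free variables then produces the \emph{twisted correlation sums} $\Ccal(K,\omega;\gamma)$ of Definition~\ref{DefinitionCorrelation} for structured matrices $\gamma=\gamma(c,d,e,n_1,n_2)$. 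The non-Fourier-exceptional hypothesis ensures (Theorem~\ref{Theorem3}) that $K$ is $(q,M)$-good, i.e.\ the correlating set $\Gbf_{K,\omega,M}$ contains no parabolics and is contained in boundedly many tori and Borel cosets; a counting argument then shows that the amplifier matrices rarely fall into $\Gbf_{K,\omega,M}$, and Deligne's square-root bound handles the rest. Optimising the amplifier length $L\asymp q^{1/8}$ against the $q^{1/2}$-savings gives the exponent $1/16$. The Eisenstein bound is then extracted from the continuous part of the same amplified moment via a short-interval average and Mellin inversion (Proposition~\ref{PropositionShortInterval} and Lemma~\ref{LemmaMellin}). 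In short, the missing idea in your sketch is the spectral embedding and amplification, which is precisely what manufactures the bilinear structure you need.
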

Therefore, Theorem \ref{Theorem2} provides the desired power saving (set $P=Q=1$) in the special case where one of the variable is very small in \eqref{Shape1} and $M\sim 1$ in \eqref{Shape2}.

\vspace{0.1cm}

Assume now that $N_0,N_1,N_2\geqslant q^\eta$ and $M\geqslant q^{\eta}$ for some small real number $\eta>0$. From now, we need to take care of the different nature of expressions \eqref{Shape1} and \eqref{Shape2}. Indeed, for \eqref{Shape1}, the fact of having three free variables allows us to factorize two of them (say $n_0n_2$) in such a way that $N_0N_2\geqslant q^{1/2+\eta}$. In this case, we can form a bilinear sum and use a general version of Poly\'a-Vinogradov (see Theorem \ref{TheoremPrime}) to obtain a power saving in the error term. The same method also works for \eqref{Shape2}, as long as $M\leqslant q^{1/2-\eta}$, because in this case $N\geqslant q^{1/2+\eta}$, or $M\geqslant q^{1/2+\eta}$. Hence, the critical range for the second sum, i.e. when Poly\'a-Vinogradov is useless, appears when $M\sim q^{1/2}$ and $N\sim q^{1/2}$ and here we apply the general result of Kowalski, Michel and Sawin concerning bilinear forms involving classical Kloosterman sums \cite[Theorem 1.3]{sawin}.

\subsection{Acknowledgment} This paper is part of my Phd thesis and I would like to thank my supervisor Philippe Michel for his valuable advice. I am also grateful to Satadal Ganguly for its careful reading and its reference to the paper \cite{das} of S. Das and R. Khan.

\section{Background on automorphic forms}
In this section, we briefly compile the main results from the theory of automorphic forms which we shall need in Section \ref{ProofTwist}. Among these are Hecke eigenbases, multiplicative properties of Hecke eigenvalues, the Kuznetsov trace formula and the spectral large sieve inequality.

\subsection{Hecke eigenbases}
Let $N \geqslant 1$ be an integer, $\omega$ a Dirichlet character of modulus $N$, $\kappa=\frac{1-\omega(-1)}{2}\in\{0,1\}$ and $k\geqslant 2$ satisfying $k\equiv \kappa$ (mod $2$). We denote by $\mathcal{S}_k(N,\omega)$, $\mathcal{L}^2(N,\omega)$ and $\mathcal{L}_0^2(N,\omega)\subset\mathcal{L}^2(N,\omega)$, respectively, the Hilbert spaces (with respect to the Petersson inner product) of holomorphic cusp forms of weight $k$, of Maass forms of weight $\kappa$, of Maass cusp forms of weight $\kappa$, with respect to the Hecke congruence group $\Gamma_0(N)$ and with nebentypus $\omega$. These spaces are endowed with an action of the commutative algebra $\mathbf{T}$ generated by the Hecke operators $\{ T_n \ | \ n\geqslant 1\}$. Among these, the $T_n$ with $(n,N)=1$ generate a subalgebra $\mathbf{T}^{(N)}$ of $\mathbf{T}$ made of normal operators. As an immediate consequence, the spaces $\mathcal{S}_k(N,\omega)$ and $\mathcal{L}_0^2(N,\omega)$ have an orthonormal basis made of eigenforms of $\mathbf{T}^{(N)}$. We denote this basis respectively by $\mathcal{B}_k(N,\omega)$ and $\mathcal{B}(N,\omega)$. 

\vspace{0.1cm}

The orthogonal complement of $\Lcal_0^2(N,\omega)$ in $\mathcal{L}^2(N,\omega)$ is the Eisenstein spectrum (plus a constant if the character is trivial) and it is denoted by $\mathcal{E}(N,\omega)$. The space $\mathcal{E}(N,\omega)$ is continuously spanned by the Eisenstein series $E_{\amf}(\cdot, 1/2+it)$ where $\amf$ runs over singular cusps (with respect to $\omega$) of $\Gamma_0(N)$.

\subsubsection{The Eisenstein series in the special case $N=2q$}\label{SpecialCase} Let $q>2$ be a prime number. For some technical reasons, it is convenient for the proof of Theorem \ref{Theorem2} to see the form $f$ of level $q$ as a form of level $2q$ (see the beginning of Section 4.1 and Section 5.5 in \cite{twists}). For arbitrary level $N$, the Eisenstein series $E_{\amf}(\cdot, 1/2+it)$ are usually not eigenfunctions of the Hecke operators. In the special case where $N=2q$, there are exactly four inequivalent cups for $\Gamma_0(2q)$ which are
$$\amf= 1 \ , \ \frac{1}{2} \ , \ \frac{1}{q} \ , \ \frac{1}{2q},$$
see for example \cite[Proposition 2.6]{classical} and all are singular. The main advantage in this situation is that these Eisentein series are eigenforms of the Hecke operators $T_n$ for $(n,2q)=1$. More precisely, if $\amf=1/v$ with $v\in\{1,2,q,2q\}$, then we have for $(n,2q)=1$, 
$$T_n E_{\amf}(\cdot,1/2+it)=\lambda_{\amf}(n,it)E_{\amf}(\cdot,1/2+it),$$
with explicitly
\begin{equation}\label{EigenvaluesEisensteinSeries}
\lambda_{\amf}(n,it)=\left\{ \begin{array}{ccc} \sum_{ab=n}\omega(a)\left(\frac{a}{b}\right)^{it} & \ifm & v=q,2q \\  & &  \\ \sum_{ab=n}\omega(b)\left(\frac{a}{b}\right)^{it} & \ifm & v=1,2,
\end{array} \right.
\end{equation}
see \cite[(6.16)-(6.17)]{artin}.
\begin{remq}\label{RemarkEisenstein} In the case $N=q$, there are exactly two inequivalent cusps $\amf=1,1/q$ and the two Eisenstein series are eigenfunctions of the Hecke operators $T_n$ for $(n,q)=1$ with eigenvalues given by \eqref{EigenvaluesEisensteinSeries}. Moreover, they are also Eisenstein series of level $2q$ after the normalization by $1/\sqrt{3}$.
\end{remq}

\subsection{Multiplicative and boundedness properties of Hecke eigenvalues} Let $f$ be any Hecke eigenform of level $N$ and nebentypus $\omega$ and let $\lambda_f(n)$ be the corresponding Hecke eigenvalues for $T_n$. Then for $(nm,N)=1$, we have
\begin{equation}\label{HeckeMult1}
\lambda_f(m)\lambda_f(n)=\sum_{d|(n,m)}\omega(d)\lambda_f\left(\frac{nm}{d^2}\right),
\end{equation}
\begin{equation}\label{HeckeMult2}
\overline{\lambda_f(n)}=\omegabar(n)\lambda_f(n).
\end{equation}
Note that if $f$ is holomorphic, then by the work of Deligne and Serre, we have the Ramanujan-Petersson conjecture, namely 
\begin{equation}\label{Ramanujan-Petersson}
|\lambda_f(n)|\leqslant \tau(n).
\end{equation}
The same bound is of course trivial in the special case $N=2q$ or $q$ and the Eisenstein series are eigenfunctions with eigenvalues given by \eqref{EigenvaluesEisensteinSeries}. In the case of a Maass cusp form $f$, the best result is due to Kim and Sarnak \cite{kim} and it is given by 
\begin{equation}\label{BoundKimSarnack}
|\lambda_f(n)|\leqslant \tau(n)n^{\theta} \ , \ \ \theta=\frac{7}{64},
\end{equation}
with an analogous bound for the spectral parameter
\begin{equation}\label{BoundSpecteralParameter}
|\Im m (t_f)|\leqslant \theta,
\end{equation}
where $1/4+t_f^2$ is the Laplace eigenvalue of $f$. However, the conjecture is true on average, in the sense that for every $X\geqslant 1$,
\begin{equation}\label{Ramanujan-Petersson-Average}
\sum_{n\leqslant X}|\lambda_f(n)|^2\ll (N(1+|t_f|))^\varepsilon X,
\end{equation}
with an implied constant depending only on $\varepsilon$ \cite[Proposition 19.6]{artin}. We will also need later similar bound for the fourth-power on average and it is enough for our purpose to restrict to prime numbers $p$ not dividing the level $N$
\begin{equation}\label{FourthPower}
\sum_{\substack{ p\leqslant X \\ (p,N)=1}}|\lambda_f(p)|^4 \ll (XN(1+|t_f|))^\varepsilon X,
\end{equation}
for any $\varepsilon>0$ and the constant only depends on $\varepsilon$. This bound is a consequence of the automorphy of the symmetric square $\mathrm{Sym}^2f$ and Rankin-Selberg theory (see for example \cite{park}).


\subsection{Hecke eigenvalues and Fourier coefficients} Let $f$ be a modular form. For $z=x+iy \in \Hbf$, we write the Fourier expansion as
$$f(z)=\sum_{n=1}^\infty \rho_f(n)n^{\frac{k-1}{2}}e(nz) \ \ \mathrm{for} \ f\in \Bcal_k(N,\omega),$$
$$f(z)=\sum_{n\neq 0}\rho_f(n)|n|^{-1/2}W_{\frac{|n|}{n}\frac{\kappa}{2},it_f}(4\pi |n|y)e(nx), \ \ \mathrm{for} \ f\in\Bcal(N,\omega),$$
where $W_{\alpha,\beta}$ is a Whittaker function, as defined in \cite[§ 4]{artin}. For an Eisenstein series $E_{\amf}(z,1/2+it)$, we write
$$E_{\amf}(z,1/2+it)= c_{1,\amf}(t)y^{1/2+it}+c_{2,\amf}(t)y^{1/2-it}+\sum_{n\neq 0}\rho_{\amf}(n,it)|n|^{-1/2}W_{\frac{|n|}{n}\frac{\kappa}{2},it}(4\pi|n|y)e(nx).$$
When $f$ is a Hecke eigenform, there is a closed relation between the Fourier coefficients and the Hecke eigenvalues $\lambda_f(n)$ ; for $(m,N)=1$ and $n\geqslant 1$, one has
\begin{equation}\label{RelationHecke-Fourier}
\lambda_f(n)\rho_f(n)=\sum_{d|(m,n)}\omega(d)\rho_f\left(\frac{mn}{d^2}\right).
\end{equation}
In particular, for all $(m,q)=1$,
\begin{equation}\label{RelationHecke-Fourier2}
\lambda_f(m)\rho_f(1)=\rho_f(m).
\end{equation}
If $f$ is primitive, the relations \eqref{RelationHecke-Fourier} and \eqref{RelationHecke-Fourier2} are valid for every $m\geqslant 1$. We will also need lower bounds for the first coefficient $\rho_f(1)$ ; we have for any $\varepsilon>0$
\begin{equation}\label{LowerBound}
|\rho_f(1)|^2 \gg_\varepsilon \left\{ \begin{array}{ccc} \frac{\cosh(\pi t_f)}{N(1+|t_f|)^{\kappa}(N+|t_f|)^\varepsilon} & \ifm & f\in\Bcal(N,\omega) \\ & & \\ \frac{(4\pi)^{k-1}}{(k-1)!N^{1+\varepsilon}k^\varepsilon} & \ifm & f\in \Bcal_k(N,\omega),
\end{array} \right.
\end{equation}
see \cite[(6.22),(7.16)]{artin} and \cite[Lemma 2.2 and (2.23)]{michel2004}. For an Eisenstein series $E_{\amf}(\cdot,1/2+it)$, we have
\begin{equation}\label{LowerBoundEisenstein}
|\rho_{\amf}(1,it)|^2\gg \frac{\cosh(\pi t)}{N(1+|t|)^\kappa (\log(N+|t|))^2},
\end{equation}
see \cite[(6.23),(7.15)]{artin}.


\subsection{Summation formula, trace formula and the spectral large sieve inequality}

\subsubsection{Voronoi summation formula}We state a version of Voronoi summation formula for the cuspidal case. For our purpose, it is enough to restrict to modular form of prime level $q$. 

\begin{proposition}\label{PropositionVoronoi} Let $q>2$ be a prime number, $\omega$ a Dirichlet character of modulus $q$ and $f$ a primitive Hecke cusp form of level $q$ and nebentypus $\omega$ with associated Hecke eigenvalues $(\lambda_f(n))_{n\geqslant 1}$. Given an integer $a$ coprime with $q$ and $g : \Rbf_{+}^*\rightarrow \Cbf$ a smooth and compactly supported function, we set
$$\mathcal{V}(f,a;q):= \sum_{n\geqslant 1}\lambda_f(n)e\left(\frac{a n}{q}\right)g(n).$$
Then 
\begin{equation}\label{VoronoiFormula}
\mathcal{V}(f,a;q)= \frac{\omegabar(a)}{q}\sum_{\pm}\sum_{n\geqslant 1}\lambda_f(n)e\left(\mp \frac{\overline{a}n}{q}\right)g_{\pm}\left(\frac{n}{q^2}\right),
\end{equation}
where
$$g_{\pm}(y)= \int_0^\infty g(x)\mathcal{J}_{\pm}(4\pi\sqrt{xy})dx,$$
with
$$\mathcal{J}_{+}(x) = 2\pi i^k J_{k-1}(x)  \ \ , \ \ \mathcal{J}_-(x)=0$$
if $f$ is holomorphic of weight $k$ and 
$$\mathcal{J}_{+}(x)= -\frac{\pi}{\sin (\pi i t_f)}\left( J_{2it_f}(x)-J_{-2it_f}(x)\right) \ , \ \mathcal{J}_{-}(x)=\varepsilon_f 4 \cos(\pi i t_f)K_{2it_f}(x)$$
if $f$ is a Maass form of parity $\varepsilon_f\in\{-1,+1\}$ and spectral parameter $t_f$.
\end{proposition}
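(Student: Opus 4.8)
The plan is to derive \eqref{VoronoiFormula} from the analytic continuation and functional equation of the additively twisted $L$-function attached to $f$, following the classical route to Voronoi summation via automorphy. Since $f$ is primitive of level $q$, one has $\rho_f(n)=\lambda_f(n)\rho_f(1)$ for all $n\geqslant 1$ with $\rho_f(1)\neq 0$ by \eqref{LowerBound}, so studying $\mathcal{V}(f,a;q)$ is equivalent to studying the additive twist of the Fourier coefficients of $f$. Let $\check g(s)=\int_0^\infty g(x)x^{s-1}dx$ be the Mellin transform of $g$, which is entire and of rapid decay on vertical lines since $g\in C_c^\infty(\Rbf_+^*)$. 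By Mellin inversion, for $\sigma>1$,
$$\mathcal{V}(f,a;q)=\frac{1}{2\pi i}\int_{(\sigma)}\check g(s)\,L\!\left(f,s,\frac aq\right)ds,\qquad L\!\left(f,s,\frac aq\right):=\sum_{n\geqslant 1}\frac{\lambda_f(n)e(an/q)}{n^s}.$$

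The heart of the matter is the analytic continuation and functional equation of $L(f,s,a/q)$. Choose $\gamma=\begin{pmatrix}a&b\\ q&d\end{pmatrix}\in\mathrm{SL}_2(\Zbf)$, so that $ad-bq=1$ and $d\equiv\overline a\ (\modm q)$, and compose its action on $\Hbf$ with the Fricke involution $z\mapsto-1/(qz)$. Using that $f$ is a newform of level exactly $q$ — so that $f$ transformed by the Fricke involution equals a fixed unit times the newform with conjugate nebentypus $\overline\omega$ — together with $\overline{\lambda_f(n)}=\overline\omega(n)\lambda_f(n)$ from \eqref{HeckeMult2}, the automorphy of $f$ on $\Hbf$ translates, after taking Fourier expansions at the two cusps, into a functional equation of the shape
$$\gamma_\infty(s)\,q^{s}\,L\!\left(f,s,\frac aq\right)=\overline\omega(a)\cdot(\text{unit})\cdot\sum_{\pm}\gamma_\infty^{\pm}(1-s)\,q^{1-s}\sum_{n\geqslant 1}\frac{\lambda_f(n)e(\mp\overline a n/q)}{n^{1-s}},$$
where $\gamma_\infty$ is the appropriate archimedean factor; the completed function is entire because $f$ is cuspidal and $(a,q)=1$ removes any contribution of the constant term. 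The $\pm$ summation is genuinely present only in the Maass case, where it records the two halves $n>0$, $n<0$ of the Fourier expansion (with $\varepsilon_f$ the parity and $\mathcal{J}_-$ the $K$-Bessel term); for holomorphic $f$ of weight $k$ only the $+$ term survives, with $\mathcal{J}_+=2\pi i^k J_{k-1}$.

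Now move the contour from $\Re s=\sigma$ to $\Re s=1-\sigma$, crossing no poles (cuspidality, and $a\not\equiv 0\ (\modm q)$), the horizontal segments vanishing by the rapid decay of $\check g$ and the convexity bound for $L$. Insert the functional equation, expand the dual Dirichlet series — now absolutely convergent since $\Re s<0$ — and interchange summation and integration to obtain
$$\mathcal{V}(f,a;q)=\frac{\overline\omega(a)}{q}\sum_{\pm}\sum_{n\geqslant 1}\lambda_f(n)e\!\left(\mp\frac{\overline a n}{q}\right)\cdot\frac{1}{2\pi i}\int_{(\sigma')}\check g(s)\,\frac{\gamma_\infty^{\pm}(1-s)}{\gamma_\infty(s)}\left(\frac{n}{q^2}\right)^{s-1}ds.$$
The remaining inner integral equals, by the classical Mellin–Barnes formulas for the Bessel functions $J_{k-1}$, $J_{\pm 2it_f}$ and $K_{2it_f}$, precisely $g_\pm(n/q^2)=\int_0^\infty g(x)\mathcal{J}_\pm(4\pi\sqrt{xn/q^2})\,dx$ with $\mathcal{J}_\pm$ as in the statement, which gives \eqref{VoronoiFormula}.

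The main obstacle is the middle step: pinning down the exact constant (the Atkin–Lehner pseudo-eigenvalue and the Gauss-sum factors must assemble into $\overline\omega(a)$ times an explicit unit) together with the exact archimedean factors $\gamma_\infty,\gamma_\infty^{\pm}$, uniformly in the holomorphic and the Maass cases, and then matching the resulting ratios of gamma factors with the Mellin transforms of $\mathcal{J}_+$ and $\mathcal{J}_-$; this is where the sign and parity bookkeeping ($i^k$, $\varepsilon_f$, the $\sin(\pi i t_f)$ and $\cos(\pi i t_f)$ normalizations) requires genuine care. The contour shift and the convergence estimates are routine. Alternatively, one may simply quote one of the standard references on Voronoi summation at prime level and specialize it to the present normalization.
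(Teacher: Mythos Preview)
The paper does not give a self-contained proof of this proposition: it simply cites \cite[Appendix A.3--A.4]{rankin} and \cite[Lemma 2.3.1]{park}. Your proposal sketches precisely the standard derivation that those references carry out --- Mellin inversion, the functional equation of the additively twisted $L$-series $L(f,s,a/q)$ obtained from the automorphy of $f$ under $\Gamma_0(q)$ combined with the Fricke involution, a contour shift (no poles by cuspidality), and the Mellin--Barnes identification of the resulting ratio of archimedean factors with the Bessel transforms $g_\pm$. The outline is correct, and you are right that the only genuinely delicate point is the bookkeeping of constants (the Atkin--Lehner eigenvalue, the $\overline\omega(a)$ factor, and the archimedean normalizations producing $2\pi i^k$, $-\pi/\sin(\pi i t_f)$, $\varepsilon_f\,4\cos(\pi i t_f)$). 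Since the paper itself defers this to the literature, your sketch is at least as informative as the paper's treatment.
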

\begin{proof}The proof is contained in \cite[Appendix A.3-A.4]{rankin} and also \cite[Lemma 2.3.1]{park} for the holomorphic case.
\end{proof}
Finally, we consider the decay properties of the Bessel transforms $g_{\pm}$ (see \cite[Lemma 2.4]{moments}).
\begin{lemme}\label{LemmaVoronoi} Let $g : \Rbf_{+}^*\rightarrow\Cbf$ be a smooth and compactly supported function satisfying 
\begin{equation}\label{GenericCondition}
x^ig^{(i)}(x)\ll_{i,\varepsilon}q^{\varepsilon i}
\end{equation}
for any $\varepsilon>0$ and $j\geqslant 0$. In the non-holomorphic case set $\vartheta = \Re e (it_f)$, otherwise set $\vartheta=0$. Then for any $\varepsilon>0$, for any $i,j\geqslant 0$ and all $y>0$, we have
$$y^j g_{\pm}^{(j)}(y)\ll_{i,j,\varepsilon} \frac{(1+y)^{j/2}}{\left(1+(yq^{-\varepsilon})^{1/2}\right)^i}\left(1+y^{-2\vartheta-\varepsilon}\right).$$
\end{lemme}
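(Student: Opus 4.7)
The plan is to analyze $g_\pm(y)$ separately in the ranges $y \leq q^{2\varepsilon}$ and $y > q^{2\varepsilon}$, handling the derivatives by first reducing to the case $j = 0$. Differentiating $g_\pm(y)$ in $y$ via the Bessel recurrence $\mathcal{J}_\nu'(z) = \tfrac{1}{2}\bigl(\mathcal{J}_{\nu-1}(z) - \mathcal{J}_{\nu+1}(z)\bigr)$ and the chain rule on $4\pi\sqrt{xy}$ produces integrals of the same form, with the Bessel index shifted by an integer and with an extra factor $(x/y)^{1/2}$ in the integrand. Iterating $j$ times and multiplying by $y^j$ yields a finite linear combination of Bessel transforms whose amplitude differs from $g$ by a factor $(xy)^{j/2}$, which is $O\bigl((1+y)^{j/2}\bigr)$ on the bounded support of $g$. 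It therefore suffices to prove
\[ |g_\pm(y)| \ll_{i,\varepsilon} \frac{1+y^{-2\vartheta-\varepsilon}}{\bigl(1+(yq^{-\varepsilon})^{1/2}\bigr)^i}. \]

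For $y \leq q^{2\varepsilon}$ (small $y$), one uses the standard near-origin bounds $|J_\nu(z)|, |K_\nu(z)| \ll_\nu z^{-|\Re \nu|}$ for $0 < z \leq 1$, together with uniform bounds for $z \geq 1$. In the Maass case, $\nu = 2it_f$ satisfies $|\Re\nu| = 2|\vartheta|$; hence $|\mathcal{J}_\pm(4\pi\sqrt{xy})| \ll (xy)^{-|\vartheta|} + 1$, and integrating against $g$ on its compact support yields $|g_\pm(y)| \ll 1 + y^{-2\vartheta - \varepsilon}$. In this range the factor $\bigl(1+(yq^{-\varepsilon})^{1/2}\bigr)^i$ is bounded, so no extra gain is needed. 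The holomorphic case is analogous and easier, with $\vartheta = 0$.

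For $y > q^{2\varepsilon}$ (large $y$), one uses the asymptotic expansion
\[ \mathcal{J}_\pm(z) = z^{-1/2}\bigl(a_+(z)e^{iz} + a_-(z)e^{-iz}\bigr) + O(z^{-3/2}), \]
valid for large $z$, where $a_\pm$ are smooth symbols with $z^n a_\pm^{(n)}(z) \ll_n 1$. Substituting $z = 4\pi\sqrt{xy}$, the resulting integrand is oscillatory in $x$ with phase $\pm 4\pi\sqrt{xy}$ whose $x$-derivative is $\asymp \sqrt{y}$ on $\mathrm{supp}(g)$, while the amplitude derivatives are controlled by the generic hypothesis $x^i g^{(i)}(x) \ll q^{i\varepsilon}$. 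Integrating by parts $i$ times in $x$ produces the decay $\bigl(\sqrt{y}/q^\varepsilon\bigr)^{-i} \asymp \bigl(1+(yq^{-\varepsilon})^{1/2}\bigr)^{-i}$, and in this range the factor $(1+y^{-2\vartheta-\varepsilon})$ is $O(1)$. The main obstacle is the careful bookkeeping of the Bessel asymptotics with possibly complex index $\nu = 2it_f$ and the uniform control of implicit constants in $t_f$; these manipulations are standard and are worked out in \cite[Lemma 2.4]{moments}.
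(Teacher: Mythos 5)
The paper itself contains no proof of this lemma; it is stated with a bare citation to \cite[Lemma 2.4]{moments}, so there is no in-paper argument to compare with. Your approach (reduce to $j=0$ via the Bessel recurrence, split at a dyadic cutoff in $y$, use near-origin Bessel bounds for small $y$ and oscillatory asymptotics with repeated integration by parts for large $y$, observing that the $K$-Bessel piece decays exponentially) is the standard one and is essentially correct. Two points are imprecise and need repair before the argument closes. First, the reduction to $j=0$ does not produce a single extra factor $(xy)^{j/2}$: applying the chain and product rules $j$ times yields a linear combination of $(xy)^{m/2}\mathcal{J}_{\pm}^{(m)}(4\pi\sqrt{xy})$ with $1\leq m\leq j$, and one must verify for each $m$ that near the origin the gain $(xy)^{m/2}$ exactly cancels the extra singularity $z^{-m}$ created by the $m$-fold index shift, so that the factor $(1+y^{-2\vartheta-\varepsilon})$ is preserved (for large $z$ one only needs $(xy)^{m/2}\ll(1+y)^{j/2}$ on the bounded support, which is clear). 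Second, the $\varepsilon$-bookkeeping is off: with cutoff $y\leq q^{2\varepsilon}$ the denominator $(1+(yq^{-\varepsilon})^{1/2})^{i}$ can reach $q^{i\varepsilon/2}$ and is therefore not $O_i(1)$ as you assert, while in the complementary range the integration-by-parts gain $(q^{\varepsilon}/\sqrt{y})^{i}$ is a factor $q^{i\varepsilon/2}$ larger than the target $(1+(yq^{-\varepsilon})^{1/2})^{-i}$, so the two regimes do not glue. Both problems disappear if you instead split at $y\asymp q^{\varepsilon}$, where the denominator really is $O_i(1)$, and invoke the generic hypothesis \eqref{GenericCondition} with the free parameter $\varepsilon/2$ inside the stationary-phase step, producing $(q^{\varepsilon/2}/\sqrt{y})^{i}\asymp(1+(yq^{-\varepsilon})^{1/2})^{-i}$ for $y>q^{\varepsilon}$. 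These are routine $\varepsilon$-reparametrizations rather than conceptual gaps, but as written the estimates do not match.
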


\subsubsection{The Petersson formula} For $k\geqslant 2$ an integer such that $k\equiv \kappa \ (\modm 2)$, the Petersson trace formula expresses an average of product of Fourier coefficients over $\Bcal_k(N,\omega)$ in terms of sums of Kloosterman sums (see \cite[Theorem 9.6]{spectral} and \cite[Proposition 14.5]{analytic}) : for any integers $n,m>0$, we have
\begin{equation}\label{Petersson}
\frac{(k-2)!}{(4\pi)^{k-1}}\sum_{g\in \Bcal_k(N,\omega)}\rho_g(n)\overline{\rho_g(m)}=\delta(m,n)+\Delta_{N,k}(m,n),
\end{equation}
where
$$\Delta_{N,k}(m,n):= 2\pi i^{-k}\sum_{N|c}\frac{1}{c}S_{\omega}(m,n;c)J_{k-1}\left(\frac{4\pi \sqrt{mn}}{c}\right),$$
and the Kloosterman sum $S_\omega(m,n;c)$ is defined by
$$S_\omega(m,n;c)=\sum_{\substack{d \ (\modm c) \\ (d,c)=1}}\omegabar(d)e\left(\frac{m\overline{d}+nd}{c}\right).$$

\subsubsection{The Kuznetsov formula}Let $\phi : \Rbf_+\rightarrow \Cbf$ be a smooth function satisfying $\phi(0)=\phi'(0)=0$ and $\phi^{(j)}(x)\ll_\varepsilon (1+x)^{-2-\varepsilon}$ for $0\leqslant j\leqslant 3$ and every $\varepsilon>0$. For $\kappa\in\{0,1\}$, we define the two Bessel transforms
\begin{alignat*}{1}
\dot{\phi}(k)= & \ i^k\int_0^\infty J_{k-1}(x)\phi(x)\frac{dx}{x}, \\
\tilde{\phi}(t)= & \ \frac{it^\kappa}{2\sinh(\pi t)}\int_0^\infty \left( J_{2it}(x)-(-1)^\kappa J_{-2it}(x)\right)\phi(x)\frac{dx}{x}.
\end{alignat*}
Then for every integers $m,n>0$, we have the following spectral decomposition of the Kloosterman sums,
\begin{equation}\label{Kuznetsov}
\begin{split}
&\mathop{\sum\sum}_{\substack{k\equiv \kappa \modm 2 \\ k>\kappa \\ g\in \Bcal_k(N,\omega)}}\dot{\phi}(k)\frac{(k-1)!}{\pi(4\pi)^{k-1}}\overline{\rho_g(m)}\rho_g(n)+\sum_{g\in\Bcal(N,\omega)}\tilde{\phi}(t_g)\frac{4\pi}{\cosh(\pi t_g)}\overline{\rho_g(m)}\rho_g(n) \\ 
& \ + \sum_{\amf \ \mathrm{singular}}\int_0^\infty \tilde{\phi}(t)\frac{1}{\cosh(\pi t)}\overline{\rho_{\amf}(m,it)}\rho_{\amf}(n,it)dt=\Delta_{N,\phi}(m,n),
\end{split}
\end{equation}
where
$$\Delta_{N,\phi}(m,n)=\sum_{N|c}\frac{1}{c}S_\omega(m,n;c)\phi\left(\frac{4\pi\sqrt{mn}}{c}\right),$$
see \cite[Theorem 9.4 and 9.8]{spectral}.

\subsubsection{The spectral large sieve inequality} 
\begin{lemme}\label{LemmaWeilBound} Let $N\geqslant 1$ be an integers and $\omega$ a Dirichlet character of modulus $N$ whose conductor is odd and squarefree. Then for any $m,n\in\Zbf$ and $N|c$, we have the Weil bound
$$|S_\omega(m,n;c)|\leqslant \tau(c)(m,n,c)^{1/2}c^{1/2}.$$
\end{lemme}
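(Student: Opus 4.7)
The plan is to reduce the statement to the classical Weil bound for untwisted Kloosterman sums via a multiplicativity argument for Kloosterman sums with characters. First, I would establish twisted multiplicativity: if $c=c_1c_2$ with $(c_1,c_2)=1$ and $\omega=\omega_1\omega_2$ is the corresponding decomposition with $\omega_i$ of modulus dividing $c_i$, then using the Chinese Remainder Theorem parametrization $d\equiv c_2\bar{c}_2 d_1+c_1\bar{c}_1 d_2\pmod{c_1 c_2}$ (with $\bar{c}_i$ the inverse of $c_i$ modulo $c_{3-i}$) together with $\bar{\omega}(d)=\bar{\omega}_1(d_1)\bar{\omega}_2(d_2)$, one derives
\[
S_\omega(m,n;c_1c_2)=\omega_1(c_2)\omega_2(c_1)\,S_{\omega_1}(m,n\bar{c}_2^{\,2};c_1)\,S_{\omega_2}(m,n\bar{c}_1^{\,2};c_2).
\]
Iterating reduces the problem to prime-power moduli $c=p^a$.

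For a prime power $p^a$ with $p$ not dividing the conductor $q_0$ of $\omega$, the $p$-part of $\omega$ is trivial, so $S_\omega(m,n;p^a)$ is the classical Kloosterman sum and the desired bound is the usual Weil estimate. It remains to treat $p\mid q_0$; since $q_0$ is odd and squarefree, $p$ is odd and the $p$-component $\omega_p$ has conductor exactly $p$. For $a=1$, Weil's/Deligne's bound for exponential sums on the torus gives $|S_{\omega_p}(m,n;p)|\leqslant 2\sqrt{p}$ when $(mn,p)=1$; when exactly one of $m,n$ is divisible by $p$ the sum reduces to a Gauss sum of modulus $\sqrt{p}$; and when both are, the sum vanishes by orthogonality of $\omega_p$. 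For $a\geqslant 2$, I would exploit the fact that $\omega_p$ is trivial on $1+p\Zbf/p^a\Zbf$: writing $d=d_0+p^{a-1}t$ with $(d_0,p)=1$ modulo $p^{a-1}$ and $t$ modulo $p$, one has $\bar{d}\equiv \bar{d}_0-p^{a-1}t\bar{d}_0^{\,2}\pmod{p^a}$ and $\omega_p(d)=\omega_p(d_0)$, so that the inner $t$-sum collapses, enforcing the $p$-adic stationary phase condition $n\equiv m\bar{d}_0^{\,2}\pmod{p}$. Iterating this reduction (equivalently, applying the $p$-adic method of stationary phase) yields $|S_{\omega_p}(m,n;p^a)|\leqslant 2p^{a/2}$ in the generic case, with analogous variants producing the $\gcd$ factor in degenerate regimes.

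The main obstacle lies in the subcase $p\mid q_0$ with $a\geqslant 2$: the $p$-adic stationary phase iteration must be carried out uniformly in $m$ and $n$ so as to recover the $\gcd(m,n,p^a)^{1/2}$ factor across the various divisibility regimes of $m,n$ by $p$. Combining all the prime-power estimates via the twisted multiplicativity of the first step then produces the bound stated in the lemma.
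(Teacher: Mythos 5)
Your proposal follows the same route as the paper: reduce by twisted multiplicativity to prime-power moduli and then invoke Weil-type estimates there. The paper's proof is simply ``twisted multiplicativity $+$ cite Propositions 9.7 and 9.8 of the reference,'' while you sketch how those prime-power estimates are obtained; the ``obstacle'' you flag (tracking the $\gcd$ factor through the degenerate regimes of the $p$-adic stationary-phase iteration when $p$ divides the conductor and $a\geqslant 2$) is exactly what the cited propositions handle, and your outline of that computation is the standard one, so there is no genuine gap relative to what the paper does.
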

\begin{proof}
The proof is a consequence of the twisted multiplicativity of Kloosterman sums and \cite[Propositions 9.7 and 9.8]{weilbound}.
\end{proof}
\begin{proposition}\label{PropositionSieve} Let $N\geqslant 1$ be an integer, $\omega$ a Dirichlet character of modulus $N$ with squarefree and odd conductor and $\kappa\in\{0,1\}$ such that $\omega(-1)=(-1)^\kappa$. Let $T\geqslant 1$, $M\geqslant 1/2$ and $(a_m)_{M<m\leqslant 2M}$ a sequence of complex numbers. Then for any $\varepsilon>0$, each of the following three quantities
$$\sum_{\substack{\kappa<k\leqslant T \\ k\equiv \kappa \ (\modm 2)}}\Gamma(k)\sum_{g\in\Bcal_k(N,\omega)}\left|\sum_{M<m\leqslant 2M}a_m\rho_g(m)\right|^2,$$
$$\sum_{\substack{g\in \Bcal(N,\omega) \\ |t_g|\leqslant T}}\frac{(1+|t_g|)^\kappa}{\cosh(\pi t_g)}\left|\sum_{M<m\leqslant 2M}a_m\rho_g(m)\right|^2,$$
$$\sum_{\amf \ \mathrm{singular}}\int_{-T}^T \frac{(1+|t|)^\kappa}{\cosh(\pi t)}\left|\sum_{M<m\leqslant 2M}a_m\rho_{\amf}(m,t)\right|^2,$$
is bounded, up to a constant depending only on $\varepsilon$, by
\begin{equation}\label{BoundSieve}
\left(T^2+\frac{M^{1+\varepsilon}}{N}\right)\sum_{M<m\leqslant 2M}|a_m|^2.
\end{equation}
\end{proposition}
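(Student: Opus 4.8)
The plan is to derive all three bounds at once from the Petersson formula \eqref{Petersson} and the Kuznetsov formula, following the classical argument of Deshouillers and Iwaniec for the spectral large sieve. Since every term on the spectral side of the trace formula is non-negative, it suffices to bound the single quantity obtained by adding the three spectral averages and relaxing the sharp cut-offs into smooth majorants: the holomorphic cut-off $\kappa<k\leqslant T$ is handled by summing \eqref{Petersson} over this range, while the Maass and Eisenstein cut-off $|t_g|\leqslant T$ is relaxed by introducing an even test function $h=h_T$ of the Laplace parameter, with sufficient decay at infinity, chosen so that the spectral weights attached to the Maass and Eisenstein contributions in the Kuznetsov formula are non-negative throughout the spectrum and dominate $(1+|t|)^{\kappa}/\cosh(\pi t)$ on $|t|\leqslant T$. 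Expanding $\bigl|\sum_m a_m\rho_g(m)\bigr|^2=\sum_{m,n}a_m\overline{a_n}\,\rho_g(m)\overline{\rho_g(n)}$, summing the three averages and applying the two trace formulae — both in the shape where a weighted spectral average of Fourier coefficients equals a diagonal term plus a sum of Kloosterman sums $S_\omega(m,n;c)$ with $N\mid c$ — one concludes, by positivity, that each of the three quantities in the statement is at most
$$c_T\sum_{M<m\leqslant 2M}|a_m|^2\;+\;\sum_{m,n}a_m\overline{a_n}\sum_{N\mid c}\frac{S_\omega(m,n;c)}{c}\,\Phi_T\!\left(\frac{4\pi\sqrt{mn}}{c}\right),$$
where $c_T$ collects the diagonal contributions $\delta(m,n)$ of both formulae and $\Phi_T$ is the associated transform on the Kloosterman side.

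The diagonal term produces the main term: by the construction of $h_T$ and the spectral density of the relevant families on $[-T,T]$ one gets $c_T\ll T^2$, hence a contribution $\ll T^2\|a\|^2$. For the off-diagonal term one uses that $m\asymp n\asymp M$ and that $\Phi_T$ is concentrated — up to rapidly decaying tails — so that only the moduli $c$ with $N\mid c$ in a range of length $\ll M^{1+\varepsilon}$ contribute essentially; inserting the Weil bound $|S_\omega(m,n;c)|\leqslant\tau(c)(m,n,c)^{1/2}c^{1/2}$ of Lemma \ref{LemmaWeilBound} — available precisely because $\cond(\omega)$ is odd and squarefree — and then summing over $c$ and over $m,n$ (isolating the diagonal $m=n$, treating the genuine off-diagonal by Cauchy--Schwarz and elementary divisor estimates, and balancing the ranges of $c$) yields a contribution $\ll_\varepsilon M^{1+\varepsilon}N^{-1}\|a\|^2$. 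Combining the two bounds and using that each of the three quantities is dominated by their sum gives the claimed inequality.

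The main obstacle is the first step: constructing $h_T$ so that all of its required properties hold simultaneously — admissibility for the trace formula, global non-negativity of the attached spectral weights, and the lower bound $\gg(1+|t|)^{\kappa}/\cosh(\pi t)$ on the window $|t|\leqslant T$ — and, relatedly, controlling the size and the localization of the Kloosterman-side transform $\Phi_T$ well enough to balance the $c$-ranges in the off-diagonal estimate. This is the technical heart of the Deshouillers--Iwaniec spectral large sieve; the nebentypus $\omega$ introduces nothing new beyond the need for the Weil bound in the twisted setting, which is supplied by Lemma \ref{LemmaWeilBound} under the standing hypothesis that its conductor is odd and squarefree.
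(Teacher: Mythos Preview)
Your approach is correct and essentially the same as the paper's: both recognize that this is the Deshouillers--Iwaniec spectral large sieve, with the only nebentypus-specific input being the Weil bound for the twisted Kloosterman sums $S_\omega(m,n;c)$ supplied by Lemma \ref{LemmaWeilBound} under the odd-squarefree conductor hypothesis. The paper's proof is more streamlined: rather than re-sketching the Deshouillers--Iwaniec machinery, it cites an existing version of the result (Drappeau, Proposition~4.7) which gives the bound $\bigl(T^2+c(\omega)^{1/2}M^{1+\varepsilon}/N\bigr)\|a\|_2^2$, and then observes that the extra factor $c(\omega)^{1/2}$ arises in that proof solely from using the general estimate $S_\omega(m,n;c)\ll\tau(c)^{O(1)}(m,n,c)^{1/2}(c(\omega)c)^{1/2}$; replacing this by Lemma \ref{LemmaWeilBound} removes the $c(\omega)^{1/2}$ and the rest of Drappeau's argument goes through verbatim. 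Your write-up would benefit from the same shortcut, since the off-diagonal step you sketch (``balancing the ranges of $c$'') is in fact rather delicate and is exactly the part that is handled in full in the cited reference.
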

\begin{proof}
This result has been proved in \cite[Proposition 4.7]{drappeau}, except that the bound \eqref{BoundSieve} is replaced by $\left(T^2+c(\omega)^{1/2}M^{1+\varepsilon}N^{-1}\right)||a_m||_2^2$, where $c(\omega)$ is the conductor of $\omega$. The extra factor $c(\omega)$ in the proof of this Proposition appears in \cite[Lemma 4.6, (4.20)]{drappeau} and is a consequence of the general estimation \cite[Theorem 9.3]{weilbound}
$$S_\omega(m,n;c)\ll \tau(c)^{O(1)}(m,n,c)^{1/2}(c(\omega)c)^{1/2}.$$
By the hypothesis on the conductor of $\omega$ (that it is squarefree), we can apply Lemma \ref{LemmaWeilBound} whose consequence is the cancellation of the factor $c(\omega)^{1/2}$ in \cite[Lemma 4.6, (4.20)]{drappeau} and the rest of the proof is completely similar.
\end{proof}

\subsection{$L$-functions and functional equations}

\subsubsection{Dirichlet $L$-functions}
Let $\chi$ be a non-principal Dirichlet character of modulus $q>2$ with $q$ prime, $\kappa\in\{0,1\}$ satisfying $\chi(-1)=(-1)^\kappa$ and define the complete $L$-function
$$\Lambda(\chi,s):= q^{s/2}L_\infty(\chi,s)L(\chi,s),$$
where
\begin{equation}\label{DefinitionLinfty}
L_\infty(\chi,s):=\pi^{-s/2}\Gamma\left( \frac{s+\kappa}{2}\right).
\end{equation}
It is well known that $\Lambda(\chi,s)$ admits an analytic continuation to the whole complex plane and satisfies the functional equation (c.f. Theorem 4.15, \cite{analytic})
\begin{equation}\label{FunctionalEquation}
\Lambda(\chi,s)= i^\kappa \varepsilon(\chi)\Lambda (\overline{\chi},1-s),
\end{equation}
where $\varepsilon(\chi)$ is the normalized Gauss sum defined by
\begin{equation}\label{DefinitionGaussSum}
\varepsilon(\chi) := \frac{1}{q^{1/2}}\sum_{x \in\Fbf_q^\times}\chi(x)e\left(\frac{x}{q}\right).
\end{equation}
Using \eqref{FunctionalEquation}, we can express the central value of a Dirichlet $L$-function as a convergent series (see Theorem 5.3, \cite{analytic}) and thus, extend in an easy way the proof to a product of three $L$-functions.
\begin{lemme}\label{LemmeApproximate} Let $\chi,\omega_1,\omega_2$ be Dirichlet characters such that $\chi\neq 1,\overline{\omega}_1,\overline{\omega}_2$. If $\kappa$ $($resp $\kappa_1,$ $\kappa_2)$ $\in \{0,1\}$ are such that $\chi(-1)=(-1)^\kappa$ $($resp $\omega_1(-1)=(-1)^{\kappa_1}$, $\omega_2(-1)=(-1)^{\kappa_2})$, then we have
\begin{alignat*}{1}
L\left(\chi,\frac{1}{2}\right)&L\left(\chi\omega_1,\frac{1}{2}\right)L\left(\chi\omega_2,\frac{1}{2}\right)= \mathop{\sum\sum\sum}_{n_0,n_1,n_2\geqslant 1}\frac{\chi(n_0n_1n_2)\chi_1(n_1)\chi_2(n_2)}{(n_0n_1n_2)^{1/2}}\mathbf{V}_\chi\left(\frac{n_0n_1n_2}{q^{3/2}}\right) \\ + & \ \chi(-1)i^{\kappa+\kappa_1+\kappa_2}\varepsilon(\chi)\varepsilon(\chi\omega_1)\varepsilon(\chi\omega_2)\mathop{\sum\sum\sum}_{n_0,n_1,n_2\geqslant 1}\frac{\overline{\chi}(n_0n_1n_2)\overline{\chi}_1(n_1)\overline{\chi}_2(n_2)}{(n_0n_1n_2)^{1/2}}\mathbf{V}_\chi\left(\frac{n_0n_1n_2}{q^{3/2}}\right),
\end{alignat*}
where 
\begin{equation}\label{DefinitionV2}
\mathbf{V}_\chi(x):= \frac{1}{2\pi i}\int_{(2)}\frac{L_\infty\left(\chi,\frac{1}{2}+s\right)L_\infty\left(\chi\chi_1,\frac{1}{2}+s\right)L_\infty\left(\chi\chi_2,\frac{1}{2}+s\right)}{L_\infty\left(\chi,\frac{1}{2}\right)L_\infty\left(\chi\chi_1,\frac{1}{2}\right)L_\infty\left(\chi\chi_2,\frac{1}{2}\right)}x^{-s}Q(s)\frac{ds}{s},
\end{equation}
for some entire and even function $Q(s)$ with exponential decay in vertical strips and satisfying $Q(0)=1.$
\end{lemme}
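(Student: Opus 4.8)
The plan is to apply the standard approximate functional equation machinery to the triple product $\Lambda(\chi,s)\Lambda(\chi\omega_1,s)\Lambda(\chi\omega_2,s)$, viewed as a single completed $L$-function of degree $3$. First I would introduce the product of complete $L$-functions
$$\Lambda_3(s) := \Lambda\left(\chi,\frac{1}{2}+s\right)\Lambda\left(\chi\omega_1,\frac{1}{2}+s\right)\Lambda\left(\chi\omega_2,\frac{1}{2}+s\right),$$
which by \eqref{FunctionalEquation} satisfies the functional equation $\Lambda_3(s) = \chi(-1) i^{\kappa+\kappa_1+\kappa_2}\varepsilon(\chi)\varepsilon(\chi\omega_1)\varepsilon(\chi\omega_2)\,\overline{\Lambda_3}(-s)$, where $\overline{\Lambda_3}(s)$ is the analogous product for the conjugate characters; note that the three characters $\chi$, $\chi\omega_1$, $\chi\omega_2$ are all non-principal by the hypothesis $\chi\neq 1,\overline{\omega}_1,\overline{\omega}_2$, so each $\Lambda$-factor is entire and there are no polar terms to worry about. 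The conductor of the triple product is $q^3$ and the archimedean factor is $L_\infty(\chi,\tfrac12+s)L_\infty(\chi\omega_1,\tfrac12+s)L_\infty(\chi\omega_2,\tfrac12+s)$ with $\chi_1 := \chi\omega_1$, $\chi_2 := \chi\omega_2$ (so that $\chi(n_1)\chi_1(n_1)$ etc. match the statement after relabelling; in the displayed formula $\chi_i$ should be read as $\omega_i$, consistent with $\chi(n_0n_1n_2)\chi_1(n_1)\chi_2(n_2) = \chi(n_0)(\chi\omega_1)(n_1)(\chi\omega_2)(n_2)$).

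Next I would run the contour-shift argument verbatim as in \cite[Theorem 5.3]{analytic}: take an even entire function $Q(s)$ with $Q(0)=1$ and rapid decay in vertical strips (e.g. $Q(s)=e^{s^2}$), form the integral
$$\frac{1}{2\pi i}\int_{(2)} \Lambda_3(s)\, Q(s)\,\frac{ds}{s},$$
expand each Dirichlet series in its region of absolute convergence $\Re e(s) > 1/2$, and divide through by the product of the three $L_\infty(\chi_i,\tfrac12)$ and the power $q^{3/4}$ coming from the conductors. Shifting the contour from $\Re e(s)=2$ to $\Re e(s)=-2$, the only residue is the simple pole at $s=0$ with residue $\Lambda_3(0)$, i.e. the desired central value (times the archimedean and conductor normalization). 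On the shifted line, substituting the functional equation and then changing variables $s\mapsto -s$ produces the dual sum, with the ratios $L_\infty(\chi_i,\tfrac12+s)/L_\infty(\chi_i,\tfrac12)$ and the factor $q^{-3s/2}$ absorbed into $\mathbf{V}_\chi(n_0n_1n_2/q^{3/2})$ exactly as in \eqref{DefinitionV2} — here one uses that $Q$ is even so the same $Q(s)$ appears in both the main and dual terms, and that the root number $\chi(-1)i^{\kappa+\kappa_1+\kappa_2}\varepsilon(\chi)\varepsilon(\chi\omega_1)\varepsilon(\chi\omega_2)$ is exactly the prefactor of the second sum.

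There is essentially no serious obstacle here; the lemma is a routine triple-product version of the classical approximate functional equation. The only points requiring a modicum of care are bookkeeping ones: verifying that the contour shift is justified (the integrand decays because of $Q$ and Stirling bounds on the $\Gamma$-factors, uniformly enough since $q$ is fixed in this step), checking that the archimedean ratio that defines $\mathbf{V}_\chi$ is holomorphic for $\Re e(s) \geqslant -2+\epsilon$ apart from the pole at $s=0$ already accounted for — which holds because the $\Gamma$-factors $\Gamma((\tfrac12+s+\kappa_i)/2)$ have their poles at $\Re e(s)\leqslant -\tfrac12$, safely to the left of the shifted line after a harmless adjustment of the contour — and confirming that the Gauss sums multiply correctly, namely $\varepsilon(\chi)\varepsilon(\chi\omega_1)\varepsilon(\chi\omega_2)$ arises from the product of the three individual functional equations. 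I would also remark that the resulting $\mathbf{V}_\chi$ satisfies the usual decay estimates ($\mathbf{V}_\chi(x) \ll_A (1+x)^{-A}$ and $\mathbf{V}_\chi(x) = 1 + O(x^{\alpha})$ for small $x$), which follow immediately from shifting the contour in \eqref{DefinitionV2} to the right or left respectively, though these are not needed for the statement itself and can be invoked later when the lemma is applied.
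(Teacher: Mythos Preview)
Your proposal is correct and follows exactly the approach indicated in the paper, which simply remarks before the lemma that one uses the functional equation \eqref{FunctionalEquation} and extends the proof of \cite[Theorem 5.3]{analytic} to a product of three $L$-functions. You have supplied the details the paper omits, including the correct bookkeeping of the root number $\chi(-1)i^{\kappa+\kappa_1+\kappa_2}\varepsilon(\chi)\varepsilon(\chi\omega_1)\varepsilon(\chi\omega_2)$ and the observation that $\chi_1,\chi_2$ in the displayed formula are typos for $\omega_1,\omega_2$.
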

\begin{rmk}\label{Remark} Since $\omega_1$ and $\omega_2$ have been fixed, the function $\mathbf{V}_\chi$ depends on $\chi$ only through its parity. Now shifting the $s$-contour to the right in \eqref{DefinitionV2} and we see that for $x\geqslant 1$ and any $A\geqslant 0$, we have the estimation
$$\mathbf{V}_\chi(x) \ll_A x^{-A}.$$
Now moving the $s$-line to $\Re e (s)=-\frac{1}{2}+\varepsilon$, we pass a simple pole at $s=0$ of residu $1$ and thus, we obtain for $x\leqslant 1$
$$\mathbf{V}_\chi(x)= 1+O_\varepsilon\left(x^{1/2-\varepsilon}\right).$$
\end{rmk}

\subsubsection{Twisted $L$-functions}\label{SectionTwisted}

Let $q>2$ be a prime number, $\omega$ a Dirichlet character of modulus $q$ and $f$ a primitive Hecke cusp of level $q$ and nebentypus $\omega$. For $\chi$ a non trivial character modulo $q$, we can construct the twisted modular form $f\otimes \chi$ whose $n$-th Fourier coefficient is given by $\rho_f(n)\chi(n)$. This form is again a Hecke eigenform of level $q^2$ with nebentypus $\omega\chi^2$ and eigenvalues $\lambda_f(n)\chi(n)$ for $(n,q)=1$ (see \cite[Chapter 7]{classical}), but it is not necessarily primitive because the level of $f$ and the conductor of $\chi$ are not coprime. In order to have a good understanding of this twist and a perfect description of the sign of the functional equation of the associated $L$-function $L(f\otimes\chi,s)$, it is convenient to switch into the representation theory language.

\vspace{0.1cm}

A well known principle associates to $f$ an automorphic cuspidal representation $\pi_f$ of $\GL_2(\Abf_{\Qbf})$, where $\Abf_{\Qbf}$ is the adele ring of $\Qbf$, with central character $\tilde{\omega}$, the adelization of $\omega$ (see \cite{goldfeld}) and of conductor $q$ (as defined in \cite[Chapter 5]{adele}). Such a representation admits a factorization $\pi_f=\otimes_{p\leqslant\infty}\pi_{f,p}$ into irreducible local representations of $\GL_2(\Qbf_p)$ with central character $\tilde{\omega}_p$ and where $\tilde{\omega}=\otimes_{p\leqslant\infty}\tilde{\omega}_p$.

\vspace{0.1cm}

Let $\tilde{\chi}=\otimes_{p\leqslant\infty}\tilde{\chi}_p$ be the adelization of the character $\chi$. Then the twisted representation $\pi_f\otimes\tilde{\chi}$ defined by $g\mapsto\tilde{\chi}(\det(g))\pi_f(g)$ admits the factorization $\otimes_{p\leqslant\infty}\pi_{f,p}\otimes\tilde{\chi}_p$ \cite[Corollary 11.2]{jacquet}. We can attach to $\pi_f\otimes\tilde{\chi}$ an $L$-function which can be represented as an infinite product over the primes of local $L$-functions
$$L(\pi_f\otimes\tilde{\chi},s)=\prod_{p<\infty}L_{p}(\pi_{f,p}\otimes\tilde{\chi}_p,s)=\sum_{n=1}^\infty\frac{\lambda_{\pi_f\otimes\tilde{\chi}}(n)}{n^s}, \ \ \Re e (s)>1,$$
with the property that 
$$\lambda_{\pi_f\otimes\tilde{\chi}}(n)=\lambda_f(n)\chi(n) \ \mathrm{for} \ (n,q)=1.$$ The $L$-function $L(\pi_f\otimes\tilde{\chi},s)$ is completed by the factor at the archimedean place $L(\pi_{f,\infty}\otimes\tilde{\chi}_\infty,s)$ and the product $L(\pi_f\otimes\tilde{\chi},s)L(\pi_{f,\infty}\otimes\tilde{\chi}_\infty,s)$ can be extended holomorphically to $\Cbf$ and satisfies a functional equation of the form
$$L(\pi_f\otimes\tilde{\chi},s)L(\pi_{f,\infty}\otimes\tilde{\chi}_\infty,s)=\varepsilon(\pi_f\otimes\tilde{\chi},s)L(\tilde{\pi}_f\otimes\overline{\tilde{\chi}},1-s)L(\tilde{\pi}_{f,\infty}\otimes\overline{\tilde{\chi}}_\infty,s),$$
where $\tilde{\pi}_f=\overline{\omega}\otimes\pi_f$ is the contragredient reprsentation and $\varepsilon(\pi_f\otimes\tilde{\chi},s)$ is called the root number. The representation $\pi_f\otimes\tilde{\chi}$ being ramified only at $p=q$ and possibly at $p=\infty$, we have 
$$\varepsilon(\pi_f\otimes\tilde{\chi},s)=\varepsilon(\pi_{f,\infty}\otimes\tilde{\chi}_\infty,s)\varepsilon(\pi_{f,q}\otimes\tilde{\chi}_q,s).$$
The infinite factor $\varepsilon(\pi_{f,\infty}\otimes\tilde{\chi}_\infty,s)$ is of modulus $1$ and depends on $\chi$ only through its parity (see for example \cite[Theorem 6.16]{adele}). To determine the finite part $\varepsilon(\pi_{f,q}\otimes\tilde{\chi}_q,s)$, we proceed as follows : Since the conductor of $\pi_{f,q}$, which is equal to the conductor of $\pi_f$, is a prime number, the local representation $\pi_{f,q}$ is not supercuspidal and thus corresponds either to an irreducible principal series, or to a special representation (see \cite[Remark 4.25]{adele}). More precisely, $\pi_{f,q}$ is a special representation $\Bcal(\mu_1,\mu_2)$ when the original character $\omega$ is trivial. In this case, the two quasi character $\mu_1,\mu_2$ of $\Qbf_q^\times$ are unramified and satisfy $\mu_1\mu_2=1$. If $\omega$ is not the trivial character, then $\pi_{f,q}$ is an irreducible principal series representation $\Bcal(\mu_1,\mu_2)$ with $\mu_1\mu_2=\tilde{\omega}_q$, $\mu_1$ unramified and $\mu_2$ is ramified with conductor $q$ (the argument is again a consequence of \cite[Remark 4.25]{adele}). The twist by $\tilde{\chi}_q$ preserves principal series or special representations is the sense that if $\pi_{f,q}=\Bcal(\mu_1,\mu_2)$, then $\pi_{f,q}\otimes \tilde{\chi}_q\simeq \Bcal(\mu_1\tilde{\chi}_q,\mu_2\tilde{\chi}_q)$. Since $\chi$ is not trivial, $\tilde{\chi}_q$ is ramified and we obtain in both cases (principal or special) 
\begin{equation}\label{Factorization}
\varepsilon(\pi_{f,q}\otimes\tilde{\chi}_q,s)=\varepsilon (\mu_1\tilde{\chi}_q,s)\varepsilon (\mu_2\tilde{\chi}_q,s),
\end{equation}
(see \cite[Theorem 6.15]{adele}), where for any quasi character $\mu$ of $\Qbf_q^\times$, $\varepsilon(\mu,s)$ is the root number appearing in the $\GL_1$ theory. If $\mu$ is unramified, we have $\varepsilon(\mu,s)=1$, otherwise, writing $\mu=\psi |\cdot|^\lambda_q$ for some $\lambda\in\Cbf$ and $\psi$ unitary with conductor $q^f$, we have $\varepsilon(\mu,s)=\varepsilon(\psi,s+\lambda)$ with (see \cite[Chapter 2, Ex 2.6]{goldfeld})
\begin{equation}\label{RootNumberGL1}
\varepsilon(\psi,s)= q^{-f(s-1/2)}\psi(q)^f\varepsilon(\overline{\psi}),
\end{equation}
where $\varepsilon(\overline{\psi})$ is the normalized Gauss sum
$$\varepsilon(\overline{\psi})=\frac{1}{q^{f/2}}\sumstar_{a (\modm q^f)}\overline{\psi}(a)e\left(\frac{a}{q^f}\right).$$
It follows from \cite[Definition 2.1.7]{goldfeld} that 
\begin{equation}\label{FinalSign}
\varepsilon(\pi_{f,q}\otimes\tilde{\chi}_q,s) = \left\{ \begin{array}{ccc} q^{-2(s-1/2)}\varepsilon(\chi)\varepsilon(\omega\chi) & \ifm & \chi\omega\neq 1 \\ & & \\ \mathrm{unimportant \ for \ us}. \end{array} \right. 
\end{equation}
We summarize all the previous computations in the following more convenient form:
\begin{proposition}\label{PropositionTwistedL} Let $q>2$ be a prime number, $\omega$ a Dirichlet character of modulus $q$ and $f$ a primitive Hecke cusp form of level $q$ and nebentypus $\omega$ with associated Hecke eigenvalues $\lambda_f(n)$ for all $n\geqslant 1$. Then for every character $\chi$ modulo $q$ such that $\chi\neq 1,\overline{\omega}$, the twisted modular form $f\otimes \chi$ is a primitive Hecke cusp of level $q^2$ and nebentypus $\chi^2\omega$ with associated Hecke eigenvalues $\chi(n)\lambda_f(n)$ for every $n\geqslant 1$. If 
\begin{equation}\label{TwistedL-function}
L(f\otimes\chi,s):= \sum_{n=1}^\infty \frac{\lambda_f(n)\chi(n)}{n^s}, \ \Re e (s)>1
\end{equation}
is the associated $L$-function, then there exists a factor $L_\infty(f\otimes\chi,s)$ such that the product
$$\Lambda(f\otimes\chi,s):= q^sL_\infty(f\otimes\chi,s)L(f\otimes\chi,s)$$
extends holomorphically to $\Cbf$ and satisfies the functional equation 
\begin{equation}\label{FunctionalEquationTwisted}
\Lambda(f\otimes\chi,s)=\varepsilon_\infty(f,\chi)\varepsilon(f\otimes\chi)\Lambda(\overline{f}\otimes \overline{\chi},1-s),
\end{equation}
where $\varepsilon(f\otimes \chi)=\varepsilon(\chi)\varepsilon(\omega\chi)$ and the infinite factors $\varepsilon_\infty(f,\chi)$ and $L_\infty(f\otimes\chi,s)$ satisfy $|\varepsilon_\infty(f,\chi)|=1$ and both depends on $\chi$ only trough its parity.
\end{proposition}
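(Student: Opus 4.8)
The plan is to assemble the local computations of \S\ref{SectionTwisted} into the classical statement; since that discussion already carries out all of the arithmetic, the proof amounts to a careful translation between the adelic and classical languages, the only genuine work being the bookkeeping at the ramified place $q$.

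First I would recall that twisting the cuspidal automorphic representation $\pi_f$ of $\GL_2(\Abf_{\Qbf})$ by the idele class character $\tilde{\chi}\circ\det$ again produces a cuspidal automorphic representation $\pi_f\otimes\tilde{\chi}$, now with central character $\tilde{\omega}\tilde{\chi}^2$, hence with a classical avatar that is a primitive Hecke cusp form of nebentypus $\omega\chi^2$ whose Hecke eigenvalues away from $q$ are $\lambda_f(n)\chi(n)$. Its level equals the arithmetic conductor of $\pi_f\otimes\tilde{\chi}$, which by \eqref{Factorization} is controlled by the two quasi-characters $\mu_1\tilde{\chi}_q$ and $\mu_2\tilde{\chi}_q$ of $\Qbf_q^\times$: since $\mu_1$ is unramified, $\mu_2$ is unramified (when $\omega$ is trivial) or ramified of conductor $q$ (when $\omega$ is not), and $\tilde{\chi}_q$ is ramified of conductor $q$, the hypothesis $\chi\neq 1,\overline{\omega}$ is precisely what forces both $\mu_1\tilde{\chi}_q$ and $\mu_2\tilde{\chi}_q$ to be ramified of conductor $q$. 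This gives conductor $q^2$, i.e. $f\otimes\chi$ is a newform of level $q^2$.

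Next I would identify the Dirichlet series. Since $\chi(q)=0$, the series in \eqref{TwistedL-function} has trivial Euler factor at $q$, and on the automorphic side the local factor of $\pi_{f,q}\otimes\tilde{\chi}_q$ at $q$ is also $1$ because this local representation is a ramified twist of a special or principal series representation; at every $p\neq q$ the eigenvalues agree, so $L(f\otimes\chi,s)=L(\pi_f\otimes\tilde{\chi},s)$. Setting $L_\infty(f\otimes\chi,s):=L(\pi_{f,\infty}\otimes\tilde{\chi}_\infty,s)$ and using $(q^2)^{s/2}=q^s$, the product $q^sL_\infty(f\otimes\chi,s)L(f\otimes\chi,s)$ is the completed $L$-function of $\pi_f\otimes\tilde{\chi}$, which extends holomorphically to $\Cbf$ precisely because $\pi_f\otimes\tilde{\chi}$ is cuspidal. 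Because $\tilde{\chi}_\infty$ is a sign character, $L_\infty(f\otimes\chi,s)$ depends on $\chi$ only through its parity.

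Finally, the functional equation \eqref{FunctionalEquationTwisted} is the classical shadow of the adelic one $s\mapsto 1-s$: the global root number factors as $\varepsilon(\pi_{f,\infty}\otimes\tilde{\chi}_\infty,s)\,\varepsilon(\pi_{f,q}\otimes\tilde{\chi}_q,s)$, and by \eqref{FinalSign} (here the hypothesis $\chi\omega\neq 1$ is used) the $q$-part equals $q^{-2(s-1/2)}\varepsilon(\chi)\varepsilon(\omega\chi)$; the factor $q^{-2(s-1/2)}$ is exactly what turns $q^s$ into $q^{1-s}$ in the definition of $\Lambda(f\otimes\chi,s)$, leaving $\varepsilon(f\otimes\chi)=\varepsilon(\chi)\varepsilon(\omega\chi)$, while the archimedean contribution $\varepsilon_\infty(f,\chi):=\varepsilon(\pi_{f,\infty}\otimes\tilde{\chi}_\infty,s)$, which does not depend on $s$, has modulus $1$ and depends on $\chi$ only through its parity. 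Since $\tilde{\pi}_f\otimes\overline{\tilde{\chi}}=(\overline{\omega}\otimes\pi_f)\otimes\overline{\tilde{\chi}}$ has classical avatar $\overline{f}\otimes\overline{\chi}$, one obtains the stated functional equation. The one step where care is genuinely needed --- and the only real obstacle --- is the local analysis at $q$: pinning down the type of $\pi_{f,q}$ (special when $\omega$ is trivial, ramified principal series otherwise) and verifying that twisting by $\tilde{\chi}_q$ keeps the conductor exponent equal to $2$ and kills the local $L$-factor, which is exactly the role of the exclusions $\chi\neq 1,\overline{\omega}$; the rest is routine dictionary work.
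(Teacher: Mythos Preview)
Your proposal is correct and follows the same route as the paper: the proposition is explicitly introduced there as a summary of the preceding local analysis, and your argument recapitulates exactly that discussion---classifying $\pi_{f,q}$ as special or ramified principal series, using the exclusions $\chi\neq 1,\overline{\omega}$ to force both $\mu_i\tilde{\chi}_q$ to have conductor $q$ (hence level $q^2$ and trivial local $L$-factor at $q$), and reading off the root number from \eqref{Factorization}--\eqref{FinalSign}. There is nothing to add.
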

\begin{remq} The infinite factor presents as a product of Gamma functions
$$L_\infty(f\otimes \chi,s)=\pi^{-s}\Gamma\left(\frac{s-\mu_{1,f\otimes\chi}}{2}\right)\Gamma\left(\frac{s-\mu_{2,f\otimes\chi}}{2}\right),$$
where $\mu_{i,f\otimes\chi}$ are the local parameters at the infinite place (encodes the weight in the holomorphic setting or the Laplace eigenvalue if $f$ is a Maass form) and we recall that they depend on $\chi$ at most trough its parity. In any case, a consequence of the work of Kim and Sarnak \cite{kim} toward the Ramanujan-Petersson conjecture is that
\begin{equation}\label{KimInfiny}
\Re e (\mu_{i,f\otimes\chi})\leqslant \frac{7}{64}.
\end{equation}
\end{remq}

We finally state the analogous of Lemma \ref{LemmeApproximate} for the product $L(f\otimes\chi,s)L(\chi,s)$ on the critical point $s=1/2$ (see \cite[§ 1.3.2]{park} for the general result).
\begin{proposition}\label{PropositionApproximateFunctionalEquationTwisted} Let $q>2$ be a prime number, $\omega$ a Dirichlet character of modulus $q$ and $f$ a primitive Hecke cusp form of level $q$ and nebentypus $\omega$ with associated Hecke eigenvalues $\lambda_f(n)$ for all $n\geqslant 1$. Then for every character $\chi$ modulo $q$ such that $\chi\neq 1,\overline{\omega}$, $\chi(-1)=(-1)^\kappa$, we have
\begin{equation}
\begin{split}
L\left(f\otimes\chi,\frac{1}{2}\right)L\left(\chi,\frac{1}{2}\right)=  & \ \mathop{\sum\sum}_{n,m\geqslant 1}\frac{\lambda_f(n)\chi(n)\chi(m)}{(nm)^{1/2}}\mathbf{V}_{f,\chi}\left(\frac{nm}{q^{3/2}}\right) \\  + & \ i^{\kappa}\varepsilon_\infty(f,\chi)\varepsilon(f\otimes\chi)\varepsilon(\chi)\mathop{\sum\sum}_{n,m\geqslant 1}\frac{\overline{\lambda_f(n)}\chibar(n)\chibar(m)}{(nm)^{1/2}}\mathbf{V}_{f,\chi}\left(\frac{nm}{q^{3/2}}\right),
\end{split}
\end{equation}
where
\begin{equation}\label{DefinitionVfchi}
\mathbf{V}_{f,\chi}(x):= \frac{1}{2\pi i}\int_{(2)}\frac{L_\infty\left(f\otimes\chi,s+\frac{1}{2}\right)L_\infty\left(\chi,s+\frac{1}{2}\right)}{L_\infty\left(f\otimes\chi,\frac{1}{2}\right)L_\infty\left(\chi,\frac{1}{2}\right)}x^{-s}P(s)\frac{ds}{s},
\end{equation}
for some suitable entire even function $P(s)$ with exponential decay in vertical strips and satisfying $P(0)=1$.
\end{proposition}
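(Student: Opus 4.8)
The plan is to run the classical contour-shift proof of an approximate functional equation, now applied to the product $L(f\otimes\chi,s)L(\chi,s)$, combining the functional equation of Proposition \ref{PropositionTwistedL} with that of \eqref{FunctionalEquation}. This is the exact analogue of Lemma \ref{LemmeApproximate}, and the general statement for products of automorphic $L$-functions is in \cite[\S 1.3.2]{park}. First I would form the completed product $\Lambda(f\otimes\chi,s)\Lambda(\chi,s) = q^{3s/2}L_\infty(f\otimes\chi,s)L_\infty(\chi,s)L(f\otimes\chi,s)L(\chi,s)$, the conductor being $q^{2}\cdot q=q^{3}$. Since $\chi\neq 1,\overline{\omega}$ and $f$ is cuspidal, Proposition \ref{PropositionTwistedL} guarantees that $f\otimes\chi$ is primitive and that $\Lambda(f\otimes\chi,s)$ is entire; $\Lambda(\chi,s)$ is entire as well, so the product is entire of finite order, and multiplying the two functional equations gives $\Lambda(f\otimes\chi,s)\Lambda(\chi,s)=i^{\kappa}\varepsilon_\infty(f,\chi)\varepsilon(f\otimes\chi)\varepsilon(\chi)\Lambda(\overline{f}\otimes\overline{\chi},1-s)\Lambda(\overline{\chi},1-s)$.

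Next I would fix an even entire function $P(s)$ with $P(0)=1$ and exponential decay in vertical strips, chosen to cancel the poles of the ratios of $\Gamma$-factors arising below so that $\mathbf{V}_{f,\chi}$ is well defined and, after moving the $s$-line, has the expected decay; by \eqref{KimInfiny} the archimedean factors are holomorphic and non-zero at $s=\tfrac12$. Then I would consider $J:=\frac{1}{2\pi i}\int_{(2)}\Lambda(f\otimes\chi,\tfrac12+s)\Lambda(\chi,\tfrac12+s)P(s)\frac{ds}{s}$. On $\Re(s)=2$ both $L$-functions are absolutely convergent Dirichlet series, so expanding $L(f\otimes\chi,\tfrac12+s)L(\chi,\tfrac12+s)=\sum_{n,m}\lambda_f(n)\chi(n)\chi(m)(nm)^{-1/2-s}$ and interchanging sum and integral (legitimate by the exponential decay of $P$ against the polynomial vertical growth of $\Lambda$) produces, after extracting the factor $q^{3/4}L_\infty(f\otimes\chi,\tfrac12)L_\infty(\chi,\tfrac12)$, the series $q^{3/4}L_\infty(f\otimes\chi,\tfrac12)L_\infty(\chi,\tfrac12)\sum_{n,m}\frac{\lambda_f(n)\chi(n)\chi(m)}{(nm)^{1/2}}\mathbf{V}_{f,\chi}\!\left(\frac{nm}{q^{3/2}}\right)$, with $\mathbf{V}_{f,\chi}$ literally as in \eqref{DefinitionVfchi}.

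I would then shift the contour to $\Re(s)=-2$. The integrand is holomorphic in the strip except for the simple pole of $1/s$ at $s=0$, whose residue is $\Lambda(f\otimes\chi,\tfrac12)\Lambda(\chi,\tfrac12)$; dividing the whole identity by $q^{3/4}L_\infty(f\otimes\chi,\tfrac12)L_\infty(\chi,\tfrac12)$ turns this residue into exactly $L(f\otimes\chi,\tfrac12)L(\chi,\tfrac12)$. In the remaining integral on $\Re(s)=-2$ I would substitute $s\mapsto-s$, use that $P$ is even (so $P(-s)\,d(-s)/(-s)=P(s)\,ds/s$), and apply the two functional equations. Since the archimedean factors depend on $\chi$ only through its parity and $\overline{\chi}(-1)=\chi(-1)$, one has $L_\infty(\overline{f}\otimes\overline{\chi},s)=L_\infty(f\otimes\chi,s)$ and $L_\infty(\overline{\chi},s)=L_\infty(\chi,s)$, while $\overline{\lambda_f(n)}$ are the Hecke eigenvalues of $\overline{f}$ (cf. \eqref{HeckeMult2}); hence the dual integral reassembles, with the same normalisation, into $i^{\kappa}\varepsilon_\infty(f,\chi)\varepsilon(f\otimes\chi)\varepsilon(\chi)\sum_{n,m}\frac{\overline{\lambda_f(n)}\,\overline{\chi}(n)\overline{\chi}(m)}{(nm)^{1/2}}\mathbf{V}_{f,\chi}\!\left(\frac{nm}{q^{3/2}}\right)$. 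Adding the two contributions gives the asserted identity.

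The points requiring care are purely bookkeeping: matching the power $q^{3/2}$ inside $\mathbf{V}_{f,\chi}$ with the conductor $q^{3}$ of the completed product (so that $(q^{3/2})^{s}(nm)^{-s}=(nm/q^{3/2})^{-s}$), and normalising the $\Gamma$-factors so that the inverse Mellin transforms occurring in the principal and dual sums are literally the same function $\mathbf{V}_{f,\chi}$. The analytic input — holomorphy of the completed $L$-functions away from $s=0$, and the justification of the contour shift and the sum--integral interchange — is routine, following from the finite order and polynomial vertical growth of $\Lambda(f\otimes\chi,s)\Lambda(\chi,s)$ together with the exponential decay of $P$. I expect no genuine obstacle here; the real difficulty of the paper lies in the subsequent treatment of the error terms via \eqref{Shape1}, \eqref{Shape2} and Theorem \ref{Theorem2}.
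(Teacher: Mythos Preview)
Your proposal is correct and follows exactly the standard contour-shift argument for the approximate functional equation that the paper has in mind; note that the paper itself does not give a proof but simply refers to \cite[\S 1.3.2]{park} for the general result. Your bookkeeping of the conductor $q^{3/2}$, the combined root number $i^{\kappa}\varepsilon_\infty(f,\chi)\varepsilon(f\otimes\chi)\varepsilon(\chi)$, and the identification $L_\infty(\overline{f}\otimes\overline{\chi},s)=L_\infty(f\otimes\chi,s)$ is accurate, so nothing needs to be changed.
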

\begin{remq} Shifting the $s$-contour on the right in \eqref{DefinitionVfchi} and we obtain that for every $x\geqslant 1$ and any $A>0$, 
$$\mathbf{V}_{f,\chi}(x)\ll_A x^{-A}.$$
By \eqref{KimInfiny}, moving the $s$-line to $\Re e (s)= -1/4$, we catch a simple pole at $s=0$ of residue $1$ and thus
$$\mathbf{V}_{f,\chi}(x)=1+O(x^{1/4}) \ \mathrm{for \ }0<x\leqslant 1.$$
\end{remq}

\section{$\ell$-adic twists of modular forms}\label{SectionAlgebraicTwsits} In this section, we fix $q>2$ a prime number, $\omega$ a Dirichlet character modulo $q$, $f$ a primitive Hecke cusp form of level $q$ and nebentypus $\omega$ and we denote by $\{\lambda_f(n)\}_{n\geqslant 1}$ the Hecke eigenvalues of $f$. For any $t\in \Rbf$, we also define the twisted divisor function $\lambda_\omega(n,it)$ by
\begin{equation}\label{DefinitionTwistedDivisorFuntion}
\lambda_\omega(n,it):= \sum_{ab=n}\omega(a)\left(\frac{a}{b}\right)^{it},
\end{equation}
which, for $(n,q)=1$, appears as Hecke eigenvalues of Eisenstein series $E_{\amf}(\cdot, 1/2+it)$ of level $q$ and nebentypus $\omega$ for a suitable choice of cusp $\amf$ (c.f. § \ref{SpecialCase} and \eqref{EigenvaluesEisensteinSeries}).
\vspace{0.2cm}

As announced in Section \ref{SectionSketch}, for $K : \Zbf \longrightarrow\Cbf$ a $q$-periodic function, a crucial step in the proof of Theorem \ref{Theorem1} requires non trivial estimates for sums of the shape 
\begin{equation}\label{Coorelation1}
\Scal_V(f,K;q)=\sum_{n\geqslant 1}\lambda_f(n)K(n)V\left(\frac{n}{q}\right),
\end{equation}
\begin{equation}\label{Correlation2}
\Scal_V(\omega,it,K;q)=\sum_{n\geqslant 1}\lambda_\omega(n,it)K(n)V\left(\frac{n}{q}\right),
\end{equation}
where $V$ is a smooth a compactly supported function on $\Rbf_+^*$. Assuming that $|K(n)|\leqslant M$ for every $n\in\Zbf$, we obtain by Cauchy-Schwarz inequality and \eqref{Ramanujan-Petersson-Average},
\begin{equation}\label{TrivialBound}
\Scal_V(\omega,it,K;q),\Scal_V(f,K;q)\ll Mq^{1+\varepsilon},
\end{equation}
with an implied constant depending only on $V$ and the spectral parameter $t_f$ and this bound can be seen as the trivial one. Theorem \ref{Theorem2} improves on \eqref{TrivialBound} with a power saving in the $q$-aspect, namely 
$$\Scal_V(\omega,it,K;q),\Scal_V(f,K;q)\ll q^{1-\frac{1}{16}+\varepsilon},$$
for any $\varepsilon>0$ and with an implied constant depending on $\varepsilon, V,t_f$ (or $t$) and controlled by some invariant of $K$, called the conductor (see Definition \eqref{DefinitionConductor}). As in \cite[Definition 1.1]{twists}, we make the following definition about the test function $V$.
\begin{defi}[Condition $V(C,P,Q)$]\label{DefinitionVCQ} Let $P>0$ and $Q\geqslant 1$ be real numbers and let $C = (C_\nu)_{\nu\geqslant 0}$ be a sequence of non-negative real numbers. A smooth compactly supported function $V$ on $\Rbf$ satisfies Condition $(V (C, P, Q))$ if
\begin{enumerate}
\item [$(1)$] The support of $V$ is contained in the dyadic interval $[P,2P]$;
\item [$(2)$] For all $x>0$ and all integer $\nu\geqslant 0$, we have the inequality 
$$\left|x^\nu V^{(\nu)}(x)\right|\leqslant C_\nu Q^\nu.$$
\end{enumerate}
\end{defi}
In practice, the test function $V$ is not compactly supported, but rather in the Schwartz class. We give here a simple Corollary of Theorem \ref{Theorem2}.
\begin{cor}\label{CorollarySchwartz} Let $q>2$ be a prime number, $\omega$ a Dirichlet character of modulus $q$, $f$ a primitive Hecke cusp form of level $q$, nebentypus $\omega$ and spectral parameter $t_f$. Let $K$ be a non Fourier-exceptional isotypic trace function of conductor $\cond(K)$. Let $Q\geqslant 1$, $C=(C_\nu)_{\nu}$ a sequence of non-negative real numbers and $V$ a smooth function on $\Rbf$ with the property that for any $A>0$,
\begin{equation}\label{HypothesisCorollary}
V(x)\ll_A \frac{1}{(1+|x|)^A} \ \ \mathrm{and} \ \ \left|x^\nu V^{(\nu)}(x)\right|\leqslant C_\nu Q^\nu, \ \ \nu\geqslant 0.
\end{equation}
Then for every $X>0$ and every $\varepsilon>0$, we have
$$\sum_{n\geqslant 1}\lambda_f(n)K(n)V\left(\frac{n}{X}\right)\ll_{C,\varepsilon} (qX)^\varepsilon(1+|t_f|)^A\cond(K)^sXQ^2\left(1+\frac{q}{X}\right)^{1/2}q^{-1/16},$$
$$\sum_{n\geqslant 1}\lambda_\omega(n,it)K(n)V\left(\frac{n}{X}\right)\ll_{C,\varepsilon}(qX)^\varepsilon(1+|t|)^A \cond(K)^s XQ^2\left(1+\frac{q}{X}\right)^{1/2}q^{-1/16},$$
\end{cor}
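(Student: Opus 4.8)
The plan is to deduce Corollary~\ref{CorollarySchwartz} from Theorem~\ref{Theorem2} by truncating the sum and decomposing the surviving range into dyadic blocks. We may assume $X\geqslant 1$: if $X<1$ then $n/X>1$ for all $n\geqslant 1$, so by the decay in \eqref{HypothesisCorollary}, the bound $|\lambda_f(n)|\leqslant\tau(n)$ and $|K(n)|\ll\cond(K)^{O(1)}$, the whole sum is $\ll_A\cond(K)^{O(1)}X^{A}\sum_{n\geqslant 1}\tau(n)n^{-A}\ll_A\cond(K)^{O(1)}X^{A}$, which for $A$ large is already dominated by the asserted bound $\gg(qX)^{\varepsilon}X^{1/2}q^{7/16}$ in the range $\cond(K)\ll q^{o(1)}$ where Theorem~\ref{Theorem2} is of interest. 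For $X\geqslant 1$ I would first discard the tail $\sum_{n>Z}$ with $Z:=X(qX)^{\varepsilon}$, which by the same trivial estimate is $\ll_A\cond(K)^{O(1)}X^{A}Z^{1-A+\varepsilon}$ and hence negligible for $A$ large. On the range $1\leqslant n\leqslant Z$ I would insert a smooth dyadic partition of unity in $n$ over $O(\log(2+qX))$ scales $2^k\leqslant Z$: writing $W_k(t):=V(qt/X)\phi(2^{-k}qt)$ for a fixed bump function $\phi$ supported in $[1,2]$ with $\sum_{k\in\Zbf}\phi(2^{-k}y)=1$ for $y>0$, one has $W_k(n/q)=V(n/X)\phi(2^{-k}n)$, the function $W_k$ is supported in $[P_k,2P_k]$ with $P_k=2^k/q\leqslant Z/q$, and — passing to the variable $u=qt/X$, which conjugates $t\frac{d}{dt}$ into the Euler operator $u\frac{d}{du}$, and using the scale-invariant bounds in \eqref{HypothesisCorollary} together with the Leibniz rule — $W_k$ satisfies Condition $V(C',P_k,Q)$ of Definition~\ref{DefinitionVCQ} for a sequence $C'$ depending only on $C$ (and $\phi$), uniformly in $k$.

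Then I would feed each block into Theorem~\ref{Theorem2}, which gives, for any $\delta<\tfrac1{16}$,
\begin{equation*}
\sum_{n\geqslant 1}\lambda_f(n)K(n)W_k\!\left(\frac{n}{q}\right)\ \ll_{C,\delta}\ (1+|t_f|)^{A}\cond(K)^{s}\,q^{1-\delta}(P_kQ)^{1/2}(P_k+Q)^{1/2},
\end{equation*}
and the analogous bound for $\Scal_V(\omega,it,K;q)$ with $(1+|t|)^{A}$ in place of $(1+|t_f|)^{A}$. Because $P\mapsto(PQ)^{1/2}(P+Q)^{1/2}$ is increasing, summing over the $O(\log(2+qX))$ blocks (and adding the negligible tail) bounds $\Scal_V(f,K;q)$ by $(qX)^{O(\varepsilon)}(1+|t_f|)^{A}\cond(K)^{s}q^{1-\delta}\big(\tfrac{Z}{q}Q\big)^{1/2}\big(\tfrac{Z}{q}+Q\big)^{1/2}$. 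Taking $\delta=\tfrac1{16}-\varepsilon$, estimating $\big(\tfrac Zq+Q\big)^{1/2}\leqslant\big(\tfrac Zq\big)^{1/2}+Q^{1/2}$, and inserting $Z=X(qX)^{\varepsilon}$, a direct computation — distinguishing according to whether $Z/q\lessgtr Q$ — shows that this is
$$\Scal_V(f,K;q)\ \ll_{C,\varepsilon}\ (qX)^{\varepsilon}(1+|t_f|)^{A}\cond(K)^{s}\,XQ^{2}\left(1+\frac{q}{X}\right)^{1/2}q^{-1/16},$$
which is the claimed estimate; the bound for $\Scal_V(\omega,it,K;q)$ follows in the same way from the second estimate of Theorem~\ref{Theorem2}.

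There is no real obstacle here: Theorem~\ref{Theorem2} does all the work, and the rest is bookkeeping. The one point that requires a little care is propagating the parameters $X,q,Q$ through the geometric sum over the dyadic blocks — the dominant contribution comes from the top scale $P_k\asymp(qX)^{\varepsilon}X/q$, and it is only because the target allows $Q^{2}$ rather than $Q$ (and a factor $(qX)^{\varepsilon}$) that the $Q^{1/2}$ lost in $(P_k+Q)^{1/2}$, the logarithmic loss from the number of blocks, and the degenerate regime $X<1$ are all comfortably absorbed.
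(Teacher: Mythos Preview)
Your proof is correct and follows the same overall scheme as the paper --- dyadic decomposition, truncation via the decay of $V$, and an application of Theorem~\ref{Theorem2} block by block --- but the execution differs in one useful way. The paper, after localizing to $n\sim N$ with a fixed bump $W$, invokes Mellin inversion on $V$ to write the block as $\int_{(\varepsilon)}(X/N)^s\widetilde{V}(s)\bigl(\sum_n\lambda_f(n)K(n)W_s(n/N)\bigr)\,ds$ with $W_s(x)=x^{-s}W(x)$; the point is that $W_s$ satisfies Condition $V(\tilde C,N/q,1+|s|)$, so Theorem~\ref{Theorem2} applies with $Q$-parameter $1+|s|$, and the original $Q$ re-enters only through the decay $\widetilde V(s)\ll (Q/(1+|s|))^B$, which truncates the integral to $|s|\leqslant q^\varepsilon Q$. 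You bypass this by observing --- via the scale-invariance of the Euler operator and Leibniz --- that the product $t\mapsto V(qt/X)\phi(2^{-k}qt)$ already satisfies Condition $V(C',P_k,Q)$ directly, so Theorem~\ref{Theorem2} applies with the correct $Q$ from the start. This is cleaner and saves the Mellin detour; the price is only that you must handle the edge case $X<1$ by hand, which the paper's argument absorbs automatically. Both routes land on the same final bound after maximizing over the top block $P_k\asymp X/q$.
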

\begin{proof}
We consider the cuspidal case since the other is completely similar. Applying a partition of unity to $[1,\infty)$ leads to the decomposition
$$\sum_{n\geqslant 1}\lambda_f(n)K(n)V\left(\frac{n}{X}\right)=\sum_N\sum_{n\geqslant 1}\lambda_f(n)K(n)V\left(\frac{n}{X}\right)W\left(\frac{n}{N}\right),$$
where $W$ is a smooth and compactly supported function on $(1/2,2)$ satisfying $|x^{j}W^{(j)}(x)|\leqslant \tilde{c_j}$ for some sequence $\tilde{c}=(\tilde{c_j})$ of non-negative real numbers and $N$ runs over real numbers of the form $2^{i}$ with $i\geqslant 0$. Since $V$ has fast decay at infinity, we can focus on the contribution of $1\leqslant
N\leqslant q^\varepsilon X$ at the cost of an error of size $O(q^{-10})$, (say). Hence, we just need to bound $O(\log(qX))$ sums of the form
$$\sum_{n\geqslant 1}\lambda_f(n)K(n)V\left(\frac{n}{X}\right)W\left(\frac{n}{N}\right).$$
By Mellin inversion formula, we have for any $\varepsilon>0$
$$\sum_{n\geqslant 1}\lambda_f(n)K(n)V\left(\frac{n}{X}\right)W\left(\frac{n}{N}\right)=\frac{1}{2\pi i}\int_{(\varepsilon)}\left(\frac{X}{N}\right)^s\widetilde{V}(s)\left(\sum_{n\geqslant 1}\lambda_f(n)K(n)W_s\left(\frac{n}{N}\right)\right)ds,$$
where the function $W_s(x) :=x^{-s}W(x)$ satisfies 
\begin{equation}\label{QW}
x^jW_s^{(j)}(x) \ll_{\tilde{c},j} \left(1+|s|\right)^j.
\end{equation}
For fixed $s$ with $\Re e (s)=\varepsilon$, we apply Theorem \ref{Theorem2} to the inner sum with the function 
$V(x)=W_s(xq/N)$ which satisfies condition $V(\tilde{C},N/q,1+|s|)$ for some $\tilde{C}$ depending on $\tilde{c}$, obtaining the bound
$$(1+|t_f|)^A\cond(K)^sq^{\frac{1}{2}-\frac{1}{16}+\varepsilon}\left(\frac{X}{N}\right)^\varepsilon\int_{(\varepsilon)}|\widetilde{V}(s)|(N(1+|s|))^{1/2}\left(\frac{N}{q}+1+|s|\right)^{1/2}ds.$$
Using the fact that the Mellin transform $\widetilde{V}(s)$ satisfies
$$\widetilde{V}(s)\ll \left(\frac{Q}{1+|s|}\right)^B,$$
for every $B>0$ with an implied constant depending on $B$ and $\Re e (s)$, we see that we can restrict the integral to $|s|\leqslant q^\varepsilon Q$. Hence replacing $1+|s|$ by its maximal value, maximizing over $N\leqslant q^\varepsilon X$ and average trivially over $|s|\leqslant q^\varepsilon Q$ in the integral yields the desire result.
\end{proof}

\subsection{Good functions and correlating matrices} We follow the same structure as in \cite{twists}. We forget all the specific properties that $K$ might have and proceed abstractly in order to deal with a great level of generality. Therefore we consider the problem of bounding the sum $\Scal_V (f, K; p)$ for $K : \Zbf \longrightarrow \Cbf$ a general $q$-periodic function, assuming only that we know that $||K||_\infty\leqslant M$ for some $M$ that we think as fixed. The strategy is to estimate an amplified second moment of $\Scal_V(g,K;q)$ for $g$ varying in a basis of cusp forms of level $q$ and nebentypus $\omega$. After completing the spectral sum, applying Kuznetsov-Petersson and Poisson formula, we have to confront some sums that we call $\mathit{twisted \ correlation \ sums}$, which we now define.

\vspace{0.2cm}

We denote by $\widehat{K}$ or $\FT(K)$ the normalized Fourier transform of $K$ given by
\begin{equation}\label{DefinitionFourier}
\FT(K)(y)=\widehat{K}(y)=\frac{1}{q^{1/2}}\sum_{x\in\Fbf_q}K(x)e\left(\frac{xy}{q}\right),\end{equation}
and we let $\PGL_2(\Fbfbar_q)$ acts on the projective line $\mathbf{P}^1(\Fbfbar_q)$ by fractional linear transformations.
\begin{defi}[Twisted correlation sum]\label{DefinitionCorrelation} Let $\gamma=\left(\begin{smallmatrix}
a & b \\ c & d
\end{smallmatrix}\right)\in\PGL_2(\Fbf_q)$. For $\omega$ a multiplicative character modulo $q$ and $K : \Fbf_q \longrightarrow \Cbf$, we define the twisted correlation sum $\Ccal(K,\omega;\gamma)$ by 
$$\Ccal(K,\omega;\gamma) := \sum_{\substack{z\in\Fbf_q \\ z\neq -d/c}}\omegabar(cz+d)\widehat{K}(\gamma z)\overline{\widehat{K}(z)}.$$
\end{defi}
\begin{remq} Note that for $\gamma\in\PGL_2(\Fbf_q)$, $\Ccal(K,\omega;\gamma)$ is well defined up to multiplication by $\omega(-1)\in\{-1,+1\}$. This is in fact not a problem since only the complex modulus $|\Ccal(K,\omega;\gamma)|$ will be considered later (see for example Definition \ref{Definition(q,M)-good}). We also mention that unlike the definition of correlation sum that the authors made for the original Theorem (see \cite[eq. (1.10)]{twists}), we have the presence here of a twist by the nebentypus of the modular form $f$. This is because the Kloosterman sums that we obtain after the application of Kuznetsov trace formula are also twsited by $\omega$.
\end{remq}
Assuming $||K||_\infty\leqslant M$, Cauchy-Schwarz and Parseval identity leads to 
\begin{equation}\label{Parseval}
|\Ccal(K,\omega;\gamma)|\leqslant M^2q.
\end{equation}
Of course this bound will not be satisfactory. According to the square root cancellation philosophy, one expects bound of the form $|\Ccal(K,\omega;\gamma)|\leqslant Mq^{1/2}$ and that the $\gamma'$s for which this estimate fails is well controlled. By this, we mean that this set of matrices (called correlation matrices) is nicely structured and rather small. Before giving a precise definition of correlation matrices and good functions, we introduce the following notations concerning algebraic subgroups of $\PGL_2$.
\begin{enumerate}
\item We denote by $\Brm\subset \PGL_2$ the subgroup of upper-triangular matrices, i.e. the stabilizer of $\infty\in\Pbfq ;$
\item We write $w=\left( \begin{smallmatrix} 0 & 1 \\ 1 & 0 \end{smallmatrix}\right)$ for the Weyl element, so that $\Brm w$ (resp. $w\Brm$) is the set of matrices mapping $0$ to $\infty$ (resp. $\infty$ to $0$);
\item We denote by $\PGL_{2,par}$ the subset of parabolic matrices in $\PGL_2$, i.e. which have a single fixed point in $\Pbfq;$ 
\item Given $x\neq y$ in $\Pbfq$, the pointwise stabilizer of $x$ and $y$ is denoted $\Trm^{x,y}$ (this is a maximal torus), and its normalizer in $\PGL_2$ (or the stabilizer of the set $\{x,y\}$) is denoted $\Nrm^{x,y}$.
\end{enumerate}
\begin{defi}[Good functions]\label{Definition(q,M)-good} Let $q$ be a prime, $K:\Fbf_q\longrightarrow \Cbf$ an arbitrary function and $\omega$ a Dirichlet character modulo $q$. Let $M\geqslant 1$ be such that $||K||_\infty\leqslant M$. 
\begin{enumerate}
\item [$(1)$] We define
$$\Gbf_{K,\omega,M}:=\left\{ \gamma\in\PGL_2(\Fbf_q) \ | \ |\Ccal(K,\omega;\gamma)|> Mq^{1/2}\right\},$$
the set of $M$-$\mathbf{correlation \ matrices}$.
\item [$(2)$] We say that $K$ is $(q,M)$-good if $\Gbf_{K,\omega,M}\cap \PGL_{2,par}=\emptyset$ and there exists at most $M$ pairs $(x_i,y_i)$ of distinct elements in $\mathbf{P}^1(\Fbfbar_q)$ such that 
$$\Gbf_{K,\omega,M}=\Gbf_{K,\omega,M}^b\cup \Gbf_{K,\omega,M}^t\cup \Gbf_{K,\omega,M}^w,$$
where
$$\Gbf_{K,\omega,M}^b\subset \Brm(\Fbf_q)\cup \Brm(\Fbf_q)w\cup w\Brm(\Fbf_q),$$
$$\Gbf_{K,\omega,M}^t\subset\bigcup_{i}\Trm^{x_i,y_i}(\Fbf_q), \ \ \Gbf_{K,\omega,M}^w\subset \bigcup_i\left(\Nrm^{x_i,y_i}-\Trm^{x_i,y_i}\right)(\Fbf_q).$$
\end{enumerate}
\end{defi}
If $K$ is a $(q,M)$-good function, it turns out that the set of matrices $\gamma$ obtained form the amplification method does not intersect the set $\Gbf_{K,\omega,M}$ of correlating matrices in a too large set. More precisely, the technical result is the following generalization of \cite[Theorem 1.9]{twists}.
\begin{theorem}\label{Theorem(q,M)-good} Let $q>2$ be a prime number, $f$ a primitive Hecke cusp form of level $q$ and nebentypus $\omega$. Let $V$ be a function satisfying $V(C,P,Q)$. Let $K : \Fbf_q\longrightarrow\Cbf$ be a $(q,M)$-good function with $||K||_\infty\leqslant M$. Then there exists an absolute constant $A\geqslant 1$ such that 
$$\Scal_V(f,K;q)\ll_{\delta,C} (1+|t_f|)^AM^{3/2}q^{1-\delta}(PQ)^{1/2}(P+Q)^{1/2},$$
$$\Scal_V(\omega,it,K;q)\ll_{\delta,C}(1+|t|)^A M^{3/2}q^{1-\delta}(PQ)^{1/2}(P+Q)^{1/2},$$
for any $\delta<1/16$.
\end{theorem}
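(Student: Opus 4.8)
\textbf{Proof plan for Theorem \ref{Theorem(q,M)-good}.}
The plan is to adapt the amplification method of \cite{twists} (Theorem 1.9 there) to the present setting of forms of level exactly $q$ with nebentypus $\omega$, where the arising Kloosterman sums are twisted by $\omega$. The starting point is to bound $\Scal_V(f,K;q)$ (and similarly $\Scal_V(\omega,it,K;q)$) by embedding $f$ into an orthonormal Hecke eigenbasis of level $q$ (or, following \S\ref{SpecialCase}, of level $2q$, so that Eisenstein series are Hecke eigenforms) and introducing an amplifier $\sum_{\ell\sim L}x_\ell\lambda_g(\ell)$ supported on primes $\ell\nmid q$, with $x_\ell=\overline{\lambda_f(\ell)}/|\lambda_f(\ell)|$ or similar. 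By positivity, $|\Scal_V(f,K;q)|^2$ times the square of the amplifier at $f$ is bounded by the full spectral average
\[
\sum_{g}\Big|\sum_{\ell\sim L}x_\ell\lambda_g(\ell)\Big|^2\Big|\sum_{n}\lambda_g(n)K(n)V(n/q)\Big|^2,
\]
where $\sum_g$ runs over the holomorphic, Maass, and Eisenstein spectrum with the appropriate Petersson/Kuznetsov weights, plus a continuous integral over $t$. Using \eqref{LowerBound}--\eqref{LowerBoundEisenstein} to bound $|\rho_f(1)|^2$ from below (this contributes the $(1+|t_f|)^A$ factor), and the Ramanujan--Petersson bound \eqref{Ramanujan-Petersson} (the hypothesis of the theorem) to control the amplifier at $f$, it suffices to estimate this full spectral average.

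Next I would open the square in the $n$-sum, use Hecke multiplicativity \eqref{HeckeMult1} to rewrite $\lambda_g(\ell)\lambda_g(\ell')\lambda_g(n)\lambda_g(m)$ as a sum of $\lambda_g$ at a single argument (introducing a twist by $\omega$ at the divisors, which is harmless), and apply the Kuznetsov formula \eqref{Kuznetsov} (together with its holomorphic analogue \eqref{Petersson}) to convert the spectral sum into a sum of twisted Kloosterman sums $S_\omega(\,\cdot\,,\,\cdot\,;c)$ with $q\mid c$. The diagonal term yields a main contribution of size roughly $q^{1+\varepsilon}L^2/L^2=q^{1+\varepsilon}$ after dividing by the amplifier normalization, which is acceptable. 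For the off-diagonal, after applying Poisson summation to the $n$- and $m$-variables modulo $q$ (the modulus $c$ being divisible by $q$), the function $K$ is replaced by its Fourier transform $\widehat K$ via \eqref{DefinitionFourier}, and the resulting expression organizes itself into the twisted correlation sums $\Ccal(K,\omega;\gamma)$ of Definition \ref{DefinitionCorrelation}, where $\gamma$ ranges over a controlled set of matrices in $\PGL_2(\Fbf_q)$ whose entries are built from $\ell,\ell',$ and the Poisson frequencies. The extra factor $\omegabar(cz+d)$ in $\Ccal(K,\omega;\gamma)$ is precisely what matches the twist $\omegabar(d)$ in $S_\omega(m,n;c)$.

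The decisive input is then the $(q,M)$-goodness hypothesis: the set $\Gbf_{K,\omega,M}$ of matrices for which $|\Ccal(K,\omega;\gamma)|>Mq^{1/2}$ avoids parabolics entirely and is contained in a bounded union of Borel cosets and tori/normalizers. One counts how many of the amplifier-generated matrices $\gamma$ can fall into $\Gbf_{K,\omega,M}$ — since tori and Borel cosets are one-dimensional and the frequencies range over intervals of length $\asymp L^2$ and $\asymp q/N$ respectively, the exceptional contribution is small — while for all remaining $\gamma$ one uses the square-root bound $|\Ccal(K,\omega;\gamma)|\le Mq^{1/2}$. Assembling the diagonal, the exceptional off-diagonal, and the generic off-diagonal contributions, optimizing the amplifier length $L$ (one finds $L$ a small power of $q$ works, giving $\delta<1/16$), and keeping track of the dependence on $P,Q$ through the support and derivative bounds in Condition $V(C,P,Q)$ (which, after a Mellin-type splitting as in Corollary \ref{CorollarySchwartz}, enter only through the factors $(PQ)^{1/2}(P+Q)^{1/2}$) completes the estimate; the spectral large sieve inequality of Proposition \ref{PropositionSieve} is used to control the tails of the spectral expansion and the contribution of large spectral parameters.

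\textbf{The main obstacle.} I expect the genuinely new difficulty, relative to \cite{twists}, to be twofold and concentrated in the bookkeeping rather than in a single hard estimate: first, because the level of $f$ is now the \emph{same} prime $q$ that defines the modulus of $K$, the Poisson step produces Kloosterman sums to moduli $c$ that are genuine multiples of $q$ (not coprime to $q$), so one must carefully separate the $q$-part — which is what manufactures the correlation sum $\Ccal(K,\omega;\gamma)$ — from the prime-to-$q$ part, where the ordinary Weil bound of Lemma \ref{LemmaWeilBound} applies; and second, one must verify that the $\omega$-twist propagates consistently through Hecke multiplicativity and Poisson so that it lands exactly as the factor $\omegabar(cz+d)$ in Definition \ref{DefinitionCorrelation}, using $\omega(-1)=(-1)^\kappa$ and the conductor being prime (hence odd and squarefree) so that Proposition \ref{PropositionSieve} and Lemma \ref{LemmaWeilBound} remain available without the spurious conductor factor. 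Once these are in place, the counting of exceptional matrices and the final optimization are essentially as in \cite[\S 4--5]{twists}.
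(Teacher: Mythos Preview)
Your overall architecture matches the paper's: amplification, embedding into level $2q$, Petersson/Kuznetsov, Poisson to produce the twisted correlation sums $\Ccal(K,\omega;\gamma)$, and the counting of exceptional matrices via the $(q,M)$-good hypothesis. The two ``bookkeeping'' points you flag (separating the $q$-part of the modulus, and checking that the $\omega$-twist lands as $\omegabar(cz+d)$) are indeed exactly what the paper tracks.

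There is, however, a genuine gap in your choice of amplifier. You propose $x_\ell=\overline{\lambda_f(\ell)}/|\lambda_f(\ell)|$ (or $\overline{\lambda_f(\ell)}$) on primes $\ell\sim L$. This does not work here: the level of $f$ is $q$ and $L$ is a small power of $q$, so the prime number theorem for $L(f\times\bar f,s)$ --- whose effective range depends on the analytic conductor $\asymp q(1+|t_f|)^2$ --- gives no lower bound on $\sum_{p\sim L}|\lambda_f(p)|^2$. The paper addresses this explicitly by using Iwaniec's amplifier \eqref{DefinitionAmplifier}, supported on primes $p\sim L^{1/2}$ and their squares $p^2\sim L$, with $b_p=\lambda_f(p)\omegabar(p)$ and $b_{p^2}=-\omegabar(p)$. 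The point is the Hecke identity $\lambda_f(p)^2-\lambda_f(p^2)=\omega(p)$, which forces $B(f)=\sum_{p\sim L^{1/2}}1\gg L^{1/2}/\log L$ unconditionally. (Note also that Theorem~\ref{Theorem(q,M)-good} does \emph{not} assume Ramanujan--Petersson, so you cannot invoke it.) This amplifier then introduces factorizations $de=\ell_1\ell_2$ of several different types (primes and squares of primes), and the off-diagonal analysis must be carried out for each type separately; this is the content of the paper's Types I--III and the subsequent lemmas.

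A secondary point: ``and similarly $\Scal_V(\omega,it,K;q)$'' understates what is needed. Since the Eisenstein series lives in the continuous spectrum, amplification gives only an \emph{average} bound over a short $t$-interval (Proposition~\ref{PropositionShortInterval}); the paper then passes to a pointwise bound via Mellin inversion (Lemma~\ref{LemmaMellin}), expressing $\Scal_{V,M,N}(\omega,it,K;q)$ as an integral of $\Scal_{V_{it_1}}(\omega,i(t+t_2),K;q)$ over $|t_1|,|t_2|\le q^\varepsilon$ and applying the short-average bound. This step is not automatic and should be made explicit.
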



\subsection{Trace functions of $\ell$-adic sheaves}\label{Sectionl-adic} The functions to which we will apply Theorem \ref{Theorem(q,M)-good} are called $\mathit{trace \ functions}$ modulo $q$, which we now define formally. 

\vspace{0.1cm}

Let $q$ be a prime number and $\ell\neq q$ be an auxiliary prime. To any constructible $\Qlbar$-sheaf $\Fcal$ on $\Abfq$ and any point $x\in\mathbf{A}^1(\Fbf_q)$, we have a linear action of a geometric Frobenius $\Frm_x$ acting on a finite dimensional $\Qlbar$-vector space $\Fcal_{\xbar}$ (we can take $\Fcal_{\xbar}=\Fcal_{\etabar}$ if $\Fcal$ is lisse at $x$, where $\etabar$ is the geometric generic point \cite[§ 4.4]{katz80}). We can thus consider the trace $\Tr(\Frm_x | \Fcal_{\xbar})$. Because this trace takes value in $\Qlbar$, we also fix a field isomorphism 
$$\iota : \Qlbar\overset{\simeq}{\longrightarrow} \Cbf,$$
and we consider functions of the shape
\begin{equation}\label{DefinitionK(x)}
K(x)=\iota\left(\Tr(\Frm_x | \Fcal_{\xbar})\right),
\end{equation}
as defined in \cite[(7.3.7)]{katz90}.

\begin{defi}[Trace sheaves]\label{DefinitionTracesheaf} 1) A constructible $\Qlbar$-sheaf $\Fcal$ on $\Abfq$ is called a $\mathit{trace \ sheaf}$ if it is a middle extension sheaf (in the sens of \cite[Section 1]{inverse}) whose restriction on any non empty open subset $U\subset\Abfq$ where $\Fcal$ is lisse is pointwise pure of weight zero.

\vspace{0.1cm}

\noindent 2) A trace sheaf is called a $\mathit{Fourier \ trace \ sheaf}$ if in addition, it is a Fourier sheaf in the sense of \cite[Definition 8.2.2]{katz88}.

\vspace{0.2cm}

\noindent 3) We say that $\Fcal$ is an $\mathit{isotypic \ trace \ sheaf}$ if it is a Fourier trace sheaf and if for every non empty open subset $U$ as in (1), the associated $\ell$-adic representation $$\pi_1(U\otimes_{\Fbf_q}\Fbfbar_q,\etabar)\longrightarrow \GL(\Fcal_{\etabar}),$$ of the geometric etale fondamental group is an isotypic representation \cite[Chapter 2]{katz88}. We define similarly an $\mathit{irreducible \ trace \ sheaf}$.
\end{defi}

\begin{defi} Let $q$ be a prime number. A function $K : \Fbf_q\longrightarrow\Cbf$ is called a $\mathit{trace \ function}$ (resp. $\mathit{Fourier \ trace \ function, \ isotypic \ trace \ function}$) if there exists a trace sheaf (resp Fourier trace sheaf, isotypic trace sheaf) $\Fcal$ such that $K$ is given by \eqref{DefinitionK(x)}.
\end{defi}
There is an important invariant which measure the complexity of a trace function which we define now.
\begin{defi}[Conductor]\label{DefinitionConductor} Let $\Fcal$ be a constructible $\Qlbar$-sheaf on $\Abfq$ with $n(\Fcal)$ singularities in $\Pbfq$. The $\mathit{conductor}$ of $\Fcal$ is the integer defined by
$$\cond(\Fcal):= \rank(\Fcal)+n(\Fcal)+\sum_{x\in \Pbfq}\Swan_x(\Fcal),$$
where $\Swan_x(\Fcal)=0$ if $\Fcal$ is lisse at $x$ (see for example \cite[Section 4.6]{katz80}). If $K$ is a trace function, the conductor $\cond(K)$ of $K$ is the smallest conductor of a trace sheaf $\Fcal$ with trace function $K$. 
\end{defi}


\subsection{Examples of trace functions} We give in this section basic examples of trace functions that we shall need later.

\subsubsection{Kummer and Artin-Schreier sheaves}\label{SectionKummerArtin} Let $\psi : \Fbf_q\longrightarrow \Cbf^\times$ be a non trivial additive character and $f=f_1/f_2\in\Fbf_q(X)$ be a rational function with all poles of order $<q$. Then for every prime $\ell\neq q$, there exists a trace sheaf $\Lcal_{\psi(f)}$ of rank 1 on $\Abfq$ with the following properties :
\begin{enumerate}
\item[$(1)$] $\iota\left(\Tr\left(\Frm_x | (\Lcal_{\psi(f)})_{\xbar}\right)\right)=\psi(f(x))$ for every $x\in\Fbf_q$ (under the convention that $\psi(\infty)=0$);
\item[$(2)$] $\Lcal_{\psi(f)}$ is lisse on $\Abfq-\{\mathrm{poles \ of \ }f\}$ with $\Swan_x$ equal to the order of the pole at $x$;
\item[$(3)$] We have $\cond (\Lcal_{\psi(f)})\leqslant 1+2\deg(f_2)$.
\end{enumerate}
Now let $\omega : \Fbf_q^\times\longrightarrow \Cbf^\times$ be a non-trivial multiplicative character of order $d$ and $f=f_1/f_2\in \Fbf_q(X)$ a rational function with zeros and poles of order not divisible by $d$. Then for every prime $\ell\neq q$, there exists a trace sheaf $\Lcal_{\omega(f)}$ of rank 1 on $\Abfq$ such that 
\begin{enumerate}
\item[$(1)$] $\iota\left(\Tr\left(\Frm_x | (\Lcal_{\omega(f)})_{\xbar}\right)\right)=\omega(f(x))$ for every $x\in\Fbf_q$ (under the convention that $\omega(0)=\omega(\infty)=0$);
\item[$(2)$] $\Lcal_{\omega(f)}$ is lisse on $\Abfq-\{\mathrm{zeros \ and \ poles \ of \ }f\}$ with $\Swan_x=0$ for every $x\in\mathbf{P}^1(\Fbfbar_q)$;
\item[$(3)$] $\cond(\Lcal_{\omega(f)})\leqslant 1+\deg(f_1)+\deg(f_2)$.
\end{enumerate}

\vspace{0.2cm}

\noindent The sheaf $\Lcal_{\psi(f)}$ is called an $\mathit{Artin}$-$\mathit{Schreier \ sheaf}$ and we say that $\Lcal_{\psi(f)}$ has $\mathit{wild \ ramification}$ at $\infty$ because $\Swan_\infty>0$. The sheaf $\Lcal_{\omega(f)}$ is called a $\mathit{Kummer \ sheaf}$ and we say that $\Lcal_{\omega(f)}$ is tamely ramified at the singularities because $\Swan_x=0$. We refer to \cite[Sommes trig.]{sga4} for the construction and properties of such sheaves and also to \cite[Proposition 1.44-1.45]{perret}.

\subsubsection{Kloosterman sheaves} Let $k\geqslant 2$ be an integer, $\omega_1,...,\omega_k$ be multiplicative characters on $\Fbf_q^\times.$ The twisted rank $k$ Kloosterman sum $\Kl_k(\omega_1,...,\omega_k;q)$ is the function on $\Fbf_q^\times$ defined by 
\begin{equation}\label{DefinitionKloosterman}
 \Kl_k(a,\omega_1,...,\omega_k;q):=\frac{1}{q^{\frac{k-1}{2}}}\sum_{\substack{x_1,...,x_k\in\Fbf_q^\times \\ x_1\cdots x_k=a}}\omega_1(x_1)\cdots \omega_k(x_k)e\left(\frac{x_1+...+x_k}{q}\right),
\end{equation}
for every $a\in\Fbf_q^\times$. If $\omega_1=...=\omega_k=1$, we write $\Kl_k(a;q)$ instead of $\Kl_k(a,1,...,1:q)$. The main result is the existence of Kloosterman sheaves and it is due to Deligne \cite[Theorem 4.1.1]{katz88}.
\begin{theorem}[Kloosterman sheaves] For every prime $\ell\neq q$, there exists a constructible $\Qlbar$-sheaf on $\Abfq$, denoted by $\Klcal_k(\omega_1,...,\omega_k;q)$ (or simply $\Klcal$), of rank $k$ satisfying the following properties :
\begin{enumerate}
\item[$(1)$] For every $a\in\Fbf_q^\times,$ we have the equality 
$$\iota\left(\Tr\left(\Frm_a | (\Klcal)_{\overline{a}}\right)\right)=(-1)^{k-1}\Kl_k(a,\omega_1,...,\omega_k;q);$$
\item[$(2)$] $\Klcal$ is geometrically irreducible, lisse on $\Gbf_{m,\Fbf_q}$ and pointwise pure of weight zero;
\item[$(2)$] $\Klcal$ has wild ramification at $\infty$ with $\Swan_\infty(\Klcal)=1$, tamely ramified at $0$ and has conductor $k+3$.
\end{enumerate}
In other words, since the rank is $\geqslant 2$, $\Klcal$ is an irreducible trace sheaf in the sense of Definition \ref{DefinitionTracesheaf} $(3)$.
\end{theorem}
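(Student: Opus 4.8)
The plan is to construct $\Klcal:=\Klcal_k(\omega_1,\dots,\omega_k;q)$ by iterated multiplicative convolution of the rank one sheaves recalled in \S\ref{SectionKummerArtin}, and then to check the claimed properties one by one. For $k=1$ I would take $\Klcal_1(\omega_1;q):=\Lcal_{\omega_1(x)}\otimes\Lcal_{\psi(x)}$ (with $\psi(x)=e(x/q)$, and $\Lcal_{\omega_1}$ replaced by the constant sheaf $\Qlbar$ when $\omega_1$ is trivial): by \S\ref{SectionKummerArtin} this is a rank one trace sheaf on $\Gbf_{m,\Fbf_q}$, lisse on $\Gbf_m$, tame at $0$, wildly ramified at $\infty$ with $\Swan_\infty=1$, pure of weight $0$, and with trace function $a\mapsto\omega_1(a)e(a/q)=\Kl_1(a,\omega_1;q)$. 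For $k\geqslant 2$ I would set
$$\Klcal_k(\omega_1,\dots,\omega_k;q):=\Klcal_{k-1}(\omega_1,\dots,\omega_{k-1};q)\,*_{!}\,\bigl(\Lcal_{\omega_k(x)}\otimes\Lcal_{\psi(x)}\bigr),$$
where $*_{!}$ denotes the $!$-pushforward of the external tensor product along the multiplication map $\Gbf_m\times\Gbf_m\to\Gbf_m$; unwinding the recursion, $\Klcal$ is $R^{k-1}\pi_!\bigl(\bigotimes_{i=1}^{k}\Lcal_{\omega_i(x_i)}\otimes\Lcal_{\psi(x_1+\dots+x_k)}\bigr)$ up to a Tate twist, where $\pi:\Gbf_m^k\to\Gbf_m$ is $(x_1,\dots,x_k)\mapsto x_1\cdots x_k$.

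The first step of the verification is to show that the convolution is concentrated in cohomological degree $k-1$, so that $\Klcal$ is genuinely a sheaf; I would do this by induction, checking that for each $a\in\Gbf_m$ the relevant lisse sheaf on the fibre $\cong\Gbf_m$ has vanishing $H^0_c$ (no proper component) and vanishing $H^2_c$ (no geometrically constant quotient, because of the wild ramification coming from $\Lcal_\psi$). Granting concentration, the Grothendieck--Lefschetz trace formula applied fibrewise, together with the $k=1$ case, gives property (1), the identification of the trace function with $(-1)^{k-1}\Kl_k$, and the Grothendieck--Ogg--Shafarevich formula on the fibre computes the generic rank of $\Klcal$ as $k$. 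Lissité on $\Gbf_{m,\Fbf_q}$ comes from proper/smooth base change over the locus where the family of fibres does not degenerate, namely $\Gbf_m$. Purity of weight $0$ (the remaining part of (2)) follows from Deligne's Weil~II applied to the pure weight $0$ input sheaves, combined with Poincaré duality on the fibres, exactly as in Deligne's original argument, after the normalising Tate twist by $q^{(k-1)/2}$. Geometric irreducibility I would prove by induction on $k$, using either Katz's diophantine criterion (the second moment of the trace function equals $1$) or a direct monodromy argument showing that convolution with $\Lcal_{\omega_k}\otimes\Lcal_\psi$ preserves geometric irreducibility here.

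The remaining point, property (3), is the local ramification analysis, and it is where I expect the real difficulty to lie. At $0$ everything in sight is tame — the Kummer sheaves $\Lcal_{\omega_i}$ are tame everywhere, and the Artin--Schreier sheaves $\Lcal_{\psi(x_i)}$ are lisse at all finite points — and $!$-convolution preserves tameness at $0$, so $\Klcal$ is tamely ramified at $0$ and $\Swan_0(\Klcal)=0$. At $\infty$ the control of the wild ramification is the crux: using Laumon's theory of local Fourier transforms (equivalently, $\ell$-adic stationary phase) to track how the breaks at $\infty$ transform under each convolution step with $\Lcal_\psi$, one shows inductively that $\Klcal$ is totally wild at $\infty$ with a single break equal to $1/k$, whence $\Swan_\infty(\Klcal)=k\cdot(1/k)=1$ and the $I_\infty$-representation is irreducible of dimension $k$. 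This break computation at $\infty$ — the heart of Katz's analysis in \cite{katz88} — is the main obstacle; everything else is bookkeeping. Once it is done, the only singularities of $\Klcal$ in $\Pbfq$ are $0$ and $\infty$, so $n(\Klcal)=2$ and
$$\cond(\Klcal)=\rank(\Klcal)+n(\Klcal)+\Swan_0(\Klcal)+\Swan_\infty(\Klcal)=k+2+0+1=k+3,$$
and since $k\geqslant 2$ the sheaf has rank $\geqslant 2$, so it is an irreducible trace sheaf in the sense of Definition~\ref{DefinitionTracesheaf}(3), as required.
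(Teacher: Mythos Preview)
Your outline is essentially the Deligne--Katz construction and is correct as a sketch; however, the paper does not give its own proof of this theorem at all. It simply states the result as due to Deligne and refers to \cite[Theorem 4.1.1]{katz88}, so there is nothing to compare against beyond noting that your iterated-convolution approach is precisely the construction carried out in that reference.
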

\begin{cor} For every $a\in\Fbf_q^\times,$ we have the sharp bound
\begin{equation}\label{WeilBoundCorollary}
|\Kl_k(a,\omega_1,...,\omega_k;q)|\leqslant k.
\end{equation}
\end{cor}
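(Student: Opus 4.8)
The plan is to deduce the bound directly from the three properties of the Kloosterman sheaf $\Klcal=\Klcal_k(\omega_1,\dots,\omega_k;q)$ listed in the preceding theorem, the decisive input being pointwise purity of weight zero. First I would fix $a\in\Fbf_q^\times$ and recall from part $(2)$ that $\Klcal$ is lisse on $\Gbf_{m,\Fbf_q}$, hence lisse at $a$, and pointwise pure of weight zero. Consequently the geometric Frobenius $\Frm_a$ acts on the punctual stalk $(\Klcal)_{\overline{a}}$, which is a $\Qlbar$-vector space of dimension $\rank(\Klcal)=k$, with eigenvalues $\alpha_1,\dots,\alpha_k$ all of weight zero; that is, under the fixed isomorphism $\iota$ one has $|\iota(\alpha_i)|=1$ for every $i$.

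Next I would simply invoke the triangle inequality for the trace, writing
$$\left|\iota\left(\Tr\left(\Frm_a\mid (\Klcal)_{\overline{a}}\right)\right)\right|=\left|\sum_{i=1}^k\iota(\alpha_i)\right|\leqslant \sum_{i=1}^k|\iota(\alpha_i)|=k.$$
Finally, property $(1)$ of the theorem identifies $\iota\left(\Tr\left(\Frm_a\mid (\Klcal)_{\overline{a}}\right)\right)$ with $(-1)^{k-1}\Kl_k(a,\omega_1,\dots,\omega_k;q)$, and since $|(-1)^{k-1}|=1$ this yields the claimed estimate $|\Kl_k(a,\omega_1,\dots,\omega_k;q)|\leqslant k$ for every $a\in\Fbf_q^\times$.

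There is essentially no genuine obstacle at this stage: the entire arithmetic content — the square-root cancellation hidden inside the $(k-1)$-fold character sum defining $\Kl_k(a,\omega_1,\dots,\omega_k;q)$ — has already been absorbed into Deligne's construction of $\Klcal$ and into the purity theorem (Weil II), both of which we may take as given. The only point deserving a word of care is that the bound $k$ uses the full rank of $\Klcal$ and exploits no further cancellation among the Frobenius eigenvalues; it is the \emph{trivial} consequence of purity, and the finer information carried by the (big) geometric monodromy of $\Klcal$ — which is what underlies the applications of Theorem~\ref{Theorem2} later in the paper — is not needed here.
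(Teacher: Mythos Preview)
Your argument is correct and is exactly the intended one: the paper states the corollary without proof precisely because it follows immediately from the rank and the pointwise purity of weight zero of $\Klcal$ on $\Gbf_{m,\Fbf_q}$, via the triangle inequality applied to the Frobenius trace, as you wrote.
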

It will be convenient in Section \ref{SectionProof1}, specially because of the Poisson summation and Fourier inversion formula, to present $\Kl$ as a Fourier transform of a function defined in $\Fbf_q$. For this, we let 
$$\Kl_1(a,\omega_1;q):= \omega_1(a)e\left(\frac{a}{q}\right),$$
and we see that for any $k\geqslant 2$ and $a\in\Fbf_q^\times$, $\Kl_k(a,\omega_1,...,\omega_k;q)$ is given by the formula
\begin{equation}\label{Kloosterman-Fourier}
\Kl_k(a,\omega_1,...,\omega_k;q)=\omega_k(a)\FT\Big(\Fbf_q \ni x\mapsto \omega_k(x)\Kbf_{k-1}(x,\omega_1,...,\omega_{k-1};q)\Big)(a),
\end{equation}
where the function $\Kbf_{k-1}$ is defined by
\begin{equation}\label{DefinitionKbf}
\Kbf_{k-1}(x,\omega_1,...,\omega_{k-1};q):= \left\{ \begin{array}{ccc} \Kl_{k-1}(\xbar,\omega_1,...,\omega_{k-1};q) & \ifm & x\in \Fbf_q^\times, \\
 & & \\
0 & \ifm & x=0.
\end{array} \right.
\end{equation}
\begin{remq}\label{RemarkExtensionByZero}
There are several ways to extend the function $\Kl_k$ to $a=0$. One can choose for example the extension by zero. We choose here the middle extension, i.e. that $\Kl_k(0,\omega_1,...,\omega_k;q)$ coincides with the trace of the Frobenius at $x=0$. It is a deep result of Deligne that the estimate \eqref{WeilBoundCorollary} remains valid for $a=0$ (see \cite[(1.8.9)]{conjecture2}).
\end{remq}

\subsection{The $\ell$-adic Fourier transform and the Kummer-Möbius group}
There is a deep interpretation of the Fourier transform at the level of sheaves who was discovered by Deligne and developed by Laumon, specially in \cite{laumon}. To be precise, let $q$ be a prime, $\ell\neq q$ another prime, $\Fcal$ a Fourier sheaf and $\psi : \Fbf_q\longrightarrow \Qlbar^\times$ a non-trivial additive character. Then there exists a Fourier sheaf $\Gcal_\psi$ with the property that 
$$\Tr\left(\Frm_x | (\Gcal_\psi)_{\xbar}\right) = -\frac{1}{q^{1/2}}\sum_{y\in \Fbf_q^\times}\Tr\left(\Frm_x | \Fcal_{\xbar}\right)\psi(xy),$$
for every $x\in \Fbf_q$ and where $q^{1/2}$ is the choice of a square root of $q$ in $\Qlbar^\times$ which maps to $q^{1/2}$ under the fixed isomorphism $\iota$. In particular,if we choose $\psi$ such that $\iota(\psi(x))=e\left(x/q\right)$ and $K$ is the trace function corresponding to $\Fcal$, we obtain 
$$\iota\left(\Tr\left(\Frm_x | \Gcal_{\xbar}\right)\right) = -\widehat{K}(x).$$
Moreover, the sheaf $\Gcal_\psi$ is geometrically isotypic (resp. geometrically irreducible) if and only if $\Fcal$ has this property \cite[Lemma 8.1]{twists}.

\vspace{0.2cm}

Finally, taking in account the twist by the character in the Definition \ref{DefinitionCorrelation}, we adapt the Definition of the Fourier-Möbius group given in \cite[Definition 1.15]{twists}, which is a geometric interpretation of the set of correlating matrices, as follows :
\begin{defi}\label{DefinitionKummerMobius} Let $q$ be prime number and $\Fcal$ an isotypic trace sheaf on $\Abfq$ with Fourier transform $\Gcal$ with respect to a non trivial additive character $\psi$. Let $\omega$ be a multiplicative character. The Kummer-Möbius group $\Gbf_{\Fcal,\omega}$ is the subgroup of $\PGL_2(\Fbf_q)$ defined by 
$$\Gbf_{\Fcal,\omega} :=\left\{\gamma=\left(\begin{matrix} a & b \\ c & d \end{matrix}\right) \in\PGL_2(\Fbf_q) \ | \ \Gcal\simeq_{\mathrm{geom}} \gamma^*\Gcal\otimes\Lcal_{\omegabar(cX+d)}\right\}.$$
\end{defi}
\begin{remq} Note that Definition \ref{DefinitionKummerMobius} makes sense in the sense that if $\gamma,\gamma'\in\GL_2(\Fbf_q)$ are equal in $\PGL_2(\Fbf_q)$, then $\gamma=\pm\mathrm{I}_2\gamma'$ and thus
$$\gamma^*\Gcal\otimes \Lcal_{\omegabar(cX+d)}=\gamma'^*\Gcal\otimes\Lcal_{\omegabar(-c'X-d')}\simeq_{\mathrm{geom}}\gamma'^*\Gcal\otimes\Lcal_{\omegabar(c'X+d')}\otimes\Lcal_{\omegabar(-1)}\simeq_{\mathrm{geom}} \gamma'^*\Gcal\otimes\Lcal_{\omegabar(c'X+d')}.$$
\end{remq}
The key property here is that $\Gbf_{\Fcal,\omega}$ is indeed a subgroup of $\PGL_2(\Fbf_q)$.
\begin{proposition} $\Gbf_{\Fcal,\omega}$ is a subgroup of $\PGL_2(\Fbf_q)$.
\end{proposition}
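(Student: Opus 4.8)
The plan is to verify the three group axioms for $\Gbf_{\Fcal,\omega}$ directly, the only non-formal ingredient being the cocycle behaviour of the affine forms $cX+d$ under composition in $\PGL_2$. For $\gamma=\SmallMatrix\in\PGL_2(\Fbf_q)$ write $j_\gamma(X):=cX+d$. A direct matrix multiplication gives the identity $j_{\gamma_1\gamma_2}(X)=j_{\gamma_1}(\gamma_2X)\,j_{\gamma_2}(X)$ in $\Fbf_q(X)$, valid up to a nonzero scalar; this scalar is harmless here because multiplying the argument of a Kummer sheaf $\Lcal_{\omegabar(\cdot)}$ by a constant only tensors it by a geometrically trivial rank one sheaf — this is also the reason, already noted after Definition \ref{DefinitionKummerMobius}, why the condition defining $\Gbf_{\Fcal,\omega}$ is independent of the chosen $\GL_2$-representative. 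I will also use freely the standard geometric isomorphisms $\gamma^*\Lcal_{\omegabar(h(X))}\simeq_{\mathrm{geom}}\Lcal_{\omegabar(h(\gamma X))}$, $\Lcal_{\omegabar(h_1h_2)}\simeq_{\mathrm{geom}}\Lcal_{\omegabar(h_1)}\otimes\Lcal_{\omegabar(h_2)}$ and $\Lcal_{\omegabar(h)}^\vee\simeq_{\mathrm{geom}}\Lcal_{\omega(h)}$ (note $\overline{\omegabar}=\omega$ since $\omega$ is unitary), together with $\gamma_2^*\gamma_1^*=(\gamma_1\gamma_2)^*$ for pull-backs along fractional linear transformations.

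The identity $\mathrm{I}_2$ belongs to $\Gbf_{\Fcal,\omega}$ at once, since $j_{\mathrm{I}_2}=1$ and $\Lcal_{\omegabar(1)}$ is geometrically trivial. For closure under products, take $\gamma_1,\gamma_2\in\Gbf_{\Fcal,\omega}$, so $\Gcal\simeq_{\mathrm{geom}}\gamma_i^*\Gcal\otimes\Lcal_{\omegabar(j_{\gamma_i})}$ for $i=1,2$; applying the exact functor $\gamma_2^*$ to the $i=1$ relation gives $\gamma_2^*\Gcal\simeq_{\mathrm{geom}}(\gamma_1\gamma_2)^*\Gcal\otimes\Lcal_{\omegabar(j_{\gamma_1}(\gamma_2X))}$, after which I would tensor by $\Lcal_{\omegabar(j_{\gamma_2})}$, substitute the $i=2$ relation, and use the cocycle identity to merge the two Kummer factors into $\Lcal_{\omegabar(j_{\gamma_1\gamma_2})}$; this exhibits $\gamma_1\gamma_2\in\Gbf_{\Fcal,\omega}$. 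For inverses, I would apply $(\gamma^{-1})^*$ to $\Gcal\simeq_{\mathrm{geom}}\gamma^*\Gcal\otimes\Lcal_{\omegabar(j_\gamma)}$; since $(\gamma^{-1})^*\gamma^*=\mathrm{id}$ this yields $(\gamma^{-1})^*\Gcal\simeq_{\mathrm{geom}}\Gcal\otimes\Lcal_{\omegabar(j_\gamma(\gamma^{-1}X))}$, and the cocycle relation for $\gamma\gamma^{-1}=\mathrm{I}_2$ shows $j_\gamma(\gamma^{-1}X)$ equals $j_{\gamma^{-1}}(X)^{-1}$ up to a constant, whence $\Lcal_{\omegabar(j_\gamma(\gamma^{-1}X))}\simeq_{\mathrm{geom}}\Lcal_{\omegabar(j_{\gamma^{-1}})}^\vee\simeq_{\mathrm{geom}}\Lcal_{\omega(j_{\gamma^{-1}})}$. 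It then remains to rewrite $(\gamma^{-1})^*\Gcal\simeq_{\mathrm{geom}}\Gcal\otimes\Lcal_{\omega(j_{\gamma^{-1}})}$ as $\Gcal\simeq_{\mathrm{geom}}(\gamma^{-1})^*\Gcal\otimes\Lcal_{\omegabar(j_{\gamma^{-1}})}$, using $\Lcal_{\omega(h)}^\vee\simeq_{\mathrm{geom}}\Lcal_{\omegabar(h)}$, which is precisely $\gamma^{-1}\in\Gbf_{\Fcal,\omega}$.

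I do not expect any real obstacle: this is the verbatim analogue of the corresponding statement in \cite[§ 1]{twists}, and the only new feature is to transport the nebentypus twist $\Lcal_{\omegabar(cX+d)}$ through the manipulation. The sole point requiring a moment's attention is bookkeeping — keeping track of which isomorphisms are merely geometric, of the fact that representatives of a class in $\PGL_2(\Fbf_q)$ are defined only up to a scalar (dealt with by the constant-twist observation above), and of the duality $\overline{\omegabar}=\omega$ relating $\Lcal_{\omega(h)}$ and $\Lcal_{\omegabar(h)}$ — none of which poses a genuine difficulty.
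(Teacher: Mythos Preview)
Your proof is correct and rests on the same computation as the paper's --- the cocycle identity $j_{\gamma_1\gamma_2}(X)=j_{\gamma_1}(\gamma_2X)\,j_{\gamma_2}(X)$ for the automorphy factor. The only difference is packaging: the paper observes that $(\Gcal,\gamma)\mapsto\gamma^*\Gcal\otimes\Lcal_{\omegabar(j_\gamma)}$ is a right action of $\PGL_2(\Fbf_q)$ on geometric isomorphism classes (so $\Gbf_{\Fcal,\omega}$ is a stabilizer, hence a subgroup), whereas you verify identity, closure, and inverses separately --- both routes reduce to the same cocycle computation.
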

\begin{proof}
Let $\Fscr$ be the set of geometric isomorphism classes of trace sheaves. To show that $\Gbf_{\Fcal,\omega}$ is a subgroup, it is enough to prove that the map
$\Fscr\times \PGL_2(\Fbf_q) \longrightarrow \Fscr$ given by 
\begin{equation}\label{DefinitionAction}
(\Gcal,\gamma) \longmapsto  \Gcal\cdot\gamma := \gamma^*\Gcal\otimes\Lcal_{\omegabar(cX+d)},\end{equation}
defines a right group action because $\Gbf_{\Fcal,\omega}$ will be the stabilizer of $\Gcal$. For this, we will use the fact that we have geometric isomorphisms (we use the notation $\simeq$ instead of $\simeq_{\mathrm{geom}}$)
\begin{equation}\label{GeometricIso}
\Lcal_{\omegabar(d)}\simeq \Qlbar \simeq \Lcal_{\omegabar(cX+d)}\otimes \Lcal_{\omega(cX+d)},
\end{equation}
where $\Qlbar$ denotes the constant sheaf. The first isomorphism implies that the identity matrix acts trivially. For the second part, note that for $\gamma_1=\left( \begin{smallmatrix} a & b \\ c & d \end{smallmatrix} \right)$ and $\gamma_2= \left( \begin{smallmatrix} a' & b' \\ c' & d' \end{smallmatrix} \right) \in \PGL_2(\Fbf_q)$, we have
$$\Lcal_{\omegabar(cX+d)}\simeq j(\gamma)^*\Lcal_\chi, \ \ j(\gamma):= \left( \begin{matrix}
0 & 1 \\ c & d 
\end{matrix}\right),$$
and 
$$j(\gamma_1\gamma_2)=\left(\begin{matrix} 0 & 1 \\ ca'+dc' & cb'+ dd' \end{matrix} \right) \ \ , \ \ j(\gamma_1)\gamma_2= \left(\begin{matrix} c' & d' \\ ca'+dc' & cb'+ dd' \end{matrix} \right).$$
Combining the above equality with the second isomorphism in \eqref{GeometricIso} leads to
$$j(\gamma_1\gamma_2)^*\Lcal_\omega\simeq (j(\gamma_1)\gamma_2)^*\Lcal_\omega\otimes j(\gamma_2)^*\Lcal_\omega.$$
Hence we obtain
\begin{alignat*}{1}
\Gcal\cdot (\gamma_1\gamma_2)\simeq & \ (\gamma_1\gamma_2)^*\Gcal\otimes j(\gamma_1\gamma_2)^*\Lcal_\omega \simeq \gamma_2^*\gamma_1^*\Gcal\otimes \gamma_2^*j(\gamma_1)^*\Lcal_\omega\otimes j(\gamma_2)^*\Lcal_\omega \\ \simeq & \ \gamma_2^*\Big(\gamma_1^*\Gcal\otimes j(\gamma_1)^*\Lcal_\omega\Big)\otimes j(\gamma_2)^*\Lcal_\omega \\
\simeq & \ (\Gcal\cdot\gamma_1)\cdot\gamma_2,
\end{alignat*}
which completes the proof of this Proposition.
\end{proof}
We will also need the following fact about the conductor of $\Fcal\cdot\gamma$.
\begin{lemme}\label{LemmeCond} Let $\Fcal$ be a trace sheaf and $\gamma\in \PGL_2(\Fbf_q)$. Then 
$$\left|\cond(\Fcal\cdot\gamma)-\cond(\Fcal)\right|\leqslant 2.$$
\end{lemme}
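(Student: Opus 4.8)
The plan is to track how each of the three terms in the conductor formula $\cond(\Fcal)=\rank(\Fcal)+n(\Fcal)+\sum_{x\in\Pbfq}\Swan_x(\Fcal)$ changes when we replace $\Fcal$ by $\Fcal\cdot\gamma=\gamma^*\Fcal\otimes\Lcal_{\omegabar(cX+d)}$. The rank is obviously unchanged, since pulling back along the automorphism $\gamma$ of $\mathbf{P}^1$ and tensoring with the rank one Kummer sheaf $\Lcal_{\omegabar(cX+d)}$ both preserve the generic rank. So it suffices to control the quantity $a(\Fcal):=n(\Fcal)+\sum_{x}\Swan_x(\Fcal)$, which is (up to the rank contribution) the total drop in Euler characteristic; in fact by the Grothendieck--Ogg--Shafarevich formula $a(\Fcal)=\sum_{x\in\Pbfq}(\mathrm{drop}_x(\Fcal)+\Swan_x(\Fcal))+(\text{number of singular points})$ can be read off from the local terms, and I will just argue pointwise.

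First I would dispose of the pullback: the automorphism $\gamma\in\PGL_2(\Fbf_q)$ induces a bijection of $\Pbfq$ permuting the singular locus, and it is an isomorphism of Henselian local rings at each point, so $\gamma^*$ preserves the number of singularities and each local Swan conductor (and local monodromy structure). Hence $a(\gamma^*\Fcal)=a(\Fcal)$, and we are reduced to comparing $a(\Gcal)$ with $a(\Gcal\otimes\Lcal_{\omegabar(cX+d)})$ for $\Gcal=\gamma^*\Fcal$, where $\Lcal:=\Lcal_{\omegabar(cX+d)}$ is lisse of rank one on $\Abfq$ minus the single point $x_0=-d/c$ (or lisse on all of $\Abfq$ if $c=0$), tamely ramified there and at $\infty$, with $\Swan\equiv 0$ everywhere. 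Tensoring by a lisse rank one sheaf does not change lisse-ness away from $x_0$ and $\infty$, does not change any Swan conductor (Swan is additive under tensor and $\Swan(\Lcal)=0$ everywhere, using that Swan of a tensor with a rank one tame sheaf equals the Swan of the other factor), and at worst creates or removes a singularity at the two points $x_0$ and $\infty$. So $|a(\Gcal\otimes\Lcal)-a(\Gcal)|$ is at most the number of points among $\{x_0,\infty\}$ where the singularity status genuinely changes, and at each such point the local term $n$ changes by one while $\Swan$ is unaffected — giving the bound $2$.

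The one subtlety I would be careful about — and it is the main (minor) obstacle — is that the bound must be stated for $\cond(K)$ as the \emph{minimum} over trace sheaves with that trace function, whereas the argument above works sheaf by sheaf. But this causes no trouble: taking $\Fcal$ to be a trace sheaf realizing $\cond(\Fcal)=\cond(K)$ and noting that $\Fcal\cdot\gamma$ is again a trace sheaf (it is a middle extension, being the tensor of a middle extension with a lisse rank one sheaf on the relevant open set, and remains pointwise pure of weight zero, as in \cite[Lemma 8.1]{twists}; this is exactly why the statement is phrased for trace sheaves and their trace functions), we get $\cond(\Fcal\cdot\gamma)\le\cond(\Fcal)+2=\cond(K)+2$, and applying the same with $\gamma^{-1}$ in place of $\gamma$ to $\Fcal\cdot\gamma$ gives the reverse inequality. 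Finally I should double-check the edge cases $c=0$ (then $\Lcal_{\omegabar(d)}$ is geometrically trivial by \eqref{GeometricIso}, so the conductor does not change at all) and $\omega$ trivial (same thing), which only make the bound easier; the worst case is genuinely when $\omega$ is nontrivial and $c\ne 0$, where the two new tame singularities at $-d/c$ and $\infty$ account for the constant $2$.
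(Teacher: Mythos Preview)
Your proof is correct and follows essentially the same approach as the paper's: both arguments observe that the rank and the total Swan conductor are preserved (since $\gamma^*$ is an automorphism permuting the local data and tensoring with a tame rank-one sheaf leaves Swan unchanged), and then bound the change in the number of singularities by $n(\Lcal_{\omegabar(cX+d)})\in\{0,2\}$. Your final paragraph's worry about $\cond(K)$ versus $\cond(\Fcal)$ is unnecessary here, since the lemma is stated directly for the sheaf $\Fcal$, not for the trace function.
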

\begin{proof}
Since the Kummer sheaves are of rank one and tamely ramified at the singularities (c.f. § \ref{SectionKummerArtin}), we have for any $x\in\Pbfq$ (by definition \eqref{DefinitionAction} of the action of $\gamma$ on $\Fcal$),  
$$\rank(\gamma^*\Fcal\otimes\Lcal_{\omegabar(cX+d)})=\rank(\gamma^*\Fcal)=\rank(\Fcal), \ \ \Swan_x(\Fcal\cdot \gamma)=\Swan_x(\Fcal),$$
see \cite[4.6 (iv)]{katz80}. Moreover, if $n(\Fcal)$ (resp $n(\Lcal_{\omegabar(cX+d)})$) denotes the number of singularities of $\Fcal$ (resp. of $\Lcal_{\omegabar(cX+d)}$), the tensor product $\gamma^*\Fcal\otimes\Lcal_{\omegabar(cX+d)}$ satisfies (see for example \cite[Proposition 1.23]{perret})
\begin{alignat*}{1}
n(\Fcal)-n(\Lcal_{\omegabar(cX+d)})= & \ n(\gamma^*\Fcal)-n(\Lcal_{\omegabar(cX+d)}) \\ \leqslant & \  n(\gamma^*\Fcal\otimes \Lcal_{\omegabar(cX+d)}) \\ \leqslant & \ n(\gamma^*\Fcal) + n( \Lcal_{\omegabar(cX+d)})= n(\Fcal)+n(\Lcal_{\omegabar(cX+d)}),
\end{alignat*}
which completes the proof since $n(\Lcal_{\omegabar(cX+d)})=0$ or $2$ depending on whether $c=0$ or not.
\end{proof}
Here is a key Lemma.
\begin{lemme}\label{PropositionInclusion} Let $q$ be a prime number , $\ell\neq q$ and $\Fcal$ an isotypic trace sheaf on $\Abfq$ with corresponding isotypic trace function $K$. Then for 
$$M\geqslant 8 \ \cond(\Fcal)^5+31 \ \cond(\Fcal)^6,$$
we have the inclusion
$$\Gbf_{K,\omega,M}\subset \Gbf_{\Fcal,\omega}.$$
\end{lemme}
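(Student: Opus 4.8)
The plan is to show that if $\gamma \in \Gbf_{K,\omega,M}$ — that is, $|\Ccal(K,\omega;\gamma)| > Mq^{1/2}$ for $M$ as large as specified — then necessarily $\Gcal \simeq_{\mathrm{geom}} \gamma^*\Gcal \otimes \Lcal_{\omegabar(cX+d)}$, i.e. $\gamma \in \Gbf_{\Fcal,\omega}$. The correlation sum $\Ccal(K,\omega;\gamma) = \sum_{z\neq -d/c} \omegabar(cz+d)\widehat{K}(\gamma z)\overline{\widehat{K}(z)}$ is, up to the normalization and passing through $\iota$, a complete exponential sum over $\Fbf_q$ attached to the sheaf $\Gcal \otimes \gamma^*\Gcal^\vee \otimes \Lcal_{\omegabar(cX+d)}$ (using that $-\widehat{K}$ is the trace function of $\Gcal$, that $\gamma^*$ corresponds to composing with $z \mapsto \gamma z$, and that $\overline{\widehat{K}(z)}$ is the trace function of the dual $\Gcal^\vee$ since $\Gcal$ is pointwise pure of weight zero). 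So the first step is to set up this sheaf-theoretic dictionary carefully, identifying $\Ccal(K,\omega;\gamma)$ (or rather $q^{-1/2}\Ccal$) as $\iota$ of a sum of local traces of Frobenius on the $\ell$-adic sheaf $\Hcal := \Gcal \otimes \gamma^*\Gcal^\vee \otimes \Lcal_{\omegabar(cX+d)}$, being careful about the ramified points that are excluded from the sum and about the conductor bound $\cond(\Hcal) \ll \cond(\Fcal)^2$ (via $\cond(\Gcal) = \cond(\Fcal)$, Lemma \ref{LemmeCond}, multiplicativity of conductors under tensor product, and the tame rank-one twist).

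The second step is the standard trichotomy coming from the Grothendieck–Lefschetz trace formula together with Deligne's Riemann Hypothesis. Writing $\sum_{x \in \Fbf_q} \iota(\Tr(\Frm_x \mid \Hcal_{\xbar})) = \sum_i (-1)^i \iota(\Tr(\Frm_q \mid \mathrm{H}^i_c(\Hcal)))$, the $\mathrm{H}^0_c$ and $\mathrm{H}^2_c$ terms are controlled: $\mathrm{H}^2_c$ is nonzero precisely when $\Hcal$ has a trivial geometric constituent, i.e. (since $\Gcal$ is isotypic) precisely when $\gamma \in \Gbf_{\Fcal,\omega}$, in which case $\mathrm{H}^2_c$ contributes a main term of size $\asymp q$; and $\mathrm{H}^0_c$ vanishes because $\Hcal$ has no punctual sections (a Fourier sheaf tensored against things has no such sections, or one argues via the middle-extension property). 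The $\mathrm{H}^1_c$ term is bounded by $\dim \mathrm{H}^1_c(\Hcal) \cdot q^{1/2}$, and by the Euler–Poincaré formula $\dim \mathrm{H}^1_c(\Hcal) \ll \cond(\Hcal)^2 \ll \cond(\Fcal)^4$ (this is where a polynomial-in-conductor bound enters, and where one must quote the standard bound on Betti numbers in terms of the conductor, e.g. from the Fouvry–Kowalski–Michel circle of ideas). One also has to account for the error from the $O(1)$ points of $\Fbf_q$ removed from the correlation sum, each contributing $O(1)$ by the pointwise bound on trace functions of pure weight-zero sheaves, so $O(\cond(\Fcal))$ in total.

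Putting this together: if $\gamma \notin \Gbf_{\Fcal,\omega}$, then $\mathrm{H}^2_c = 0$, so $|\Ccal(K,\omega;\gamma)| \ll \cond(\Fcal)^4 q^{1/2} + \cond(\Fcal) q^{1/2}$, and with the explicit bookkeeping of the Euler–Poincaré and tensor-product estimates one arrives at the numerical bound $|\Ccal(K,\omega;\gamma)| \leqslant (8\,\cond(\Fcal)^5 + 31\,\cond(\Fcal)^6) q^{1/2}$, say — contradicting $|\Ccal(K,\omega;\gamma)| > Mq^{1/2}$ for $M$ at least that size. Hence $\gamma \in \Gbf_{\Fcal,\omega}$, proving the inclusion. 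The main obstacle I expect is the bookkeeping in the second and third steps: getting an honest, explicit polynomial bound on $\dim \mathrm{H}^1_c$ and on the conductor of the triple tensor product $\Gcal \otimes \gamma^*\Gcal^\vee \otimes \Lcal_{\omegabar(cX+d)}$ — the conductor can blow up when the singularities of $\gamma^*\Gcal^\vee$ collide with those of $\Gcal$ or of the Kummer sheaf, so one needs the careful additivity/subadditivity estimates for conductors under pullback and tensor product — and then tracking the constants to land on exactly the stated threshold. This is precisely the content of \cite[Theorem 1.9 and §8--9]{twists} transposed to the twisted setting, and the argument is a direct adaptation; the only genuinely new point is carrying the extra Kummer twist $\Lcal_{\omegabar(cX+d)}$ through all the conductor estimates.
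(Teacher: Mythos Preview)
Your overall strategy is exactly the paper's: identify $\Ccal(K,\omega;\gamma)$ with a sum of Frobenius traces on $\Hcal=\gamma^*\Gcal\otimes\Lcal_{\omegabar(cX+d)}\otimes\Gcal^\vee$, apply Grothendieck--Lefschetz and Deligne's Riemann Hypothesis, observe that $\mathrm{H}^2_c$ vanishes precisely when $\gamma\notin\Gbf_{\Fcal,\omega}$ (by isotypy), and bound $\dim\mathrm{H}^1_c$ via Grothendieck--Ogg--Shafarevich. The paper does exactly this, citing \cite[Theorem 9.1, Proposition 8.2]{twists} and its own Lemma~\ref{LemmeCond}.

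However, your bookkeeping contains a genuine slip that makes the intermediate estimates inconsistent with the target bound. You assert $\cond(\Gcal)=\cond(\Fcal)$; this is false. The $\ell$-adic Fourier transform does \emph{not} preserve the conductor: the correct bounds, quoted in the paper as \eqref{BoundCondFourier}, are
\[
\cond(\Gcal)\leqslant 10\,\cond(\Fcal)^2,\qquad \rank(\Gcal)\leqslant\cond(\Fcal)^2,\qquad n(\Gcal)\leqslant 3\,\cond(\Fcal).
\]
This quadratic loss is exactly what produces the exponents $5$ and $6$ in the threshold: the paper gets $M_1=(n(\gamma^*\Gcal\otimes\Lcal_{\omegabar})+n(\Gcal))\rank(\Gcal)^2\leqslant(2n(\Gcal)+2)\rank(\Gcal)^2\leqslant 8\,\cond(\Fcal)^5$ for the contribution of excluded points, and $M_2=\dim\mathrm{H}^1_c\leqslant\rank(\Gcal)^2(5+2n(\Gcal)+2\cond(\Gcal))\leqslant 31\,\cond(\Fcal)^6$. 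With your claimed $\cond(\Gcal)=\cond(\Fcal)$ and $\dim\mathrm{H}^1_c\ll\cond(\Fcal)^4$, you would land on a bound of order $\cond(\Fcal)^4 q^{1/2}$, not $\cond(\Fcal)^6 q^{1/2}$ --- so your own intermediate claims are incompatible with the constants you quote at the end. Fixing this is just a matter of inserting the correct Fourier-conductor bounds; the architecture of the argument is sound.
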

\begin{proof} Let $\Gcal$ be the $\ell$-adic Fourier transform of $\Fcal$. The proof of this Lemma is a consequence of the Riemann hypothesis over finite field and we sketch here how it works.

As in \cite[Theorem 9.1]{twists}, the Lefschetz trace formula can be used to derive that for any $\gamma\notin \Gbf_{\Fcal,\omega}$, we have
$$\left|\Ccal(K,\omega;\gamma)\right|\leqslant M_1+M_2q^{1/2},$$
with $$M_1=\Big(n(\gamma^*\Gcal\otimes\Lcal_{\omegabar(cX+d)})+n(\Gcal)\Big)\rank(\Gcal)^2$$
and 
$$M_2=\dim\Hrm^1_c(U_\gamma\times_{\Fbf_q} \Fbfbar_q,\gamma^*\Gcal\otimes\Lcal_{\omegabar(cX+d)}\otimes\Gcal^{\vee})$$
and where $\Gcal^{\vee}$ is the dual sheaf, $\Hrm^1_c$ is the first étale cohomology group with compact support. Here $U_\gamma\subset \Abfq$ is an open subset where the sheaf $\gamma^*\Gcal\otimes\Lcal_{\omegabar(cX+d)}\otimes\Gcal^{\vee}$ is lisse and can be taken such that 
$$|(\mathbf{P}^1-U_\gamma)(\Fbfbar_q)|\leqslant n(\gamma^*\Gcal\otimes\Lcal_{\omegabar(cX+d)})+n(\Gcal^{\vee})\leqslant 2n(\Gcal)+2,$$
where we used Lemma \ref{LemmeCond} for the last inequality. Using \cite[Proposition 8.2]{twists} which is a consequence of the Grothendieck-Ogg-Schavarevich formula and again Lemma \ref{LemmeCond}, we obtain
$$\dim\Hrm^1_c(U_\gamma\times_{\Fbf_q} \Fbfbar_q,\gamma^*\Gcal\otimes\Lcal_{\omegabar(cX+d)}\otimes\Gcal^{\vee})\leqslant \rank(\Gcal)^2\left(5+2n(\Gcal)+2\cond(\Gcal)\right).$$
Now according to the bounds (see \cite[Proposition 8.2 (1), (8.4)-(8.5)]{twists})
\begin{equation}\label{BoundCondFourier}
\cond (\Gcal)\leqslant 10 \ \cond(\Fcal)^2, \ \ \rank(\Gcal)\leqslant \cond(\Fcal)^2, \ \ n(\Gcal)\leqslant 3 \ \cond(\Fcal),
\end{equation}
we obtain
$$M_1\leqslant (2n(\Gcal)+2)\rank(\Gcal)^2\leqslant 8 \ \cond(\Fcal)^5 \ \ \mathrm{and} \ \ M_2\leqslant 31 \ \cond(\Fcal)^6$$
and the result follows.
\end{proof}
Let $\Fcal$ be an isotypic trace sheaf with corresponding trace function $K$ and $N\geqslant 1$ with $\cond(\Fcal)\leqslant N$. Suppose that its Fourier-Möbius group $\Gbf_{\Fcal,\omega}$ satisfies condition $(2)$ in Definition \ref{Definition(q,M)-good} for some $M'$ with $|K|\leqslant M'$. Then by Proposition \ref{PropositionInclusion}, choosing $M\geqslant \max(39N^6,M')$ and we have the inclusion $\Gbf_{K,\omega,M}\subset\Gbf_{\Fcal,\omega}$, so $K$ is $(q,M)$-good and Theorem \ref{Theorem2} follows from Theorem \ref{Theorem(q,M)-good}. The following Proposition, which is a simple variant of \cite[Lemma 9.3]{twists} and follows from classification of Artin-Schreier sheaves, shows that $\Gbf_{\Fcal,\omega}$ contains parabolic elements if the Fourier transform $\Gcal$ contains in its irreducible component a sheaf of the form $\Lcal_{\psi}\cdot\gamma$.
\begin{lemme}\label{Classification-Artin-Schreier} Let $q>2$ be a prime and $\ell\neq q$ an auxiliary prime. Let $\psi$ be a non trivial $\ell$-adic additive character of $\Fbf_q$ and let $\Fcal=\oplus \Lcal_\psi$ be a finite direct sum of copies of $\Lcal_\psi$. Then for $\gamma\in \PGL_2(\Fbf_q)$, we have a geometric isomorphism $\Fcal\cdot\gamma \simeq \Fcal$ if and only if $\gamma \in \Urm^\infty(\Fbf_q)$, where for any $x\in \mathbf{P}^1(\Fbfbar_q)$, $\Urm^x$ denotes the unipotent radical of the Borel subgroup of $\PGL_2$ fixing $x$.
\end{lemme}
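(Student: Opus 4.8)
The plan is to analyze the geometric isomorphism class of $\Fcal\cdot\gamma = \gamma^*\Fcal\otimes\Lcal_{\omegabar(cX+d)}$ where $\gamma = \left(\begin{smallmatrix} a & b \\ c & d\end{smallmatrix}\right)$. Since $\Fcal = \bigoplus\Lcal_\psi$ is a direct sum of $r$ copies of the single rank-one Artin-Schreier sheaf $\Lcal_\psi$, and since pullback and tensor commute with direct sums, we have $\Fcal\cdot\gamma \simeq_{\mathrm{geom}} \bigoplus\left(\gamma^*\Lcal_\psi\otimes\Lcal_{\omegabar(cX+d)}\right)$, again a direct sum of $r$ copies of a single rank-one sheaf. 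Hence $\Fcal\cdot\gamma\simeq_{\mathrm{geom}}\Fcal$ if and only if $\gamma^*\Lcal_\psi\otimes\Lcal_{\omegabar(cX+d)}\simeq_{\mathrm{geom}}\Lcal_\psi$. Wait — one must be careful here: the lemma as stated only involves the Artin-Schreier twist and no multiplicative character appears in $\Fcal$ itself, so in fact the relevant action for this lemma should be pure pullback, $\Fcal\cdot\gamma = \gamma^*\Fcal$ (the Kummer factor $\Lcal_{\omegabar(cX+d)}$ would only enter if $\omega$ were nontrivial and built into the sheaf structure being preserved; for the statement quoted the condition to check is simply $\gamma^*\Lcal_\psi\simeq_{\mathrm{geom}}\Lcal_\psi$, possibly up to a tame twist which does not affect the wild part). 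I will therefore reduce to the rank-one problem: \emph{for which $\gamma\in\PGL_2(\Fbf_q)$ is $\gamma^*\Lcal_\psi$ geometrically isomorphic to $\Lcal_\psi$} (up to a tame Kummer twist).

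The key input is the classification of the local monodromy of $\Lcal_\psi$: it is lisse of rank one on $\Abfq$, tamely ramified nowhere on $\mathbf{A}^1$, and wildly ramified at $\infty$ with $\Swan_\infty(\Lcal_\psi) = 1$; moreover its break at $\infty$ is exactly $1$ and the local representation at $\infty$ is the Artin-Schreier character associated to $\psi$. First I would observe that the set of wild singularities is a geometric invariant: $\gamma^*\Lcal_\psi$ is wildly ramified precisely at $\gamma^{-1}(\infty)$. For $\gamma^*\Lcal_\psi$ to be geometrically isomorphic to $\Lcal_\psi$ (whose unique wild singularity is $\infty$), we need $\gamma^{-1}(\infty) = \infty$, i.e. $\gamma\in\Brm(\Fbf_q)$, the Borel fixing $\infty$; equivalently $c = 0$. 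Next, writing $\gamma z = \alpha z + \beta$ with $\alpha\in\Fbf_q^\times$, $\beta\in\Fbf_q$, we have $\gamma^*\Lcal_\psi = \Lcal_{\psi(\alpha X + \beta)}$. Now $\Lcal_{\psi(\alpha X+\beta)}\simeq_{\mathrm{geom}}\Lcal_{\psi(\alpha X)}$ since a translation of the argument gives a geometric (indeed arithmetic, up to a constant) isomorphism, and the constant $\psi(\beta)$ is invisible geometrically. So the condition becomes $\Lcal_{\psi(\alpha X)}\simeq_{\mathrm{geom}}\Lcal_{\psi(X)}$. By the standard classification (see \cite[Sommes trig.]{sga4}), $\Lcal_{\psi(\alpha X)}$ and $\Lcal_{\psi(X)}$ are geometrically isomorphic if and only if the additive characters $x\mapsto\psi(\alpha x)$ and $x\mapsto\psi(x)$ differ by… in fact $\Lcal_{\psi(aX)}\simeq_{\mathrm{geom}}\Lcal_{\psi(X)}$ precisely when $a = 1$ — the local monodromy at $\infty$ of $\Lcal_{\psi(aX)}$ is the representation $\sigma\mapsto\psi(a\cdot(\text{Artin-Schreier witness}))$ and these are pairwise non-isomorphic for distinct $a\in\Fbf_q^\times$ as characters of the wild inertia quotient. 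Hence $\alpha = 1$, which together with $c=0$ and the $\PGL_2$-identification forces $\gamma = \left(\begin{smallmatrix} 1 & \beta \\ 0 & 1\end{smallmatrix}\right)$, i.e. $\gamma\in\Urm^\infty(\Fbf_q)$. The converse is immediate: if $\gamma\in\Urm^\infty$ then $\gamma^*\Lcal_\psi = \Lcal_{\psi(X+\beta)}\simeq_{\mathrm{geom}}\Lcal_\psi$, hence $\Fcal\cdot\gamma\simeq_{\mathrm{geom}}\Fcal$.

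The main obstacle — really the only nontrivial point — is the rigidity statement that $\Lcal_{\psi(\alpha X)}\not\simeq_{\mathrm{geom}}\Lcal_{\psi(X)}$ for $\alpha\neq 1$, i.e. that a nontrivial scaling of the Artin-Schreier parameter genuinely changes the geometric isomorphism class. This is where one invokes the local theory at $\infty$: the Swan conductor $1$ forces the break to be $1$, and on the unique break the local representation is pinned down by the residue of the connection / by the image in $I_\infty^{\mathrm{wild}}/(\text{higher ramification})\cong\Fbf_q$ of the corresponding additive character, which detects $\alpha$ exactly. I would cite \cite[Sommes trig.]{sga4} (or \cite[Proposition 1.44-1.45]{perret}) for this classification rather than reprove it. Everything else is bookkeeping with pullbacks and the elementary geometry of $\PGL_2(\Fbf_q)$ acting on $\mathbf{P}^1$. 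One should also double-check the role of the Kummer twist $\Lcal_{\omegabar(cX+d)}$: when $c = 0$ it is the constant sheaf $\Lcal_{\omegabar(d)}\simeq_{\mathrm{geom}}\Qlbar$ (as recorded in \eqref{GeometricIso}), so it does not interfere; for $c\neq 0$ we have already ruled out $\gamma$ by the wild-singularity argument, since $\Lcal_{\omegabar(cX+d)}$ is tame and cannot contribute or cancel the wild ramification of $\gamma^*\Lcal_\psi$ at $\gamma^{-1}(\infty)\neq\infty$.
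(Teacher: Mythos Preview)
Your proposal is correct and follows essentially the same route as the paper. The only cosmetic differences are: (i) the paper extracts the rank-one statement $\Lcal_\psi\cdot\gamma\simeq_{\mathrm{geom}}\Lcal_\psi$ from $\Fcal\cdot\gamma\simeq_{\mathrm{geom}}\Fcal$ by an explicit inclusion-then-projection argument (choosing a component so that the composite $\Lcal_\psi\hookrightarrow\Fcal\simeq\Fcal\cdot\gamma\twoheadrightarrow\Lcal_\psi\cdot\gamma$ is nonzero, hence an isomorphism between rank-one sheaves), whereas you invoke uniqueness of isotypic components directly; and (ii) the paper stops at ``the conclusion follows from the classification of Artin-Schreier sheaves'' without writing out $\gamma z=\alpha z+\beta$ and the $\alpha=1$ step as you do. Your mid-proof hesitation about whether the Kummer twist belongs in the action is misplaced---the action $\Fcal\cdot\gamma$ is \emph{defined} in \eqref{DefinitionAction} with the factor $\Lcal_{\omegabar(cX+d)}$ regardless of $\Fcal$---but you recover correctly: for $c=0$ this factor is geometrically trivial by \eqref{GeometricIso}, and for $c\neq 0$ it is tame and cannot move or cancel the displaced wild singularity at $\gamma^{-1}(\infty)\neq\infty$, exactly as the paper argues.
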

\begin{proof}
If $\gamma\in \Urm^\infty(\Fbf_q)$, then by definition of the action \eqref{DefinitionAction}, we have geometrically 
$$\Fcal\cdot \gamma\simeq \gamma^*\Fcal=\bigoplus \gamma^*\Lcal_\psi \simeq \bigoplus \Lcal_\psi = \Fcal.$$
Conversely, assume that $\Fcal\cdot \gamma \simeq \Fcal$. We show first that we also have $\Lcal_\psi\cdot\gamma \simeq \Lcal_\psi.$ Indeed, writing $\iota_i : \Lcal_\psi \hookrightarrow \Fcal$ (resp. $\mathrm{pr}_i : \Fcal\cdot\gamma \rightarrow \Lcal_\psi\cdot\gamma$) for the inclusion of (resp. the projection in) the $i$-th component, we can choose $j$ such that the morphism
$$\Lcal_\psi \overset{\iota_1}{\hookrightarrow}\Fcal \overset{\simeq}{\longrightarrow}\Fcal\cdot\gamma=\bigoplus \Lcal_\psi\cdot\gamma \overset{\mathrm{pr}_j}{\longrightarrow}\Lcal_\psi\cdot\gamma$$
is not the zero map. Since the rank is one on both side, we conclude that it is an isomorphism. Hence we obtain
$$\Lcal_\psi \simeq \gamma^*\Lcal_\psi\otimes \Lcal_{\omegabar(cX+d)}, \ \ \gamma=\left( \begin{matrix} a & b \\ c & d 
\end{matrix} \right).$$
The sheaf $\Lcal_\psi$ has wild ramification at $\infty$ and $\Lcal_{\omegabar(cX+d)}$ is tame at this point (see § \ref{SectionKummerArtin}), so we must have $\gamma\infty=\infty$, which implies that $c=0$ and thus $\Lcal_\psi\simeq \gamma^*\Lcal_\psi$. The conclusion follows now from the classification of Artin-Schreier sheaves.
\end{proof}
It turns out that if $\Gbf_{\Fcal,\omega,M}$ contains parabolic elements, then contrary to the original Theorem of \cite{twists}, the size of its intersection with the matrices constructed from the amplification method may be too large. We therefore formulate the last definition :
\begin{defi}\label{DefinitionExceptional} An isotypic trace sheaf $\Fcal$ is called \textit{Fourier-exceptional} if the irreducible component of its Fourier transform $\Gcal$ is of the form $\gamma^*\Lcal_\psi\otimes\Lcal_{\omegabar(cX+d)}$ for some $\gamma\in\PGL_2(\Fbf_q)$ and some (possibly trivial) additive character $\psi$.
\end{defi}
 With all these definitions, we can now formulate the last result which, together with Theorem \ref{Theorem(q,M)-good}, immediately implies Theorem \ref{Theorem2}.

\begin{theorem}\label{Theorem3}Let $q$ be a prime number, $N\geqslant 1$ and $\Fcal$ a non-exceptional isotypic trace sheaf, as in Definition \ref{DefinitionExceptional}, on $\Abfq$ with $\cond(\Fcal)\leqslant N$. Let $K$ be the corresponding isotypic trace function. Then there exists absolute constants $s\geqslant 1$ and $a\geqslant 1$ such that $K$ is $(q,aN^s)$-good as in Definition \ref{Definition(q,M)-good}.
\end{theorem}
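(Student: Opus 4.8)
The plan is to deduce the $(q,aN^{s})$-goodness of $K$ from a structural description of the Kummer–Möbius group $\Gbf_{\Fcal,\omega}$, combined with Dickson's classification of subgroups of $\PGL_2(\Fbf_q)$. Fix $M=aN^{s}$ for absolute constants $a,s$ pinned down at the end (it will be enough to take $s=6$ and $a$ large). By Lemma \ref{PropositionInclusion}, the bounds \eqref{BoundCondFourier} and $\cond(\Fcal)\leqslant N$, for $a$ large enough one has the inclusion $\Gbf_{K,\omega,M}\subset\Gbf_{\Fcal,\omega}$; since $\Gbf_{\Fcal,\omega}$ is a \emph{subgroup} of $\PGL_2(\Fbf_q)$, it then suffices to show that $\Gbf_{\Fcal,\omega}$ contains no parabolic element and is contained in $\Brm(\Fbf_q)\cup\Brm(\Fbf_q)w\cup w\Brm(\Fbf_q)$ together with at most $M$ of the subgroups $\Trm^{x_{i},y_{i}}(\Fbf_q)$ and $\Nrm^{x_{i},y_{i}}(\Fbf_q)$, which is precisely condition $(2)$ of Definition \ref{Definition(q,M)-good} for $K$. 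Small $q$ is disposed of at once: as $||K||_{\infty}\leqslant\cond(\Fcal)\leqslant N$, the Parseval bound \eqref{Parseval} gives $|\Ccal(K,\omega;\gamma)|\leqslant N^{2}q$, so $\Gbf_{K,\omega,M}=\emptyset$ whenever $q<(aN^{s-2})^{2}$; choosing $a$ so that the remaining range forces $q>60$, we may assume below that $q$ exceeds the order of every exceptional finite subgroup of $\PGL_2(\Fbf_q)$.

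I would then go through Dickson's list for $\Gbf_{\Fcal,\omega}$. If it lies in the normalizer $\Nrm^{x,y}$ of a maximal torus, it is already of the shape $\Gbf^{t}\cup\Gbf^{w}$ with the single pair $(x,y)$, and all its elements are semisimple, hence not parabolic. If it is one of $A_{4},S_{4},A_{5}$, then its order being prime to $q$, every non-identity element is semisimple and lies in $\Trm^{x_{\gamma},y_{\gamma}}(\Fbf_q)$ for its pair of fixed points in $\mathbf{P}^1(\Fbfbar_q)$; thus $\Gbf_{\Fcal,\omega}$ is contained in a union of at most $60$ tori, again with no parabolic. Finally, if $\Gbf_{\Fcal,\omega}$ contains no unipotent element and is not one of the above, it is contained in a Borel subgroup, hence — having trivial intersection with the unipotent radical — cyclic, so contained in a single maximal torus $\Trm^{x,y}(\Fbf_q)$. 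In each of these cases $\Gbf_{\Fcal,\omega}$ has the required form and no parabolic element.

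The one remaining possibility is that $\Gbf_{\Fcal,\omega}$ contains a parabolic element $\gamma_{0}$ — which happens exactly when $\Gbf_{\Fcal,\omega}$ is a Borel subgroup meeting the unipotent radical nontrivially, or contains $\mathrm{PSL}_2(\Fbf_q)$ ($q$ being prime). I claim this forces $\Fcal$ to be Fourier-exceptional, contradicting the hypothesis; this is the crux. The unique fixed point $x_{0}$ of $\gamma_{0}$ is $\Fbf_q$-rational, and, $\Gbf_{\Fcal,\omega}$ being a group, the whole unipotent subgroup $\Urm^{x_{0}}(\Fbf_q)=\langle\gamma_{0}\rangle$ lies in $\Gbf_{\Fcal,\omega}$; equivalently, the Fourier transform $\Gcal$ of $\Fcal$ satisfies $\Gcal\simeq_{\mathrm{geom}}\gamma^{*}\Gcal\otimes\Lcal_{\omegabar(cX+d)}$ for every $\gamma=\left(\begin{smallmatrix}a&b\\c&d\end{smallmatrix}\right)\in\Urm^{x_{0}}(\Fbf_q)$. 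I would work with $\Gcal':=\Gcal\cdot\delta$ for a suitable $\delta\in\PGL_2(\Fbf_q)$ sending $x_{0}$ to $\infty$; then $\Gcal'\cdot\gamma\simeq_{\mathrm{geom}}\Gcal'$ for all $\gamma\in\Urm^{\infty}(\Fbf_q)$, and since the Kummer factor $\Lcal_{\omegabar(0\cdot X+1)}$ is geometrically trivial this just says that $\Gcal'$ is geometrically translation-invariant. Passing to an irreducible component of $\Gcal'$ — legitimate because $\Gcal$, hence $\Gcal'$, is isotypic, by Krull–Schmidt — and invoking the classification of Artin–Schreier sheaves exactly as in the proof of Lemma \ref{Classification-Artin-Schreier} and in \cite[\S 9]{twists}, one deduces that this component is geometrically $\simeq\Lcal_{\psi}$ for some (possibly trivial) additive character $\psi$. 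Since the set of sheaves of the form $\gamma^{*}\Lcal_{\psi}\otimes\Lcal_{\omegabar(cX+d)}$ is stable under the right action \eqref{DefinitionAction} — indeed $(\gamma^{*}\Lcal_{\psi}\otimes\Lcal_{\omegabar(cX+d)})\cdot\delta'\simeq_{\mathrm{geom}}(\gamma\delta')^{*}\Lcal_{\psi}\otimes\Lcal_{\omegabar(c''X+d'')}$, the action being a group action — and $\Gcal'=\Gcal\cdot\delta$, the irreducible component of $\Gcal$ itself is of this form, i.e.\ $\Fcal$ is Fourier-exceptional, the desired contradiction.

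Collecting the cases, for $\Fcal$ non-exceptional the group $\Gbf_{\Fcal,\omega}$ satisfies condition $(2)$ of Definition \ref{Definition(q,M)-good} with at most $60$ pairs and contains no parabolic element; a fortiori so does $\Gbf_{K,\omega,M}$, and hence $K$ is $(q,aN^{s})$-good for an absolute constant $a$ and $s=6$. The step I expect to be the main obstacle is the last one: deducing from geometric translation-invariance (and, before conjugating, from $\Urm^{x_{0}}(\Fbf_q)$-invariance up to the Kummer twist) of the irreducible component of the Fourier transform that it must be an Artin–Schreier sheaf requires the full classification of rank-one, or more generally translation-invariant, wildly ramified sheaves, together with some care about how the nebentypus twist by $\Lcal_{\omegabar(cX+d)}$ interacts with this classification — in contrast to \cite{twists}, where no such twist is present. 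Everything else is a fairly mechanical, if lengthy, bookkeeping with Dickson's theorem and the conductor estimates \eqref{BoundCondFourier}.
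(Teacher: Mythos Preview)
Your approach is essentially the same as the paper's: reduce to the structure of the subgroup $\Gbf_{\Fcal,\omega}$ via Lemma~\ref{PropositionInclusion}, use Dickson for the order-prime-to-$q$ case, and for the parabolic case conjugate to $\Urm^\infty$ and argue via translation invariance of the Fourier transform. Your upfront disposal of small $q$ is equivalent to (and arguably tidier than) the paper's ``large conductor implies trivially good'' escape hatches.

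One point deserves tightening. You write that from translation invariance of $\Gcal'$ ``one deduces that this component is geometrically $\simeq\Lcal_{\psi}$'', citing Lemma~\ref{Classification-Artin-Schreier}. That lemma runs the \emph{other} direction (it starts from $\bigoplus\Lcal_\psi$), and the implication you need is not automatic: a geometrically irreducible sheaf with $[+1]^*\Gcal_2\simeq\Gcal_2$ is not forced to be an Artin--Schreier sheaf. The paper splits into two subcases here. If $\Gcal'$ has a singularity in $\Abf^1(\Fbfbar_q)$, translation invariance propagates it to $q$ singularities, so $\cond(\Gcal)\geqslant q-2$; combined with $\cond(\Gcal)\leqslant 10\cond(\Fcal)^2\leqslant 10N^2$ this forces $q\ll N^2$, which is absorbed by your small-$q$ step. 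If $\Gcal'$ is lisse on $\Abf^1$, one invokes \cite[Lemma 5.4(2)]{inverse} (Katz), which gives the dichotomy $\Swan_\infty(\Gcal_2)\geqslant q+\rank(\Gcal_2)$ \emph{or} $\Gcal_2\simeq\Lcal_\psi$; the first branch again collapses to small $q$. So your argument is salvageable, but you should make explicit that the alternative to $\Gcal_2\simeq\Lcal_\psi$ is always ``conductor $\gtrsim q$'', and that this is precisely what your preliminary small-$q$ reduction eliminates. As written, the reader cannot see why translation invariance alone suffices.

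A minor remark on your Dickson case analysis: the case ``contains no unipotent and is not one of the above'' is in fact empty (any subgroup of order prime to $q$ is cyclic, dihedral, or one of $A_4,S_4,A_5$, hence already covered), so your Borel clause there is vacuous. This does no harm but could be dropped.
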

\begin{proof}By Proposition \ref{PropositionInclusion}, there exists $M\leqslant 39 N^6$ such that 
$$\Gbf_{K,\omega,M}\subset G:= \Gbf_{\Fcal,\omega},$$
which is a subgroup of $\PGL_2(\Fbf_q)$. If the order of $G$ is coprime with $q$, then we proceed as in the first paragraph of the proof of \cite[Theorem 1.14]{twists}.

\vspace{0.2cm}

We now show that under the non-exceptional assumption, the order of $G$ cannot be divisible by $q$. Assume by contradiction that it is the case and fix an element $\gamma_0\in G$ of order $q$. Then $\gamma_0$ is necessarily parabolic, so it has a unique fixed point $x\in \mathbf{P}^1(\Fbf_q)$. Let $\sigma \in \PGL_2(\Fbf_q)$ be such that 
$$\sigma \left( \begin{matrix} 1 & 1 \\ 0 & 1 
\end{matrix} \right)\sigma^{-1} = \gamma_0,$$
and define $\Gcal_1:= \Gcal\cdot \sigma = \sigma^*\Gcal\otimes \Lcal_{\omegabar(cX+d)}$ for $\sigma=\left(\begin{smallmatrix} a & b \\ c & d \end{smallmatrix}\right)$. Since geometrically we have $[+1]^*\Fcal\simeq \Fcal\cdot \left(\begin{smallmatrix} 1 & 1 \\ 0 & 1 \end{smallmatrix}\right)$ for any trace sheaf $\Fcal$, we see that we have a geometric isomorphism
$$[+1]^*\Gcal_1 \simeq \Gcal_1.$$
Suppose first that $\Gcal_1$ is ramified at some $y\in \mathbf{A}^1(\Fbfbar_q)$, then by the above, $\Gcal_1$ is also ramified at $y+1,...,y+p-1$ and we obtain by Lemma \ref{LemmeCond}
$$\cond(\Gcal)\geqslant \cond(\Gcal_1)-2\geqslant q-2+\rank(\Gcal_1)=q-2+\rank(\Gcal),$$
and in this case $K$ is $(q,N)$-good for trivial reasons.

\vspace{0.2cm}

Assume now that $\Gcal_1$ is lisse on $\mathbf{A}^1(\Fbfbar_q)$. Since $\Gcal$ is geometrically isotypic, the same is true for $\Gcal_1$ and the geometrically irreducible component $\Gcal_2$ of $\Gcal_1$ also satisfies $[+1]^*\Gcal_2\simeq \Gcal_2$. Using \cite[Lemma 5.4, (2)]{inverse} with $G=\Fbf_q$ and $P_h=0$, we have either
$$\cond(\Gcal_1)\geqslant \Swan_\infty(\Gcal_2)\geqslant q+\rank(\Gcal_2)$$
and in this case we are done as before, or $\Gcal_2$ is geometrically isomorphic to some Artin-Schreier sheaf $\Lcal_\psi$ for some additive character $\psi$. It follows that $\Gcal_1$ is geometrically isomorphic to a direct sum of copies of $\Lcal_\psi$ and thus, by definition of $\Gcal_1$, we have a geometric isomorphism
$$\Gcal \simeq \left(\bigoplus \Lcal_\psi \right)\cdot\sigma^{-1}=\bigoplus \Lcal_\psi \cdot \sigma^{-1},$$
which contradicts the fact that $\Fcal$ is not Fourier-exceptional.
\end{proof}

\section{Bilinear forms involving trace functions} 
We begin with a classical result.
\begin{proposition}[Poly\'a-Vinogradov method]\label{PropositionPolya}Let $q$ be a prime number and $\Fcal$ be a Fourier trace sheaf on $\Abf^1_{\Fbf_q}$ with corresponding trace function $K$ modulo $q$. Let $f$ be a smooth and compactly supported function on $\Rbf$ and $N>0$ be a real number. Then for any $\varepsilon>0$, we have
$$\sum_{n\in\Zbf}K(n)f\left(\frac{n}{N}\right)\ll \min\left\{N,\frac{N}{q^{1/2}}\left(1+\frac{q^{1+\varepsilon}}{N}\right)\right\},$$
where the implied constant depends on $\varepsilon,f$ and the conductor of $\Fcal$.
\end{proposition}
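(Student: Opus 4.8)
The plan is to establish the two bounds separately — the trivial bound $N$ is immediate from $|K(n)|\leqslant\cond(\Fcal)$ (a standard consequence of the Riemann Hypothesis over finite fields: a weight-zero middle extension trace function of conductor $c$ is bounded by $c$ on all of $\Fbf_q$), together with the compact support of $f$, so I focus on the nontrivial estimate. The idea of the Polyá--Vinogradov method is to detect the residue class of $n$ modulo $q$ via additive characters and then exploit cancellation in the Fourier transform $\widehat K$. Concretely, I would write $K(n)=q^{-1/2}\sum_{h\in\Fbf_q}\widehat K(h)e(-nh/q)$ by Fourier inversion, interchange the sums, and obtain
\[
\sum_{n\in\Zbf}K(n)f\left(\frac{n}{N}\right)=\frac{1}{q^{1/2}}\sum_{h\in\Fbf_q}\widehat K(h)\sum_{n\in\Zbf}f\left(\frac{n}{N}\right)e\left(-\frac{nh}{q}\right).
\]
Applying the Poisson summation formula to the inner sum over $n$ converts it into $N\sum_{m\in\Zbf}\widehat f\!\left(N\!\left(m-\tfrac{h}{q}\right)\right)$, where $\widehat f$ now denotes the (archimedean) Fourier transform; since $f$ is Schwartz, $\widehat f$ has rapid decay, so the inner sum is $\ll_A N(1+N\|h/q\|)^{-A}$ where $\|\cdot\|$ is the distance to the nearest integer, and in particular it is negligible unless $h$ (as an element of $\{0,1,\dots,q-1\}$) satisfies $h\ll q^{1+\varepsilon}/N$ or $q-h\ll q^{1+\varepsilon}/N$.

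The second ingredient is the bound on $\widehat K$. Since $\Fcal$ is a Fourier trace sheaf, its $\ell$-adic Fourier transform $\Gcal$ is again a trace sheaf (of weight zero, middle extension) with $\cond(\Gcal)\ll\cond(\Fcal)^2$ by the estimates recalled in \eqref{BoundCondFourier}, and $-\widehat K$ is its trace function; hence $|\widehat K(h)|\leqslant\cond(\Gcal)\ll\cond(\Fcal)^2$ for every $h\in\Fbf_q$. Feeding this into the display above, the contribution of $h=0$ is $\ll q^{-1/2}\cdot\cond(\Fcal)^2\cdot N$ (this is the main term, of size $N/q^{1/2}$ up to the conductor), and the contribution of the $O(1+q^{1+\varepsilon}/N)$ remaining values of $h$ is $\ll q^{-1/2}\cdot\cond(\Fcal)^2\cdot N\cdot(1+q^{1+\varepsilon}/N)$ after summing the rapidly decaying tails. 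Adding these gives
\[
\sum_{n\in\Zbf}K(n)f\left(\frac{n}{N}\right)\ll\frac{N}{q^{1/2}}\left(1+\frac{q^{1+\varepsilon}}{N}\right),
\]
with the implied constant depending on $\varepsilon$, on $f$ (through the decay of $\widehat f$), and on $\cond(\Fcal)$, which is exactly what is claimed.

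The only genuine subtlety — and the step I expect to require the most care — is the use of the sup-norm bound $|\widehat K(h)|\ll\cond(\Fcal)^2$ uniformly in $h$, including at the finitely many points where $\Gcal$ fails to be lisse: there one must invoke that $\widehat K$ is the trace function of the \emph{middle extension} sheaf $\Gcal$ and that weight-zero middle extension trace functions are bounded by the conductor everywhere (the analogue of Remark \ref{RemarkExtensionByZero}), rather than only generically. Everything else is routine: the Fourier inversion over $\Fbf_q$, the archimedean Poisson summation, and the elementary estimate of the resulting geometric-type sum over $h$. One should also note that the whole argument is insensitive to whether $f$ is exactly compactly supported or merely Schwartz with the stated decay, since only the rapid decay of $\widehat f$ is used; this is why the statement can later be applied in the form needed in Sections \ref{Section3Poisson} and following.
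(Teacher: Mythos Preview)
Your proposal is correct and follows essentially the same route as the paper: the trivial bound is immediate, and for the nontrivial one the paper applies Poisson summation directly to reach $\sum_n K(n)f(n/N)=\frac{N}{q^{1/2}}\sum_n\widehat K(n)\widehat f(nN/q)$, then truncates using the rapid decay of $\widehat f$ and bounds $\widehat K$ via the conductor of the Fourier transform sheaf. Your two-step version (Fourier inversion on $\Fbf_q$ followed by archimedean Poisson on the inner sum) unwinds to exactly the same identity after re-indexing $h-mq\mapsto n$, so the arguments coincide.
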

\begin{proof}
The first bound is clear. For the second, an application of Poisson summation formula gives
\begin{equation}\label{Gives}
\sum_{n\in\Zbf}K(n)f\left(\frac{n}{N}\right)=\frac{N}{q^{1/2}}\sum_{n\in\Zbf}\widehat{K}(n)\widehat{f}\left(\frac{nN}{q}\right),
\end{equation}
where $\widehat{K}$ is the Fourier transform defined in \eqref{DefinitionFourier} and $\widehat{f}$ is the analytic Fourier transform
$$\widehat{f}(y)=\int_{\Rbf}f(x)e^{-2\pi ixy}dx.$$
Since $\widehat{f}$ has fast decay at infinity, the sum is essentially supported on $|n|\leqslant q^{1+\varepsilon}/N$ and thus, using the fact that the infinity norm of $\widehat{K}$ is bounded in terms of the conductor of $\Fcal$ (c.f. \eqref{BoundCondFourier}) and estimating trivially the right handside of \eqref{Gives} yields the second bound.
\end{proof}
A more elaborate treatment of the Poly\'a-Vinogradov method can be used to obtain bounds for bilinear sums \cite[Theorem 1.17]{prime}.
\begin{theorem}\label{TheoremPrime} Let $K$ be an isotypic trace function modulo $q$ associated to an isotypic $\ell$-adic sheaf $\Fcal$ such that $\Fcal$ does not contain a sheaf of the form $\Lcal_\omega\otimes\Lcal_\psi$ in his irreducible component. Let $M,N\geqslant 1$ be parameters and $(\alpha_m)_m, (\beta_n)_n$ two sequences of complex numbers supported on $[M/2,2M]$ and $[N/2,2N]$ respectively. 
\begin{enumerate}
\item[$(1)$] We have
$$\mathop{\sum\sum}_{\substack{n,m \\ (m,q)=1}}\alpha_m\beta_n K(mn) \ll ||\alpha||_2||\beta||_2(NM)^{1/2}\left(\frac{1}{q^{1/4}}+\frac{1}{M^{1/2}}+\frac{q^{1/4}\log^{1/2}q}{N^{1/2}}\right),$$
with
$$||\alpha||_2^2=\sum_{m}|\alpha_m|^2 \ , \ ||\beta||_2^2=\sum_n|\beta_n|^2.$$
\item[$(2)$] We have
$$\sum_{(m,q)=1}\alpha_m\sum_{n\leqslant N}K(mn) \ll \left(\sum_m|\alpha_m|\right)N\left(\frac{1}{q^{1/2}}+\frac{q^{1/2}\log q}{N}\right).$$
\end{enumerate}
In both estimates, the implicit constants depend only, and at most polynomially, on the conductor of $\Fcal.$
\end{theorem}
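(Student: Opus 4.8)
The plan is to follow the Poly\'a--Vinogradov argument of Fouvry, Kowalski and Michel (the bound is \cite[Theorem 1.17]{prime}), which I sketch below, treating the two statements separately and beginning with the easier one, $(2)$. For $(2)$, fix $m$ coprime to $q$; since $n\mapsto K(mn)$ is the trace function of the pull-back $[\times m]^{*}\Fcal$, a sheaf of the same conductor as $\Fcal$ which again lies outside the excluded family (multiplicative translation preserves Kummer and Artin--Schreier sheaves) and is in particular a Fourier trace sheaf, it suffices to bound $\sum_{n\leqslant N}K'(n)$ uniformly for an arbitrary Fourier trace function $K'$ of conductor $\ll\cond(\Fcal)$. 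I would split $[1,N]$ into $\lfloor N/q\rfloor$ complete residue intervals modulo $q$ together with one interval of length $<q$: on each complete interval $\sum_{n\bmod q}K'(n)\ll_{\cond(\Fcal)}q^{1/2}$, because a Fourier trace sheaf has no geometrically trivial component; on the remaining short interval, expanding the sharp cut-off into additive characters modulo $q$ as in the proof of Proposition \ref{PropositionPolya} and using $||\widehat{K'}||_{\infty}\ll_{\cond(\Fcal)}1$ (cf. \eqref{BoundCondFourier}) gives $\ll_{\cond(\Fcal)}q^{1/2}\log q$, the logarithm produced by $\sum_{1\leqslant|h|\leqslant q/2}|h|^{-1}$. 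Summing over $m$ against $|\alpha_m|$ and extracting a factor $N$ yields $(2)$.

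For $(1)$, I would first apply Cauchy--Schwarz in the variable $n$, which carries no coprimality constraint:
$$\Big|\mathop{\sum\sum}_{\substack{n,\,m\\(m,q)=1}}\alpha_m\beta_nK(mn)\Big|^{2}\leqslant||\beta||_2^{2}\sum_{N/2<n\leqslant 2N}\Big|\sum_{\substack{M/2<m\leqslant 2M\\(m,q)=1}}\alpha_mK(mn)\Big|^{2}.$$
Then I would majorize the outer sum by a fixed smooth bump supported on $[N/4,4N]$, complete it modulo $q$ by Poisson summation, and expand the square; this writes the inner sum as a linear combination, over frequencies $h$ weighted by a Fourier coefficient of the bump, of the expressions $\sum_{m_1,m_2}\alpha_{m_1}\overline{\alpha_{m_2}}\,\mathcal{C}_h(m_1,m_2)$ built from the complete twisted correlation sums
$$\mathcal{C}_h(m_1,m_2)=\sum_{x\bmod q}K(m_1x)\,\overline{K(m_2x)}\,e\!\Big(\frac{hx}{q}\Big),\qquad (m_1,q)=(m_2,q)=1.$$

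Everything then hinges on a dichotomy for $\mathcal{C}_h$. If $m_1\equiv m_2\pmod q$, then $\mathcal{C}_0(m_1,m_2)=\sum_{x}|K(m_1x)|^{2}\ll_{\cond(\Fcal)}q$, while for $h\neq0$ the constant part of $|K|^{2}$ is killed and Poly\'a--Vinogradov gives $\mathcal{C}_h(m_1,m_2)\ll_{\cond(\Fcal)}q^{1/2}$; for each $m_1$ only $\ll 1+M/q$ values of $m_2$ occur here. If $m_1\not\equiv m_2\pmod q$, the substitution $x\mapsto\overline{m_2}x$ turns $\mathcal{C}_h(m_1,m_2)$ into a twisted multiplicative-translation correlation sum $\sum_{x}K(ax)\overline{K(x)}e(bx/q)$ with $a\equiv m_1\overline{m_2}\not\equiv1$, which the Riemann Hypothesis over finite fields bounds by $\ll_{\cond(\Fcal)}q^{1/2}$ uniformly in $b$ (see below). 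Hence the zero frequency contributes $\ll_{\cond(\Fcal)}(N+NMq^{-1/2})||\alpha||_2^{2}$ to the inner sum — the $N$ coming from the diagonal, the remainder from the off-diagonal via the Riemann Hypothesis — and the $\ll q/N$ relevant nonzero frequencies together contribute $\ll_{\cond(\Fcal)}q^{1/2}M||\alpha||_2^{2}$; combining with the Cauchy--Schwarz step, dividing through by $N$, and taking a square root produces the three terms $M^{-1/2}$, $q^{-1/4}$ and $q^{1/4}N^{-1/2}$ of $(1)$, the logarithmic refinement being obtained, as in \cite{prime}, by a more careful treatment of the truncations.

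The main obstacle, and the one step where the hypothesis on $\Fcal$ is genuinely needed, is the uniform bound $\sum_{x\bmod q}K(ax)\overline{K(x)}e(bx/q)\ll_{\cond(\Fcal)}q^{1/2}$ for all $a\not\equiv1$ and all $b$. Via the Lefschetz trace formula and Deligne's Riemann Hypothesis this reduces to showing that the sheaf $[\times a]^{*}\Fcal\otimes\Fcal^{\vee}\otimes\Lcal_{\psi(bX)}$ has no geometrically trivial subquotient, equivalently that $[\times a]^{*}\Fcal\not\simeq_{\mathrm{geom}}\Fcal\otimes\Lcal_{\overline{\psi}(bX)}$; iterating such a hypothetical isomorphism over the cyclic subgroup of $\Fbf_q^{\times}$ generated by $a$ and inspecting the wild ramification at $\infty$ together with the tame characters at $0$, exactly as in \cite{prime}, one finds that its existence for a single $a\not\equiv1$ forces $\Fcal$ to be geometrically isomorphic to some $\Lcal_\omega\otimes\Lcal_\psi$, contrary to hypothesis. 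Without this exclusion there would be resonant frequencies $b$ at which the correlation sum attains size $q$ and the bilinear estimate would break down; isolating and ruling out exactly those sheaves is the substance of the argument.
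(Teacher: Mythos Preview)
The paper does not give a proof of this statement: it is quoted verbatim from \cite[Theorem~1.17]{prime}, so there is no in-paper argument to compare against. Your sketch follows the strategy of that reference and is correct in outline for both parts.

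One point to sharpen in your last paragraph: the claim that a \emph{single} pair $(a,b)$ with $a\not\equiv 1$ and $[\times a]^{*}\Fcal\simeq_{\mathrm{geom}}\Fcal\otimes\Lcal_{\psi_b}$ already forces $\Fcal\simeq\Lcal_\omega\otimes\Lcal_\psi$ is too strong as stated. What the argument in \cite{prime} actually establishes is that the set of such pairs forms a subgroup of the affine group $\Fbf_q^{\times}\ltimes\Fbf_q$ whose order is bounded polynomially in $\cond(\Fcal)$ unless the irreducible component of $\Fcal$ is of the excluded Kummer--Artin--Schreier type. These boundedly many exceptional ratios $m_1\overline{m_2}$ (together with the corresponding frequencies $h$) then contribute exactly like your diagonal term, with an extra conductor-dependent factor absorbed into the implicit constant. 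With this correction your bookkeeping goes through unchanged and yields the stated bound.
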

The above theorem beats the trivial bound and gives a power saving in the error term as long as $\max(N,M)\geqslant q^{1/2+\delta}$ and $\min(M,N)\geqslant q^\delta$ for some $\delta>0$. In the critical case where $N\sim M\sim q^{1/2}$, we have the powerful result of Kowalski, Michel and Sawin, which still saves a small power of $q$, but has been proved in the special case of classical Kloosterman sums \cite[Theorem 1.3]{sawin} and \cite[Theorem 5.1]{moments}. 
\begin{theorem}\label{TheoremSawin} Let $q$ be a prime number and $a$ an integer coprime with $q$. Let $M,N\geqslant 1$ be such that 
\begin{equation}\label{AssumptionTheoremBilinear}
1\leqslant M\leqslant N^2, \ \ N<q, \ \ MN<q^{3/2}.
\end{equation}
Let $(\alpha_m)_{m\leqslant M}$ be a sequence of complex numbers and $\mathcal{N}\subset [1,q-1]$ be an interval of length $N$. Then for any $\varepsilon>0$, we have
\begin{equation}\label{TypeI}
\sum_{n\in\mathcal{N}}\sum_{1\leqslant m\leqslant M}\alpha_m \Kl_k(anm;q) \ll q^\varepsilon ||\alpha||_1^{1/2}||\alpha||_2^{1/2}M^{1/4}N\left(\frac{M^2N^5}{q^{3}}\right)^{-1/12},
\end{equation}
with
$$||\alpha||_1= \sum_{1\leqslant m\leqslant M}|\alpha_m|$$
where the implied constant in \eqref{TypeI} only depends on $\varepsilon$ and $k$.
\end{theorem}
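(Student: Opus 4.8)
This is the \emph{type I} bilinear estimate of Kowalski--Michel--Sawin for hyper-Kloosterman sums, recalled here from \cite[Theorem 1.3]{sawin} and \cite[Theorem 5.1]{moments}; whereas the general Poly\'a--Vinogradov bilinear bound of Theorem \ref{TheoremPrime} is already non-trivial once $\max(M,N)\geqslant q^{1/2+\delta}$ and $\min(M,N)\geqslant q^{\delta}$, the point of the present statement is to reach the critical range $M\sim N\sim q^{1/2}$, and for this one must exploit the fine algebraic structure of $\Kl_k$. First I would make the standard reductions: a dyadic splitting in $m$ together with a flattening of $(\alpha_m)$ into $O((\log q)^2)$ pieces on which $|\alpha_m|$ is essentially constant on a subset of a dyadic block reduces, at the cost of $q^\varepsilon$, to flat coefficients --- which is also what makes the two norms interchangeable and accounts for the combination $\|\alpha\|_1^{1/2}\|\alpha\|_2^{1/2}$ in the conclusion. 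Since $\Ncal\subset[1,q-1]$ is an interval of length $N<q$, I would then complete the $n$-sum by Poisson summation modulo $q$ (as in the proof of Proposition \ref{PropositionPolya}), writing $\mathbf 1_{\Ncal}$ in additive characters; up to $O(\log q)$ and with the dual variable essentially confined to a range of size $\asymp q/N$, this leaves complete sums $\sum_{n\bmod q}\Kl_k(anm;q)e(hn/q)$ averaged against $\alpha_m$, in which $m$ and $n$ are now both \emph{flat}.

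The heart of the proof is a bilinear treatment of these completed sums. I would apply Cauchy--Schwarz in $m$ to remove $(\alpha_m)$ and then, crucially, a $q$-analogue of van der Corput's inequality (a redundant averaging of the resulting $m$-sum over an additive shift $m\mapsto m+r$, $|r|<H$, with $H$ a free parameter) so as not to throw away the whole length of $\Ncal$; after opening the squares one is reduced, besides a diagonal term of size $\asymp MN$ coming from $|\Kl_k|\leqslant k$, to complete \emph{sum--product} sums of (typically four) hyper-Kloosterman factors with additive shifts, of the shape $\sum_{x\bmod q}\Kl_k(a_1x;q)\Kl_k(a_2x;q)\overline{\Kl_k(a_3x;q)\Kl_k(a_4x;q)}e(bx/q)$. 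The decisive input --- and the genuinely hard part --- is that each such sum has square-root cancellation, $\ll_k q^{1/2}$, unless $(a_1,a_2,a_3,a_4,b)$ lies on an explicit proper subvariety: this follows from Deligne's Riemann hypothesis over $\Fbf_q$, provided one knows that the associated \emph{sum--product} $\ell$-adic sheaf is geometrically irreducible with conductor bounded only in terms of $k$. That monodromy statement is Sawin's contribution; it relies on $\Klcal_k$ being geometrically irreducible with $\Swan_\infty(\Klcal_k)=1$ and tame at $0$, and on the absence of any extra multiplicative or additive symmetry of $\Klcal_k$ (its Kummer--M\"obius group, Definition \ref{DefinitionKummerMobius}, being trivial for $k\geqslant 2$), so that a coincidence among the translates $x\mapsto\Kl_k(a_ix;q)$ forces precisely the relations cutting out the exceptional locus.

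It remains to assemble the pieces. The off-diagonal parameters lying on the exceptional locus form a set of positive codimension and so contribute acceptably (estimated trivially, but few in number), while the generic parameters each save $q^{1/2}$; restoring the $\|\alpha\|_2^2$ from Cauchy--Schwarz, the factor $H$ from the shift, and the $\asymp q/N$ values of the dual variable from the completion step, and then balancing the resulting off-diagonal against the diagonal $MN$ by optimising over the auxiliary parameters (notably $H$) --- legitimate exactly under the hypotheses \eqref{AssumptionTheoremBilinear}, which are what keep every completion lossless and the optimal parameters admissible --- yields a bound of the shape $q^\varepsilon\|\alpha\|_1^{1/2}\|\alpha\|_2^{1/2}M^{1/4}N(M^2N^5/q^3)^{-1/12}$, which is \eqref{TypeI}. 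The harmonic-analytic steps (flattening, the two completions, the Cauchy--Schwarz and van der Corput manipulations, the final optimisation) are long but routine; the only place the argument can fail is the geometric irreducibility and conductor control of the sum--product sheaf in the second paragraph, together with the bookkeeping needed to propagate conductor bounds through the Fourier transform and the pullback/tensor constructions used to build it.
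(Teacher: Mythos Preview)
The paper does not give a proof of this statement: it is quoted as a black box from \cite[Theorem 1.3]{sawin} and \cite[Theorem 5.1]{moments} (see the sentence immediately preceding the theorem). So there is no ``paper's own proof'' to compare against; your write-up is a sketch of the argument in those external references rather than of anything done here.

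As a sketch of the Kowalski--Michel--Sawin argument your outline is broadly right in spirit --- Cauchy--Schwarz to remove the coefficients, an auxiliary shift to avoid losing the full length of the smooth variable, reduction to complete correlation sums of several hyper-Kloosterman factors, and then Deligne's Riemann Hypothesis together with Sawin's geometric irreducibility/conductor control of the relevant sum--product sheaf. Two points of detail you may want to revisit if you ever need the argument in earnest. First, the shift in \cite{sawin} is not a plain additive shift $m\mapsto m+r$ after Cauchy--Schwarz in $m$; it is the ``$+ab$ shift'' applied to the \emph{smooth} variable $n$ (a combined multiplicative/additive redundancy $n\mapsto n+ab$ with $a,b$ in short boxes), and it is this specific structure that produces correlation sums to which the sheaf-theoretic machinery applies cleanly. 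Second, the flattening/dyadic step you invoke to pass from general $(\alpha_m)$ to constant coefficients, and hence to justify the mixed norm $\|\alpha\|_1^{1/2}\|\alpha\|_2^{1/2}$, is not how the exponent arises in \cite{sawin}: there the mixed norm comes out directly from the way Cauchy--Schwarz is applied (one factor of $\|\alpha\|_1$ from bounding a sum of $|\alpha_m|$, one of $\|\alpha\|_2$ from the $\ell^2$-dual), not from a level-set decomposition of $|\alpha_m|$. Neither point affects the conclusion, but both change the shape of the intermediate expressions.
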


\section{Proof of Theorem \ref{Theorem(q,M)-good}}\label{ProofTwist} This section is devoted to the proof of Theorem \ref{Theorem(q,M)-good}. For the cuspidal case, we will indicate the necessary changes in \cite[Sections 4,5,6]{twists} due to the level $q$ and the presence of a nebentypus. Finally, we will explain how to adapt \cite[Section 2]{prime} and put together with Section \ref{SectionCuspidalecase} to obtain the conclusion of Theorem \ref{Theorem(q,M)-good} in the Eisenstein case.


\subsection{The cuspidal case}\label{SectionCuspidalecase}


\subsubsection{The amplification method} Let $q>2$ be a prime number, $\omega$ a Dirichlet character of modulus $q$ and $\kappa\in\{0,1\}$ such that $\omega(-1)=(-1)^\kappa$. Let $f$ be a $L^2$-normalized primitive Hecke cusp form of weight $k_f\equiv \kappa \ (\modm 2)$ (resp. with Laplace eigenvalue $1/4+t_f^2$) if $f$ is holomorphic (resp. if $f$ is a Maass form) of level $q$ and nebentypus $\omega$. For some technical reasons, it is convenient to view $f$ as a modular form of level $2q$ (see the beginning of § \ref{SpecialCase}) under the isometric embedding (with respect to the Petersson inner product) 
$$f(z)\mapsto \frac{f(z)}{[\Gamma_0(q): \Gamma_0(2q)]^{1/2}}=\frac{f(z)}{\sqrt{3}},$$
which can be embedded in a suitable orthonormal basis of modular cusp forms of level $2q$, i.e. either $\Bcal_{k_f}(2q,\omega)$ or $\Bcal(2q,\omega)$. The strategy is therefore to estimate an amplified second moment of the sum $\Scal_V(g,K;q)$ where $g$ runs over a basis of $\Bcal_{k_f}(2q,\omega)$ and $\Bcal(2q,\omega)$.

\vspace{0.2cm}

To be precise, let $L\geqslant 1$ and $(b_\ell)$ a sequence of coefficients supported on $1\leqslant \ell\leqslant 2L$. For any modular form $g$, we let 
$$B(g):= \sum_{1\leqslant\ell\leqslant2L}b_\ell \lambda_g(\ell).$$
For an Eisenstein series $E_{\amf}(\cdot,1/2+it)$, we set
$$B(\amf,it):= \sum_{1\leqslant \ell\leqslant2L}b_\ell \lambda_{\amf}(\ell,it),$$
where for any singular cusp $\amf$, $\lambda_{\amf}(\ell,it)$ is given by \eqref{EigenvaluesEisensteinSeries}. Since the original form is of level $q$ and $L$ will be at the end a small power of $q$, we cannot choose the standard coefficients $b_\ell=\overline{\lambda_f(\ell)}$ for $\ell$ a prime $p\sim L$, but rather the less obvious amplifier found by Iwaniec in \cite{duke},
\begin{equation}\label{DefinitionAmplifier}
b_\ell = \left\{ \begin{array}{ccc} \lambda_f(p)\omegabar(p) & \ifm & \ell = p \sim L^{1/2} \ \mathrm{and} \ (p,2q)=1, \\
 & &  \\
 -\omegabar(p) & \ifm & \ell=p^2 \sim L \ \mathrm{and} \ (p,2q)=1, \\ 
 & & \\
0 & \mathrm{else}.
\end{array} \right.
\end{equation}
Since we will apply the trace formula, it is also better to consider the Fourier coefficients $\rho_g(n)$ instead of the Hecke eigenvalues $\lambda_g(n)$ in the definition of $\Scal_V(g,k;q)$. For this, we define 
$$\tilde{\Scal}_V(g,K;q)= \sum_n \rho_g(n)K(n)V\left(\frac{n}{q}\right)$$
and note that for $g$ primitive,  it is related to the original sum $\Scal_V(g,K;q)$ by the simple relation (c.f. \eqref{RelationHecke-Fourier2}),
\begin{equation}\label{SimpleRelation}
\tilde{\Scal}_V(g,K;q)=\rho_g(1)\Scal_V(g,K;q).
\end{equation}
We then let
\begin{equation}
\begin{split}
M(L) := & \ \sum_{\substack{k\equiv \kappa \ (\modm 2) \\ k>\kappa}}\dot{\phi}(k)(k-1)M(L;k) \\
+ & \ \sum_{g\in \Bcal(2q,\omega)}\tilde{\phi}(t_g)\frac{4\pi}{\cosh(\pi t_g)}|B(g)|^2 \left| \tilde{\Scal}_V(g,K;q)\right|^2 \\ 
+ & \ \sum_{\amf}\int_{-\infty}^\infty \tilde{\phi}(t)\frac{1}{\cosh(\pi t)}|B(\amf,it)|^2\left|\tilde{\Scal}_V(E_{\amf}(\cdot,1/2+it),K;q)\right|^2dt,
\end{split}
\end{equation}
where for any $k\equiv \kappa \ (\modm 2)$ with $k>\kappa,$
\begin{equation}
M(L;k):= \frac{(k-2)!}{\pi (4\pi)^{k-1}}\sum_{g\in\Bcal_k(2q,\omega)}|B(g)|^2\left|\tilde{\Scal}_V(g,K;q)\right|^2,
\end{equation}
and we refer to \cite[Section 3.2]{twists} or \cite[(2.9)]{blomer} for the choice and properties of the test function $\phi=\phi_{a,b}$. The key Proposition is the following \cite[Proposition 4.1]{twists}.
\begin{proposition}\label{Proposition4.1} Assume that $M\geqslant 1$ is such that $K$ is $(q,M)$-good. Let $V$ be a smooth compactly supported function satisfying Condition $V(C,P,Q)$. Let $(b_\ell)$ be the sequence of complex numbers defined by \eqref{DefinitionAmplifier}.

For any $\varepsilon >0$, there exists $k(\varepsilon)>\kappa$ such that for any $k\geqslant k(\varepsilon)$ and any integers $a>b>2$ satisfying 
$$a-b\geqslant k(\varepsilon), \ \  a-b\equiv \kappa \ (\modm 2),$$
we have the bound
\begin{equation}\label{BoundProposition4.1}
M(L), \ M(L;k) \ll \left\{ q^{1+\varepsilon}L^{1/2}P(P+Q)+q^{1/2}L^{2}PQ^2(P+Q)\right\}M^3,
\end{equation}
provided
\begin{equation}\label{ConditionL}
q^\varepsilon LQ<q^{1/4}
\end{equation}
and where the implied constant depends on $C,\varepsilon,a,b,k$ and polynomially on the archimedean parameter of $f$.
\end{proposition}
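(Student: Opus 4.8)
The plan is to run the amplification method of \cite[Proposition~4.1]{twists}, carrying the level $q$ and the nebentypus $\omega$ through every step. I would begin by applying the Petersson formula \eqref{Petersson} to each $M(L;k)$ and the Kuznetsov formula \eqref{Kuznetsov} to $M(L)$. The reason $f$ was embedded at level $2q$ in \S\ref{SpecialCase} is precisely that this makes the Kuznetsov approach work for $M(L)$: the four cusps of $\Gamma_0(2q)$ are all singular and the corresponding Eisenstein series are eigenfunctions of $T_n$ for $(n,2q)=1$, with eigenvalues $\lambda_{\amf}(\ell,it)$ given by \eqref{EigenvaluesEisensteinSeries}, so the holomorphic, Maass and continuous pieces of $M(L)$ assemble into a single complete spectral average which \eqref{Kuznetsov} converts into a sum of twisted Kloosterman sums $S_\omega(\cdot,\cdot;c)$ over moduli $c$ with $2q\mid c$ (the test function $\phi=\phi_{a,b}$ being chosen, as in \cite[\S3.2]{twists}, so that no stray diagonal survives on the Kuznetsov side).

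Next I would open the squares $|B(g)|^2$ and $|\tilde{\Scal}_V(g,K;q)|^2$ and use the Hecke relations \eqref{HeckeMult1}, \eqref{HeckeMult2} and \eqref{RelationHecke-Fourier} to rewrite each product $\lambda_g(\ell_1)\overline{\lambda_g(\ell_2)}\,\rho_g(n_1)\overline{\rho_g(n_2)}$ as a short divisor-sum of terms $\rho_g(m_1)\overline{\rho_g(m_2)}$ with $m_i=\ell_i n_i/d_i^2$; this is the combinatorial reduction of \cite[\S4.2]{twists}, the only new feature being the factors $\omega(d_i)$, of modulus at most $1$, that now decorate the coefficients. After the trace formulas, $M(L;k)$ and $M(L)$ are expressed, up to a diagonal main term $\delta(m_1,m_2)$ (present for the Petersson part), as sums $\sum_{m_1,m_2}c_{m_1}\overline{c_{m_2}}\sum_{2q\mid c}c^{-1}S_\omega(m_1,m_2;c)\,(\cdots)$, where $c_m=\sum_{\ell n/d^2=m}\omega(d)\,b_\ell\,K(n)V(n/q)$. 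The diagonal $m_1=m_2$ I would estimate trivially, using $\|K\|_\infty\le M$, the support of $V$ in $[P,2P]$ (so $n\asymp Pq$) and the bound $\sum_\ell|b_\ell|^2\ll L^{1/2+\varepsilon}$ for the amplifier \eqref{DefinitionAmplifier}, itself a consequence of \eqref{Ramanujan-Petersson-Average}; this stays well inside the first term of \eqref{BoundProposition4.1}.

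The core of the argument is the off-diagonal term. Writing $c=2qr$ (the contribution of $q^2\mid c$ being negligible), I would use twisted multiplicativity to split off the $q$-part $\sum_{x\in\Fbf_q^\times}\overline{\omega}(x)e\bigl((m_1\overline{x}+m_2x)/q\bigr)$ of $S_\omega(m_1,m_2;c)$ from an ordinary Kloosterman sum modulo $2r$, and then apply Poisson summation modulo $q$ in each of the variables $n_1,n_2$. This replaces $K$ by its Fourier transform $\widehat{K}$, and the mod-$q$ exponential sum combines with the two copies of $\widehat{K}$ into exactly a twisted correlation sum $\Ccal(K,\omega;\gamma)$ attached to a matrix $\gamma=\gamma(\ell_1,\ell_2,d_1,d_2,h_1,h_2,r)\in\PGL_2(\Fbf_q)$ depending on all the summation variables ($h_1,h_2$ the Poisson-dual variables) — this is the computation of \cite[\S5]{twists}, and the extra factor $\overline{\omega}(cz+d)$ built into Definition \ref{DefinitionCorrelation} is exactly what the nebentypus twist of $S_\omega$ forces here. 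I would then invoke the $(q,M)$-goodness of $K$: one has $|\Ccal(K,\omega;\gamma)|\le Mq^{1/2}$ unless $\gamma\in\Gbf_{K,\omega,M}$, and for such $\gamma$ only the Parseval bound $|\Ccal(K,\omega;\gamma)|\le M^2q$ of \eqref{Parseval} is available; but $\Gbf_{K,\omega,M}$ contains no parabolic element and lies in $\Gbf_{K,\omega,M}^b\cup\Gbf_{K,\omega,M}^t\cup\Gbf_{K,\omega,M}^w$, with $\Gbf_{K,\omega,M}^t$ spread over at most $M$ tori $\Trm^{x_i,y_i}$ and $\Gbf_{K,\omega,M}^w$ over at most $M$ cosets inside the $\Nrm^{x_i,y_i}$. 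Counting how many tuples $(\ell_1,\ell_2,d_1,d_2,h_1,h_2,r)$ fall into each of these pieces — the lattice-point arguments of \cite[\S6]{twists} and \cite[\S2]{prime}, where the absence of parabolic elements is exactly what keeps this count of size $\ll Mq^\varepsilon$ — the generic range contributes the error term $q^{1/2}L^2PQ^2(P+Q)M^3$ while the exceptional matrices are absorbed into $q^{1+\varepsilon}L^{1/2}P(P+Q)M^3$; the hypothesis \eqref{ConditionL} is precisely what legitimizes the truncations of the $c$- and $h$-sums and keeps the exceptional contribution subordinate.

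The hard part will be the off-diagonal computation — tracking the nebentypus through twisted multiplicativity, the two Poisson summations and the bookkeeping of $\gamma$, and checking that everything collapses into the single factor $\overline{\omega}(cz+d)$ of Definition \ref{DefinitionCorrelation} — together with redoing the lattice-point counts of \cite[\S6]{twists} and \cite[\S2]{prime} for the twisted group $\Gbf_{K,\omega,M}$ and verifying that the parabolic-freeness and the bound on the number of tori, both now phrased with the $\overline{\omega}(cz+d)$-twist, still suffice to keep that count small. Once these are in place, the remaining steps differ from \cite{twists} and \cite{prime} only by routine bookkeeping.
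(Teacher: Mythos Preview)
Your proposal is correct and follows essentially the same route as the paper, which adapts \cite[\S\S4--6]{twists} to level $2q$ and nebentypus $\omega$: trace formula, Hecke combinatorics, Poisson to twisted correlation sums $\Ccal(K,\omega;\gamma)$, then the $(q,M)$-good splitting and the lattice-point counts of \cite[\S6]{twists}. The only organizational difference is that the paper first peels off the amplifier-diagonal $(\ell_1,\ell_2)>1$ and bounds it via the spectral large sieve (Proposition~\ref{PropositionSieve}) before applying the trace formula, and on the coprime part it merges $\lambda_g(\ell_1)\lambda_g(\ell_2)=\lambda_g(\ell_1\ell_2)$ with $\rho_g(n_1)$ via a single divisor $d\mid(\ell_1\ell_2,n_1)$ --- leading to the factorization $de=\ell_1\ell_2$ and the Type I/II/III case split --- rather than your two separate reductions $m_i=\ell_in_i/d_i^2$; the endgame is identical.
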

Theorem \ref{Theorem(q,M)-good} can be deduced from Proposition \ref{Proposition4.1} exactly in the same way as in \cite[Section 4.2]{twists}. The only changes is to use \eqref{SimpleRelation} to pass from $|B(f)|^2|\Scal_V(f,K;q)|^2$ to $|B(f)|^2|\tilde{\Scal}_V(f,K;q)|^2$ and then \eqref{LowerBound} for the upper bound on $|\rho_f(1)|^{-2}$. Finally, since for any prime $p$ different from $q$ we have the elementary relation
$$\lambda_f(p)^2-\lambda_f(p^2)=\omega(p),$$ 
we obtain the lower bound
$$B(f) \gg \frac{L^{1/2}}{\log L},$$
simply using the prime number Theorem. Hence it remains to prove Proposition \ref{Proposition4.1}. 

\vspace{0.2cm}

Expanding the square in $|B(g)|^2$ and $|\tilde{\Scal}_V(g,K;q)|$ (choosing the variables $\ell_1,\ell_2$ for those comming from the amplifier), we get a first decomposition of $M(L)$ and $M(L;k)$
\begin{equation}\label{FirstDecomposition}
M(L)=M_d(L)+M_{nd}(L) \ \ \mathrm{and} \ \ M(L;k)= M_{d}(L;k)+M_{nd}(L;k),
\end{equation}
depending on weither $(\ell_1,\ell_2)>1$ (the diagonal term) or not. For the diagonal term, we have the following lemma which is the analogous of \cite[Lemma 5.1]{twists}.
\begin{lemme} Assume $|K|\leqslant M$. For any $\varepsilon >0$, we have
$$M_d(L), M_d(L;k) \ll M^2q^{1+\varepsilon}L^{1/2}P(P+1),$$
where the implied constant depends only on $\varepsilon$.
\end{lemme}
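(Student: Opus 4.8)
The plan is to expand both squares in $M(L)$ and $M(L;k)$ and separate the terms with $(\ell_1,\ell_2)>1$, so that
$$M_d(L)=\sum_{\substack{\ell_1,\ell_2\leqslant 2L\\ (\ell_1,\ell_2)>1}}b_{\ell_1}\overline{b_{\ell_2}}\sum_g\omega_g\,\lambda_g(\ell_1)\overline{\lambda_g(\ell_2)}\,\bigl|\tilde{\Scal}_V(g,K;q)\bigr|^2,$$
where $\sum_g\omega_g$ abbreviates the full spectral average occurring in $M(L)$ (holomorphic forms in $\Bcal_k(2q,\omega)$ weighted by $\dot{\phi}(k)(k-1)\frac{(k-2)!}{\pi(4\pi)^{k-1}}$, Maass forms in $\Bcal(2q,\omega)$ weighted by $\tilde{\phi}(t_g)\frac{4\pi}{\cosh(\pi t_g)}$, and the Eisenstein integrals weighted by $\tilde{\phi}(t)\frac{1}{\cosh(\pi t)}$; the $\omega_g$ here are, of course, not the nebentypus), and $M_d(L;k)$ is the same with only the weight-$k$ holomorphic piece present. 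The key point is that here, unlike for $M_{nd}$, one need not fold the Hecke eigenvalues into the Fourier coefficients: it suffices to pull the $(\ell_1,\ell_2)$-sum outside the spectral average and estimate the remaining un-amplified second moment.

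First I would exploit the shape of the amplifier \eqref{DefinitionAmplifier}: it is supported on the primes $p\sim L^{1/2}$ and their squares, so a common divisor of $\ell_1$ and $\ell_2$ forces $\ell_1=p^i$, $\ell_2=p^j$ with $i,j\in\{1,2\}$ and one common prime $p\sim L^{1/2}$. Hence, bounding trivially in $g$,
$$|M_d(L)|\leqslant\sum_g\omega_g\,\bigl|\tilde{\Scal}_V(g,K;q)\bigr|^2\sum_{p\sim L^{1/2}}\Bigl(\sum_{i=1}^2|b_{p^i}|\,|\lambda_g(p^i)|\Bigr)^2 .$$
By Ramanujan--Petersson for $f$ one has $|b_{p^i}|\ll 1$, so the inner sum is $\ll\sum_{p\sim L^{1/2}}\bigl(|\lambda_g(p)|^2+|\lambda_g(p^2)|^2\bigr)$. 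For $g$ holomorphic this is $\ll L^{1/2}$ by \eqref{Ramanujan-Petersson}, and likewise for $E_{\amf}(\cdot,1/2+it)$ since $|\lambda_{\amf}(p^i,it)|\leqslant\tau(p^i)\leqslant 3$ by \eqref{EigenvaluesEisensteinSeries}; for a Maass cusp form $g$ I would use $\lambda_g(p^2)=\lambda_g(p)^2-\omega(p)$ together with the second- and fourth-moment bounds \eqref{Ramanujan-Petersson-Average}, \eqref{FourthPower}, which give $\sum_{p\sim L^{1/2}}\bigl(|\lambda_g(p)|^2+|\lambda_g(p)|^4\bigr)\ll\bigl(q(1+|t_g|)\bigr)^{\varepsilon}L^{1/2}$, hence the same bound with the extra factor $(1+|t_g|)^{\varepsilon}$, harmlessly absorbed by the decay of $\tilde{\phi}$ built into $\phi=\phi_{a,b}$ (cf.\ \cite[Section 3.2]{twists}). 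This gives $|M_d(L)|\ll_{\varepsilon}q^{\varepsilon}L^{1/2}\sum_g\omega_g|\tilde{\Scal}_V(g,K;q)|^2$.

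Finally, it remains to estimate the un-amplified second moment. Writing $\tilde{\Scal}_V(g,K;q)=\sum_n\rho_g(n)K(n)V(n/q)$, the coefficients $K(n)V(n/q)$ are supported on $n\in[Pq,2Pq]$ and bounded by $M$, so $\sum_n|K(n)V(n/q)|^2\ll M^2Pq$; applying the spectral large sieve inequality (Proposition \ref{PropositionSieve}, level $N=2q$) to each of the three parts of the spectrum — truncating at $|t_g|,k\leqslant q^{\varepsilon}$ at negligible cost thanks to the decay of $\tilde{\phi},\dot{\phi}$, and using that $\omega_g$ is, up to a bounded factor, dominated by the large-sieve weight $(1+|t_g|)^{\kappa}/\cosh(\pi t_g)$ — yields
$$\sum_g\omega_g\,\bigl|\tilde{\Scal}_V(g,K;q)\bigr|^2\ll_{\varepsilon}\Bigl(q^{2\varepsilon}+\tfrac{(2Pq)^{1+\varepsilon}}{2q}\Bigr)M^2Pq\ll M^2q^{1+\varepsilon}P(P+1),$$
and combining with the previous display gives $|M_d(L)|\ll M^2q^{1+\varepsilon}L^{1/2}P(P+1)$. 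The bound for $M_d(L;k)$ follows in exactly the same way, the archimedean normalization factor $\frac{(k-2)!}{\pi(4\pi)^{k-1}}$ against the large-sieve weight $\Gamma(k)$ absorbing any $k$-dependence. There is no real obstacle in this lemma — it is the easy, essentially diagonal, part of the amplified moment, the genuine work being reserved for $M_{nd}(L)$; the only two points requiring a little care are the use of the mean-square bounds \eqref{Ramanujan-Petersson-Average}, \eqref{FourthPower} for Maass forms (where no pointwise Ramanujan estimate is available, so one must argue for each individual $g$) and the verification that the Kuznetsov/Petersson weights are compatible with the large-sieve weights, both standard from the construction of $\phi_{a,b}$ in \cite[Section 3.2]{twists}.
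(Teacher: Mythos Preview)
Your approach is essentially the same as the paper's: bound the diagonal amplifier contribution $\sum_{(\ell_1,\ell_2)>1}b_{\ell_1}\overline{b_{\ell_2}}\lambda_g(\ell_1)\overline{\lambda_g(\ell_2)}$ by $(q(1+|t_g|))^\varepsilon L^{1/2}$ for each $g$, then apply the spectral large sieve (Proposition~\ref{PropositionSieve}) to the un-amplified second moment $\sum_g\omega_g|\tilde{\Scal}_V(g,K;q)|^2$.

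There is, however, one step that is not justified as written. You bound $|b_{p^i}|\ll 1$ by invoking Ramanujan--Petersson for $f$, but Theorem~\ref{Theorem(q,M)-good} (and hence Proposition~\ref{Proposition4.1} and this lemma) does \emph{not} assume R--P for $f$: when $f$ is a Maass form, only the Kim--Sarnak bound $|\lambda_f(p)|\leqslant 2p^{7/64}$ is available, so $|b_p|=|\lambda_f(p)|$ is not $\ll 1$. The paper handles this by keeping the factor $|\lambda_f(p)|$ and applying Cauchy--Schwarz (or H\"older) to separate $f$ from $g$: for instance, for the term $\ell_1=\ell_2=p$,
\[
\sum_{p\sim L^{1/2}}|\lambda_f(p)|^2|\lambda_g(p)|^2\leqslant\Bigl(\sum_{p\sim L^{1/2}}|\lambda_f(p)|^4\Bigr)^{1/2}\Bigl(\sum_{p\sim L^{1/2}}|\lambda_g(p)|^4\Bigr)^{1/2}\ll\bigl(qL(1+|t_f|)(1+|t_g|)\bigr)^\varepsilon L^{1/2},
\]
using \eqref{FourthPower} for \emph{both} $f$ and $g$; the cross terms $\ell_1=p,\ell_2=p^2$ and $\ell_1=\ell_2=p^2$ are treated the same way via $|\lambda(p^2)|\leqslant 1+|\lambda(p)|^2$. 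This is the same idea you already use for $g$; you simply need to apply it to $f$ as well rather than bounding $|\lambda_f(p)|$ pointwise. With that modification your argument is complete and matches the paper's.
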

\begin{proof}
We consider $M_d(L)$ which decomposes as a sum of the holomorphic, Maass and Eisenstein contributions
$$M_d(L)=M_{d,\mathrm{hol}}(L)+M_{d,\mathrm{
Maass}}(L)+M_{d,\mathrm{Eis}}(L).$$
We treat only $M_{d,\mathrm{Maass}}(L)$ since the others contributions are the same and even simpler. For instance, we have
$$M_{d,\mathrm{Maass}}(L)=\sum_{g\in \Bcal(2q,\omega)}\tilde{\phi}(t_g)\frac{4\pi}{\cosh(\pi t_g)}\mathscr{C}(g,L)\left|\sum_n K(n)\rho_g(n)V\left(\frac{n}{q}\right)\right|^2,$$
with
$$\mathscr{C}(g,L):= \sum_{(\ell_1,\ell_2)>1}b_{\ell_1}\overline{b_{\ell_2}}\lambda_g(\ell_1)\overline{\lambda_g(\ell_2)}.$$
By definition of the coefficients $b_\ell$ (c.f. \eqref{DefinitionAmplifier}), the case $(\ell_1,\ell_2)>1$ appears when $\ell_1=\ell_2=p\sim L^{1/2}$, $\ell_1=p^2=\ell_2^2\sim L$ (or the inverse) and $\ell_1=\ell_2=p^2\sim L$. We write $\mathscr{C}(g,L)=\mathscr{C}_1(g,L)+\mathscr{C}_2(g,L)+\mathscr{C}_3(g,L)$ according to the different possibilities and we estimate the three quantities individually. We first have by Cauchy-Schwarz inequality and \eqref{FourthPower},
\begin{alignat*}{1}
\mathscr{C}_1(g,L)= & \ \sum_{\substack{p\sim L^{1/2} \\ p \ \mathrm{prime}}} |\lambda_f(p)|^2|\lambda_g(p)|^2 \\
\leqslant & \ \left(\sum_{\substack{p\sim L^{1/2}}}|\lambda_f(p)|^4\right)^{1/2} \left(\sum_{\substack{p\sim L^{1/2}}}|\lambda_g(p)|^4\right)^{1/2} \\ \ll & \ (qL(1+|t_f|)(1+t_g))^\varepsilon L^{1/2},
\end{alignat*}
where the implied constant only depends on $\varepsilon$. For the second case, we have using $|\lambda(p^2)|\leqslant 1+|\lambda(p)|^2$ (c.f. \eqref{HeckeMult1}), Hölder and again \eqref{FourthPower},
\begin{alignat*}{1}
|\mathscr{C}_2(g,L)|\leqslant & \  \sum_{\substack{p\sim L^{1/2} \\ p \ \mathrm{prime}}}|\lambda_f(p)||\lambda_g(p)||\lambda_g(p^2)|\leqslant \sum_{\substack{p\sim L^{1/2} \\ p \ \mathrm{prime}}}|\lambda_f(p)||\lambda_g(p)|(1+|\lambda_g(p)|^2) \\ 
\leqslant & \  \left(\sum_{\substack{p\sim L^{1/2}}}|\lambda_f(p)|^4\right)^{1/4} \left(\sum_{\substack{p\sim L^{1/2}}}|\lambda_g(p)|^4\right)^{1/4}\left(\sum_{p\sim L^{1/2}}(1+|\lambda_g(p)|^2)^2\right)^{1/2} \\ \ll & \  (qL(1+|t_f|)(1+t_g))^\varepsilon L^{1/2}.
\end{alignat*}
Using the inequality $|\lambda_g(p^2)|^2 \leqslant 2(1+|\lambda_g(p)|^4)$, we treat in the same way $\mathscr{C}_3(g,L)$. The rest of the proof is exaclty the same as \cite[Lemma 5.1]{twists}, except that we must use Proposition \ref{PropositionSieve} for the spectral large sieve (possible since the conductor of $\omega$ is either $1$ or a prime $q$) instead of the original version of Deshouillers-Iwaniec \cite[Theorem 2, (1.29)]{82}.
\end{proof}
Now comes the contribution of the $\ell_1,\ell_2$ such that $(\ell_1,\ell_2)=1$. We first change the complex conjugate $\overline{\lambda_g(\ell_2)}=\omegabar(\ell_2)\lambda_g(\ell_2)$ in $M_{nd}(L)$ and $M_{nd}(L;k)$ appearing in the decomposition \eqref{FirstDecomposition} (c.f. \eqref{HeckeMult2}). By the primiality condition, we use the multiplicativity of the Hecke eigenvalues \eqref{HeckeMult1} followed by the relation \eqref{RelationHecke-Fourier} to obtain
$$\lambda_g(\ell_1\ell_2)\rho_g(n_1)=\sum_{d|(\ell_1\ell_2,n_1)}\omega(d)\rho_g\left(\frac{\ell_1\ell_2n_1}{d^2}\right).$$
Once we have done this, we apply the Petersson trace formula \eqref{Petersson} to $M_{nd}(L;k)$ in \eqref{FirstDecomposition}, obtaining $$\pi M_{nd}(L;k)=M_1(L;k)+M_2(L;k),$$ where $M_1(L;k)$ corresponds to the diagonal term $\delta(\ell_1\ell_2n_1d^{-2},n_2)$. Similarily, we apply Kuznetsov formula \eqref{Kuznetsov} to $M_{nd}(L)$ and since there is no diagonal term, we write $M_{nd}(L)=M_2(L).$ 

The treatment of the diagonal term $M_1(L;k)$ is contained in \cite[Lemma 5.3]{twists}, with the appropriate changes using \eqref{Ramanujan-Petersson-Average} for the coefficients of the amplifier,
\begin{equation}\label{BoundDiagonalTerm}
M_1(L;k) \ll (q(1+|t_f|))^\varepsilon qL^{1/2}PM^2,
\end{equation}
with an implied constant depending only on $\varepsilon$.


\subsubsection{The off-diagonal terms} This is the most important case of $M_2(L)$ and $M_2(L;k)$ and thus we write explicitly the quantities to study. For $\phi$ an arbitrary function, we write
\begin{equation}
\begin{split}
M_2[\phi]= & \ \frac{1}{2q}\sum_{(\ell_1,\ell_2)=1}b_{\ell_1}\overline{b_{\ell_2}}\overline{\omega}(\ell_2)\sum_{d|\ell_1\ell_2}\omega(d)\sum_{\substack{n_1,n_2 \\ d|n_1}}K(n_1)\overline{K(n_2)}V\left(\frac{n_1}{q}\right)V\left(\frac{n_2}{q}\right) \\ 
\times & \ \sum_{c\geqslant 1}c^{-1}S_{\omega}(\ell_1\ell_2n_1d^{-2},n_2;2cq)\phi\left(\frac{4\pi}{2cq}\sqrt{\frac{\ell_1\ell_2n_1n_2}{d^2}}\right),
\end{split}
\end{equation}
in order to have 
$$M_2(L)=M_2[\phi_{a,b}] \ \ \mathrm{and} \ \ M_2(L;k)=M_2[\phi_k]$$
where $\phi_k=2\pi i^{-k}J_{k-1}$ is the Bessel function. We transform the sum as
\begin{equation}\label{M2}
M_2[\phi]=\sum_{(\ell_1,\ell_2)=1}b_{\ell_1}\overline{b_{\ell_2}}\omegabar(\ell_2)\sum_{de=\ell_1\ell_2}\omega(d)M_2[\phi;d,e],
\end{equation}
where 
$$M_2[\phi;d,e]=\frac{1}{2q}\sum_{c\geqslant 1}c^{-1}\Ecal_\phi(c,d,e)$$
and
\begin{alignat*}{1}
\Ecal_\phi(c,d,e)= & \ \sum_{n_1,n_2}S_\omega(en_1,n_2;2cq)K(dn_1)\overline{K(n_2)}\phi\left(\frac{4\pi \sqrt{en_1n_2}}{2cq}\right)V\left(\frac{dn_1}{q}\right)V\left(\frac{n_2}{q}\right) \\ 
= & \ \sum_{n_1,n_2}S_\omega(en_1,n_2;2cq)K(dn_1)\overline{K(n_2)}H_\phi(n_1,n_2),
\end{alignat*}
with
\begin{equation}\label{DefinitionH}
H_\phi(x,y)=\phi\left(\frac{4\pi\sqrt{exy}}{2cq}\right)V\left(\frac{dx}{q}\right)V\left(\frac{y}{q}\right).
\end{equation}
As in \cite[Section 5.4]{twists}, we truncate the parameter $c$ in $M_2[\phi;d,e]$ by writing $M_2[\phi;d,e]=M_{2,C}[\phi;d,e]+M_3[\phi;d,e]$ where $M_{2,C}[\phi;d,e]$ denotes the contribution of ther terms with $c>C$ for some $C=C(d,e)\geqslant 1/2$ and correspondingly
\begin{equation}\label{tail}
M_2[\phi]=M_{2,\mathrm{tail}}[\phi]+M_{3}[\phi].
\end{equation}
It turns out that with the choice
\begin{equation}\label{ChoiceC}
C=\max\left(\frac{1}{2},q^{\delta}P\sqrt{\frac{e}{d}}\right)\ll q^\delta LP,
\end{equation}
the contribution of $c>C$ is negligible (see \cite[(5.9)]{twists}), so we focus on the complementary sum, which is given by
\begin{equation}\label{ComplementarySum}
M_3[\phi;d,e]=\frac{1}{2q}\sum_{1\leqslant c\leqslant C}c^{-1}\Ecal_\phi(c,d,e).
\end{equation}
In particular, the above expression is zero if $C<1$. 

Recall that we factored the product $\ell_1\ell_2$ as $de=\ell_1\ell_2$. Since we allow $\ell_1$ and $\ell_2$ to be square of primes, there are more different type of factorization to consider. We distinguish three types.
\begin{enumerate}
\item [$\bullet$] Type I (balanced case) : this is when both $d$ and $e$ are $\neq 1$ and $d/e\sim 1$, so $d$ and $e$ are either primes $\sim L^{1/2}$ (type ($L^{1/2},L^{1/2}$)) or square of primes $\sim L$ (type $(L,L)$) with $(d,e)=1$ in each case.
\item [$\bullet$] Type II (unbalanced case) : this is when $(d,e)$ satisfies $e/d\gg L^{1/2}$, i.e. is of type $(1,L),(1,L^{3/2}),(1,L^2),$ $(L^{1/2},L)$ and $(L^{1/2},L^{3/2})$.
\item [$\bullet$] Type III (unbalanced case) : this is when $(d,e)$ satisfies $d/e\gg L^{1/2}$, so is of type $(L,1),(L^{3/2},1),(L^2,1),(L,L^{1/2})$ and $(L^{3/2},L^{1/2}).$
\end{enumerate}
Assuming the harmless condition
\begin{equation}\label{HarmlessCondition}
q^\delta P\ll L^{1/2},
\end{equation}
we obtain by \eqref{ChoiceC} :
\begin{lemme} Suppose that $(d,e)$ is of Type III and that \eqref{HarmlessCondition} is satisfied. Then we have the equality
$$M_{3}[\phi;d,e]=0.$$
\end{lemme}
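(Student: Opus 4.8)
The plan is to observe that, for a Type III pair $(d,e)$ and under \eqref{HarmlessCondition}, the sum defining $M_3[\phi;d,e]$ contains no terms at all. Recall from \eqref{ComplementarySum} that
$$M_3[\phi;d,e]=\frac{1}{2q}\sum_{1\leqslant c\leqslant C}c^{-1}\Ecal_\phi(c,d,e),$$
where, by the choice made in \eqref{ChoiceC}, the truncation parameter is $C=\max\bigl(\tfrac12,\,q^\delta P\sqrt{e/d}\bigr)$. As already noted just after \eqref{ComplementarySum}, whenever $C<1$ there is no integer $c$ with $1\leqslant c\leqslant C$, so the sum is empty and $M_3[\phi;d,e]=0$. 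Thus the whole statement reduces to the single inequality $C<1$, and in particular one never has to look inside $\Ecal_\phi(c,d,e)$.

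To obtain $C<1$ I would simply substitute the Type III hypothesis into the formula for $C$. By definition, Type III means $d/e\gg L^{1/2}$, hence $\sqrt{d/e}\gg L^{1/4}$ and
$$q^\delta P\sqrt{\frac{e}{d}}=\frac{q^\delta P}{\sqrt{d/e}}\ll\frac{q^\delta P}{L^{1/4}}.$$
Invoking the harmless condition \eqref{HarmlessCondition} then forces the right-hand side to be $<1$, so that $C=\tfrac12<1$ and we conclude $M_3[\phi;d,e]=0$.

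The only point I would check with care — and the only thing resembling an obstacle — is the bookkeeping of the implied constants: one must ensure that the upper bound on $q^\delta P$ furnished by \eqref{HarmlessCondition} genuinely stays below $\sqrt{d/e}$ for \emph{every} Type III factorization, the tightest ones being those for which $d/e$ is only of order $L^{1/2}$ (so that $\sqrt{d/e}$ is of order $L^{1/4}$). Since $\delta$ is a free small absolute constant and the implied constant in the definition of the Type III range may be taken as large as needed, there is ample room to arrange this. No genuine analytic difficulty arises: this is a purely combinatorial reduction lemma, entirely analogous to the corresponding step in \cite[Section 5]{twists}, whose sole role is to discard the Type III factorizations from the subsequent analysis.
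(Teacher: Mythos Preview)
Your approach is exactly the one the paper intends: show that $C<1$ so that the sum in \eqref{ComplementarySum} is empty. However, your final arithmetic does not close. For the tightest Type~III pair, namely $(d,e)$ of type $(L,L^{1/2})$, one has $d/e\asymp L^{1/2}$ and hence $\sqrt{d/e}\asymp L^{1/4}$. The harmless condition \eqref{HarmlessCondition} only gives $q^\delta P\ll L^{1/2}$, so
\[
q^\delta P\sqrt{\frac{e}{d}}\ \ll\ \frac{L^{1/2}}{L^{1/4}}\ =\ L^{1/4},
\]
which tends to infinity with $q$. This is a mismatch in the \emph{power} of $L$, not in the implied constants, so your sentence ``the implied constant in the definition of the Type~III range may be taken as large as needed'' does not repair it: no choice of absolute constants makes $L^{1/4}<1$ for large $L$.

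The correct fix is to strengthen the harmless condition to $q^\delta P\ll L^{1/4}$ (this is what the computation actually needs, and it is equally harmless in the application, where $P$ is bounded and $L$ is a small power of $q$). With that stronger hypothesis your argument goes through verbatim: for every Type~III pair one has $\sqrt{d/e}\gg L^{1/4}$, hence $q^\delta P\sqrt{e/d}<1$, so $C=\tfrac12$ and $M_3[\phi;d,e]=0$. The exponent $L^{1/2}$ in \eqref{HarmlessCondition} appears to be a slip inherited from the simpler amplifier of \cite{twists}, where the extremal Type~III ratio is larger.
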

It remains to deal with the types I and II. The goal now is to transform the sums $\Ecal_\phi(c,d,e)$ to connect them with the correlation sums $\Ccal(K,\omega;\gamma)$ defined in \eqref{DefinitionCorrelation} for suitable matrices $\gamma$. This is the content of \cite[Section 5.5]{twists} and it is achieved using principally twisted multiplicativity of the Kloosterman sums and Poisson summation formula. The only difference here is the appearance of the nebentypus $\omega$ when we open the Kloosterman sum. We also mention that it is in this treatment that we use the fact that the level is $2q$ and not $q$. The result is that for any $c\geqslant 1$, we have the identity
\begin{equation}\label{ConnectionCorrelation}
\Ecal_\phi(c,d,e)=\frac{\omega(-d)}{q}\mathop{\sum\sum}_{\substack{n_1n_2\neq 0, \ (n_2,2c)=1 \\ n_1n_2\equiv e (\modm 2c)}}\widehat{H}_\phi\left(\frac{n_1}{2cq},\frac{n_2}{2cq}\right)\Ccal(K,\omega;\gamma(c,d,e,n_1,n_2)),
\end{equation}
where $\widehat{H}_\phi$ is the Fourier transform of $H_\phi$ and
\begin{equation}\label{DefinitionCorrelationMatrix}
\gamma(c,d,e,n_1,n_2):= \left( \begin{matrix}
n_1 & \frac{n_1n_2-e}{2c} \\ 2cd & dn_2
\end{matrix} \right) \in \mathrm{M}_2(\Zbf)\cap \GL_2(\Qbf).
\end{equation}
\begin{remq} Observe that $\det(\gamma(c,d,e,n_1,n_2))=de$ which is coprime with $q$. Hence the reduction of $\gamma(c,d,e,n_1,n_2)$ modulo $q$ provides a well defined element of $\PGL_2(\Fbf_q)$.
\end{remq}

\subsubsection{Analysis of $\Ecal_\phi(c,d,e)$} The first step in the analysis of \eqref{ConnectionCorrelation} passes by the study of the Fourier transform of $H_\phi(x,y)$. This is the content of \cite[Sections 5.6-5.7]{twists} and it is contained in Lemmas 5.7,5.9. One of the consequences is that it allows to truncate the $n_1,n_2$-sum in $\Ecal_\phi(c,d,e)$ to 
\begin{equation}\label{Restriction}
0\neq |n_1|\leqslant \Ncal_1:= q^\varepsilon cd \frac{\left(Q+\frac{P}{2c}\sqrt{\frac{e}{d}}\right)}{P}, \ \ 0\neq|n_2|\leqslant \Ncal_2:= \frac{\Ncal_1}{d},
\end{equation}
(see \cite[(5.21)]{twists}). The final strategy is to separate the terms in \eqref{ConnectionCorrelation} (with the restriction \eqref{Restriction} on $n_1,n_2$) according to whether
$$|\Ccal(K,\omega;\gamma(c,d,e,n_1,n_2))|\leqslant Mq^{1/2}$$
or not, i.e., as to whether the reduction modulo $q$ of the matrix $\gamma(c,d,e,n_1,n_2)$ is in the set $\Gbf_{K,\omega,M}$ of $M$-correlation matrix or not. We thus write
$$\Ecal_\phi(c,d,e)=\Ecal_\phi^{c}(c,d,e)+\Ecal_\phi^n(c,d,e),$$
where $\Ecal_\phi^c(c,d,e)$ is the subsum of \eqref{ConnectionCorrelation} where we restrict to the variables $n_1,n_2$ such that the reduction modulo $q$ of $\gamma(c,d,e,n_1,n_2)$ belongs to $\Gbf_{K,\omega,M}$ and $\Ecal_\phi^n(c,d,e)$ is the contribution of the remaining terms. According to \eqref{ComplementarySum}, \eqref{tail} and \eqref{M2}, we also write
\begin{alignat*}{1}
M_3[\phi;d,e]= & \ \frac{1}{2q}\sum_{c\leqslant C}c^{-1}\left(\Ecal_\phi^c(c,d,e)+\Ecal_\phi^n(c,d,e)\right) \\ 
= & \ M_3^c[\phi;d,e]+M_3^n[\phi;d,e],
\end{alignat*}
and
\begin{alignat*}{1}
M_3[\phi]= & \ \sum_{(\ell_1,\ell_2)=1}b_{\ell_1}\overline{b_{\ell_2}}\omegabar(\ell_2)\sum_{de=\ell_1\ell_2}\omega(d)\left(M_3^c[\phi;d,e]+M_3^n[\phi;d,e]\right)  \\ 
= & \ M_3^c[\phi]+M_3^n[\phi].
\end{alignat*}

\begin{lemme} With the above notations, we have
$$M_3^n[\phi_{a,b}]\ll Mq^{1/2+\varepsilon}L^2PQ^2(P+Q), \ \ M_3^n[\phi_k]\ll Mk^3q^{1/2+\varepsilon}L^2PQ^2(P+Q),$$
for any $\varepsilon>0$ where the implied constant depends on $\varepsilon,a,b$ for $\phi=\phi_{a,b}$ and on $\varepsilon$ for $\phi=\phi_k$.
\end{lemme}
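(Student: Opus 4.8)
The plan is to exploit the defining property of $\Gbf_{K,\omega,M}$ on the non-correlating locus. Every pair $(n_1,n_2)$ occurring in $\Ecal_\phi^n(c,d,e)$ is, by construction, such that the reduction of $\gamma(c,d,e,n_1,n_2)$ modulo $q$ lies outside $\Gbf_{K,\omega,M}$, so that $|\Ccal(K,\omega;\gamma(c,d,e,n_1,n_2))|\leqslant Mq^{1/2}$. Inserting this into \eqref{ConnectionCorrelation}, using the truncation \eqref{Restriction} of the $n_1,n_2$-sum together with the Fourier-transform estimates of \cite[Lemmas 5.7 and 5.9]{twists} --- in the present notation, $q^{-2}\widehat{H}_{\phi_{a,b}}\left(\frac{n_1}{2cq},\frac{n_2}{2cq}\right)\ll q^\varepsilon\frac{P^2}{d}\min\left(Z^{-1/2},Q/Z\right)$, with an extra factor $k^3$ for $\phi=\phi_k$, where $Z:=\frac{P}{2c}\sqrt{e/d}$ --- reduces the estimate to a lattice-point count. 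First I would count the $(n_1,n_2)$ with $n_1n_2\equiv e\ (\modm 2c)$, $(n_2,2c)=1$ and $|n_i|\leqslant\Ncal_i$: for each admissible $n_2$ the residue of $n_1$ modulo $2c$ is fixed, so the number of pairs is $\ll q^\varepsilon\Ncal_1\Ncal_2/c$, up to smaller-order terms. Since $\Ncal_1=q^\varepsilon cd(Q+Z)/P$ and $\Ncal_2=\Ncal_1/d$, this is $\ll q^{2\varepsilon}cd(Q+Z)^2/P^2$; multiplying by the Fourier bound (observe the cancellation $\frac{P^2}{d}\cdot d=P^2$), by $Mq^{1/2}$, and by the prefactor $q^{-1}$ of \eqref{ConnectionCorrelation}, I obtain
$$\Ecal_\phi^n(c,d,e)\ll Mq^{3/2+O(\varepsilon)}\,c\,(Q+Z)^2\min\!\bigl(Z^{-1/2},\,Q/Z\bigr),$$
with the additional $k^3$ in the $\phi_k$ case; the part of the count of size $\Ncal_2$ is treated identically and leads to a bound of the same shape.

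Next I would perform the $c$-sum. Set $A:=\frac{P}{2}\sqrt{e/d}$, so $Z=A/c$; by \eqref{ChoiceC} we may assume $C=q^\delta P\sqrt{e/d}\asymp q^\delta A$ (otherwise $M_3[\phi;d,e]=0$), so $Z$ ranges from $A$ down to $\asymp q^{-\delta}$. Using the crude bound $\min(Z^{-1/2},Q/Z)\leqslant Q/Z=Qc/A$ and $(Q+Z)^2\leqslant 2Q^2+2A^2/c^2$, and summing the resulting elementary series,
$$\sum_{c\leqslant C}(Q+Z)^2\min\!\bigl(Z^{-1/2},\,Q/Z\bigr)\ll\frac{Q}{A}\bigl(Q^2C^2+A^2\log C\bigr)\ll q^{O(\delta)}AQ^3,$$
using $Q\geqslant 1$ and $C\asymp q^\delta A$; replacing $\min(Z^{-1/2},Q/Z)$ by $Q/Z$ near $c=C$ costs only $q^{O(\delta)}$ because $Z\gg q^{-\delta}$ there. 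Since the factor $c^{-1}$ in $M_3^n[\phi;d,e]=\frac{1}{2q}\sum_{c\leqslant C}c^{-1}\Ecal_\phi^n(c,d,e)$ absorbs the leftover $c$, this gives $M_3^n[\phi;d,e]\ll Mq^{1/2+O(\varepsilon)}AQ^3$; and since $A\leqslant\frac{P}{2}\sqrt{\ell_1\ell_2}\leqslant PL$ for every factorization $de=\ell_1\ell_2$ with $\ell_1,\ell_2\leqslant 2L$, we reach $M_3^n[\phi;d,e]\ll Mq^{1/2+O(\varepsilon)}LPQ^3$.

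Finally I would sum over the factorizations $de=\ell_1\ell_2$ and the amplifier variables appearing in \eqref{M2}. For each value of $\ell_1\ell_2$ there are $\ll\tau(\ell_1\ell_2)\ll 1$ factorizations, and bounding the amplifier \eqref{DefinitionAmplifier} through the second-moment estimate \eqref{Ramanujan-Petersson-Average} (so that $\sum_{\ell\leqslant 2L}|b_\ell|^2\ll q^\varepsilon L^{1/2}$) gives $\sum_{(\ell_1,\ell_2)=1}|b_{\ell_1}b_{\ell_2}|\tau(\ell_1\ell_2)\ll q^\varepsilon L$. Multiplying through yields $M_3^n[\phi]\ll Mq^{1/2+O(\varepsilon)}L^2PQ^3$, and since $Q^3=Q^2\cdot Q\leqslant Q^2(P+Q)$, choosing $\delta=\varepsilon$ produces $M_3^n[\phi_{a,b}]\ll Mq^{1/2+\varepsilon}L^2PQ^2(P+Q)$ and $M_3^n[\phi_k]\ll Mk^3q^{1/2+\varepsilon}L^2PQ^2(P+Q)$. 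The one mildly delicate point is controlling the $c$-sum against the oscillation-free weight $\min(Z^{-1/2},Q/Z)$; everything else is a straightforward combination of the trivial bound $|\Ccal|\leqslant Mq^{1/2}$ valid off $\Gbf_{K,\omega,M}$ with the Fourier-transform estimates of \cite[Section 5]{twists}.
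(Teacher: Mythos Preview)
Your proposal is correct and follows essentially the same approach as the paper, which simply refers to \cite[pp.~625--626]{twists} with minimal changes for the new amplifier types. You have reproduced that argument: insert the bound $|\Ccal|\leqslant Mq^{1/2}$ valid off $\Gbf_{K,\omega,M}$, use the Fourier-transform estimates of \cite[Lemmas~5.7,~5.9]{twists}, count the $(n_1,n_2)$ in the box $|n_i|\leqslant\Ncal_i$ subject to $n_1n_2\equiv e\ (\modm 2c)$, sum over $c\leqslant C$, and finally sum over the amplifier using $\sum_\ell|b_\ell|\ll q^\varepsilon L^{1/2}$ and $\tau(\ell_1\ell_2)\ll 1$.
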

\begin{proof}
This is the content of \cite[pp. 625-626]{twists}, with minimal changes due to the different nature of pairs $(d,e)$ of type I and II.
\end{proof}

To conclude the proof of Proposition \ref{Proposition4.1}, it remains to evaluate the contribution $M_3^c[\phi;d,e]$ corresponding to the matrices whose reduction modulo $q$ is a correlating matrix, i.e.
such that $\gamma(c,d,e,n_1,n_2) \modm q \in \Gbf_{K,\omega,M}$. The final lemma is the following :
\begin{lemme} Under the assumption
\begin{equation}\label{assumption}
q^{3\varepsilon}LQ<q^{1/4},
\end{equation}
we have
$$M_3^c[\phi_k]\ll M^3k^3q^{1+\varepsilon}L^{1/2}PQ, \ \ M_3^c[\phi_{a,b}]\ll M^3q^{1+\varepsilon}L^{1/2}PQ,$$
where the implied constant depends on $\varepsilon,a,b$.
\end{lemme}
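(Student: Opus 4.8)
The plan is to bound each $M_3^c[\phi;d,e]$ for $(d,e)$ of type I and II (the type III pairs contribute $0$) and then to reassemble via
$$M_3^c[\phi]=\sum_{(\ell_1,\ell_2)=1}b_{\ell_1}\overline{b_{\ell_2}}\omegabar(\ell_2)\sum_{de=\ell_1\ell_2}\omega(d)M_3^c[\phi;d,e].$$
Recall $M_3^c[\phi;d,e]=\frac1{2q}\sum_{c\leqslant C}c^{-1}\Ecal_\phi^c(c,d,e)$, where by \eqref{ConnectionCorrelation} and the truncation \eqref{Restriction}, $\Ecal_\phi^c(c,d,e)$ is the sum over those $(n_1,n_2)$ with $0\ne|n_1|\leqslant\Ncal_1$, $0\ne|n_2|\leqslant\Ncal_2$, $(n_2,2c)=1$, $n_1n_2\equiv e\bmod 2c$, \emph{and} $\gamma(c,d,e,n_1,n_2)\bmod q\in\Gbf_{K,\omega,M}$, of $\widehat{H}_\phi(\tfrac{n_1}{2cq},\tfrac{n_2}{2cq})\,\Ccal(K,\omega;\gamma(c,d,e,n_1,n_2))$. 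Thus the estimate reduces to combining the size of $\widehat{H}_\phi$, the trivial bound $|\Ccal|\leqslant M^2q$ from \eqref{Parseval}, and — the crucial point — a good count of the ``correlating triples'' $(c,n_1,n_2)$.

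The count is where $(q,M)$-goodness (Definition \ref{Definition(q,M)-good}) is used. First, $\Gbf_{K,\omega,M}^b\subset\Brm\cup\Brm w\cup w\Brm$ is irrelevant: the three entries $n_1$, $2cd$, $dn_2$ of $\gamma(c,d,e,n_1,n_2)$ in \eqref{DefinitionCorrelationMatrix} that would have to vanish modulo $q$ cannot, since $|n_1|\leqslant\Ncal_1<q$, $|n_2|\leqslant\Ncal_2<q$, $1\leqslant c\leqslant C<q$ — the strict inequalities $\Ncal_1,\Ncal_2,C<q$ being exactly what \eqref{ChoiceC}, \eqref{Restriction}, \eqref{HarmlessCondition}, the bound $de\ll L^2$, and above all the hypothesis \eqref{assumption}, guarantee. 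Next, for the pieces $\Gbf_{K,\omega,M}^t$ and $\Gbf_{K,\omega,M}^w$ attached to the $\leqslant M$ distinguished pairs $(x_i,y_i)$: the two fixed points of $\gamma(c,d,e,n_1,n_2)$ on $\mathbf{P}^1(\Fbfbar_q)$ are the roots of $2cd\,z^2+(dn_2-n_1)z-\frac{n_1n_2-e}{2c}$, so imposing that this root-pair be $\{x_i,y_i\}$ (resp. that $\gamma$ swap $x_i$ and $y_i$) becomes, via Vieta's formulas, one congruence $n_1\equiv dn_2+2cd(x_i+y_i)$ (resp. $n_1\equiv -dn_2$) together with a quadratic congruence in $n_2$, so that $(n_1\bmod q,n_2\bmod q)$ is pinned to $\leqslant 2$ residue classes per $c$ and per $i$, each carrying at most one admissible lattice point because the box has side $<q$. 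The refined input, following \cite[Section 5.8]{twists}, is that once one also uses the restricted ranges of $c,n_1,n_2$ and the congruence $n_1n_2\equiv e\bmod 2c$ — degenerate tori through $0$ or $\infty$ being handled by the same computation, those through $\infty$ not occurring at all — the correlating triples are substantially fewer than the crude count $O(MC)$, and this is what yields the extra saving in $L$.

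With such a count in hand, insert $|\Ccal|\leqslant M^2q$ together with the Fourier decay of \cite[Lemmas 5.7, 5.9]{twists}, namely $q^{-2}\widehat{H}_{\phi_{a,b}}(\tfrac{n_1}{2cq},\tfrac{n_2}{2cq})\ll q^{\delta}\tfrac{P^2}{d}\min(Z^{-1/2},Q/Z)$ with $Z=\tfrac{P}{2c}\sqrt{e/d}$ (and an extra $k^3$ for $\phi=\phi_k$), and sum over $c\leqslant C$: since the truncation level \eqref{ChoiceC} is comparable to the scale $\tfrac{P}{2}\sqrt{e/d}$ of $Z$, the sum $\sum_{c\leqslant C}c^{-1}\min(Z^{-1/2},Q/Z)$ collapses to $q^{O(\delta)}$. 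Summing the resulting bound for $M_3^c[\phi_{a,b};d,e]$ over the $O(L^{1+\varepsilon})$ admissible amplifier pairs $(\ell_1,\ell_2)$ via $|b_\ell|\ll\tau(\ell)$ (from \eqref{Ramanujan-Petersson}) and $\sum_{de=\ell_1\ell_2}d^{-1}=O(1)$, and absorbing the $q^{O(\delta)}$ and $L^{O(\varepsilon)}$ factors into $q^{\varepsilon}$ under \eqref{assumption} (taking $\delta$ a small multiple of $\varepsilon$), gives $M_3^c[\phi_{a,b}]\ll M^3q^{1+\varepsilon}L^{1/2}PQ$; carrying the $k^3$ throughout gives $M_3^c[\phi_k]\ll M^3k^3q^{1+\varepsilon}L^{1/2}PQ$.

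The main obstacle is precisely the count of correlating triples in the second step: this is the only genuinely new ingredient, it is where the non-exceptionality hypothesis enters through Theorem \ref{Theorem3} (via $(q,M)$-goodness), and it is where the clause \eqref{assumption}, $q^{3\varepsilon}LQ<q^{1/4}$, is needed so that $\Ncal_1,\Ncal_2<q$ and residue classes do not fold. Getting it right means carefully combining the absence of parabolic correlating matrices, the torus/normalizer Vieta analysis above, and the size restrictions on $(c,n_1,n_2)$ — essentially transcribing \cite[Section 5.8]{twists} — with the minor additional bookkeeping caused by the nebentypus twist in $\Ccal(K,\omega;\cdot)$ (harmless, as only $|\Ccal|$ is used) and by the longer list of factorizations $(d,e)$ produced by the squared primes in the amplifier \eqref{DefinitionAmplifier}.
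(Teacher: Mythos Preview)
Your overall architecture is correct and matches the paper's: reduce to counting correlating triples $(c,n_1,n_2)$, use $(q,M)$-goodness to exclude the Borel piece via the size constraints forced by \eqref{assumption}, treat the toric and Weyl pieces via the fixed-point/Vieta analysis, combine with the $\widehat{H}_\phi$ bounds and $|\Ccal|\leqslant M^2q$. The paper likewise defers the bulk of the argument to \cite[Sections 6.1, 6.3, 6.5]{twists} (not Section 5.8).

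However, what you call ``minor additional bookkeeping caused by \ldots\ the squared primes in the amplifier'' is in fact the \emph{entire} content of the paper's proof, and you have not identified the actual obstacles. In \cite{twists} the amplifier uses only primes, so $de=\ell_1\ell_2$ is squarefree; several steps in Sections 6.1 and 6.3 there rely on this to guarantee that certain integers are nonzero. With squared primes allowed, two specific points fail and must be rescued differently:
\begin{enumerate}
\item[(i)] In the toric case one needs the discriminant $(n_1+dn_2)^2-4de\neq 0$ (over $\Zbf$). In \cite{twists} this followed from $de$ squarefree; here one argues instead that if it vanished then $\Tr(\gamma)^2-4\det(\gamma)=0$ in $\Fbf_q$, so $\gamma$ has a repeated eigenvalue, hence is scalar (parabolics being excluded by $(q,M)$-goodness), forcing $cd\equiv 0\pmod q$, which contradicts $c\leqslant C$ and \eqref{assumption}.
\item[(ii)] In the case $n_1=dn_2$ one needs $dn_2^2-e\neq 0$; in \cite{twists} this used $(d,e)=1$. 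Here, if $dn_2^2=e$ then also $n_1n_2-e=0$, so $\gamma=\left(\begin{smallmatrix}n_1&0\\2cd&dn_2\end{smallmatrix}\right)$ with $n_1=dn_2$ is parabolic with unique fixed point $0$, again contradicting $(q,M)$-goodness.
\end{enumerate}
Once these two points are handled, the rest of \cite[\S\S 6.1, 6.3, 6.5]{twists} goes through unchanged (the nebentypus twist is indeed harmless, as you note). Your outline is sound, but you should explicitly flag and resolve (i) and (ii) rather than absorb them into ``bookkeeping''.
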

\begin{proof}
The proof is \cite[Sections 6.1,6.3,6.5]{twists} (recall that here there are no parabolic elements). Various arguments use the fact that the discriminant of certain binary quadratic form is not zero. For example, if $\gamma=\gamma(c,d,e,n_1,n_2)$ is a toric matrix, then we need to have $(n_1+dn_2)^2-4de\neq 0$ and we cannot say that $de=\ell_1\ell_2$ is squarefree since we allow square of primes in the amplifier. This is not a problem here because if $(n_1+dn_2)^2=4de$, then we would get (see \eqref{DefinitionCorrelationMatrix})
$$\Tr(\gamma)^2-4\det(\gamma)=0 \ \mathrm{in} \ \Fbf_q.$$
This means that $\gamma$ has only one distinct eigenvalue, so its is necessarily scalar since not parabolic by assumption. But for $\gamma$ scalar, we have $cd\equiv 0 \ (\modm q)$, wich is not possible by \eqref{assumption} and \eqref{ChoiceC}.

\vspace{0.2cm}

Another argument uses the fact that $dn_2^2-e\neq 0$ in a situation where $n_1-dn_2=0$ (c.f. \cite[Section 6.3, p.p 629]{twists}). Again, $d$ and $e$ are not necessarily coprime here so we cannot argue in the same way. However, if $dn_2^2=e$, then since $n_1=dn_2$, we obtain $n_1n_2-e=dn_2^2-e=0$ and the matrix $\gamma(c,d,e,n_1,n_2)$ takes the form
$$\gamma(c,d,e,n_1,n_2)=\left( \begin{matrix} n_1 & 0 \\ 2cd & dn_2 \end{matrix} \right).$$
Since $dn_2=n_1$, this matrix is parabolic with single fixed point $z=0$, which contradicts the fact that $\Gbf_{k,\omega,M}$ does not contain parabolic elements.
\end{proof}


\subsection{The Eisenstein case}  We recall the notations from Section \ref{Section1}. For $q>2$ prime, $\omega$ a Dirichlet character modulo $q$ and $t\in\Rbf$, we set
$$\lambda_\omega(n,it)= \sum_{ab=n}\omega(a)\left(\frac{a}{b}\right)^{it},$$
and
$$\Scal_V(\omega,it,K;q)=\sum_{n\geqslant 1}\lambda_{\omega}(n,it)K(n)V\left(\frac{n}{q}\right)$$
for $K$ a non Fourier-exceptional isotypic trace function and $V$ satisfying condition $V(C,P,Q)$. Since $\lambda_{\omega}(n,it)$ appears as Hecke eigenvalues (for $(n,q)=1$) of the Eisenstein series $E_1(\cdot,1/2+it)$ (the cusp $\amf=1$) lying  in the continuous spectrum of the Laplacian on the space of modular forms of level $q$ (and thus also of level $2q$ after a normalization) and nebentypus $\omega$ (see § \ref{SpecialCase} and Remark \ref{RemarkEisenstein}), we may estimate an amplified second moment of $\Scal_V(\omega,it,K;q)$ by embedding in the Eisentsein spectrum and using Kuznetsov trace formula as in the cuspidal case.

\vspace{0.1cm}

For $\tau\in\Rbf$, we define as in \eqref{DefinitionAmplifier}
\begin{equation}\label{DefinitionAmplifierEisenstein}
b_{\ell}(\tau) := \left\{ \begin{array}{ccc} \lambda_{\omega}(\ell,i\tau)\omegabar(\ell) & \ifm & \ell=p\sim L^{1/2} \ \mathrm{and} \ (p,2q)=1 \\ 
 & & \\
-\omegabar(\ell) & \ifm & \ell=p^2\sim L \ \mathrm{and} \ (p,2q)=1 \\ 
 & & \\
0 & \mathrm{else},    &
\end{array} \right.
\end{equation}
and for $g$ a cuspidal form, we set
$$B_{\tau}(g) = \sum_{1\leqslant \ell\leqslant 2L}b_\ell(\tau)\lambda_g(\ell).$$
For an Eisenstein series $E_{\amf}(\cdot, 1/2+it)$, we let
$$B_\tau(\amf,it)= \sum_{1\leqslant\ell\leqslant 2L}b_\ell(\tau)\lambda_{\amf}(\ell,it)$$
and we also write $B_\tau(\omega,it)$ so that it corresponds to the Eisenstein series $E_1(\cdot,1/2+it)$ having $\lambda_\omega(n,it)$ as Hecke eigenvalues. Since the trace formula involves Fourier coefficients instead of Hecke eigenvalues, we define as in Section \ref{SectionCuspidalecase}
$$\tilde{\Scal}_V(\omega,it,K;q)=\sum_{n\geqslant 1}\rho_\omega (n,it)K(n)V\left(\frac{n}{q}\right),$$
with the relation
\begin{equation}\label{RelationEisenstein}
\Scal_V(\omega,it,K;q)=\rho_\omega(1,it)^{-1}\tilde{\Scal}_V(\omega,it,K;q).
\end{equation}
\begin{remq} Actually, the relation \eqref{RelationEisenstein} is true if we restrict the $n$-summation in $\Scal_V(\omega,it,K;q)$ to $(n,q)=1$. However, we could consider directly this restriction at the beginning since the error to pass from one to the other is given by
$$\Scal_V(\omega,it,K;q)=\sum_{(n,q)=1}\lambda_\omega(n,it)K(n)V\left(\frac{n}{q}\right)+O\left(q^\varepsilon M(P+1)\right).$$
\end{remq}
Using the lower bound for $\rho_\omega(1,it)$ given by \eqref{LowerBoundEisenstein} and $\tilde{\phi}_{a,b}(t)\asymp (1+|t|)^{\kappa-2b-2}$ (c.f. \cite[(2.21)]{blomer}), we obtain exactly as in Section \ref{SectionCuspidalecase} (see Proposition \ref{Proposition4.1}),
\begin{equation}\label{FullSpectrumEisenstein}
\begin{split}
 \int_{\Rbf}\frac{\left|\Scal_V(\omega,it,K;q)\right|^2}{(1+|t|)^{2b+2-\varepsilon}}\left|B_\tau(\omega,it)\right|^2dt  & \ \ll q^{1+\varepsilon}\int_{\Rbf}\frac{(1+|t|)^{\kappa-2b-2}}{\cosh(\pi t)}\left|\tilde{\Scal}_V(\omega,it,K;q)\right|^2\left|B_\tau(\omega,it)\right|^2dt \\ & \ \ll q^{1+\varepsilon} \int_{\Rbf}\frac{\tilde{\phi}_{a,b}(t)}{\cosh(\pi t)}\left|\tilde{\Scal}_V(\omega,it,K;q)\right|^2\left|B_\tau(\omega,it)\right|^2dt \\ & \ \ll M^3\left\{ q^{2+\varepsilon}L^{1/2}P(P+Q)+q^{3/2}L^2PQ^2(P+Q)\right\}.
\end{split}
\end{equation}
In order to apply \eqref{FullSpectrumEisenstein}, the following Lemma gets a suitable lower bound for the amplifier $B_\tau(\omega,it)$ when $\tau$ is close enough to $t$ (see \cite[Lemma 2.4]{prime}).
\begin{lemme}\label{LemmaLowerBound} For $L$ large enough, we have
$$B_\tau(\omega,it)\gg \frac{L^{1/2}}{\log L},$$
uniformly in $t,\tau \in \Rbf$ satisfying
$$|t-\tau|\leqslant \frac{1}{\log ^2 L}.$$
\end{lemme}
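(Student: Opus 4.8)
The plan is to imitate the elementary computation from the cuspidal case (the discussion after Proposition~\ref{Proposition4.1}): the amplifier \eqref{DefinitionAmplifierEisenstein} is arranged so that, upon grouping the contributions of $\ell=p$ and $\ell=p^2$, a Hecke relation collapses $B_\tau(\omega,it)$ — at least when $\tau=t$ — into a sum of $1$'s over primes, to which the prime number theorem applies; the only genuinely new point is the dependence on $\tau$ in the range $|t-\tau|\le 1/\log^2 L$.

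First I would write $B_\tau(\omega,it)$ out explicitly. Since the coefficient attached to $\ell=p^2$ in \eqref{DefinitionAmplifierEisenstein} does not involve $\tau$, separating the two ranges of $\ell$ gives
$$B_\tau(\omega,it)=\sum_{\substack{p\sim L^{1/2}\\(p,2q)=1}}\omegabar(p)\,\lambda_\omega(p,i\tau)\lambda_\omega(p,it)\;-\;\sum_{\substack{p\sim L^{1/2}\\(p,2q)=1}}\omegabar(p)\,\lambda_\omega(p^2,it).$$
Specialising to $\tau=t$, I would invoke the multiplicativity relation \eqref{HeckeMult1} with $m=n=p$, which, directly from the definition \eqref{DefinitionTwistedDivisorFuntion}, reads $\lambda_\omega(p,it)^2-\lambda_\omega(p^2,it)=\omega(p)$. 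This turns the display into
$$B_t(\omega,it)=\sum_{\substack{p\sim L^{1/2}\\(p,2q)=1}}\omegabar(p)\omega(p)=\#\{p\sim L^{1/2}\ :\ (p,2q)=1\},$$
and, since for $L$ (hence $q$) large the condition $(p,2q)=1$ discards no prime of size $\sim L^{1/2}$ (because $q$ is prime with $q>2L^{1/2}$, and $2$ is too small), the prime number theorem yields $B_t(\omega,it)\asymp L^{1/2}/\log L$ with an absolute implied constant, uniformly in $t$ and $\omega$.

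Next I would estimate the perturbation. Only the first sum above depends on $\tau$, so
$$B_\tau(\omega,it)-B_t(\omega,it)=\sum_{\substack{p\sim L^{1/2}\\(p,2q)=1}}\omegabar(p)\bigl(\lambda_\omega(p,i\tau)-\lambda_\omega(p,it)\bigr)\lambda_\omega(p,it).$$
Writing $\lambda_\omega(p,i\sigma)=p^{-i\sigma}+\omega(p)p^{i\sigma}$ and using the elementary bound $|p^{i\sigma_1}-p^{i\sigma_2}|\le|\sigma_1-\sigma_2|\log p$ together with the trivial estimate $|\lambda_\omega(p,it)|\le 2$, each summand is $O(|\tau-t|\log p)=O(|\tau-t|\log L)$; hence, for $|t-\tau|\le 1/\log^2 L$,
$$\bigl|B_\tau(\omega,it)-B_t(\omega,it)\bigr|\ll\frac{1}{\log L}\,\#\{p\sim L^{1/2}\}\ll\frac{L^{1/2}}{(\log L)^2},$$
again with an absolute implied constant (prime number theorem, upper bound). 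Combining this with the lower bound for $B_t(\omega,it)$ gives $B_\tau(\omega,it)\gg L^{1/2}/\log L$ once $L$ exceeds an absolute threshold, uniformly over $t,\tau\in\Rbf$ with $|t-\tau|\le 1/\log^2 L$, which is the assertion.

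I do not expect a real obstacle here; the argument is entirely elementary once one observes that $n\mapsto\lambda_\omega(n,it)$ satisfies at primes exactly the Hecke multiplicativity of a genuine eigenform. The only points deserving care are the uniformity in $t$ and in $\omega$ — handled by the absolute bound $|\lambda_\omega(p,i\sigma)|\le 2$ and by the absoluteness of the constants in the prime number theorem — and checking, as above, that no parasitic prime $p\sim L^{1/2}$ divides $2q$ when $L<q$ with $q$ a large prime.
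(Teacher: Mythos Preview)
Your proof is correct and follows essentially the same strategy as the paper: reduce to the diagonal case where the two spectral parameters coincide via a perturbation estimate, then apply the Hecke identity $\lambda_\omega(p,it)^2-\lambda_\omega(p^2,it)=\omega(p)$ together with the prime number theorem. The only cosmetic difference is that the paper varies the second argument (bounding $B_\tau(\omega,it)-B_\tau(\omega,i\tau)$, which involves both the $p$- and $p^2$-terms) whereas you vary the amplifier parameter (bounding $B_\tau(\omega,it)-B_t(\omega,it)$, which touches only the $p$-terms); both routes yield the same $O(L^{1/2}/\log^2 L)$ perturbation.
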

\begin{proof}
Observe that since $\ell$ has at most three divisors, we have $|b_\ell(\tau)|\leqslant 3$ and thus
\begin{alignat*}{1}
|B_\tau(\omega,it)-B_\tau(\omega,i\tau)|\leqslant & \ \sum_{\ell}|b_\ell(\tau)||\lambda_\omega(\ell,it)-\lambda_\omega(\ell,i\tau)| \\ 
\leqslant & \ 3\sum_{\substack{p\sim L^{1/2} \\ p \ \mathrm{prime}}}\left\{ |\lambda_\omega(p,it)-\lambda_\omega(p,i\tau)|+|\lambda_\omega(p^2,it)-\lambda_\omega(p^2,i\tau)| \right\} \\ \leqslant & \ 6\sum_{\substack{p\sim L^{1/2} \\ p \ \mathrm{prime}}} \left\{ |p^{it}-p^{i\tau}|+|p^{2it}-p^{2i\tau}|\right\} \\ 
\leqslant & \ 36 |t-\tau|\sum_{\substack{p\sim L^{1/2} \\ p \ \mathrm{prime}}}\log(p) \ll \frac{L^{1/2}}{\log^2 L}.
\end{alignat*}
It is therefore enough to prove the result for $t=\tau$. But this is a consequence of the elementary relation
$$\lambda_\omega (p,it)^2\omegabar(p)-\omegabar(p)\lambda_\omega(p^2,it)=1,$$
valid for $(p,q)=1$,
and the prime number Theorem.
\end{proof}
The above Lemma combining with the average bound \eqref{FullSpectrumEisenstein} allows us to deduce a first upper-bound for short averages of twists of Eisenstein series. For this, we introduce the notations
$$I(\tau,q) :=\left\{ t\in\Rbf \ | \ |t-\tau|\leqslant \frac{1}{\log^2 q}\right\}$$
and
$$M(P,Q;q):= M^{3/2} q^{1-\frac{1}{16}}(PQ)^{1/2}(P+Q)^{1/2},$$
so that Theorem \ref{Theorem(q,M)-good} claims that
$$\Scal_V(\omega,it,K;q)\ll_\varepsilon q^\varepsilon (1+|t|)^A M(P,Q;q)$$
for any $\varepsilon>0$ and some $A\geqslant 1$ depending on $\varepsilon$.
\begin{proposition}\label{PropositionShortInterval} For any $\varepsilon>0$, there exists $B\geqslant 1$, depending only on $\varepsilon$, such that for any $\tau\in\Rbf$ we have
\begin{equation}
\int_{I(\tau,q)}\left|\Scal_V(\omega,it,K;q)\right|^2dt \ll_\varepsilon q^{\varepsilon}(1+|\tau|)^B M(P,Q;q)^2,
\end{equation}
where the implied constant depends only on $\varepsilon$.
\end{proposition}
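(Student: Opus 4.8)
The strategy is to read off the claimed mean value from the amplified spectral bound \eqref{FullSpectrumEisenstein} after restricting the $t$-integral to the short interval $I(\tau,q)$, on which the amplifier $B_\tau(\omega,it)$ is bounded below by Lemma \ref{LemmaLowerBound}. Throughout, fix the auxiliary parameters $a>b>2$ as permitted by Proposition \ref{Proposition4.1} (so $b$ may be taken to be an absolute constant, e.g. $b=3$) and put $B:=2b+2$.

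First I would dispose of the range $Q\geqslant q^{1/8}$ by the trivial bound. Since $V$ is supported in $[P,2P]$ with $\|V\|_\infty\ll 1$, while $|K(n)|\leqslant M$ and $|\lambda_\omega(n,it)|\leqslant\tau(n)$, one has $\Scal_V(\omega,it,K;q)\ll MPq^{1+\varepsilon}$ uniformly in $t$, whence $\int_{I(\tau,q)}|\Scal_V(\omega,it,K;q)|^2\,dt\ll |I(\tau,q)|\,M^2P^2q^{2+\varepsilon}\ll M^2P^2q^{2+\varepsilon}$; and this is $\ll q^\varepsilon M^3q^{2-1/8}PQ(P+Q)=q^\varepsilon M(P,Q;q)^2$ because $Pq^{1/8}\leqslant PQ\leqslant MQ(P+Q)$.

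For the complementary range $1\leqslant Q<q^{1/8}$, the plan is to choose $L:=q^{1/4-2\varepsilon}Q^{-1}$, so that $1\leqslant L<q$, $\log L\asymp\log q$, and the admissibility condition \eqref{ConditionL} holds since $q^\varepsilon LQ=q^{1/4-\varepsilon}<q^{1/4}$. Because $L<q$ we have $1/\log^2 L\geqslant 1/\log^2 q$, so $I(\tau,q)\subseteq\{\,|t-\tau|\leqslant 1/\log^2 L\,\}$ and Lemma \ref{LemmaLowerBound} applies on all of $I(\tau,q)$, yielding $|B_\tau(\omega,it)|^2\gg L/\log^2 L$ there; moreover $(1+|t|)^{2b+2-\varepsilon}\ll(1+|\tau|)^{B}$ on $I(\tau,q)$. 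Discarding the remaining non-negative part of the integrand in \eqref{FullSpectrumEisenstein} and rearranging then gives
$$\int_{I(\tau,q)}\left|\Scal_V(\omega,it,K;q)\right|^2\,dt\ll (1+|\tau|)^{B}q^{\varepsilon}M^3\left\{q^{2}L^{-1/2}P(P+Q)+q^{3/2}LPQ^2(P+Q)\right\}.$$
Substituting $L=q^{1/4-2\varepsilon}Q^{-1}$ one gets $q^{2}L^{-1/2}P(P+Q)\ll q^{15/8+\varepsilon}Q^{1/2}P(P+Q)\leqslant q^{15/8+\varepsilon}PQ(P+Q)$ (using $Q\geqslant 1$) and $q^{3/2}LPQ^2(P+Q)\ll q^{7/4}PQ(P+Q)\leqslant q^{15/8}PQ(P+Q)$, both of which are $\ll q^\varepsilon M(P,Q;q)^2$ after reinstating the factor $M^3$; this proves the Proposition with the stated $B$.

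The argument is largely bookkeeping. The two points to watch are: (i) the optimal choice of $L$ must respect the constraint \eqref{ConditionL} inherited from Proposition \ref{Proposition4.1}, and, perhaps surprisingly, it does so with no loss of a power of $q$; and (ii) one must keep $L\leqslant q$ so that $I(\tau,q)$ lies inside the range $|t-\tau|\leqslant 1/\log^2 L$ where the amplifier is large — this is precisely why the short interval is taken of length $\log^{-2}q$. All the genuine analytic input, namely the amplified second moment \eqref{FullSpectrumEisenstein} and the near-diagonal positivity of the amplifier, is already in hand, so there is no real obstacle beyond this optimization.
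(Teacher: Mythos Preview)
Your proof is correct and follows essentially the same route as the paper: restrict the amplified spectral bound \eqref{FullSpectrumEisenstein} to $I(\tau,q)$, invoke Lemma \ref{LemmaLowerBound} for the lower bound on $|B_\tau(\omega,it)|^2$, and optimize $L\asymp q^{1/4}Q^{-1}$ subject to \eqref{ConditionL}. Your treatment is in fact slightly more careful than the paper's: you explicitly dispose of the range $Q\geqslant q^{1/8}$ (where the choice of $L$ would otherwise fail to satisfy $L\geqslant 1$), and you spell out why $I(\tau,q)\subseteq\{|t-\tau|\leqslant 1/\log^2 L\}$, both of which the paper leaves implicit.
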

\begin{proof}Using Lemma \ref{LemmaLowerBound} and \eqref{FullSpectrumEisenstein}, we obtain
\begin{alignat*}{1}
\frac{L}{\log^2 L}\int_{I(\tau,q)}& \left|\Scal_V(\omega,it,K;q)\right|^2dt \ll \int_{I(\tau,q)}\frac{(1+|\tau|)^{2b+2}}{(1+|t|)^{2b+2-\varepsilon}}\left|\Scal_V(\omega,it,K;q)\right|^2\left|B_\tau(\omega,it)\right|^2dt \\ \ll & \ (1+|\tau|)^{2b+2}M^3\left\{ q^{2+\varepsilon}L^{1/2}P(P+Q)+q^{3/2}L^2PQ^2(P+Q)\right\},
\end{alignat*}
and we conclude as in \cite[Section 4.2]{twists} by choosing 
$$L= \frac{1}{2}q^{1/4-\varepsilon}Q^{-1},$$
and $B=2b+2$ which depends on $\varepsilon$.
\end{proof}
The last step is to derive a pointwise bound for $\Scal_V(\omega,it,K;q)$. For this, we separate the variables $n,m$ in the twisted divisor function $\lambda_\omega(n,it)$ and using a partition of unity, we can decompose $\Scal_V(\omega,it,K;q)$ into $O(\log q)$ sums of the shape
$$\Scal_{V,M,N}(\omega,it,K;q)= \sum_{n,m\geqslant 1}K(mn)\omega(m)\left(\frac{m}{n}\right)^{it}W_1\left(\frac{m}{M}\right)W_2\left(\frac{n}{N}\right)V\left(\frac{nm}{q}\right),$$
where the parameters $(M,N)$ belongs to the set 
\begin{equation}\label{RangeMN}
\mathbf{P}:= \left\{ (M,N) \ | \ \frac{Pq}{4}\leqslant NM \leqslant 4Pq \ , \ \ 1\leqslant N,M\right\}
\end{equation}
and $W_1,W_2$ are smooth and compactly supported functions on $[-1/2,2]$ satisfying $x^jW_{i}^{(j)}(x)\ll_j 1$ for every $j\geqslant 0$.
It follows that 
\begin{equation}\label{Max}
\Scal_V(\omega,it,K;q)\ll \log (q) \max_{(M,N)\in\mathbf{P}}\left|\Scal_{V,M,N}(\omega,it,K;q)\right|.
\end{equation}
The relation between $\Scal_{V,M,N}(\omega,it,K;q)$ and and an average of $\Scal_V(\omega,it,K;q)$ is given through the Mellin transform (see \cite[Lemma 2.1]{prime}).
\begin{lemme}\label{LemmaMellin} Given $s\in\Cbf$ and $x>0$, we define
$$V_s(x):= V(x)x^{-s}.$$
Then for every $\varepsilon >0$, we have
\begin{equation}\label{EquationMellin}
\Scal_{V,M,N}(\omega,it,K;q)\ll_\varepsilon \iint_{|t_1|,|t_2|\leqslant q^{\varepsilon}}\left| \Scal_{V_{it_1}}(\omega,it_2+it,K;q)\right|dt_1dt_2 + O\left(q^{-100}\right).
\end{equation}
\end{lemme}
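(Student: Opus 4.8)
The plan is to reproduce, in the present Eisenstein setting, the standard argument of \cite[Lemma 2.1]{prime}: the two localising weights $W_1(m/M)$ and $W_2(n/N)$ can be removed by Mellin inversion, after which the resulting monomial $m^{-s_1}n^{-s_2}$ in the summation variables factors as a power of $mn$ times a power of $m/n$ --- which is exactly the structure carried both by the divisor-type coefficients $\lambda_\omega(\cdot,it')$ and by the normalisation $V_s(x)=V(x)x^{-s}$. Concretely, I would first write, for $W_1,W_2$ smooth and supported in a compact subset of $(0,\infty)$ (which we may assume after replacing $W_i$ by its restriction to $[1,2]$, since only $m,n\geqslant 1$ occur),
$$W_1\left(\frac{m}{M}\right)=\frac{1}{2\pi i}\int_{(0)}\widetilde{W_1}(s_1)\left(\frac{m}{M}\right)^{-s_1}ds_1,\qquad W_2\left(\frac{n}{N}\right)=\frac{1}{2\pi i}\int_{(0)}\widetilde{W_2}(s_2)\left(\frac{n}{N}\right)^{-s_2}ds_2,$$
the contours lying on the imaginary axis, on which $\widetilde{W_i}$ is entire and of rapid decay.

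Substituting this into the definition of $\Scal_{V,M,N}(\omega,it,K;q)$ and exchanging the $(m,n)$-sum with the two integrals, the inner sum is, with $s_1=it_1'$ and $s_2=it_2'$,
$$\sum_{m,n\geqslant 1}K(mn)\omega(m)\left(\frac{m}{n}\right)^{it}V\left(\frac{mn}{q}\right)m^{-it_1'}n^{-it_2'}.$$
Writing $m^{-it_1'}n^{-it_2'}=(mn)^{-i(t_1'+t_2')/2}(m/n)^{-i(t_1'-t_2')/2}$ and pulling $q^{-i(t_1'+t_2')/2}$ out of $(mn)^{-i(t_1'+t_2')/2}=q^{-i(t_1'+t_2')/2}(mn/q)^{-i(t_1'+t_2')/2}$, this sum equals
$$q^{-i\tau_1}\,\Scal_{V_{i\tau_1}}(\omega,it+i\tau_2,K;q),\qquad \tau_1:=\frac{t_1'+t_2'}{2},\quad \tau_2:=-\frac{t_1'-t_2'}{2},$$
because expanding $\lambda_\omega(\cdot,i(t+\tau_2))$ as a sum over factorisations of $mn$ reproduces the factor $(m/n)^{i(t+\tau_2)}=(m/n)^{it}(m/n)^{-i(t_1'-t_2')/2}$, while $V_{i\tau_1}(mn/q)=V(mn/q)(mn/q)^{-i\tau_1}$.

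It then remains to take absolute values. On the contours $|M^{s_1}|=|N^{s_2}|=|q^{-i\tau_1}|=1$, so only the rapid decay $\widetilde{W_i}(it_i')\ll_A(1+|t_i'|)^{-A}$ survives, giving
$$\Scal_{V,M,N}(\omega,it,K;q)\ll_A\iint_{\Rbf^2}(1+|t_1'|)^{-A}(1+|t_2'|)^{-A}\left|\Scal_{V_{i\tau_1}}(\omega,it+i\tau_2,K;q)\right|dt_1'\,dt_2'.$$
For $\max(|t_1'|,|t_2'|)>q^\varepsilon$ I would invoke the crude bound $|\Scal_{V_{i\tau_1}}(\omega,it+i\tau_2,K;q)|\leqslant M\sum_{Pq\leqslant n\leqslant 2Pq}\tau(n)\ll M(Pq)^{1+\varepsilon}$, valid uniformly in all parameters since $|\lambda_\omega(n,is)|\leqslant\tau(n)$ for real $s$, $|V_s(x)|=|V(x)|$ for real $s$, and $V$ is supported on $[P,2P]$; this makes that tail $O(q^{-100})$ once $A$ is taken large enough. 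On the complementary range $|t_1'|,|t_2'|\leqslant q^\varepsilon$ I would perform the linear change of variables $t_1'=\tau_1-\tau_2$, $t_2'=\tau_1+\tau_2$, of Jacobian $2$, which maps this square into $\{|\tau_1|,|\tau_2|\leqslant q^\varepsilon\}$; bounding the remaining factors $(1+|t_i'|)^{-A}$ by $1$ and renaming $(\tau_1,\tau_2)$ as $(t_1,t_2)$ produces exactly \eqref{EquationMellin}.

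There is no genuine obstacle here: the argument is a routine Mellin manipulation. The only points requiring a little care are the bookkeeping of which new variable plays the role of the test-function shift and which the role of the spectral shift, the verification that the spurious factor $q^{-i\tau_1}$ introduced by the normalisation $V_s(x)=V(x)x^{-s}$ has modulus one, and the truncation of the two Mellin integrals --- all of which follow immediately from the rapid decay of $\widetilde{W_1},\widetilde{W_2}$ together with the trivial uniform bound for $\Scal_{V_s}$.
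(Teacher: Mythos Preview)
Your proof is correct and follows essentially the same route as the paper's own argument: Mellin inversion in $W_1,W_2$, the change of variables $(t_1',t_2')\mapsto(\tau_1,\tau_2)=\bigl((t_1'+t_2')/2,(t_2'-t_1')/2\bigr)$ which identifies the inner sum as $q^{-i\tau_1}\Scal_{V_{i\tau_1}}(\omega,i(t+\tau_2),K;q)$, and a truncation using the rapid decay of $\widetilde{W_i}$ together with the trivial bound for $\Scal_{V_s}$. The only cosmetic differences are notational (the paper writes $\theta_1,\theta_2$ and packages the inner sum as $\mathcal{T}_V(s_1,s_2)$ before identifying it), and your remark on adjusting $W_i$ to be supported in $(0,\infty)$ is a harmless technicality that the paper passes over.
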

\begin{proof}
Using Mellin inversion formula for $W_1$ and $W_2$, we can write
$$\Scal_{V,M,N}(\omega,it,K;q)=\frac{1}{(2\pi i)^2}\int_{(0)}\int_{(0)}\widehat{W}_1(s_1)\widehat{W}_2(s_2)\mathcal{T}_V(s_1,s_2)M^{s_1}N^{s_2}ds_1ds_2,$$
where $\widehat{W}_1,\widehat{W}_2$ denote the Mellin transform of the smooth functions $W_1,W_2$ and 
$$\mathcal{T}_{V}(s_1,s_2)=\sum_{n,m\geqslant 1}K(nm)\omega(m)m^{it-s_1}n^{-it-s_2}V\left(\frac{nm}{q}\right).$$
Note that this sum can be expressed as a twist of Eisenstein series, namely
$$\mathcal{T}_V(s_1,s_2)=q^{-\theta_1}\Scal_{V_{\theta_1}}(\omega,\theta_2+it,K;q),$$
with
$$\theta_1=\frac{s_1+s_2}{2} \, \ \ \theta_2=\frac{-s_1+s_2}{2}.$$
For $\Re e (\theta_1),$ the smooth function $V_{\theta_1}$ satisfies condition $V(C,P,Q(\theta_1))$ with 
\begin{equation}\label{Q(theta1)}
Q(\theta_1)\ll Q+|\theta_1|,
\end{equation}
where the implied constant is absolute. Thus by a change of variables, we get
\begin{equation}\label{Int1}
\begin{split}
\Scal_{V,M,N}(\omega,it,K;q)=\frac{1}{(2\pi i)^2}\int\limits_{(0)}\int\limits_{(0)}&\widehat{W}_1(\theta_1-\theta_2)\widehat{W}_2(\theta_1+\theta_2)\left(\frac{M}{N}\right)^{\theta_2} \\ & \times \left(\frac{MN}{q}\right)^{\theta_1}\Scal_{V_{\theta_1}}(\omega,\theta_2+it,K;q)d\theta_1d\theta_2.
\end{split}
\end{equation}
Because we have the estimations
$$\widehat{W}_1(s),\widehat{W}_2(s)\ll \frac{1}{(1+|s|)^C},$$
with an implied constant depending on $C$ and $\Re e (s)$, we can truncate the integral \eqref{Int1} to $|\theta_1|\leqslant q^{\varepsilon}$, $|\theta_2|\leqslant q^\varepsilon$ for a cost of $O(q^{-100})$ by taking $C$ large enough in term of $\varepsilon$ and using the trivial bound for $\Re e (\theta_1)=\Re e (\theta_2)=0$
$$\Scal_{V_{\theta_1}}(\omega,\theta_2+it,K;q)\ll MPq\log q.$$
\end{proof}

\subsubsection{Conclusion} We are now in position to obtain the conclusion of Theorem \ref{Theorem(q,M)-good} in the Eisenstein case. Indeed, fix $\varepsilon>0$ and take $B=B(\varepsilon)$ as in Proposition \ref{PropositionShortInterval}. By \eqref{Max}, it is enough to estimate $\Scal_{V,M,N}(\omega,it,K;q)$ for $(M,N)\in\mathbf{P}$. Now let $\varepsilon'=\varepsilon/B$ such that we have the estimate \eqref{EquationMellin} of Lemma \ref{LemmaMellin}. We thus get
$$\Scal_{V,M,N}(\omega,it,K;q)\ll_{\varepsilon '} q^{\varepsilon'}\max_{|t_1|\leqslant q^{\varepsilon'}}\int_{|t_2|\leqslant q^{\varepsilon'}}\left| \Scal_{V_{it_1}}(\omega,it_2+it,K;q)\right|dt_1dt_2 + O\left(q^{-100}\right).$$
We split the above integral into $O(q^{\varepsilon'})$ integrals over intervals of length $\log^{-2} q$. For such interval $I$ centered at $\tau$, we obtain by Proposition \ref{PropositionShortInterval}, the value \eqref{Q(theta1)} and Cauchy-Schwarz inequality, the bound
$$\Scal_{V,M,N}(\omega,it,K;q)\ll q^\varepsilon(1+|\tau|)^{B/2} M(P,Q+q^{\varepsilon'};q),$$
(the function $Q\mapsto M(P,Q;q)$ is increasing). Finally, taking the maximal value $|\tau|\leqslant |t|+q^{\varepsilon'}$ yields the desire result.

\section{Proof of Theorem \ref{Theorem1}}\label{SectionProof1}

\subsection{The Eisenstein case}\label{SectionEisenstein}
It is natural to separate the sum in \eqref{DefinitionMomentDirichlet} into even and odd primitives characters because they have different gamma factors in their functional equations. We will treat only the case of even characters since the odd case is completely similar. We therefore consider 
\begin{equation}\label{TwistedEven}
\mathscr{T}_{\mathrm{even}}^3(\omega_1,\omega_2,\ell;q):=\frac{2}{q-1}\sideset{}{^+}\sum_{\substack{\chi \ (\modm q) \\ \chi\neq 1,\overline{\omega}_1,\overline{\omega}_2}}L\left(\chi,\frac{1}{2}\right)L\left(\chi\omega_1,\frac{1}{2}\right)L\left(\chi\omega_2,\frac{1}{2}\right)\chi(\ell).
\end{equation}

\subsubsection{Applying the approximate functional equation}\label{SectionApplicationApproximateFunctionalEquation}
Applying the approximate function equation provided by Lemma \ref{LemmeApproximate}, we decompose \eqref{TwistedEven} into two terms
$$\mathscr{T}_{\mathrm{even}}^3(\omega_1,\omega_2,\ell;q) = \mathcal{S}_1(\omega_1,\omega_2,\ell;q)+i^{\kappa_1+\kappa_2}\mathcal{S}_2(\omega_1,\omega_2,\ell;q),$$
with
\begin{equation}\label{FirstTerm}
\mathcal{S}_1(\omega_1,\omega_2,\ell;q):=\frac{2}{q-1}\sideset{}{^+}\sum_{\substack{\chi \  (\modm q) \\ \chi\neq 1,\overline{\omega}_1,\overline{\omega}_2}}\sideset{}{^*}\sum_{n_0,n_1,n_2\geqslant 1}\frac{\chi(n_0n_1n_2\ell)\omega_1(n_1)\omega_2(n_2)}{(n_0n_1n_2)^{1/2}}\mathbf{V}_\chi\left(\frac{n_0n_1n_2}{q^{3/2}}\right),
\end{equation}
and
\begin{equation}\label{FourthTerm}
\begin{split}
\mathcal{S}_2(\omega_1,\omega_2,\ell;q):=\frac{2}{q-1}\sideset{}{^+}\sum_{\substack{\chi \  (\modm q) \\ \chi\neq 1,\overline{\omega}_1,\overline{\omega}_2}}\sideset{}{^*}\sum_{n_0,n_1,n_2\geqslant 1}&\frac{\chi(\overline{n_0n_1n_2}\ell)\overline{\omega}_1(n_1)\overline{\omega}_2(n_2)}{(n_0n_1n_2)^{1/2}} \\ & \ \times\varepsilon(\chi)\varepsilon(\chi\omega_1)\varepsilon(\chi\omega_2)\mathbf{V}_\chi\left(\frac{n_0n_1n_2}{q^{3/2}}\right),
\end{split}
\end{equation}
where the symbol $^*$ over the $n_i's$ sum means that we restrict to $(n_0n_1n_2,q)=1$ and the function $\mathbf{V}_\chi$ is defined in \eqref{DefinitionV2}. In particular, since we sum over even characters, this function is constant on the average and we write $\mathbf{V}$ instead of $\mathbf{V}_\chi.$
\begin{rmk}\label{RemarkFastDecay}
The function $\mathbf{V}$ has rapid decay at infinity by Remark \ref{Remark}, so the sums \eqref{FirstTerm}-\eqref{FourthTerm} are essentially supported on $1\leqslant n_0n_1n_2\leqslant q^{3/2+\varepsilon}$. It follows that the sum over $n_0,n_1,n_2$ is trivially bounded by $O(q^{3/4+\varepsilon})$, so we can remove as it suits us the contribution of $\chi=1,\overline{\omega}_1$ or $\chi=\overline{\omega}_2$ for an error of size $O(q^{-1/4+\varepsilon})$. 
\end{rmk}

\subsubsection{Average over the primitive and even characters}\label{SectionAverageCharacter} We need to average the sum over the characters in \eqref{FirstTerm}-\eqref{FourthTerm}. For this, we use some orthogonality relations asserting that for any prime $q>2$ and any integer $a$ corpime with $q$, we have (c.f. \cite[(3.2)-(3.4)]{Iw-S}) 
\begin{equation}\label{Orthogonality1}
\sideset{}{^+}\sum_{\substack{\chi \ (\modm q) \\ \chi\neq 1}}\chi(a)=\frac{q-1}{2}\delta_{a\equiv \pm 1 (q)}-1,
\end{equation}
and for $\kappa\in\{0,1\},$
\begin{equation}\label{Orthogonality2}
\sideset{}{^\kappa}\sum_{\substack{\chi \ (\modm q) \\ \chi\neq 1}}\chi(m)\varepsilon(\chi) = \frac{q-1}{2q^{1/2}}\sum_{\pm}(\pm 1)^{\kappa}\left(e\left(\pm \frac{\overline{m}}{q}\right)+\frac{1}{(q-1)}\right),
\end{equation}
where the supscript $\kappa$ means that we sum over $\chi$ such that $\chi(-1)=(-1)^\kappa$. In \eqref{FirstTerm}, we remove the contribution of $\chi=\overline{\omega}_1,\overline{\omega}_2$ (see Remark \ref{RemarkFastDecay}) and after applying \eqref{Orthogonality1}, we get $\mathcal{S}_1=\mathcal{S}_1^++\mathcal{S}_1^- + O(q^{-1/4+\varepsilon})$ with 
\begin{equation}\label{Average1}
\mathcal{S}_1^\pm(\omega_1,\omega_2,\ell;q)=\mathop{\sum\sum\sum}_{\substack{n_0,n_1,n_2\geqslant 1 \\ n_0n_1n_2\ell\equiv \pm 1 \ (\modm q)}}\frac{\omega_1(n_1)\omega_2(n_2)}{(n_0n_1n_2)^{1/2}}\mathbf{V}\left(\frac{n_0n_1n_2}{q^{3/2}}\right).
\end{equation}
For \eqref{FourthTerm}, we remove the contribution of $\chi=1,\overline{\omega}_2$ and note that for $(m,q)=1$ we have, opening the Gauss sum $\varepsilon(\chi\omega_2)$ and using \eqref{Orthogonality2},
\begin{equation}\label{AverageTwoGaussSums}
\begin{split}
\frac{2}{q-1}\sideset{}{^+}\sum_{\chi\neq\omegabar_1}\chi(m)\varepsilon(\chi\omega_1)&\varepsilon(\chi\omega_2)=\frac{1}{q^{1/2}}\sum_{a\in\Fbf_q^\times}\omega_2(a)e\left(\frac{a}{q}\right)\left(\frac{2}{q-1}\sideset{}{^+}\sum_{\chi\neq\omegabar_1}\chi(am)\varepsilon(\chi\omega_1)\right) \\ 
= & \ \frac{\omegabar_1(m)}{q^{1/2}}\sum_{a\in\Fbf_q^\times}\omegabar_1\omega_2(a)e\left(\frac{a}{q}\right)\left(\frac{2}{q-1}\sideset{}{^{\kappa_1}}\sum_{\chi\neq 1}\chi(am)\varepsilon(\chi)\right) \\
= & \ \frac{1}{q^{1/2}}\sum_{\pm}\omegabar_1(\pm m)\left(\frac{1}{q^{1/2}}\sum_{a\in\Fbf_q^\times}\omegabar_1\omega_2(a)e\left(\frac{a}{q}\right)\left(e\left(\frac{\pm\overline{am}}{q}\right)+\frac{1}{q-1}\right)\right).
\end{split}
\end{equation} 
The second expression in the above parenthesis is easily computed as a Gauss sum. For the first term, we have 
\begin{alignat*}{1}
\frac{1}{q^{1/2}}\sum_{a\in\Fbf_q^\times}\omegabar_1\chi_2(a)e\left(\frac{a}{q}\right)e\left(\frac{\pm\overline{am}}{q}\right) = & \ \omega_1\omegabar_2(\pm m)\frac{1}{q^{1/2}}\sum_{a\in\Fbf_q^\times}\omegabar_1\omega_2(a)e\left(\frac{\overline{a}}{q}\right)e\left(\frac{\pm \overline{m}a}{q}\right) \\ 
= & \ \omega_1(\pm m)\Kl_2(\pm\overline{m},\omega_1,\omega_2;q),
\end{alignat*}
where the twisted rank $2$ Kloosterman sum is defined by \eqref{DefinitionKloosterman} (see also \eqref{Kloosterman-Fourier}). Hence we see that \eqref{AverageTwoGaussSums} equals
\begin{equation}\label{DoubleGauss}
\frac{2}{q-1}\sideset{}{^+}\sum_{\chi\neq\omegabar_1}\chi(m)\varepsilon(\chi\omega_1)\varepsilon(\chi\omega_2)=\frac{1}{q^{1/2}}\Kl_2(\pm\overline{m},\omega_1,\omega_2;q)+\frac{\varepsilon(\omegabar_1\omega_2)\omegabar_1(m)(1+(-1)^{\kappa_1})}{q^{1/2}(q-1)}.
\end{equation}
Now opening the Gauss sum $\varepsilon(\chi)$ and using \eqref{DoubleGauss}, we obtain for every $(m,q)=1$,
\begin{equation}\label{TripleGauss}
\begin{split}
\frac{2}{q-1}\sideset{}{^+}\sum_{\chi\neq\omegabar_1}\chi(m)\varepsilon(\chi)\varepsilon(\chi\omega_1)\varepsilon(\chi\omega_2)= & \ \frac{1}{q^{1/2}}\sum_{a\in\Fbf_q^\times}e\left(\frac{a}{q}\right)\left(\frac{2}{q-1}\sideset{}{^+}\sum_{\chi\neq\omegabar_1}\chi(am)\varepsilon(\chi\omega_1)\varepsilon(\chi\omega_2)\right) \\ 
= & \ \frac{1}{q}\sum_{a\in\Fbf_q^\times}\Kl_2(\pm \overline{am},\omega_1,\omega_2;q)e\left(\frac{a}{q}\right)+O\left(q^{-3/2}\right) \\ 
= & \ \frac{1}{q^{1/2}}\Kl_3(\pm \overline{m},\omega_1,\omega_2,1;q)+O\left(q^{-3/2}\right).
\end{split}
\end{equation}
Finally, applying \eqref{TripleGauss} in \eqref{FourthTerm} with $m=\overline{n_0n_1n_2}\ell$ yields $\Scal_2=\Scal_2^++\Scal_2^-+O(q^{-1/4+\varepsilon})$ (recall Remark \ref{RemarkFastDecay}) with
\begin{equation}\label{FinalValueS2}
\Scal_2^{\pm}(\omega_1,\omega_2,\ell;q)= \frac{1}{q^{1/2}}\sumstar_{n_0,n_1,n_2 \geqslant 1}\frac{\omegabar_1(n_1)\omegabar_2(n_2)}{(n_0n_1n_2)^{1/2}}\Kl_3(\pm n_0n_1n_2 \overline{\ell},\omega_1,\omega_2,1;q)\mathbf{V}\left(\frac{n_0n_1n_2}{q^{3/2}}\right).
\end{equation}
We will evaluate each of these two terms (\eqref{Average1}-\eqref{FinalValueS2}) separately and find that a main term appears only in $\mathcal{S}_1^+(\omega,\omega_2,\ell;q)$ when $\ell=1$. The others will contribute as an error term.

\subsubsection{The Main Term}\label{SectionMainTerm} The main contribution comes from $n_0=n_1=n_2=\ell=1$ in \eqref{Average1}. Indeed, assuming $n_0n_1n_2\ell=1$, we obtain by the Remark \ref{Remark}
$$\mathbf{V}\left(\frac{1}{q^{3/2}}\right)=1+O\left(q^{-3/4+\varepsilon}\right).$$
When $n_0n_1n_2\ell\equiv \pm 1$ (mod $q$) with $n_0n_1n_2\ell\neq 1$, we write the congruence equation in the form $n_0n_1n_2\ell = \pm 1 + kq$ with $1\leqslant k\leqslant \ell q^{1/2+\varepsilon}+1$. Therefore, we get that the contribution of $n_0n_1n_2\ell\neq 1$ is at most
$$\ell^{1/2}q^{\varepsilon-1/2}\sum_{1\leqslant k\leqslant \ell q^{1/2+\varepsilon}+1}\frac{1}{k^{1/2}}\ll \ell q^{-1/4+\varepsilon}.$$
We conclude with
$$\mathcal{S}_1^+(\omega_1,\omega_2,\ell;q)=\delta_{\ell=1}+O\left(\ell q^{-1/4+\varepsilon}\right) \ \ \mathrm{and} \ \ \mathcal{S}_1^-(\omega_1,\omega_2,\ell;q)=O\left(\ell q^{-1/4+\varepsilon}\right),$$
which gives the desired main term of Theorem \ref{Theorem1} provided
\begin{equation}\label{Condition1ell}
\ell\leqslant q^{\frac{1}{4}-\frac{1}{64}}=q^{\frac{15}{64}}.
\end{equation}

\subsubsection{The error term}\label{SectionErrorTerm}In this section, we analyze the expression \eqref{FinalValueS2} and will find that it contributes as an error term. Applying a partition of unity to $[1,\infty)^3$ in order to locate the variables $n_0,n_1,n_2$ and we obtain 
$\mathcal{S}_2^\pm (\omega_1,\omega_2,\ell;q)=\sum_{N_0,N_1,N_2}\mathcal{S}_2^\pm (\ell,N_0,N_1,N_2;q)$ with 
\begin{equation}\label{LastTerm}
\begin{split}
\mathcal{S}_2^\pm (\ell,N_0,N_1,N_2;q)= & \ \frac{1}{(qN_0N_1N_2)^{1/2}}\sideset{}{^*}\sum_{n_0,n_1,n_2\in\mathbf{Z}}\overline{\omega}_1(n_1)\overline{\omega}_2(n_2)f_1\left(\frac{n_1}{N_1}\right)f_2\left(\frac{n_2}{N_2}\right)\\ & \times  \mathrm{Kl}_{3}(\pm n_0n_1n_2\overline{\ell},\omega_1,\omega_2,1;q) f_0\left(\frac{n_0}{N_0}\right)\mathbf{V}\left(\frac{n_0n_1n_2}{q^{3/2}}\right),
\end{split}
\end{equation}
where the functions $f_i$ are smooth and compactly supported on $(1/2,2)$ and the $N_i'$s runs over real numbers of the form $2^i$, $i\geqslant 0$. By the fast decay at infinity of $\mathbf{V}$, we can assume that 
$$1\leqslant N_0,N_1,N_2 \ \ \mathrm{and} \ \  N_0N_1N_2\leqslant q^{3/2+\varepsilon}.$$
Hence it remains to bound $O(\log^3 q)$ sums of the shape \eqref{LastTerm}. It is also convenient to separate the variables $n_0n_1n_2$ in the test function $\mathbf{V}$. This can be done using its integral representation \eqref{DefinitionV2} and we refer to \cite[Section 4.1]{zacharias} for more details. We keep the same notation $\mathcal{S}_2^\pm (\ell,N_0,N_1,N_2;q)$, but with the factor $\mathbf{V}$ removed, and also for the functions $f_i$, i.e.
\begin{equation}\label{LastTerm2}
\begin{split}
\mathcal{S}_2^\pm (\ell,N_0,N_1,N_2;q)=\frac{1}{(qN_0N_1N_2)^{1/2}}\sideset{}{^*}\sum_{n_0,n_1,n_2\in\mathbf{Z}}&\overline{\omega}_1(n_1)\overline{\omega}_2(n_2)\mathrm{Kl}_{3}(\pm n_0n_1n_2\overline{\ell},\omega_1,\omega_2,1;q)  \\ & \ \times   f_0\left(\frac{n_0}{N_0}\right)f_1\left(\frac{n_1}{N_1}\right)f_2\left(\frac{n_2}{N_2}\right),
\end{split}
\end{equation}
with 
\begin{equation}\label{fi}
x^j f_i^{(j)}(x) \ll_j q^{\varepsilon j}.
\end{equation}
Note finally that the trivial estimate is
\begin{equation}\label{TrivialEstimateS4}
\mathcal{S}_2^{\pm}(\ell,N_0,N_1,N_2;q)\ll \left(\frac{N_0N_1N_2}{q}\right)^{1/2}.
\end{equation}

\subsubsection{Poly\'a-Vinogradov bound}\label{SectionPolya} We show here that \eqref{LastTerm2} is very small if we assume that one of the parameters $N_i$ is greater than $q$. Indeed, since the argument is the same, we suppose that $N_1\geqslant q/2$. In this case, for fixed $(n_0n_2,q)=1$ we focus on the $n_1$-sum 
$$\mathcal{P}(N_1;q) = \sumstar_{n_1\in\Zbf}\omegabar(n_1)\Kl_3(\pm n_0n_1n_2\overline{\ell},\omega_1,\omega_2,1;q)f_1\left(\frac{n_1}{N_1}\right).$$
By Remark \ref{RemarkExtensionByZero}, we can add the contribution of $q|n_1$ for an error of size $O(N_1/q)$ (since $N_1\geqslant q/2$). Hence, applying Proposition \ref{PropositionPolya} with the Fourier trace sheaf
$$\Lcal_{\omegabar}\otimes \left[\times \left(\pm n_0n_2\overline{\ell}\right)\right]^*\Klcal_3(\omega_1,\omega_2,1;q),$$
we get
$$\mathcal{P}(N_1;q)=O\left(q^{\varepsilon}\frac{N_1}{q^{1/2}}+\frac{N_1}{q}\right)=O\left(q^{\varepsilon}\frac{N_1}{q^{1/2}}\right).$$
Finally, averaging trivially over $n_0$ and $n_2$ in \eqref{LastTerm2} yields
$$\Scal_2^\pm(\ell,N_0,N_1,N_2;q)\ll q^\varepsilon \left(\frac{N_0N_1N_2}{q^2}\right)^{1/2}.$$
Since $N_0N_1N_2\leqslant q^{3/2+\varepsilon}$, we obtain
\begin{proposition}\label{PropositionPolyBound} Assume that $N_i\geqslant q/2$ for some $i\in\{0,1,2\}.$ Then for any $\varepsilon>0$, we have
$$\Scal_2^\pm(\ell,N_0,N_1,N_2;q) = O\left(q^{-1/4+\varepsilon}\right),$$
with an implied constant depending only on $\varepsilon$.
\end{proposition}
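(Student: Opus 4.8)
The plan is to extract cancellation from the summation over the one variable $n_i$ whose range satisfies $N_i\geqslant q/2$ by means of the Poly\'a--Vinogradov estimate of Proposition \ref{PropositionPolya}, and to estimate the remaining two variables trivially. Since the Kloosterman sum $\Kl_3(a,\omega_1,\omega_2,1;q)$ is symmetric in its three multiplicative parameters $\omega_1,\omega_2,1$, it suffices to treat the two cases $N_1\geqslant q/2$ and $N_0\geqslant q/2$; the case $N_2\geqslant q/2$ reduces to the former after interchanging the roles of $\omega_1$ and $\omega_2$ in \eqref{LastTerm2}.

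First I would freeze $n_0,n_2$ with $(n_0n_2,q)=1$ and isolate the inner sum
$$
\mathcal{P}(N_1;q) := \sumstar_{n_1\in\Zbf}\overline{\omega}_1(n_1)\,\Kl_3\!\left(\pm n_0n_1n_2\overline{\ell},\omega_1,\omega_2,1;q\right)f_1\!\left(\frac{n_1}{N_1}\right).
$$
Because $(n_0n_2\overline{\ell},q)=1$, the map $x\mapsto \pm n_0n_2\overline{\ell}\,x$ is an automorphism of $\Abfq$, so $n_1\mapsto \overline{\omega}_1(n_1)\Kl_3(\pm n_0n_1n_2\overline{\ell},\omega_1,\omega_2,1;q)$ is the trace function of the Fourier trace sheaf $\Lcal_{\overline{\omega}_1}\otimes\left[\times(\pm n_0n_2\overline{\ell})\right]^*\Klcal_3(\omega_1,\omega_2,1;q)$, whose conductor is bounded by an absolute constant, uniformly in $q,\omega_1,\omega_2,\ell,n_0,n_2$: the Kloosterman sheaf has conductor $6$, pullback by a homothety preserves rank and Swan conductors and merely permutes the singularities, and tensoring by the rank-one tamely ramified Kummer sheaf $\Lcal_{\overline{\omega}_1}$ changes the conductor only by a bounded amount while keeping the sheaf geometrically irreducible of rank $3\geqslant 2$, hence Fourier. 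Using the middle-extension convention of Remark \ref{RemarkExtensionByZero} I may complete the summation to all $n_1\in\Zbf$ at a cost of $O(N_1/q)$ (legitimate since $N_1\geqslant q/2$), and Proposition \ref{PropositionPolya} then gives $\mathcal{P}(N_1;q)\ll q^{\varepsilon}N_1q^{-1/2}$.

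Feeding this back into \eqref{LastTerm2} and bounding the sums over $n_0\sim N_0$ and $n_2\sim N_2$ trivially, I obtain
$$
\Scal_2^\pm(\ell,N_0,N_1,N_2;q)\ll \frac{q^{\varepsilon}}{(qN_0N_1N_2)^{1/2}}\cdot N_0N_2\cdot\frac{N_1}{q^{1/2}}=q^{\varepsilon}\,\frac{(N_0N_1N_2)^{1/2}}{q},
$$
and since $N_0N_1N_2\leqslant q^{3/2+\varepsilon}$ this is $O(q^{-1/4+\varepsilon})$ after renaming $\varepsilon$, as desired. The case $N_0\geqslant q/2$ is handled identically and is in fact easier, since there is no Kummer twist on the $n_0$-variable: the relevant sheaf is simply $\left[\times(\pm n_1n_2\overline{\ell})\right]^*\Klcal_3(\omega_1,\omega_2,1;q)$, again of bounded conductor, and the same computation applies. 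There is no serious obstacle in this argument; the only point deserving a moment's attention is verifying that twisting $\Klcal_3$ by the rank-one sheaf $\Lcal_{\overline{\omega}_1}$ leaves it a Fourier sheaf with conductor bounded independently of $q$, which is exactly what guarantees that the implied constant coming from Proposition \ref{PropositionPolya} is absolute.
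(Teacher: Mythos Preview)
Your argument is correct and follows essentially the same route as the paper: isolate the long variable $n_i$, complete the sum to remove the coprimality condition at cost $O(N_i/q)$, apply Proposition~\ref{PropositionPolya} to the trace function of the Kummer-twisted pullback of $\Klcal_3$, and then sum trivially over the remaining two variables. Your additional remarks on why the conductor stays bounded and why the sheaf remains Fourier are welcome elaborations but not departures from the paper's method.
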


\subsubsection{Applying Poisson summation in the three variables}\label{Section3Poisson} In this section, we obtain an estimate for $\mathcal{S}_2^\pm$ which is satisfactory if the product of the three variables $N_0N_1N_2$ is greater that $q$. This can be done using successive applications of Poisson summation formula. Before this, we just note that by Proposition \ref{PropositionPolyBound}, we can assume that $N_i<q/2$ for $i=0,1,2$, which allows us to ignore the primality condition $(n_0n_1n_2,q)=1$ in \eqref{LastTerm2} since we also have $N_i\geqslant 1$.

\vspace{0.1cm}

We begin with the $n_0$-variable. In \eqref{LastTerm2}, we write the Kloosterman sum as the Fourier transform of the function $x\mapsto \mathbf{K}_{2}(x,\omega_1,\omega_2;q)$ defined in \eqref{DefinitionKbf}. Hence, an application of Poisson summation in $n_0$ and Fourier inversion formula gives (recall that $(n_1n_2,q)=1$)
\begin{alignat*}{1}
\sum_{n_0\in \Zbf}\widehat{\mathbf{K}_2(\omega_1,\omega_2;q)}(\pm n_0n_1n_2\overline{\ell})f_0\left(\frac{n_0}{N_0}\right) = & \ \frac{N_0}{q^{1/2}}\sum_{n_0\in\Zbf}\widehat{\widehat{\mathbf{K}_2(\omega_1,\omega_2;q)}}(\pm n_0\overline{n_1n_2}\ell)\widehat{f_0}\left(\frac{n_0 N_0}{q}\right) \\ 
= & \ \frac{N_0}{q^{1/2}}\sum_{n_0\in\Zbf}\mathbf{K}_2(\mp n_0\overline{n_1n_2}\ell,\omega_1,\omega_2;q)\widehat{f_0}\left(\frac{n_0N_0}{q}\right).
\end{alignat*}
Since by Definition $\mathbf{K}_2(x)=0$ for $q|x$, we obtain
\begin{equation}\label{Poisson2}
\begin{split}
\Scal_{2}^\pm (\ell,N_0,N_1,N_2;q)= \frac{N_0^{1/2}}{q(N_1N_2)^{1/2}}\sum_{\substack{n_0,n_1,n_2\in\Zbf \\ (n_0,q)=1}}&\omegabar_1(n_1)\omegabar_2(n_2)\Kl_2(\mp n_1n_2\overline{n_0\ell},\omega_1,\omega_2;q) \\
 & \ \times \widehat{f_0}\left(\frac{n_0N_0}{q}\right)f_1\left(\frac{n_1}{N_1}\right)f_2\left(\frac{n_2}{N_2}\right).
\end{split}
\end{equation}
We continue with the $n_2$-variable. As before, since the argument of $\Kl_2$ is non zero modulo $q$, we can express the Kloosterman sum as suitable Fourier transform, namely (see \eqref{DefinitionKbf})
\begin{equation}\label{FourierKl2}
\Kl_2(\mp n_1n_2\overline{\ell},\omega_1,\omega_2;q)=\omega_2(\mp n_1n_2\overline{n_0\ell})\widehat{\left[\omega_2\mathbf{K}_1(\omega_1;q)\right]}(\mp n_1n_2\overline{n_0\ell}).
\end{equation}
Using exactly the same argument as before, after replacing $\Kl_2$ by \eqref{FourierKl2} in \eqref{Poisson2}, we get
\begin{alignat*}{1}
\Scal_2^{\pm}(\ell,N_0,N_1,N_2;q)=\frac{\omegabar_2(\mp\ell)N_0^{1/2}}{q(N_1N_2)^{1/2}}\sum_{\substack{n_0,n_1,n_2 \in\Zbf \\ (n_0n_2,q)=1}} & \omegabar_1\chi_2(n_1)\omegabar_2(n_0)[\widehat{\omega_2\mathbf{K}_1(\omega_1)}](\mp n_1n_2\overline{n_0\ell}) \\ 
\times & \ \widehat{f_0}\left(\frac{n_0N_0}{q}\right)f_1\left(\frac{n_1}{N_1}\right)f_2\left(\frac{n_2}{N_2}\right).
\end{alignat*}
Applying Poisson in the $n_2$-variable yields 
\begin{alignat*}{1}
\sum_{n_2\in\Zbf}\widehat{\omega_2\mathbf{K}_1(\omega_1)}(\mp n_1n_2\overline{n_0\ell})f_2\left(\frac{n_2}{N_2}\right)= & \ \frac{N_2}{q^{1/2}}\sum_{n_2\in\Zbf}\widehat{\widehat{\omega_2\mathbf{K}_1(\omega_1)}}(\mp n_2\overline{n_1}n_0\ell)\widehat{f_2}\left(\frac{n_2N_2}{q}\right) \\ = & \ \frac{N_2}{q^{1/2}}\sum_{n_2\in\Zbf}\omega_2(\pm n_2\overline{n_1}n_0\ell)\mathbf{K}_1(\pm n_2\overline{n_1}n_0\ell,\omega_1;q)\widehat{f_2}\left(\frac{n_2N_2}{q}\right) \\ 
= & \ \frac{N_2}{q^{1/2}}\sum_{(n_2,q)=1}\omegabar_1\omega_2(\pm n_2\overline{n_1}n_0\ell)e\left(\frac{\pm n_1\overline{n_2n_0\ell}}{q}\right)\widehat{f_2}\left(\frac{n_2N_2}{q}\right).
\end{alignat*}
Hence 
\begin{alignat}{1}
\Scal_2^\pm(\ell,N_0,N_1,N_2;q)=\omegabar_1(\pm\ell)\omega_2(-1)\left(\frac{N_0N_2}{q^3N_1}\right)^{1/2}\sum_{\substack{ n_0,n_1,n_2 \in\Zbf \\ (n_0n_2,q)=1}} & \omegabar_1(n_2n_0)\omega_2(n_2)e\left(\frac{\pm n_1\overline{n_2n_0\ell}}{q}\right)   \nonumber \\ \times & \ f_1\left(\frac{n_1}{N_1}\right) \widehat{f_0}\left(\frac{n_0N_0}{q}\right)\widehat{f_2}\left(\frac{n_2N_2}{q}\right) \label{S2Final}.
\end{alignat}
It remains to do Poisson in the $n_1$-variable. Let $a\in\Fbf_q$, we denote by $\bm{\delta}_{a}$ the Dirac function on $\Fbf_q$ defined by $\bm{\delta}_a(x)=1$ if $x=a$ and zero else. Then the exponential map $$n_1\mapsto e\left(\frac{\pm n_1\overline{n_0n_2\ell}}{q}\right)$$ is the additive Fourier transform of the Dirac function $x\mapsto q^{1/2}\bm{\delta}_{\pm\overline{n_0n_2\ell}}(x)$. It follows that the $n_1$ sum in \eqref{S2Final} equals
$$\sum_{n_1\in\Zbf}\widehat{q^{1/2}\bm{\delta}_{\pm\overline{n_0n_2\ell}}}(n_1)f_1\left(\frac{n_1}{N_1}\right)=N_1 \sum_{n_1\in\Zbf}\bm{\delta}_{\pm\overline{n_0n_2\ell}}(-n_1)\widehat{f_1}\left(\frac{n_1N_1}{q}\right).$$
Summarizing all the previous computations yields the bound
$$\Scal_{2}^\pm(\ell,N_0,N_1,N_2;q)\ll \left(\frac{N_0N_1N_2}{q^3}\right)^{1/2}\sum_{n_0n_1n_2\ell\equiv \mp 1 (\modm q)}\left|\widehat{f_0}\left(\frac{n_0N_0}{q}\right)\widehat{f_1}\left(\frac{n_1N_1}{q}\right)\widehat{f_2}\left(\frac{n_2N_2}{q}\right)\right|.$$
Finally, using the fact that all these Fourier transform have fast decay at infinity, we see that the above sum is essentially supported on $|n_i|\leqslant \frac{q^{1+\varepsilon}}{N_i}$ and thus, a trivial estimate leads to
\begin{equation}\label{FinalBoundPoisson}
\Scal_2^{\pm}(\ell,N_0,N_1,N_2;q)\ll q^\varepsilon\left(\frac{N_0N_1N_2}{q^3}\right)^{1/2}\left(\frac{q^2}{N_0N_1N_2}+1\right)\ll q^\varepsilon \left(\frac{q}{N_0N_1N_2}\right)^{1/2}.
\end{equation}

\subsubsection{Estimation of $\Scal_2^\pm$ using Theorems \ref{Theorem2} and \ref{TheoremPrime}} We return to expression \eqref{LastTerm2}. The combination of \eqref{TrivialEstimateS4} and \eqref{FinalBoundPoisson} shows that it remains to deal with the case where the product $N_0N_1N_2$ is of length about $q$. The strategy is the following : if one of the variables $N_i$ is very small, then we factorize the two others to form a new long variable and apply Theorem \ref{Theorem2} for the twist of Eisenstein series. If all $N_i$ are not too small, then it is possible to factorize two variables and form a bilinear sum in such a way that an application of Theorem \ref{TheoremPrime} is beneficial. 

\vspace{0.1cm}

We prove in this section :
\begin{proposition}\label{FinalProposition} Let $N=\max(N_0,N_1,N_2),$ $M=\min (N_0,N_1,N_2)$ and write $D$ for the remaining parameter, i.e. $M\leqslant D\leqslant N$. Then for every $\varepsilon>0$, we have
\begin{equation}\label{FinalBoundTheorem}
\Scal_{2}^\pm (\ell,N_0,N_1,N_2;q)\ll_\varepsilon q^\varepsilon \left(\frac{N_0N_1N_2}{q}\right)^{1/2} \left\{ \begin{array}{c}  \left(1+\frac{qM}{N_0N_1N_2}\right)^{1/2}q^{-1/16} \\  \\ \frac{1}{q^{1/4}}+\frac{1}{D^{1/2}}+\frac{q^{1/4}}{(NM)^{1/2}}.\end{array} \right.
\end{equation}
\end{proposition}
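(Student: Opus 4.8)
The starting point is the expression \eqref{LastTerm2} for $\Scal_2^\pm(\ell,N_0,N_1,N_2;q)$ together with the constraint that we may assume $N_0N_1N_2\asymp q$ (by combining the trivial bound \eqref{TrivialEstimateS4} with the Poisson bound \eqref{FinalBoundPoisson}, the cases $N_0N_1N_2\ll q^{1-\varepsilon}$ and $N_0N_1N_2\gg q^{1+\varepsilon}$ being already satisfactory; we may also assume $N_i<q/2$ by Proposition \ref{PropositionPolyBound}). Write the three dyadic parameters as $M\leqslant D\leqslant N$ after relabelling. There are two regimes to handle, matched to the two bounds displayed in \eqref{FinalBoundTheorem}.

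\textbf{First regime: $M$ small.} Here I would group the two larger variables into a single long variable. Say, after relabelling, $N_1 = M$ is the smallest; then I set $n = n_0 n_2$, a variable of size $N := N_0 N_2 \asymp q/M$, with multiplicity governed by the twisted divisor function: collecting the characters, the $n_0,n_2$-sum has the shape $\sum_n \lambda_{\omegabar_1\omega_2\text{-type}}(n, it) K_{n_1}(n)$ where $K_{n_1}$ is (a dilate of) the Kloosterman trace function $x\mapsto \Kl_3(\pm x n_1 \overline{\ell},\omega_1,\omega_2,1;q)$, which by \S\ref{SectionKummerArtin}--\S\ref{Sectionl-adic} is a non-Fourier-exceptional isotypic trace function of bounded conductor (the scaling by $\pm n_1\overline\ell$ and the twist by a Kummer sheaf change the conductor by $O(1)$, and one checks the Fourier transform is not of the exceptional form $\gamma^*\Lcal_\psi\otimes\Lcal_{\omegabar(cX+d)}$ — a Kloosterman sheaf of rank $3$ is geometrically irreducible of rank $>1$ so this is automatic). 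Applying Corollary \ref{CorollarySchwartz} (the Schwartz-class form of Theorem \ref{Theorem2}, with $X\asymp N$, $Q\asymp q^\varepsilon$) to the inner $n$-sum and averaging trivially over $n_1\sim M$ gives
$$
\Scal_2^\pm(\ell,N_0,N_1,N_2;q)\ll q^\varepsilon\frac{M^{1/2}}{(qN_0N_1N_2)^{1/2}}\cdot M\cdot N\Bigl(1+\frac{q}{N}\Bigr)^{1/2}q^{-1/16},
$$
and substituting $N=N_0N_2=N_0N_1N_2/M$ and simplifying yields exactly the first bound $q^\varepsilon(N_0N_1N_2/q)^{1/2}(1+qM/(N_0N_1N_2))^{1/2}q^{-1/16}$.

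\textbf{Second regime: $M$ not too small.} Here I would instead factor two of the variables so as to form a genuine bilinear sum. Write $\alpha_m$ for the sum over the two variables assembled into $m$ of size $M D$ (carrying the relevant characters, so $|\alpha_m|\leqslant \tau_3(m)$ roughly, hence $\|\alpha\|_2^2 \ll q^\varepsilon MD$) and $\beta_n$ for the remaining variable of size $N$ (so $|\beta_n|\leqslant 1$, $\|\beta\|_2^2\ll N$); the summand factors as $K(mn)$ with $K(x)=\Kl_3(\pm x\overline\ell,\omega_1,\omega_2,1;q)$ up to a Kummer twist and scaling, again a non-exceptional isotypic trace function of bounded conductor whose sheaf does not contain $\Lcal_\omega\otimes\Lcal_\psi$ in its irreducible component. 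Apply part (1) of Theorem \ref{TheoremPrime}: the double sum is
$$
\ll q^\varepsilon \|\alpha\|_2\|\beta\|_2 (MDN)^{1/2}\Bigl(\frac{1}{q^{1/4}}+\frac{1}{(MD)^{1/2}}+\frac{q^{1/4}}{N^{1/2}}\Bigr).
$$
Dividing by the normalisation $(qN_0N_1N_2)^{1/2}=(qMDN)^{1/2}$ and inserting $\|\alpha\|_2\|\beta\|_2\ll q^\varepsilon(MD)^{1/2}N^{1/2}$ collapses the prefactor to $q^\varepsilon(N_0N_1N_2/q)^{1/2}$ times the parenthesis; with the clean choice of which pair to bundle, the middle term is $1/D^{1/2}$ (bundle the two \emph{largest}, $N$ and $D$, into $n$? — one must choose the split that makes the error $\frac1{q^{1/4}}+\frac1{D^{1/2}}+\frac{q^{1/4}}{(NM)^{1/2}}$ come out, i.e. put the \emph{smallest} and one other on one side; I would check the three possible groupings and keep the optimal one, which is what produces the stated shape), giving the second bound.

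\textbf{Main obstacle.} The routine parts — Poisson/partition reductions, conductor bookkeeping under scaling and Kummer twists, and the Cauchy--Schwarz normalisations — are mechanical. The real content is entirely imported: in the first regime it is Theorem \ref{Theorem2}/Corollary \ref{CorollarySchwartz}, whose proof occupies Sections \ref{SectionAlgebraicTwsits}--\ref{ProofTwist} and rests on the amplification method plus the sheaf-theoretic input that Kloosterman sheaves are non-Fourier-exceptional; in the second regime it is the Polya--Vinogradov bilinear bound Theorem \ref{TheoremPrime}, valid precisely because $\Klcal_3$ does not contain $\Lcal_\omega\otimes\Lcal_\psi$. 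So the only genuine work inside this proposition is \emph{(i)} verifying that the trace function appearing after the Poisson manipulations — namely $x\mapsto\Kl_3(\pm x\,n_1\overline\ell,\omega_1,\omega_2,1;q)$ and its various dilates, tensored with a Kummer sheaf $\Lcal_{\omegabar_1}$ or similar — is isotypic, non-Fourier-exceptional, and has conductor $O(1)$ uniformly in $n_1,\ell$ (so that the implied constants in Theorems \ref{Theorem2} and \ref{TheoremPrime} are absolute), and \emph{(ii)} choosing, in each regime, the grouping of $\{N_0,N_1,N_2\}$ into (long variable)/(short variables) that makes the output match the two displayed bounds. I expect \emph{(ii)} — keeping track of which factorisation is optimal in the intermediate ranges of $(M,D,N)$ — to be the slightly fussy point, though not a serious obstacle.
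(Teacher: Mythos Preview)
Your proposal is correct and follows the paper's approach in both regimes. One point you leave unresolved is the grouping in the second regime: the paper groups the \emph{smallest} and \emph{largest} variables together (with $N_0=M\leqslant N_1=D\leqslant N_2=N$, it sets $m=n_0n_2$ of size $NM$ as the long bilinear variable and keeps $n_1$ of size $D$ as the short one), so that Theorem~\ref{TheoremPrime}(1) outputs exactly $\tfrac{1}{q^{1/4}}+\tfrac{1}{D^{1/2}}+\tfrac{q^{1/4}}{(NM)^{1/2}}$; your initial choice of bundling $M$ and $D$ would not give the stated shape. For the first bound the paper does what you describe, combining the two larger variables $n_1,n_2$ via Mellin inversion into $\lambda_{\omegabar_1\omega_2}(n,\theta_2)$ against $K(n)=\omegabar_2(n)\Kl_3(\pm nn_0\overline{\ell},\omega_1,\omega_2,1;q)$, and verifies non-Fourier-exceptionality by noting that the $\ell$-adic Fourier transform of the associated sheaf is (a dilate of) $\Lcal_{\omega_2}\otimes[x\mapsto x^{-1}]^*\Klcal_2(\omega_1,1;q)$, which is irreducible of rank~$2$.
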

\vspace{0.1cm}
\begin{proof}
To fix the ideas, we assume that 
$$M=N_0 \leqslant N_1=D\leqslant N_2=N,$$
and we leave it to the reader to ensure that the other cases treated with minimal changes. 

We first focus on the $n_1,n_2$-sum in \eqref{LastTerm2} and write it in the form (recall that $N_1,N_2<q/2$ so the primality condition is satisfied)
\begin{equation}\label{n2n1-sum}
\sum_{n_1,n_2\geqslant 1}\omegabar_1\omega_2(n_1)\omegabar_2(n_1n_2)\Kl_3(n_0n_1n_2\overline{\ell},\omega_1,\omega_2;q)f_1\left(\frac{n_1}{N_1}\right)f_2\left(\frac{n_2}{N_2}\right).
\end{equation}
We show now how to transform this expression in order to obtain the same as in Corollary \ref{CorollarySchwartz}. To simplify notations, we define 
\begin{equation}\label{DefinitionTraceK}
K(n):= \omegabar_2(n)\Kl_3(\pm nn_0\overline{\ell},\omega_1,\omega_2;q).
\end{equation} 
Using Mellin inversion on $f_1$ and $f_2$ in \eqref{n2n1-sum} leads to
$$\frac{1}{(2\pi i)^2}\int\limits_{(0)}\int\limits_{(0)}\widetilde{f_1}(s_1)\widetilde{f_2}(s_2)N_1^{s_1}N_2^{s_2}\sum_{n_1,n_2\geqslant 1}\omegabar_1\omega_2(n_1)K(n_1n_2)n_1^{-s_1}n_2^{-s_2}ds_1ds_2.$$
Making the change of variables
$$\theta_1=\frac{s_1+s_2}{2} \ , \ \ \theta_2=\frac{-s_1+s_2}{2},$$
and we see that the above integral takes the form
\begin{equation}\label{aboveIntegral}
\begin{split}
\frac{2}{(2\pi i)^2}\int\limits_{(0)}\int\limits_{(0)}\widetilde{f_1}&(\theta_1-\theta_2)\widetilde{f_2}(\theta_1+\theta_2)\left(\frac{N_2}{N_1}\right)^{\theta_2} \\  \times & \sum_{n_1,n_2\geqslant 1}\omegabar_1\omega_2(n_1)\left(\frac{n_1}{n_2}\right)^{\theta_2}K(n_1n_2)\left(\frac{N_1N_2}{n_1n_2}\right)^{\theta_1}d\theta_1d\theta_2 \\ 
= & \ \frac{2}{(2\pi i)^2}\int_{(0)}\left(\frac{N_2}{N_1}\right)^{\theta_2}\sum_{n\geqslant 1}\lambda_{\omegabar_1\omega_2}(n,\theta_2)K(n)V\left(\frac{n}{N_1N_2},\theta_2\right)d\theta_2,
\end{split}
\end{equation}
where for any $x\geqslant 0$ and $\Re e (\theta_2)=0$, we defined
\begin{equation}\label{DefinitionV(theta2)}
V(x,\theta_2) := \int_{(0)}\widetilde{f_1}(\theta_1-\theta_2)\widetilde{f_2}(\theta_1+\theta_2)x^{-\theta_1}d\theta_1.
\end{equation}
Because the Mellin transforms satisfy (c.f. \eqref{fi})
\begin{equation}\label{MellinTransformBehaviour}
\widetilde{f_1}(s),\widetilde{f_2}(s) \ll \left(\frac{q^\varepsilon}{1+|s|}\right)^B,
\end{equation}
with an implied constant depending on $\varepsilon$, $B$ and $\Re e (s)$, the function $V(x,\theta_2)$ satisfies 
$$V(x,\theta_2)\ll_B \frac{1}{(1+x)^B} \ \ \mathrm{and} \ \ x^\nu V^{(\nu)}(x,\theta_2) \ll_{\nu,\varepsilon} q^{\nu\varepsilon},$$
uniformly in $\Re e (\theta_2)=0$. Since we want to estimate the inner sum in \eqref{aboveIntegral} using Theorem \ref{Theorem2} and then average trivially over the $\theta_2$-integral, we also need to control the function $V(x,\theta_2)$ with respect to the $\theta_2$-variable. By \eqref{MellinTransformBehaviour}, for any $B\geqslant 1$, we have uniformly on $x>0$ and with an implied constant depending only on $B$,
$$V(x,\theta_2)\ll \int_{(0)}\left(\frac{q^\varepsilon}{(1+|\theta_1-\theta_2|)(1+|\theta_1+\theta_2|)}\right)^Bd\theta_1.$$
Note the identity
\begin{alignat*}{1}
(1+|\theta_1-\theta_2|)(1+|\theta_1+\theta_2|)= & \ 1+|\theta_1^2-\theta_2^2|+|\theta_1-\theta_2|+|\theta_1+\theta_2| \\ \geqslant & \ 1+|\theta_1^2-\theta_2^2|+2\max(|\theta_1|,|\theta_2|).
\end{alignat*}
Hence, splitting the integral depending on whether $|\theta_1|\leqslant |\theta_2|$ or not and we get
\begin{alignat*}{1}
V(x,\theta_2) \ll & \ \int\limits_{\substack{\Re e (\theta_1)=0 \\ |\theta_1|\geqslant |\theta_2|}}\left(\frac{q^\varepsilon}{1+|\theta_1^2-\theta_2^2|+2|\theta_1|}\right)^Bd\theta_1+\int\limits_{\substack{\Re e (\theta_1)=0 \\ |\theta_1|\leqslant |\theta_2|}}\left(\frac{q^\varepsilon}{1+|\theta_1^2-\theta_2^2|+2|\theta_2|}\right)^Bd\theta_1 \\ 
\leqslant & \ \int_{|t|\geqslant |\theta_2|} \left(\frac{q^\varepsilon}{1+2|t|}\right)^Bdt+ \int_{|t|\leqslant |\theta_2|}\left(\frac{q^\varepsilon}{1+2|\theta_2|}\right)^Bdt \ll \left(\frac{q^\varepsilon}{1+|\theta_2|}\right)^{B-1}.
\end{alignat*}
Therefore, for any $\varepsilon'>0$, we obtain that \eqref{aboveIntegral} is bounded, up to a constant which depends only on $\varepsilon'$, by 
\begin{equation}\label{Maximum}
q^{\varepsilon'} \max_{\substack{|\theta_2|\leqslant q^{\varepsilon'} \\ \Re e (\theta_2)=0}}\left|\sum_{n\geqslant 1}\lambda_{\omegabar_1\omega_2}(n,\theta_2)K(n)V\left(\frac{n}{N_1N_2},\theta_2\right)\right|.
\end{equation}
We now apply Corollary \ref{CorollarySchwartz} with the Schwartz function $V(x,\theta)$ and with the sheaf
$$\Fcal:= \Lcal_{\omegabar_2}\otimes [\pm n_0\overline{\ell}]^*\Klcal_3(\omega_1,\omega_2,1;q)$$
having trace function \eqref{DefinitionTraceK}. Note that since $\Kl_3(\cdot,\omega_1,\omega_2,1;q)$ is invariant under permutation of the triple $(\omega_1,\omega_2,1)$, we have by \eqref{Kloosterman-Fourier} a geometric isomorphism
$$\Fcal\simeq [\times(\pm n_0\overline{\ell})]^*\FT\left(\Lcal_{\omega_2}\otimes [x\mapsto x^{-1}]^*\Klcal_2(\omega_1,1;q)\right)$$
and hence $\Fcal$ is not Fourier-exceptional since by Fourier inversion, its $\ell$-adic Fourier transform is a rank $2$ irreducible sheaf. It follows that for any $\varepsilon>0$, we can estimate \eqref{Maximum} by
$$q^{\varepsilon'}\max_{\substack{|\theta_2|\leqslant q^{\varepsilon'}}}(qN_1N_2)^\varepsilon (1+|\theta_2|)^A N_1N_2\left(1+\frac{q}{N_1N_2}\right)^{1/2}q^{-1/16}.$$
Choosing $\varepsilon'=\varepsilon/A$, maximizing the above quantity by setting $\theta_2=q^{\varepsilon'}$, replacing the obtained bound in \eqref{n2n1-sum} and finally, averaging trivially over $n_0$ in \eqref{LastTerm2} yields the first estimate of \eqref{FinalBoundTheorem}
\begin{equation}\label{EstimateEisenstein}
\Scal_{2}^\pm(\ell,N_0,N_1,N_2;q) \ll_\varepsilon q^\varepsilon \left(\frac{N_0N_1N_2}{q}\right)^{1/2}\left(1+\frac{q}{N_1N_2}\right)^{1/2}q^{-1/16}.
\end{equation}
For the second bound, we group together the variables $n_0n_2=m$ in \eqref{LastTerm2} and we obtain
\begin{equation}\label{Bilinear}
\Scal_2^\pm(\ell,N_0,N_1,N_2;q)=\frac{1}{(qN_0N_1N_2)^{1/2}}\sum_{n,n_1}\alpha_m\beta_{n_1}\Kl_3(\pm nn_1\overline{\ell},\omega_1,\omega_2,1;q),
\end{equation}
with 
$$\alpha_m := \sum_{n_0n_2=m}\omegabar_2(n_2)f_0\left(\frac{n_0}{N_0}\right)f_2\left(\frac{n_2}{N_2}\right) \ \ \mathrm{and} \ \ \beta_{n_1}:= \omegabar_1(n_1)f_1\left(\frac{n_1}{N_1}\right).$$
Applying Theorem \ref{TheoremPrime} (1) with $N=N_0N_2$ and $M=N_2$ gives
\begin{equation}\label{BoundBilinear}
\Scal_2^\pm (\ell,N_0,N_1,N_2;q)\ll q^\varepsilon \left(\frac{N_0N_1N_2}{q}\right)^{1/2}\left(\frac{1}{q^{1/4}}+\frac{1}{N_1^{1/2}}+\frac{q^{1/4}}{(N_0N_2)^{1/2}}\right),
\end{equation}
as wishes.
\end{proof}

\subsubsection{Conclusion of the Eisenstein case} Write $N_i=q^{\mu_i}$ with $\mu_i\geqslant 0$ and let $\eta>0$ be a parameter. If $\mu_0+\mu_1+\mu_2<1-2\eta$ or $\mu_0+\mu_1+\mu_2>1+2\eta$, we use the trivial bound \eqref{TrivialEstimateS4} or the estimate \eqref{FinalBoundPoisson} to obtain
$$\Scal_{2}^\pm(\ell,N_0,N_1,N_2;q) = O\left(q^{-\eta+\varepsilon}\right).$$
We therefore assume that we are in the range
\begin{equation}\label{Range1}
1-2\eta\leqslant \mu_0+\mu_1+\mu_2\leqslant 1+2\eta.
\end{equation}
Let $\delta>0$ be an auxiliary parameter. As we already see, there is no loose of generality assuming that $\mu_0\leqslant \mu_1\leqslant \mu_2$. Suppose first that 
\begin{equation}\label{Assume1}
\mu_0\leqslant \delta.
\end{equation}
In this case, we apply \eqref{EstimateEisenstein} which, combining with \eqref{Range1} and \eqref{Assume1} gives
$$\Scal_2^\pm(\ell,N_0,N_1,N_2;q)\ll_\varepsilon q^{\varepsilon}\left(q^{\eta-\frac{1}{16}}+q^{\frac{\delta}{2}-\frac{1}{16}}\right)\ll_\varepsilon q^{-\eta+\varepsilon},$$
provided
\begin{equation}\label{Condition1}
\eta\leqslant \frac{1}{32} \ \ \mathrm{and} \ \ \delta\leqslant \frac{1}{8}-2\eta,
\end{equation}
which condition we henceforth assume to hold. 

\vspace{0.1cm}

Suppose now that we are in the case
\begin{equation}\label{Assume2}
\mu_0\geqslant \delta.
\end{equation}
The estimate \eqref{BoundBilinear} leads to
$$\Scal_2^\pm(\ell,N_0,N_1,N_2;q)\ll q^\varepsilon \left(q^{\eta-\frac{1}{4}}+q^{\frac{1}{2}(\mu_0+\mu_2-1)}+q^{\frac{1}{2}(\mu_1-\frac{1}{2})}\right).$$
The first term is clearly smaller than $q^{-\eta+\varepsilon}$ by \eqref{Condition1}. For the second, note that $\mu_1\geqslant \mu_0\geqslant \delta$ and thus, by \eqref{Range1}
$$\mu_0+\mu_2-1\leqslant 2\eta-\delta.$$
It follows that 
$$q^{\varepsilon +\frac{1}{2}(\mu_0+\mu_2-1)}\leqslant q^{\varepsilon+\eta-\frac{\delta}{2}}\leqslant q^{-\eta+\varepsilon},$$
under the assumption that
\begin{equation}\label{Condition2}
\delta\geqslant 4\eta.
\end{equation}
Finally, the combination of \eqref{Range1}, $\mu_1\leqslant \mu_2$ and \eqref{Assume2} gives
$$\mu_1\leqslant \frac{1}{2}+\eta-\frac{\delta}{2}$$
and hence
$$q^{\varepsilon+\frac{1}{2}(\mu_1-\frac{1}{2})}\leqslant q^{\varepsilon +\frac{\eta}{2}-\frac{\delta}{4}}\leqslant q^{-\eta+\varepsilon},$$
provided
\begin{equation}\label{Condition3}
\delta\geqslant 6\eta,
\end{equation}
which is more restrictive than \eqref{Condition2}. To finalize the computations, we just note that the second condition in \eqref{Condition1} and \eqref{Condition3} are simultaneously satisfied as long as $\eta\leqslant \frac{1}{64}$, which gives the correct exponent of the error term in Theorem \ref{Theorem1}.


\subsection{The cuspidal case}
We consider as in Section \ref{SectionEisenstein} the average over primitive and even characters (recall that the nebentypus is trivial)
\begin{equation}\label{MomentEvenCuspidal}
\Tscr_{\mathrm{even}}^3(f,\ell;q) := \frac{2}{q-1}\sideset{}{^+}\sum_{\substack{\chi \ (\modm q) \\ \chi\neq 1}}L\left(f\otimes\chi,\frac{1}{2}\right)L\left(\chi,\frac{1}{2}\right)\chi(\ell).
\end{equation}

\subsubsection{Applying the approximate functional equation} Using Proposition \ref{PropositionApproximateFunctionalEquationTwisted}, we can write \eqref{MomentEvenCuspidal} in the form
$$\Tscr_{\mathrm{even}}^3(f,\ell;q)=\Ccal_1(f,\ell;q)+\varepsilon_\infty(f,+1)\Ccal_2(f,\ell;q)$$
with
$$
\Ccal_1(f,\ell;q) = \frac{2}{q-1}\sideset{}{^+}\sum_{\substack{\chi \ (\modm q) \\ \chi\neq 1}}\sum_{n.m\geqslant 1}\frac{\lambda_f(n)\chi(nm\ell)}{(nm)^{1/2}}\mathbf{V}_{f,\chi}\left(\frac{nm}{q^{3/2}}\right),
$$
$$
\Ccal_2(f,\ell;q) = \frac{2}{q-1}\sideset{}{^+}\sum_{\substack{\chi \ (\modm q) \\ \chi\neq 1}}\sum_{n.m\geqslant 1}\frac{\overline{\lambda_f(n)}\chibar(nm)\chi(\ell)}{(nm)^{1/2}}\varepsilon(\chi)^3\mathbf{V}_{f,\chi}\left(\frac{nm}{q^{3/2}}\right),
$$
where we recall that $\mathbf{V}_{f,\chi}$ depends on $\chi$ only through its parity. Since we assume that $f$ satisfies the Ramanujan-Petersson conjecture, we have $|\lambda_f(n)|\leqslant \tau(n)$. Hence, proceeding as in § \ref{SectionAverageCharacter} for the average over the characters and writing $\mathbf{V} = \mathbf{V}_{f,\chi}$, we find $$\Ccal_i(f,\ell;q)=\sum_\pm \Ccal_i^\pm(f,\ell;q)+O\left(q^{-1/4+\varepsilon}\right),$$
where
\begin{equation}\label{Cal1}
\Ccal_1^\pm(f,\ell;q)= \sumstar_{nm\ell\equiv \pm 1 \ (\modm q)}\frac{\lambda_f(n)}{(nm)^{1/2}}\mathbf{V}\left(\frac{nm}{q^{3/2}}\right),
\end{equation}
and
\begin{equation}\label{Cal2}
\Ccal_2^\pm(f,\ell;q) = \frac{1}{q^{1/2}}\sumstar_{n,m\geqslant 1}\frac{\overline{\lambda_f(n)}}{(nm)^{1/2}}\Kl_3(\pm nm\overline{\ell};q)\mathbf{V}\left(\frac{nm}{q^{3/2}}\right).
\end{equation}

\subsubsection{The main term} The extraction of the main term is done is a similar way as in § \ref{SectionMainTerm}. We just conclude with 
$$\Ccal_1^+(f,\ell;q)=\delta_{\ell=1}+O\left(\ell q^{-1/4+\varepsilon}\right) \ , \ \ \Ccal_1^-(f,\ell;q) = O\left(\ell q^{-1/4+\varepsilon}\right).$$
Note that the error terms are $O(q^{-\frac{1}{52}+\varepsilon})$ (c.f. Theorem \ref{Theorem1}) if we assume that
\begin{equation}\label{Condition2ell}
\ell\leqslant q^{\frac{1}{4}-\frac{1}{52}}=q^{\frac{3}{13}}.
\end{equation}


\subsubsection{The error term} Applying a partition of unity to \eqref{Cal2}, removing the test function $\mathbf{V}$ using its integral representation (see § \ref{SectionErrorTerm}) and we are reduced to analyze $O(\log^2q)$ sums of the shape
\begin{equation}\label{Shape1Cal}
\Ccal_2^\pm(f,N,M;q) := \frac{1}{(qNM)^{1/2}}\sumstar_{n,m\in\Zbf}\overline{\lambda_f(n)}\Kl_3(\pm nm\overline{\ell};q)W_1\left(\frac{m}{M}\right)W_2\left(\frac{n}{N}\right),
\end{equation}
where $W_i$ are smooth and compactly supported functions on $(1/2,2)$ such that $x^jW_i^{(j)}(x)\ll_{\varepsilon,j}q^{\varepsilon j}$ for all $j\geqslant 0$ and $M,N$ are reals numbers with the standard restriction due to the fast decay of $\mathbf{V}$ at infinity
$$1\leqslant M,N \ \ \mathrm{and} \ \ NM\leqslant q^{3/2+\varepsilon}.$$
Note that the trivial bound is
\begin{equation}\label{TrivialBoundCuspidal}
\Ccal_2^\pm (f,N,M;q) \ll \left(\frac{NM}{q}\right)^{1/2}.
\end{equation}
Moreover, if $M\geqslant q/2$, then an application of Poly\'a-Vinograov method in the $m$-variable (see Proposition \ref{PropositionPolya} and § \ref{SectionPolya}) leads to
$$\Ccal_2^\pm(f,N,M;q)\ll_\varepsilon q^\varepsilon\left(\frac{NM}{q^2}\right)^{1/2} \ll q^{-1/4+\varepsilon}.$$
Hence we can suppose from now on that $M<q/2$ in such a way that the condition $(m,q)=1$ under the summation in \eqref{Cal2} is automatically satisfied.

\subsubsection{Application of Poisson/Voronoi summation formula}\label{SectionVoronoi} The first step is to apply Voronoi summation formula in the $n$-variable. To get in a good position, we write the Kloosterman sum $\Kl_3$ for $(a,q)=1$ in the form
\begin{equation}\label{Klforme2}
\Kl_3(a;q) = \frac{1}{q^{1/2}}\sum_{x\in\Fbf_q^\times}\Kl_2(\overline{x};q)e\left(\frac{ax}{q}\right).
\end{equation}
Note that this definition can be extended to $a=0$ with the value
$$\Kl_3(0;q)=\frac{1}{q}\left(\sum_{x\in\Fbf_q^\times}e\left(\frac{x}{q}\right)\right)^2=\frac{1}{q}.$$
It follows that after writing $\Kl_3(\pm nm\overline{\ell};q)$ in the form \eqref{Klforme2} and adding the contribution of $q|n$ for negligible error term (of size at most $q^{-3/4+\varepsilon}$), we get
\begin{equation}\label{Voronoi1}
\begin{split}
\Ccal_2^\pm (f,N,M;q)=\frac{1}{(qNM)^{1/2}}\frac{1}{q^{1/2}}\sum_{x\in\Fbf_q^\times}&\Kl_2(\overline{x};q)\sum_{m\in\Zbf}W_1\left(\frac{m}{M}\right) \sum_{n\geqslant 1}\overline{\lambda_f(n)}e\left(\frac{\pm nm\overline{\ell}x}{q}\right)W_2\left(\frac{n}{N}\right).
\end{split}
\end{equation}
Assuming we are dealing with the plus case and applying Voronoi formula (c.f. Proposition \ref{VoronoiFormula}) to the inner sum in \eqref{Voronoi1}, we obtain
\begin{alignat*}{1}
\Ccal_2^+(f,N,M;q)=\left(\frac{N}{q^3M}\right)^{1/2}\frac{1}{q^{1/2}}\sum_{\pm }\sum_{x\in\Fbf_q^\times}\Kl_2(\overline{x};q)&\sum_{n\geqslant 1}\overline{\lambda_f(n)}W_2^{\pm}\left(\frac{nN}{q^2}\right) \\ \times & \ \sum_{m\in\Zbf}e\left(\frac{\mp n\overline{mx}\ell}{q}\right)W_1\left(\frac{m}{M}\right). 
\end{alignat*}
Changing the order of summation, making the change of variable $\overline{x}\leftrightarrow xm$ (recall that $(m,q)=1$) allows us to write
\begin{alignat}{1}
\Ccal_2^+(f,N,M;q) = \left(\frac{N}{q^3M}\right)^{1/2}\frac{1}{q^{1/2}}\sum_\pm \sum_{x\in\Fbf_q^\times}&\sum_{n\geqslant 1}\overline{\lambda_f(n)}e\left(\frac{\mp nx\ell}{q}\right)W_2^\pm \left(\frac{nN}{q^2}\right) \nonumber\\ \times & \ \sum_{m\in\Zbf}\Kl_2(xm;q)W_1\left(\frac{m}{M}\right) \label{Voronoi2}.
\end{alignat}
By Poisson formula and since $\Kl_2$ is the Fourier transform of the function defined by \eqref{DefinitionKbf}, we see that the $m$-sum in \eqref{Voronoi2} is equal to
$$\frac{M}{q^{1/2}}\sum_{(m,q)=1}e\left(-\frac{x\overline{m}}{q}\right)\widehat{W_1}\left(\frac{mM}{q}\right).$$
Replacing this identity in \eqref{Voronoi2} yields
\begin{equation}\label{DiracSymbol}
\begin{split}
\Ccal_2^+(f,N,M;q)=\left(\frac{NM}{q^3}\right)^{1/2}\sum_\pm\sum_{n\geqslant 1}\sum_{(m,q)=1}&\overline{\lambda_f(n)}\widehat{W_1}\left(\frac{mM}{q}\right)W_2^\pm\left(\frac{nN}{q^2}\right) \\ \times & \frac{1}{q}\sum_{x\in\Fbf_q^\times}e\left(x\frac{\mp n\ell-\overline{m}}{q}\right),
\end{split}
\end{equation}
with the same expression for the minus case $\Ccal_2^-$, but with $\mp$ replaced by $\pm$ in the exponential. Because of the fast decay of $\widehat{W_1}$ and $W_2^{\pm}$ at infinity (c.f. Lemma \ref{LemmaVoronoi}), the $n,m$-sum \eqref{DiracSymbol} is essentially supported on $|m|\leqslant q^{1+\varepsilon}/M$ and $|n|\leqslant q^{2+\varepsilon}/N$. In this range, we use the estimate $|\lambda_f(n)|\leqslant \tau(n)\ll_\varepsilon n^\varepsilon$ and we apply Lemma $\ref{LemmaVoronoi}$ with $\vartheta=0$ (recall that $f$ satisfies R-P-C) to bound $W_2^\pm$ by $q^\varepsilon$. Adding the contribution of $x=0$, estimating this extra factor trivially and executing the complete $x$-summation gives 
\begin{alignat*}{1}
\Ccal_2^+ (f,N,M;q) = \left(\frac{NM}{q^3}\right)^{1/2}\sum_\pm\mathop{\sum\sum}_{nm\ell\equiv \mp 1 \ (\modm q)}&\overline{\lambda_f(n)}\widehat{W_1}\left(\frac{mM}{q}\right)W_2^\pm\left(\frac{nN}{q^2}\right) \\ & + O\left(q^\varepsilon\left(\frac{q}{NM}\right)^{1/2}\right).
\end{alignat*}
Therefore, as in § \ref{Section3Poisson}, we obtain
\begin{equation}\label{BoundVoronoi}
\Ccal_2^{\pm}(f,N,M;q) \ll_\varepsilon q^\varepsilon \left(\frac{q}{NM}\right)^{1/2}.
\end{equation}


\subsubsection{Estimation of $\Ccal_2$ using bounds for bilinear forms and Theorem \ref{Theorem2}}
We finally state the analogous of Proposition \ref{FinalProposition} which is an immediate application of Theorem \ref{TheoremPrime} (1)-(2), Theorem \ref{TheoremSawin} and Corollary \ref{CorollarySchwartz}.

\begin{proposition}\label{FinalPropositionCuspidal} For any $\varepsilon>0$, the quantity defined in \eqref{Shape1Cal} satisfies
$$
\Ccal_2^\pm (f,N,M;q) \ll q^\varepsilon \left(\frac{NM}{q}\right)^{1/2}\left\{ \begin{array}{l}
\frac{1}{q^{1/4}}+\frac{1}{M^{1/2}}+\frac{q^{1/4}}{N^{1/2}}  \\  \\
\frac{1}{q^{1/2}}+\frac{q^{1/2}}{M} \\  \\ 
\left(\frac{N^2M^5}{q^3}\right)^{-1/12} \\ \\
\left(1+\frac{q}{N}\right)^{1/2}q^{-1/16},
\end{array} \right.
$$
where the implied constant depends on $\varepsilon$ and polynomially on $t_f$ in the last bound and the third bound is valid in the case where $1\leqslant N\leqslant M^2$, $M<q$ and $NM<q^{3/2}$ $($c.f. \eqref{AssumptionTheoremBilinear}$)$.
\end{proposition}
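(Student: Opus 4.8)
The plan is to read $\Ccal_2^\pm(f,N,M;q)$, as given in \eqref{Shape1Cal}, as a bilinear sum with kernel the trace function
$$K(x):=\Kl_3(\pm x\overline{\ell};q),$$
one variable carrying the arithmetic weight $\overline{\lambda_f(n)}=\lambda_f(n)$ (equality since the nebentypus is trivial) and the other a smooth cutoff, and then to feed it to the four available tools. First I would record the standing facts: by Cauchy--Schwarz and \eqref{Ramanujan-Petersson-Average} one has $\sum_{n\sim N}|\lambda_f(n)|\ll_\varepsilon q^\varepsilon N$ and $\sum_{n\sim N}|\lambda_f(n)|^2\ll_\varepsilon q^\varepsilon N$; and $K$, like each of its dilates $x\mapsto\Kl_3(\pm xm\overline{\ell};q)$ with $(m,q)=1$, is the trace function of a pullback of $\Klcal_3$ by a nonzero homothety, hence of a geometrically irreducible (in particular isotypic and non Fourier-exceptional) sheaf of rank $3$ of absolutely bounded conductor; in particular it contains no sheaf of the form $\Lcal_\omega\otimes\Lcal_\psi$ in its irreducible component, so that all the implied constants below depending polynomially on the conductor are in fact absolute.

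The first two estimates come from the Poly\'a--Vinogradov inputs of Theorem \ref{TheoremPrime}. For the first, I would write $\Ccal_2^\pm(f,N,M;q)=(qNM)^{-1/2}\sum_{n,m}\alpha_m\beta_nK(nm)$ with $\alpha_m=W_1(m/M)$ supported on $[M/2,2M]$ and $\beta_n=\overline{\lambda_f(n)}W_2(n/N)$ supported on $[N/2,2N]$, and apply Theorem \ref{TheoremPrime}(1): with $||\alpha||_2\ll M^{1/2}$ and $||\beta||_2\ll q^\varepsilon N^{1/2}$ this is exactly the first claimed bound. For the second, I would regard $m$ as the clean interval variable and $n$ as the weighted one, remove the smooth factor $W_1(m/M)$ by a standard Mellin (or partial-summation) argument at the cost of $q^\varepsilon$, apply Theorem \ref{TheoremPrime}(2) to get $\sum_{m}K(nm)W_1(m/M)\ll q^\varepsilon M(q^{-1/2}+q^{1/2}/M)$ uniformly in $n$, and sum trivially over $n$ using $\sum_{n\sim N}|\lambda_f(n)|\ll q^\varepsilon N$.

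The third estimate is the same manoeuvre with Theorem \ref{TheoremSawin} in its critical range: I would match Sawin's short variable (length $M_{\mathrm{Sawin}}$, carrying coefficients $\alpha$) with our $n$-variable and its interval variable (length $N_{\mathrm{Sawin}}$) with our $m$-variable, so that $M_{\mathrm{Sawin}}=N$, $N_{\mathrm{Sawin}}=M$, and the hypotheses \eqref{AssumptionTheoremBilinear} become $1\le N\le M^2$, $M<q$, $NM<q^{3/2}$ — precisely the stated validity condition; with $\alpha_n=\overline{\lambda_f(n)}W_2(n/N)$, so $||\alpha||_1^{1/2}||\alpha||_2^{1/2}\ll q^\varepsilon N^{3/4}$, and the smooth weight $W_1$ removed as before, Theorem \ref{TheoremSawin} bounds the double sum by $q^\varepsilon NM(N^2M^5/q^3)^{-1/12}$, and dividing by $(qNM)^{1/2}$ gives the third bound. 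The fourth and last one uses Corollary \ref{CorollarySchwartz}: for each fixed $m\sim M$ the inner $n$-sum is $\sum_{n\geqslant1}\lambda_f(n)K_m(n)W_2(n/N)$ with $K_m(x)=\Kl_3(\pm xm\overline{\ell};q)$ a non Fourier-exceptional isotypic trace function of bounded conductor and $W_2$ Schwartz with $Q\ll q^\varepsilon$, so Corollary \ref{CorollarySchwartz} with $X=N$ bounds it by $q^\varepsilon(1+|t_f|)^AN(1+q/N)^{1/2}q^{-1/16}$; summing trivially over the $\ll M$ values of $m$ and dividing by $(qNM)^{1/2}$ produces the fourth bound, with the asserted polynomial dependence on $t_f$.

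There is no real obstacle here: the content is entirely a matter of fitting our sum to the hypotheses of results already proved. The only two points deserving a line of care are the verification that the homothety-pullbacks of $\Klcal_3$ remain irreducible of absolutely bounded conductor — so that the implied constants in Theorems \ref{TheoremPrime} and \ref{TheoremSawin} and in Corollary \ref{CorollarySchwartz} are, respectively, absolute or depend only on $\varepsilon$ and $t_f$ — and the routine smoothing step replacing the cutoff $W_1(m/M)$ by a sharp one in the applications of Theorem \ref{TheoremPrime}(2) and Theorem \ref{TheoremSawin}, which are stated for sums over an interval.
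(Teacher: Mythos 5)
Your proposal is correct and follows the same route the paper indicates: each of the four bounds is obtained by applying Theorem \ref{TheoremPrime}(1), Theorem \ref{TheoremPrime}(2), Theorem \ref{TheoremSawin}, and Corollary \ref{CorollarySchwartz}, respectively, exactly as you set them up (with the $n$-variable carrying the $\lambda_f$-weight, the $m$-variable as the smooth/interval variable, and the nonzero-homothety pullback of $\Klcal_3$ as the non-exceptional isotypic trace sheaf of bounded conductor). The only thing left implicit is that for $f$ of level $1$ one invokes the original \cite[Theorem 1.2]{twists} in place of Corollary \ref{CorollarySchwartz}, giving the stronger exponent $1/8$ which still yields the stated bound; this matches the remark at the end of the section.
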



\subsubsection{Conclusion of the cuspidal case} Fix $\eta>0$ a parameter and write $M=q^{\mu}$, $N=q^{\nu}$ with $\mu,\nu\geqslant 0$. By the trivial bound \eqref{TrivialBoundCuspidal} and \eqref{BoundVoronoi}, we can assume that 
\begin{equation}\label{AssumeCusp1}
1-2\eta \leqslant \mu+\nu \leqslant 1+2\eta,
\end{equation}
otherwise we get $\Ccal_2^\pm(f,N,M;q)=O(q^{-\eta+\varepsilon}).$ We now let $\delta_1,\delta_2,\delta_3>0$ be sufficiently small auxiliary parameters and we distinguish four cases :
\begin{enumerate}
\item[$(a)$] Assume that $\mu\leqslant \delta_1.$ In this case we apply the fourth estimate of Proposition \ref{FinalPropositionCuspidal} and we get by \eqref{AssumeCusp1}
$$\Ccal_2^\pm(f,N,M;q)\ll_{\varepsilon,t_f} q^\varepsilon\left(q^{\eta-\frac{1}{16}}+q^{\frac{\delta_1}{2}-\frac{1}{16}}\right)\leqslant q^{-\eta+\varepsilon},$$
provided
\begin{equation}\label{Provided1Cusp}
\eta\leqslant \frac{1}{32} \  \ \mathrm{and} \  \ \delta_1\leqslant \frac{1}{8}-2\eta.
\end{equation}
\item[$(b)$] If $\delta_1<\mu\leqslant \frac{1}{2}-\delta_2$, the first bound of Proposition \ref{FinalPropositionCuspidal} yields
$$\Ccal_2^\pm (f,N,M;q) \ll_\varepsilon q^\varepsilon \left(q^{\eta-\frac{1}{4}}+q^{\frac{1}{2}(\nu-1)}+q^{\frac{1}{2}(\mu-\frac{1}{2})}\right).$$
The first term is less than $q^{\varepsilon-\eta}$ since $\eta\leqslant \frac{1}{32}$. For the second, we have $\nu-1\leqslant 2\eta-\delta_1$ (use \eqref{AssumeCusp1} and $\mu\geqslant \delta_1$). Thus it is less than $q^{\varepsilon-\eta}$ under the assumption that
\begin{equation}\label{Provided2Cusp}
\delta_1\geqslant 4\eta.
\end{equation}
The third term is at most $q^{-\delta_2/2}\leqslant q^{-\eta}$ if 
\begin{equation}\label{Provided3Cusp}
\delta_2\geqslant 2\eta.
\end{equation}
\item[$(c)$] Suppose that $\frac{1}{2}-\delta_2<\mu\leqslant \frac{1}{2}+\delta_3.$ In this configuration, we apply the third bound and we obtain
$$\Ccal_2^\pm (f,N,M;q)\ll_\varepsilon q^{\varepsilon-\frac{1}{4}+\frac{\nu}{3}+\frac{\mu}{12}}=q^{\varepsilon-\frac{1}{4}+\frac{1}{12}(\mu+\nu)+\frac{\nu}{4}}.$$
Using \eqref{AssumeCusp1} and $\nu\leqslant 1+2\eta-\mu\leqslant \frac{1}{2}+2\eta+\delta_2$ allows us to bound the above expression by
$$q^{\varepsilon -\frac{1}{4}+\frac{1}{12}(1+2\eta)+\frac{1}{4}(\frac{1}{2}+2\eta+\delta_2)}=q^{\varepsilon -\frac{1}{12}(\frac{1}{2}-8\eta-3\delta_2)}\leqslant q^{\varepsilon-\eta},$$
provided
\begin{equation}\label{Provided4Cusp}
3\delta_2\leqslant \frac{1}{2}-20\eta.
\end{equation}
\item[$(d)$] Assume that $\mu>\frac{1}{2}+\delta_3$ the second bound gives 
$$\Ccal_2^\pm(f,N,M;q) \ll_\varepsilon q^\varepsilon\left(q^{\eta-\frac{1}{2}}+q^{\eta+\frac{1}{2}-\mu}\right)\leqslant q^{\varepsilon-\eta}+q^{\varepsilon+\eta-\delta_3}\ll q^{\varepsilon-\eta},$$
if we assume that
$$
\delta_3\geqslant 2\eta.
$$
\end{enumerate}
Finally, the combination of conditions \eqref{Provided1Cusp} and \eqref{Provided2Cusp} forces $\eta\leqslant \frac{1}{48}$ and \eqref{Provided3Cusp}-\eqref{Provided4Cusp} are simultaneously satisfied as long as $\eta\leqslant\frac{1}{52}$, which gives the correct exponent of the error term in Theorem \ref{Theorem1}.

\begin{remq} The treatment carried out in Section \ref{SectionCuspidalecase} remains almost identical if $f$ is level $1$ Hecke cusp form. The only change we have to make is to replace the exponent $1/16$ by $1/8$ in the fourth bound of Proposition \ref{FinalPropositionCuspidal}, which is due to the original Theorem \cite[Theorem 1.2]{twists} for small level compared with $q$. However, it does not improve the final exponent $\frac{1}{52}$ since \eqref{Provided3Cusp}-\eqref{Provided4Cusp} is more restrictive and independent of \eqref{Provided1Cusp}-\eqref{Provided2Cusp}.
\end{remq}

\bibliography{ShiftedCubicMoment}
\bibliographystyle{alpha}

\end{document}